\documentclass[12pt]{amsart}

\usepackage{amsmath,amssymb,amsthm}
\usepackage[all]{xy}
\usepackage[right]{lineno}
\usepackage{mathrsfs}
\usepackage[dvipdfm,colorlinks=true]{hyperref}
\usepackage{epic,eepic}
\usepackage{bm}

\topmargin=0cm
\textheight=22cm
\textwidth=17cm
\oddsidemargin=-0.5cm
\evensidemargin=-0.5cm

\numberwithin{equation}{section}

\SelectTips{eu}{12}


\newtheorem{theorem}{Theorem}[section]
\newtheorem{proposition}[theorem]{Proposition}
\newtheorem{lemma}[theorem]{Lemma}
\newtheorem{corollary}[theorem]{Corollary}
\newtheorem{problem}[theorem]{Problem}

\theoremstyle{definition}
\newtheorem{definition}[theorem]{Definition}
\newtheorem{example}[theorem]{Example}
\newtheorem{conjecture}[theorem]{Conjecture}

\theoremstyle{remark}
\newtheorem{remark}[theorem]{Remark}

\newcommand{\R}{\mathbb{R}}
\newcommand{\RZ}{\R\mathcal{Z}}
\newcommand{\Z}{\mathcal{Z}}
\newcommand{\W}{\mathcal{W}}
\newcommand{\st}{\mathrm{st}}
\newcommand{\lk}{\mathrm{lk}}
\newcommand{\dl}{\mathrm{dl}}
\newcommand{\Sd}{\mathrm{Sd}}
\newcommand{\Cone}{\mathrm{Cone}}


\title[Fat wedge filtration and decomposition of polyhedral products]{Fat wedge filtration and decomposition of polyhedral products}

\author{Kouyemon Iriye}
\address{Department of Mathematics and Information Sciences, Osaka Prefecture University, Sakai, 599-8531, Japan}
\email{kiriye@mi.s.osakafu-u.ac.jp}
\author{Daisuke Kishimoto}
\address{Department of Mathematics, Kyoto University, Kyoto, 606-8502, Japan}
\email{kishi@math.kyoto-u.ac.jp}
\thanks{K.I. is supported by JSPS KAKENHI (No. 26400094), and D.K. is supported by JSPS KAKENHI (No. 25400087)}

\subjclass[2010]{55P15, 05E45, 52B22}
\keywords{polyhedral product, moment-angle complex, fat wedge filtration, Golodness, sequentially Cohen-Macaulay complex, neighborly complex}

\begin{document}

\maketitle

\begin{abstract}
The polyhedral product constructed from a collection of pairs of cones and their bases and a simplicial complex $K$ is studied by investigating its filtration called the fat wedge filtration. We give a sufficient condition for decomposing the polyhedral product in terms of the fat wedge filtration of the real moment-angle complex for $K$, which is a desuspension of the decomposition of the suspension of the polyhedral product due to Bahri, Bendersky, Cohen, and Gitler \cite{BBCG}. We show that the condition also implies a strong connection with the Golodness of $K$, and is satisfied when $K$ is dual sequentially Cohen-Macaulay over $\mathbb{Z}$ or $\lceil\frac{\dim K}{2}\rceil$-neighborly so that the polyhedral product decomposes. Specializing to the moment-angle complex, we prove that the similar condition on its fat wedge filtrations is necessary and sufficient for its decomposition. 
\end{abstract}

\baselineskip 17pt


\section{Introduction}

Let $K$ be an abstract simplicial complex on the vertex set $[m]:=\{1,\ldots,m\}$, and let $(\underline{X},\underline{A})$ be a collection of pairs of spaces indexed by the vertices of $K$. The space $\Z_K(\underline{X},\underline{A})$ which is now called the {\it polyhedral product} is defined by the union of product spaces constructed from $(\underline{X},\underline{A})$ in accordance with the combinatorial information of $K$. Polyhedral products were first found in Porter's work on higher order Whitehead products \cite{P} in 1965, and appear in several fundamental constructions in algebra, geometry, and topology related with combinatorics: the cohomology of $\Z_K(\mathbb{C}P^\infty,*)$ and $\Z_K(D^2,S^1)$ are identified with the Stanley-Reisner ring of $K$ and its certain derived algebra, respectively \cite{DJ,BBP,BP}; the fundamental group of $\Z_K(\R P^\infty,*)$ and $\Z_K(D^1,S^0)$ are the right-angled Coxeter group of the 1-skeleton of $K$ and its commutator subgroup \cite{DO}; the union of the coordinate subspace arrangement in $\R^m$ associated with $K$ is $\Z_K(\R,*)$, and its complement has the homotopy type of $\Z_K(D^1,S^0)$ \cite{GT1,IK1,BP}. From these examples, one sees that the special polyhedral products $\Z_K(C\underline{X},\underline{X})$ and $\Z_K(\underline{X},\underline{*})$ are especially important, where $(C\underline{X},\underline{X})$ and $(\underline{X},\underline{*})$ are collections of pairs of cones and their base spaces, and spaces and their basepoints, respectively. There is a homotopy fibration involving these polyhedral products, so they are supplementary to each other in a sense. The object to study in this paper is the polyhedral product $\Z_K(C\underline{X},\underline{X})$, and we are particularly interested in its homotopy type. 

Among other results on the homotopy types of polyhedral products, the work of Bahri, Bendersky, Cohen, and Gitler \cite{BBCG} is remarkable. They proved a decomposition of a suspension of $\Z_K(\underline{X},\underline{A})$ in general, and specializing to the polyhedral product $\Z_K(C\underline{X},\underline{X})$, they obtained the following decomposition, where the notations will be explained later.

\begin{theorem}
[Bahri, Bendersky, Cohen, and Gitler \cite{BBCG}]
\label{BBCG}
There is a homotopy equivalence
$$\Sigma\Z_K(C\underline{X},\underline{X})\simeq\Sigma\bigvee_{\emptyset\ne I\subset[m]}|\Sigma K_I|\wedge\widehat{X}^I.$$
\end{theorem}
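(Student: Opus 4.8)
The plan is to deduce the stated decomposition from the general stable splitting of polyhedral products, also due to Bahri, Bendersky, Cohen, and Gitler:
$$\Sigma\Z_K(\underline{X},\underline{A})\simeq\Sigma\bigvee_{\emptyset\ne I\subset[m]}\widehat{\Z}_{K_I}(\underline{X},\underline{A}),$$
where, for a simplicial complex $L$ on a vertex set $J$, the \emph{polyhedral smash product} $\widehat{\Z}_L(\underline{X},\underline{A})$ is the subspace $\bigcup_{\sigma\in L}\bigwedge_{i\in J}Y_i$ of $\bigwedge_{i\in J}X_i$ with $Y_i=X_i$ for $i\in\sigma$ and $Y_i=A_i$ otherwise. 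Granting this splitting, the theorem reduces to identifying $\widehat{\Z}_{K_I}(C\underline{X},\underline{X})$ with $|\Sigma K_I|\wedge\widehat{X}^I$ for each $\emptyset\ne I\subset[m]$. So there are two tasks: establish the general splitting, and carry out this identification.

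For the general splitting I would induct on the number of simplices of $K$. If $K$ has no vertices then $\Z_K(\underline{X},\underline{A})=\prod_{i=1}^{m}A_i$ and $\widehat{\Z}_{K_I}(\underline{X},\underline{A})=\widehat{A}^I$, so the claim is the classical stable splitting $\Sigma\prod_i A_i\simeq\Sigma\bigvee_{\emptyset\ne I}\widehat{A}^I$ of a finite product, whose summands are realised by the composites $\prod_i A_i\to\prod_{i\in I}A_i\to\widehat{A}^I$ of a coordinate projection and the pinch map. For the inductive step, choose a maximal simplex $\sigma$ of $K$ and set $K'=K\setminus\{\sigma\}$; then $K=K'\cup\overline{\sigma}$ and $K'\cap\overline{\sigma}=\partial\sigma$ with $\overline{\sigma}=2^{\sigma}$, which gives a pushout of cofibrations
$$\Z_K(\underline{X},\underline{A})=\Z_{K'}(\underline{X},\underline{A})\cup_{\Z_{\partial\sigma}(\underline{X},\underline{A})}\Z_{\overline{\sigma}}(\underline{X},\underline{A}),$$
in which $\Z_{\overline{\sigma}}(\underline{X},\underline{A})=\prod_{i\in\sigma}X_i\times\prod_{i\notin\sigma}A_i$ is a product. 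Applying $\Sigma$ and feeding the inductive hypothesis and the stable product splitting into the three corners, the key point is that the maps realising the wedge summands are natural with respect to the coordinatewise inclusions $A_i\hookrightarrow X_i$ and the coordinate projections that occur in the pushout, so the three splittings can be chosen compatibly and glued; collecting, for each $I$, the $I$-indexed summands and recognising the resulting pushout as the analogous smash-level pushout $\widehat{\Z}_{K_I}=\widehat{\Z}_{K'_I}\cup_{\widehat{\Z}_{(\partial\sigma)_I}}\widehat{\Z}_{\overline{\sigma}_I}$ yields the splitting for $K$.

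For the identification, we may work with reduced cones; the reduced cone on $X_i$ is $X_i\wedge D^1$, where $D^1=[0,1]$ is based at the endpoint $1$ and $S^0=\{0,1\}$. Since $X_i\wedge S^0\cong X_i$, this gives a face-compatible homeomorphism $\widehat{(C\underline{X},\underline{X})}^{\sigma}\cong\widehat{X}^I\wedge\bigwedge_{i\in\sigma}D^1$, whence, distributing the smash over the union in the definition of the polyhedral smash product,
$$\widehat{\Z}_{K_I}(C\underline{X},\underline{X})\cong\widehat{X}^I\wedge\widehat{\RZ}_{K_I},\qquad\text{where}\quad\widehat{\RZ}_L:=\widehat{\Z}_L(D^1,S^0).$$
It then remains to prove $\widehat{\RZ}_L\cong|\Sigma L|$ for every $L$ on $n$ vertices, with $\Sigma$ the unreduced suspension. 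One computes directly that $\widehat{\RZ}_L$ is the quotient of $N:=\{x\in[0,1]^n:\{i:x_i\ne 0\}\in L\}$ by $B:=\{x\in N:\max_i x_i=1\}$; since $N$ is the cone with apex $0$ over $B$ and $B$ is a geometric realisation of $L$ (radially project onto the standard simplex), the quotient $N/B$ is precisely the unreduced suspension of $B$, that is $\Sigma|L|=|\Sigma L|$. Combining the two displays gives $\widehat{\Z}_{K_I}(C\underline{X},\underline{X})\cong|\Sigma K_I|\wedge\widehat{X}^I$, and substituting into the general splitting proves the theorem.

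The real obstacle is the compatibility bookkeeping in the general splitting: one must choose the ambient stable splittings of the various product pieces coherently enough to survive every gluing, and verify that the polyhedral (smash) product squares above are homotopy pushouts. A clean way to make this precise is to present $\Z_K(\underline{X},\underline{A})$ as the homotopy colimit over the face category of $K$ of the products $(\underline{X},\underline{A})^{\sigma}$; since the stable splitting of a product is a natural transformation of functors on based CW complexes, suspending, decomposing levelwise, and using that homotopy colimits commute with wedges reduces everything to identifying, for each $I$, the homotopy colimit of the $I$-th smash pieces with $\Sigma\widehat{\Z}_{K_I}(\underline{X},\underline{A})$ — which follows from a cofinality argument for the functor $\sigma\mapsto\sigma\cap I$ onto the face category of $K_I$. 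This is, in essence, the route taken in \cite{BBCG}.
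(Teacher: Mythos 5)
Your proposal is correct, and it is essentially the original argument of Bahri--Bendersky--Cohen--Gitler rather than the one this paper uses: you first establish the general stable wedge decomposition $\Sigma\Z_K(\underline{X},\underline{A})\simeq\Sigma\bigvee_I\widehat{\Z}_{K_I}(\underline{X},\underline{A})$ into polyhedral smash products (via a homotopy colimit / wedge lemma of Ziegler--\u{Z}ivaljevi\'c type, as you note at the end), and then identify $\widehat{\Z}_{K_I}(C\underline{X},\underline{X})\cong\widehat{X}^I\wedge\widehat{\RZ}_{K_I}\cong\widehat{X}^I\wedge|\Sigma K_I|$; your hands-on verification that $\widehat{\RZ}_L$ is the cone on $B\cong|L|$ modulo its base, hence the unreduced suspension $|\Sigma L|$, is a clean way to do the second step. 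The paper instead reproves the theorem as Corollary~\ref{BBCG-natural} by a different route, built on the fat wedge filtration introduced in Section~4: Theorem~\ref{Phi} shows that the filtration quotients $\Z_K^i/\Z_K^{i-1}$ are precisely the wedges $\bigvee_{|I|=i}|\Sigma K_I|\wedge\widehat{X}^I$ via explicit relative homeomorphisms built from the cubical maps $\Phi_{K_I}$, and then the splitting after one suspension is obtained by observing that $\{\Z_{K_I}(C\underline{X}_I,\underline{X}_I)\}_{I\subset[m]}$ is a ``space over the poset $2^{[m]}$ with natural retractions'' and invoking \cite{IK2}. The advantage of the paper's route is that it produces the splitting naturally in both $\underline{X}$ and subcomplex inclusions (which is needed later in the proofs of Lemma~\ref{Z-split-isolated} and Theorem~\ref{Z-extractible}), and it displays the splitting as arising from the fat wedge filtration, which is the structure the whole paper is built to desuspend; your route is more self-contained conceptually but delivers only an unstructured stable equivalence, and the ``compatibility bookkeeping'' you flag in the inductive version of the general splitting is genuinely delicate, so if you want a complete proof you should commit to the homotopy colimit formulation rather than the pushout induction.
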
 

Let us call the decomposition of this theorem the BBCG decomposition. The proof of the BBCG decomposition is a combination of the decomposition of suspensions of general polyhedral products which they obtained, and a formula of homotopy colimits \cite{ZZ}. Unfortunately, from the original proof, one cannot seize the intrinsic nature of $\Z_K(C\underline{X},\underline{X})$ which yields the BBCG decomposition, but the BBCG decomposition certainly showed a direction in studying the homotopy type of $\Z_K(C\underline{X},\underline{X})$, that is, to describe the homotopy type of $\Z_K(C\underline{X},\underline{X})$ by desuspending the BBCG decomposition. This direction was proposed in \cite{BBCG} when $K$ is a special simplicial complex called a shifted complex: they conjectured that the previous result of Grbi\'c and Theriault \cite{GT1} on $\Z_K(D^2,S^1)$ when $K$ is a shifted complex, can be generalized to a desuspension of the BBCG decomposition. This conjecture was affirmatively resolved by the authors \cite{IK1}, and was partially generalized to dual vertex-decomposable complexes by Welker and Gruji\'c \cite{GW}, where Grbi\'c and Theriault \cite{GT2} also considered a desuspension for shifted complexes but the paper includes serious mistakes such as the closedness of $\mathcal{W}_n$ by retracts in the proof of the main theorem. However, the proofs of these results are quite ad-hoc so that we cannot apply them to other classes of simplicial complexes.

The first aim of this paper is to elucidate the intrinsic nature of the polyhedral product $\Z_K(C\underline{X},\underline{X})$ for general $K$ which yields the BBCG decomposition and its desuspension. The structure of $\Z_K(C\underline{X},\underline{X})$ in question will be described by a certain filtration which we call the {\it fat wedge filtration}. We will see that the BBCG decomposition is actually a consequence of the property of the fat wedge filtration such that it splits after a suspension, so the analysis of the fat wedge filtration naturally shows a way to desuspend the BBCG decomposition. In analyzing the fat wedge filtration, the special polyhedral product $\Z_K(D^1,S^0)$ which is called the {\it real moment-angle complex} for $K$ and is denoted by $\RZ_K$ plays the fundamental role, where the real moment-angle complexes have been studied in toric topology as a rich source producing manifolds with good 2-torus actions. We will prove that the fat wedge filtration of $\RZ_K$ is a cone decomposition of $\RZ_K$, and will describe the attaching maps of the cones explicitly in a combinatorial manner. We say that the fat wedge filtration of $\RZ_K$ is trivial if all of these attaching maps are null homotopic, and now state our first main result.

\begin{theorem}
\label{main-decomp}
If the fat wedge filtration of $\RZ_K$ is trivial, then for any $\underline{X}$ there is a homotopy equivalence
$$\Z_K(C\underline{X},\underline{X})\simeq\bigvee_{\emptyset\ne I\subset[m]}|\Sigma K_I|\wedge\widehat{X}^I.$$
\end{theorem}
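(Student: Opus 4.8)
The plan is to deduce the decomposition from a fat wedge filtration of $\Z_K(C\underline{X},\underline{X})$ for an arbitrary collection $\underline{X}$, the fat wedge filtration of $\RZ_K$ being the case $X_1=\cdots=X_m=S^0$ (so that each $CX_j=D^1$); the whole point is that triviality of the latter forces triviality of the former. First I would extend the construction, obtaining a filtration $*=\Z^0\subset\Z^1\subset\cdots\subset\Z^m=\Z_K(C\underline{X},\underline{X})$ and checking, by the same argument as in the real case, that it is a cone decomposition: $\Z^i$ is obtained from $\Z^{i-1}$ by attaching the cone on $W_i(\underline{X})\simeq\bigvee_{I\subset[m],\,|I|=i}|K_I|\wedge\widehat{X}^I$ along a map $\varphi_i\colon W_i(\underline{X})\to\Z^{i-1}$ given by the same explicit combinatorial formula as for $\RZ_K$. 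The only inputs are that each $(CX_j,X_j)$ is a relative CW pair with $CX_j$ canonically the cone on $X_j$, so that the coordinatewise cone structures assemble. In particular $\Z^i/\Z^{i-1}\simeq\bigvee_{|I|=i}\Sigma|K_I|\wedge\widehat{X}^I$; suspending the filtration once and summing these subquotients recovers Theorem~\ref{BBCG}, and also identifies the list of wedge summands the conclusion demands.

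The heart of the proof is to show that every $\varphi_i$ is null homotopic once the fat wedge filtration of $\RZ_K$ is trivial. First, triviality of the fat wedge filtration of $\RZ_K$ implies that of $\RZ_{K_I}$ for each $I\subset[m]$: the coordinate projection $(D^1)^m\to(D^1)^I$ together with the inclusion by basepoints exhibits $\RZ_{K_I}$ as a retract of $\RZ_K$ compatibly with the filtrations, so the attaching maps for $\RZ_{K_I}$ are retracts of those for $\RZ_K$ (this is also transparent from the combinatorial formula, whose $I$-indexed summand involves only $K_I$). Second, the same formula shows that the restriction of $\varphi_i$ to the summand $|K_I|\wedge\widehat{X}^I$ with $|I|=i$ is, up to homotopy, the restriction of the corresponding real attaching map to $|K_I|$ smashed with $\mathrm{id}_{\widehat{X}^I}$, followed by a natural map into $\Z^{i-1}$ coming from the functoriality of polyhedral products in the collection of pairs. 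Since smashing with a fixed space preserves null homotopy and the real restriction to $|K_I|$ is null homotopic by the first point, each wedge summand of $\varphi_i$, hence $\varphi_i$ itself, is null homotopic.

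Granting this, a cone decomposition whose attaching maps are all null homotopic splits: inductively $\Z^i\simeq\Z^{i-1}\vee\Sigma W_i(\underline{X})$, since the mapping cone of a null homotopic map is the wedge with a suspension. Starting from $\Z^0=*$,
$$\Z_K(C\underline{X},\underline{X})=\Z^m\simeq\bigvee_{i=1}^m\Sigma W_i(\underline{X})\simeq\bigvee_{\emptyset\ne I\subset[m]}|\Sigma K_I|\wedge\widehat{X}^I,$$
using $W_i(\underline{X})\simeq\bigvee_{|I|=i}|K_I|\wedge\widehat{X}^I$ and the standard identification $\Sigma|K_I|\simeq|\Sigma K_I|$.

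The step I expect to be the main obstacle is the comparison of attaching maps in the middle paragraph: making precise that, for an arbitrary $\underline{X}$, the attaching maps of the fat wedge filtration of $\Z_K(C\underline{X},\underline{X})$ are obtained from those of $\RZ_K$ merely by smashing in the $\widehat{X}^I$-factors. This is a naturality statement for the fat wedge filtration under replacing $(D^1,S^0)$ by $(CX_j,X_j)$ in each coordinate, and establishing it requires a careful analysis of the explicit combinatorial attaching maps; with that in hand, everything else is formal.
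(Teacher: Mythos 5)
Your proposal correctly identifies the filtration to use and the role of the real attaching maps $\varphi_{K_I}$, and the final splitting argument would be fine \emph{if} the filtration of $\Z_K(C\underline{X},\underline{X})$ were a cone decomposition with the attaching data you describe. But it is not, and this is exactly where the paper has to work.

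The relative homeomorphism that controls $\Z_K^{i-1}(C\underline{X},\underline{X})\subset\Z_K^i(C\underline{X},\underline{X})$ has source the \emph{product of pairs}
$$\bigl(|\mathrm{Cone}(\Sd K_I)|,|\Sd K_I|\bigr)\times\bigl(\underline{X}^{\times I},T^{|I|-1}(\underline{X}_I)\bigr),$$
whose ``attaching set'' is $(|\mathrm{Cone}(\Sd K_I)|\times T^{|I|-1}(\underline{X}_I))\cup(|\Sd K_I|\times\underline{X}^{\times I})$. This is not a cone pair and the attaching set is not $|K_I|\wedge\widehat{X}^I$; the paper explicitly notes that the fat wedge filtration of $\Z_K(C\underline{X},\underline{X})$ is only a categorical sequence in Fox's sense, in contrast to $\RZ_K$ where you genuinely get a cone decomposition (precisely because $S^0$ has isolated basepoint so the $\widehat{X}^I$ factor is trivial). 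So the claim that ``$\Z^i$ is obtained from $\Z^{i-1}$ by attaching the cone on $\bigvee_{|I|=i}|K_I|\wedge\widehat{X}^I$'' is false in general, and with it the clean smash-product comparison of attaching maps you propose. Your own flagged concern (``the comparison of attaching maps'') is downstream of this earlier problem.

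What the paper does instead: it proves a naturality lemma asserting that a relative homeomorphism $(X,A)\to(Y,B)$ whose restriction to $A$ is null homotopic in $B$ yields a natural zig-zag $Y\simeq B\vee Y/B$ (no cone hypothesis). It then establishes the splitting first in the special case where every $X_i$ has an \emph{isolated} basepoint; there the attaching set splits as a disjoint union $(|\mathrm{Cone}(\Sd K_I)|\times T^{|I|-1}(\underline{X}_I))\sqcup(|\Sd K_I|\times\check{X}^I)$, and each piece is separately null homotopic into $\Z^{i-1}_{K_I}$ --- the first by collapsing the cone, the second by factoring through $\varphi_{K_I}\times 1$ and using triviality of $\varphi_{K_I}$. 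To pass to arbitrary $\underline{X}$, the paper adjoins disjoint basepoints $X_i\sqcup *$, obtains the splitting there, and then removes the extra basepoints one coordinate at a time via a double-mapping-cylinder induction that pushes the natural zig-zag through the pushout $X_i\leftarrow *_i\sqcup *\rightarrow *_i$. This last step, which your proposal does not anticipate, is essential: for non-isolated basepoints the attaching set does not decompose and one cannot simply smash the real attaching map with $\widehat{X}^I$.
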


As well as $\RZ_K$ the polyhedral product $\Z_K(D^2,S^1)$ has been studied in toric topology as an object producing manifolds with good torus actions, which is called the {\it moment-angle complex} for $K$ and is denoted by $\Z_K$. We will prove that the fat wedge filtration of $\Z_K$ is also a cone decomposition, so we can define its triviality as well as that of $\RZ_K$. We will give two conditions equivalent to the triviality of the fat wedge filtration of $\Z_K$ as follows.

\begin{theorem}
\label{main-decomp-Z}
The following three conditions are equivalent:
\begin{enumerate}
\item The fat wedge filtration of $\Z_K$ is trivial;
\item $\Z_K$ is a co-H-space;
\item There is a homotopy equivalence
$$\Z_K\simeq\bigvee_{\emptyset\ne I\subset[m]}\Sigma^{|I|+1}|K_I|.$$
\end{enumerate}
\end{theorem}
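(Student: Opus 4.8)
The plan is to prove the cycle of implications $(1)\Rightarrow(3)\Rightarrow(2)\Rightarrow(1)$, the first two being formal and the last carrying the real content.

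For $(1)\Rightarrow(3)$: once the fat wedge filtration of $\Z_K$ is known to be a cone decomposition, its triviality means that every attaching map is null homotopic, so the decomposition splits stage by stage and $\Z_K$ is homotopy equivalent to the wedge of the suspensions of the domains of its cones. I would then identify this wedge by reading off the combinatorial description of the cones established earlier, which (consistently with Theorem \ref{BBCG} applied to $\underline{X}=(S^1,\dots,S^1)$, where $\widehat{X}^I=S^{|I|}$ and hence $|\Sigma K_I|\wedge\widehat{X}^I=\Sigma^{|I|+1}|K_I|$) exhibits one cone for each nonempty $I\subseteq[m]$ whose suspended domain is $\Sigma^{|I|+1}|K_I|$; this yields $(3)$. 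For $(3)\Rightarrow(2)$: since $|I|\geq1$ each $\Sigma^{|I|+1}|K_I|$ is a suspension, so the wedge in $(3)$ is itself a suspension and in particular a co-H-space.

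For $(2)\Rightarrow(1)$ I would induct on the number of vertices $m$. For $I\subsetneq[m]$ the moment-angle complex $\Z_{K_I}$ is a retract of $\Z_K$ (inclusion of the full subcomplex $K_I$, projection collapsing the coordinates outside $I$), and a retract of a co-H-space is again a co-H-space — if $r\colon X\to Y$, $\iota\colon Y\to X$ satisfy $r\iota\simeq\mathrm{id}_Y$ and $\psi$ comultiplies $X$, then $(r\vee r)\circ\psi\circ\iota$ comultiplies $Y$. (Co-H-spaces are closed under retracts; the failure of the analogous closure property for the classes $\W_n$ is precisely the gap in \cite{GT2}, so the restriction to co-H-spaces here is essential.) Hence $\Z_{K_I}$ is a co-H-space, so by the inductive hypothesis its fat wedge filtration is trivial. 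By the combinatorial description of the attaching maps, the attaching map of the cone of the fat wedge filtration of $\Z_K$ indexed by $I$ depends only on $K_I$, hence for $I\subsetneq[m]$ coincides with the corresponding (null homotopic) attaching map of $\Z_{K_I}$; consequently the penultimate stage of the filtration splits as the wedge of suspensions $W:=\bigvee_{\emptyset\ne I\subsetneq[m]}\Sigma^{|I|+1}|K_I|$, and the only attaching map left to kill is the top one $\phi\colon A\to W$, with $A$ a suspension ($A\simeq\Sigma^m|K|$).

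Everything thus reduces to the following key lemma, which I expect to be the main obstacle: if $W$ is a wedge of suspensions, $A$ is a suspension, $\phi\colon A\to W$ satisfies $\Sigma\phi\simeq*$, and $W\cup_\phi CA$ is a co-H-space, then $\phi\simeq*$. Both hypotheses are available: $\Sigma\phi\simeq*$ because the fat wedge filtration splits after a single suspension (the property behind the BBCG decomposition), and $W\cup_\phi CA=\Z_K$ is a co-H-space by assumption. To prove the lemma I would reinterpret $\Sigma\phi\simeq*$ as saying that the collapse $q\colon\Z_K\to\Z_K/W=\Sigma A$ admits a stable section, and then use the comultiplication of $\Z_K$ to desuspend it to an honest section $s\colon\Sigma A\to\Z_K$; once $q$ is split one has $\tilde H_*(\Z_K)\cong\tilde H_*(W)\oplus\tilde H_*(\Sigma A)$ and $W\vee\Sigma A\to\Z_K$ a homology isomorphism of simply connected spaces, forcing $\Z_K\simeq W\vee\Sigma A$ and hence $\phi\simeq*$. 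The delicate part is exactly this desuspension: the comultiplication lives on $\Z_K$ and restricts to neither $W$ nor $A$, so one must transport it through the cofibration $A\xrightarrow{\phi}W\to\Z_K$ (equivalently, through its Puppe coaction) and use the Hilton--Milnor splitting of $[A,W]$ to control which components the co-H structure is allowed to leave nonzero. This is also the step that explains why co-H-ness of $\Z_K$ is genuinely stronger than the Golodness of $K$, which reflects only the vanishing of the reduced diagonal of $\Z_K$.
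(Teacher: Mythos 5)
Your cycle of implications matches the paper: $(1)\Rightarrow(3)$ is the cone decomposition of Theorem \ref{cone-decomp-Z} with all attaching maps null, $(3)\Rightarrow(2)$ is trivial, and for $(2)\Rightarrow(1)$ the paper also inducts on $m$ and uses that $\Z_{K_I}$ is a retract of $\Z_K$ (hence a co-H-space, hence $\widehat\varphi_{K_I}\simeq*$ for $I\subsetneq[m]$) — you are right that this retract-closure of co-H-spaces, unlike the false claim for $\W_n$ in \cite{GT2}, is the property that makes the induction work. So the reduction to ``kill the top attaching map $\widehat\varphi_K$'' is exactly the paper's reduction.

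Where you diverge, and where your proposal has a genuine gap, is in the final step. You reduce to a ``key lemma'': if $W\cup_\phi CA$ is a co-H-space with $A,W$ suspensions and $\Sigma\phi\simeq*$, then $\phi\simeq*$; and you plan to prove it by desuspending a stable section of the collapse $q\colon\Z_K\to\Sigma A$ using the comultiplication and Hilton--Milnor. That desuspension is exactly the content that needs proving, and you never carry it out; it is not clear that a stable section can be promoted to an honest section merely because the total space carries a co-H structure, nor is it clear which homotopy set the Hilton--Milnor analysis should be applied to. The paper avoids this entirely by constructing a \emph{left homotopy inverse of the inclusion} $j\colon\Z_K^{m-1}\to\Z_K$ rather than a section of the quotient. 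Concretely: the comultiplication followed by the projections onto the retracts $\Z_{K_I}$ ($\emptyset\ne I\subsetneq[m]$) gives a map $\mu\colon\Z_K\to\bigvee_{I\subsetneq[m]}\Z_{K_I}$; the suspension splitting from Corollary \ref{BBCG-natural} (i.e.\ \cite{IK2}) identifies $\Sigma^2(\mu\circ j)$ and shows that, after the further projection $q$ onto $\bigvee_{I\subsetneq[m]}\Sigma^{|I|+1}|K_I|$ (available because $\widehat\varphi_{K_I}\simeq*$ by induction), the composite $q\circ\mu\circ j$ is a homology isomorphism; $\Z_K^{m-1}$ is simply connected by Proposition \ref{Z-1-conn}, so Whitehead upgrades it to a homotopy equivalence, and $(q\circ\mu\circ j)^{-1}\circ q\circ\mu$ is the desired retraction, whence $\widehat\varphi_K\simeq r\circ j\circ\widehat\varphi_K\simeq*$. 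If you want to keep your section-of-$q$ formulation, you would still need an argument of this kind to produce the section; without it, your ``key lemma'' is an unproved claim rather than a lemma.
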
 


Note that if the BBCG decomposition desuspends, then $\Z_K(C\underline{X},\underline{X})$ becomes a suspension by Theorem \ref{main-decomp}, so in particular, all products and (higher) Massey products in the cohomology of $\Z_K$ are trivial. As mentioned above, the cohomology of $\Z_K$ is isomorphic to a certain derived algebra of the Stanley-Reisner ring of $K$, and the triviality of products and (higher) Massey products of this derived algebra is equivalent to a property of $K$ called the {\it Golodness} which has been extensively studied in combinatorial commutative algebra \cite{HRW,BJ,B}. We will also show the triviality of the fat wedge filtration of $\RZ_K$ (resp. $\Z_K$) implies the (resp. stable) homotopy version of the Golodness of $K$. 

The second aim of this paper is to examine the triviality of the fat wedge filtration of the real moment-angle complexes for specific simplicial complexes which implies the decomposition of polyhedral products by Theorem \ref{main-decomp}. To this end, we must choose appropriate classes of simplicial complexes. For shifted and dual vertex-decomposable complexes, desuspensions of the BBCG decomposition were studied in \cite{GT1,IK1,GW} as mentioned above, where dual shifted complexes are shifted. Originally, shifted and vertex-decomposable complexes were introduced as handy subclasses of shellable complexes in \cite{BW}, and shellable complexes are combinatorial simplification of sequentially Cohen-Macaulay (SCM, for short) complexes over $\mathbb{Z}$ \cite{S,BWW}, where sequentially Cohen-Macaulayness is a non-pure generalization of Cohen-Macaulayness. Then there are implications:
\begin{equation}
\label{SCM-implication}
\text{shifted}\quad\Longrightarrow\quad\text{vertex-decomposable}\quad\Longrightarrow\quad\text{shellable}\quad\Longrightarrow\quad\text{SCM over }\mathbb{Z}
\end{equation}
Then we first choose dual shellable complexes to show the triviality of the fat wedge filtrations of real moment-angle complexes, and then generalize its argument homologically to obtain the following result for dual SCM complexes over $\mathbb{Z}$, which is a substantial improvement of the previous results \cite{GT1,IK1,GW}. The theorem will be actually proved for a larger class of simplicial complexes including dual SCM complexes over $\mathbb{Z}$, and a spin off of the method used for dual shellable complexes will be given to produce a $p$-local desuspension of the BBCG decomposition for another class of simplicial complexes including dual sequentially Cohen-Macaulay complexes over $\mathbb{Z}/p$ under a mild condition on $\underline{X}$.

\begin{theorem}
\label{main-SCM}
If $K$ is dual SCM over $\mathbb{Z}$, then the fat wedge filtration of $\RZ_K$ is trivial.
\end{theorem}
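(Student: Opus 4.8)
My plan is to show that the triviality of the fat wedge filtration of $\RZ_K$ is governed by the vanishing of the attaching maps of its cones, and that dual SCM-ness over $\mathbb{Z}$ forces these to vanish by a homological argument. The first step is to recall the structure obtained earlier in the paper: the fat wedge filtration $\RZ_K^1\subset\RZ_K^2\subset\cdots\subset\RZ_K^m=\RZ_K$ is a cone decomposition, where $\RZ_K^n$ is built from $\RZ_K^{n-1}$ by attaching cones indexed by the minimal non-faces of $K$ on certain vertex subsets, and the attaching maps $\phi_I\colon \Sigma|K_I|\to \RZ_{K_I}^{|I|-1}$ (for $I\subset[m]$ with $K_I$ not a simplex) are described combinatorially. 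Triviality means every $\phi_I$ is null homotopic, and by the skeletal structure of the filtration it suffices to treat, for each $I$, the map $\phi_I$ into the $(|I|-2)$-dimensional-type stage; inductively it is enough to handle the case $I=[m]$ and assume triviality for all proper full subcomplexes $K_I$, $I\subsetneq[m]$.

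The second step is to pass to homology. Under the inductive hypothesis, $\RZ_{K_I}^{|I|-1}$ for $I\subsetneq[m]$ is a wedge of the pieces $|\Sigma K_J|\wedge\widehat{S^0}^J\simeq\Sigma^{|J|}|K_J|$ appearing in Theorem \ref{main-decomp} (truncated appropriately), so the target of $\phi_{[m]}$ has free homology concentrated in a controlled range of degrees, with known generators indexed by reduced homology of full subcomplexes $K_J$, $J\subsetneq[m]$. The source $\Sigma|K_{[m]}|=\Sigma|K|$ likewise has free homology. I would then argue that the attaching map $\phi_{[m]}$ is null homotopic if and only if it is null homologous — this is where connectivity/dimension bookkeeping enters: the relevant stage is a suspension (so co-H), its homotopy groups in the range where $\Sigma|K|$ has homology are, after one further suspension, detected by homology because the target splits as a wedge of spheres' suspensions of complexes with free homology, and one invokes a Hurewicz/Whitehead-type argument together with the fact that maps out of a suspension into a wedge are determined by their components. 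Hence triviality of the fat wedge filtration reduces to the vanishing of a connecting homomorphism in the long exact sequence of the pair $(\RZ_K^{m}, \RZ_K^{m-1})$, equivalently to the splitting of a short exact sequence of homology groups.

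The third step is the combinatorial–algebraic heart: showing this short exact sequence splits when $K$ is dual SCM over $\mathbb{Z}$. Here I would use the Alexander duality identification relating $\widetilde{H}_*(K_I)$ to the homology of links in the Alexander dual $K^\vee$, and the definition of sequential Cohen–Macaulayness of $K^\vee$ — namely that the pure skeleta (in the non-pure sense, the filtration by the $\mathbb{Z}$-SCM sequence) have Cohen–Macaulay subquotients, so that $\widetilde{H}_i(\lk_{K^\vee}(\sigma))$ is free and concentrated in the top degree for all faces $\sigma$. Reisner-type criteria then give that the homology of the full subcomplexes $K_I$ is free and sits in the expected single degree dictated by $|I|$ and $\dim K$, which is exactly what is needed to force the connecting homomorphism to vanish: the source and target of the potential obstruction live in degrees that cannot interact, or the relevant Ext/extension group vanishes because everything is free. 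Running the induction on $m$ then completes the proof.

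The main obstacle I anticipate is the second step — converting "null homologous" to "null homotopic" for $\phi_{[m]}$. This requires that the target $\RZ_K^{m-1}$, after the inductive hypothesis, genuinely splits as a wedge whose homology is free and concentrated so that no secondary (e.g. $\mathrm{Ext}$ or Toda-bracket) obstructions survive; controlling the connectivity so that a single suspension suffices, and ruling out torsion phenomena, is delicate and is precisely where the hypothesis "over $\mathbb{Z}$" (as opposed to over a field) must be used. The combinatorial input (dual SCM $\Rightarrow$ freeness and degree-concentration of $\widetilde H_*(K_I)$) I expect to be essentially a packaging of known facts about SCM complexes and Alexander duality, so the real work is the homotopy-theoretic desuspension/splitting argument.
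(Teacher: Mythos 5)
Your plan is structurally different from the paper's, and it contains a concrete error as well as a gap you yourself flag. The paper does not argue via "null homologous $\Rightarrow$ null homotopic" at all; instead it introduces the notion of a \emph{(homology) fillable} complex. The key observation (Theorem \ref{fillable-1}) is that if $M$ is a minimal non-face of $K$, then the subdivision $|\Sd\Delta^M|$ already lies inside $\RZ_K^{m-1}$, so the attaching map $\varphi_K\colon|\Sd K|\to\RZ_K^{m-1}$ factors \emph{on the nose} through the inclusion $|\Sd K|\hookrightarrow|\Sd(K\cup M_1\cup\cdots\cup M_r)|$. If that larger complex can be made contractible (fillability), $\varphi_K$ is visibly null. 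This sidesteps the entire homological-to-homotopical conversion that dominates your second step. For dual SCM over $\mathbb{Z}$, the paper produces "homology spanning facets" of $K^\vee$ over each $\mathbb{Z}/p$ (Proposition \ref{homology-spanning}), interprets their removal via Alexander duality as adding minimal non-faces to $K$ that make the union $\mathbb{Z}/p$-acyclic, then glues the primes using a Bousfield--Kan fracture lemma under a simple-connectivity hypothesis on $\widehat{K}$ — which is supplied by Golodness of dual SCM complexes (Propositions \ref{Golod}, \ref{Golod-1-conn}). The only homotopy-theoretic work is the fracture argument, not a Hurewicz/Whitehead desuspension as you propose.

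Two concrete problems with your plan. First, your "combinatorial--algebraic heart" asserts that dual SCMness of $K^\vee$ forces $\widetilde{H}_*(\lk_{K^\vee}(\sigma))$, and hence $\widetilde{H}_*(K_I)$, to be free and concentrated in a single degree. That is the \emph{Reisner criterion for Cohen--Macaulay}, not for sequentially Cohen--Macaulay. SCM is precisely the \emph{non-pure} generalization (Proposition \ref{SCM-def} in the paper gives the correct criterion in terms of the skeletal subcomplexes $\lk_K(\sigma)^{\langle i\rangle}$), and SCM complexes routinely have reduced homology spread across several degrees. So your "degrees cannot interact" argument has no footing; dual SCM does not deliver the degree-concentration you want. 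Second, you yourself identify the missing lemma ("null homologous" $\Rightarrow$ "null homotopic" for $\phi_{[m]}$ into a wedge of suspensions) as the crux and leave it open. Even granting free homology, the obstructions here are genuine unstable homotopy classes — e.g.\ Whitehead products among the wedge summands — and these are not killed by any Ext-type vanishing. The paper's fillability argument is precisely the device that avoids having to confront these obstructions directly, and it is the idea your proposal is missing.
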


We will prove that $|\Sigma K_I|$ has the homotopy type of a wedge of spheres for any $\emptyset\ne I\subset[m]$ if $K$ is dual SCM over $\mathbb{Z}$, so we obtain the following by Theorem \ref{main-decomp} and \ref{main-SCM}.

\begin{corollary}
\label{main-SCM-sphere}
If $K$ is dual SCM over $\mathbb{Z}$, then $\Z_K(D^n,S^{n-1})$ is homotopy equivalent to a wedge of spheres.
\end{corollary}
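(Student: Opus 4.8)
The plan is to combine Theorems \ref{main-SCM} and \ref{main-decomp} with an analysis of the resulting wedge summands. First, note that $\Z_K(D^n,S^{n-1})=\Z_K(C\underline{X},\underline{X})$ for the constant collection $\underline{X}$ with $X_i=S^{n-1}$ for every $i$, because $CS^{n-1}\cong D^n$ and $S^{n-1}\hookrightarrow D^n$ is the inclusion of the base of the cone. Since $K$ is dual SCM over $\mathbb{Z}$, Theorem \ref{main-SCM} gives that the fat wedge filtration of $\RZ_K$ is trivial, so Theorem \ref{main-decomp} applies and yields
$$\Z_K(D^n,S^{n-1})\simeq\bigvee_{\emptyset\ne I\subset[m]}|\Sigma K_I|\wedge\widehat{X}^I.$$

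Next I would identify the summands. For this $\underline{X}$ we have $\widehat{X}^I=\bigwedge_{i\in I}S^{n-1}=S^{(n-1)|I|}$, so the $I$-th summand is $|\Sigma K_I|\wedge S^{(n-1)|I|}\simeq\Sigma^{(n-1)|I|+1}|K_I|$, an iterated suspension of the polyhedron $|K_I|$. Since a suspension of a finite complex with free reduced integral homology is homotopy equivalent to a wedge of spheres, it suffices to show that $\tilde{H}_*(K_I;\mathbb{Z})$ is free abelian for every $\emptyset\ne I\subset[m]$; this is the statement flagged in the introduction.

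To prove that statement I would use combinatorial Alexander duality, $\tilde{H}_i(K_I;\mathbb{Z})\cong\tilde{H}^{|I|-i-3}((K_I)^\vee;\mathbb{Z})$, where $(K_I)^\vee$ is the Alexander dual of $K_I$ taken within the ground set $I$. The point is that $(K_I)^\vee$ is obtained from $K^\vee$ by the combinatorial operation dual to restriction — a deletion or link of the vertices of $[m]\setminus I$ — and that $K^\vee$, being SCM over $\mathbb{Z}$, passes this property (or at least its homological consequence) along under such operations. Hence $(K_I)^\vee$ has torsion-free reduced integral homology in every degree, and then so does $K_I$ by the displayed isomorphism together with the universal coefficient theorem. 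Feeding this back into the decomposition above gives the corollary.

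The step I expect to be the main obstacle is the last one: identifying the precise combinatorial operation on $K^\vee$ that yields $(K_I)^\vee$ and checking that SCM over $\mathbb{Z}$ — or, at worst, the weaker property that all iterated links and deletions have torsion-free reduced homology, which is all we actually need — is inherited under it. Everything else is a formal consequence of Theorems \ref{main-SCM} and \ref{main-decomp}.
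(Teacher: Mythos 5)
The first two steps of your plan are fine and match the paper's route: combining Theorems~\ref{main-SCM} and~\ref{main-decomp} does yield
$\Z_K(D^n,S^{n-1})\simeq\bigvee_{\emptyset\ne I\subset[m]}|\Sigma K_I|\wedge S^{(n-1)|I|}
\simeq\bigvee_{\emptyset\ne I\subset[m]}\Sigma^{(n-1)|I|+1}|K_I|$.

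The gap is in the sentence ``a suspension of a finite complex with free reduced integral homology is homotopy equivalent to a wedge of spheres.'' This is false. Take a triangulation $L$ of $\mathbb{C}P^2$: $\widetilde{H}_*(L;\mathbb{Z})$ is free, but $\Sigma^k|L|$ is not a wedge of spheres for any $k\ge 1$, since the Steenrod square $Sq^2$ acts nontrivially on its mod~$2$ cohomology (equivalently, the top cell attaches by $\eta$). So establishing that $\widetilde{H}_*(K_I;\mathbb{Z})$ is torsion-free does not suffice; one needs to know that $|\Sigma K_I|$ is itself a wedge of spheres. That is a genuinely stronger statement, and it is exactly what the paper's notion of \emph{homology fillability} is designed to deliver: Theorem~\ref{SCM-homology-fillable-sub} shows each $K_I$ is homology fillable when $K^\vee$ is SCM over $\mathbb{Z}$, and Proposition~\ref{homology-fillable-sphere} then produces the wedge-of-spheres structure directly, by collapsing a $p$-locally acyclic complex $|K_I\cup M_1^p\cup\cdots\cup M_r^p|$ onto the quotient $|K_I\cup M_1^p\cup\cdots\cup M_r^p|/|K_I|$ (a wedge of spheres by construction) and then assembling the prime-by-prime information via Lemma~\ref{wedge-localization}. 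Your Alexander-duality observation that $(K_I)^\vee=\lk_{K^\vee}([m]\setminus I)$ is SCM is correct and is in fact used in the paper (Proposition~\ref{dl-SCM}), but the conclusion drawn from it there is homology fillability, not merely torsion-free homology, and that extra strength is what makes the argument go through.
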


We next consider the property of the inductive triviality of the attaching maps of the cones in the fat wedge filtration of $\RZ_K$. When all attaching maps in the $i^\text{th}$ filter are trivial, we will show that the attaching maps for the $(i+1)^\text{th}$ filter become trivial after composed with a certain map, say $\alpha$. So the attaching maps lift to the homotopy fiber of $\alpha$. By evaluating the connectivity of the homotopy fiber of $\alpha$, we will obtain the following, where neighborly complexes are defined in Definition \ref{def-neighborly}. The theorem will be slightly generalized by replacing dimension with homology dimension. 

\begin{theorem}
\label{main-neighborly}
If $K$ is $\lceil\frac{\dim K}{2}\rceil$-neighborly, then the fat wedge filtration of $\RZ_K$ is trivial.
\end{theorem}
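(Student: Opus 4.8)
\medskip

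I would prove that all the attaching maps of the cone decomposition of $\RZ_K$ coming from the fat wedge filtration are null homotopic by induction on the filtration degree, exploiting the lifting mechanism recorded above. Suppose every attaching map of the $i^{\text{th}}$ filter $\W_i$ is null homotopic. Then, as explained before the theorem, each attaching map $\phi$ of the $(i+1)^{\text{st}}$ filter becomes null homotopic after composition with the map $\alpha$, hence lifts through the homotopy fiber $F_\alpha$ of $\alpha$; it therefore suffices to show that the connectivity of $F_\alpha$ is at least the dimension of the domain of $\phi$, since then the lift — and so $\phi$ — is null homotopic. To obtain the homology-dimension version one replaces the skeletal filtration of the domain of $\phi$ by its homology decomposition and runs the same argument with homology connectivity and the corresponding homological obstruction, which has the additional benefit of removing any issue with fundamental groups. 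The base of the induction is trivial: for $i+1\le\lceil\tfrac{\dim K}{2}\rceil$ every $I$ with $|I|=i+1$ spans a face of $K$, so $K_I$ is a simplex, the relevant cones are contractible, and there is nothing to attach.

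So there are two quantities to control. The first is the dimension of the domain of $\phi$: from the explicit combinatorial description of the cones of the fat wedge filtration, this is a wedge, over the $I\subset[m]$ with $|I|=i+1$, of spaces built from $|K_I|$, of dimension bounded in terms of $\dim K$ (and of $i$). The second, which carries the neighborliness hypothesis, is a lower bound on the connectivity of $F_\alpha$. If $K$ is $\lceil\tfrac{\dim K}{2}\rceil$-neighborly then $K$ contains the $\bigl(\lceil\tfrac{\dim K}{2}\rceil-1\bigr)$-skeleton of the full simplex on $[m]$, hence so does each $K_I$ on its own vertex set, whence every $K_I$ is $\bigl(\lceil\tfrac{\dim K}{2}\rceil-2\bigr)$-connected and $|\Sigma K_I|=\Sigma|K_I|$ is $\bigl(\lceil\tfrac{\dim K}{2}\rceil-1\bigr)$-connected. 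By construction $F_\alpha$ is assembled from joins and smashes of (loop spaces of) the wedge summands $|\Sigma K_I|$ appearing in the already-trivialized lower filters; a Ganea--Porter type computation then bounds $\mathrm{conn}(F_\alpha)$ from below by a quantity of order $2\lceil\tfrac{\dim K}{2}\rceil$, which is exactly what is needed to dominate the dimension bound of the first quantity. Combining the two estimates closes the induction.

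The hard part is the connectivity estimate for $F_\alpha$: the neighborliness threshold $\lceil\tfrac{\dim K}{2}\rceil$ is chosen precisely so that the two bounds above balance, so there is no room to spare and a crude estimate will not do. I expect this step to require pinning $F_\alpha$ down via an explicit Ganea-type splitting of the homotopy fiber of the inclusion of a wedge into a fat wedge, together with a Mayer--Vietoris or homotopy-colimit spectral sequence expressing the bottom (co)homology of $F_\alpha$ through the groups $\widetilde H_*(K_I)$; this is also the point at which the refinement from dimension to homology dimension emerges, since only the relevant range of $\widetilde H_*(K_I)$ then enters. The remaining ingredients — the base case, the bookkeeping of cone dimensions, and the obstruction-theoretic passage from connectivity of $F_\alpha$ to nullhomotopy of the lift — are routine.
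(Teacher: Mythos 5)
Your route is the same as the paper's: induct on the filtration degree; once the lower attaching maps are trivial, lift $\varphi_{K_I}$ through the homotopy fiber $F_i$ of the inclusion of the wedge $\bigvee_{|J|<i}|\Sigma K_J|$ into the \emph{product} $\prod_{|J|<i}|\Sigma K_J|$ (Propositions \ref{trivial-varphi}--\ref{induct-trivial}); compute the connectivity of $F_i$ via a wedge-of-smashes description (the paper does this by recognizing $F_i$ as a polyhedral product over a discrete simplicial complex and applying Theorem \ref{main-decomp} to it; your Ganea--Porter appeal gives the same answer); translate neighborliness into connectivity of each $K_J$ (Lemma \ref{neighborly-conn}); and compare. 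One small terminological slip: the relevant map $\alpha$ is the inclusion into the product, not into a fat wedge, but that is not where the problem lies.

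The substantive issue is an off-by-one error that you cannot afford, precisely because, as you rightly emphasize, the bound is tight with no slack. Being $k$-neighborly means $K$ contains the $k$-skeleton of $\Delta^{[m]}$ (the definition asserts every $I$ with $|I|=k+1$ is a face), not the $(k-1)$-skeleton; hence each $K_I$ contains the $k$-skeleton of $\Delta^I$ and is $(k-1)$-connected. Taking $k=\lceil\frac{\dim K}{2}\rceil$ gives $\mathrm{conn}\,K_J\ge\lceil\frac{\dim K}{2}\rceil-1$, and Corollary \ref{conn-F} then gives $\mathrm{conn}\,F_i\ge 2\bigl(\mathrm{conn}\,K_J+1\bigr)\ge 2\lceil\frac{\dim K}{2}\rceil\ge\dim K\ge\dim K_I$, which is exactly what Proposition \ref{induct-trivial} needs. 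Your estimate, that $K_I$ is only $\bigl(\lceil\frac{\dim K}{2}\rceil-2\bigr)$-connected, yields $\mathrm{conn}\,F_i\ge 2\lceil\frac{\dim K}{2}\rceil-2$, which fails to dominate $\dim K$ when $\dim K$ is even, so the induction would not close. The mechanism is right; the arithmetic must be tightened by one.
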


This paper is organized as follows. In Section 2 we define polyhedral products, and collect some of their examples and properties which will be used later. In Section 3 we combinatorially describe the fat wedge filtration of $\RZ_K$, and in Section 4, we study the fat wedge filtration of $\Z_K(C\underline{X},\underline{X})$ by using the description of the fat wedge filtration of $\RZ_K$. We then prove Theorem \ref{main-decomp}. In Section 5 we further investigate the fat wedge filtration of $\Z_K$, and prove Theorem \ref{main-decomp-Z}. Section 6 deals with a connection between the triviality of the fat wedge filtrations of $\RZ_K$ and $\Z_K$ and the Golodness of $K$. In Section 7 and 8, we give criteria, called the fillability and the homology fillability, for the triviality of the fat wedge filtration of $\RZ_K$, and apply them to dual shellable complexes and dual sequentially Cohen-Macaulay complexes over $\mathbb{Z}$, proving Theorem \ref{main-SCM}. Section 9 is a spin off of the arguments for dual shellable complexes in Section 8. We introduce a new simplicial complexes called extractible complexes, and prove a $p$-local desuspension of the BBCG decompsition for them under a mild condition on $\underline{X}$ for extractible complexes over $\mathbb{Z}/p$. In Section 10 we give another criterion for the triviality of the fat wedge filtration of $\RZ_K$ which is Theorem \ref{main-neighborly}. Finally in Section 11, we give a list of possible future problems on the fat wedge filtration of polyhedral products.

Throughout the paper, we use the following notations:
\begin{itemize}
\item Let $K$ be a simplicial complex on the vertex set $[m]$, where we put $[m]:=\{1,\ldots,m\}$;
\item Let $|K|$ denote the geometric realization of $K$;
\item Let $\underline{X}$ be a sequence of spaces with non-degenerate basepoints $\{X_i\}_{i\in[m]}$;
\item Put $(C\underline{X},\underline{X}):=\{(CX_i,X_i)\}_{i\in[m]}$, pairs of reduced cones and their base spaces;
\item If $(X,A)$ is a pair of spaces, the symbol $(X,A)$ also denotes its $m$-copies ambiguously.
\end{itemize}

The authors are grateful to Piotr Beben for discussion on the homotopy Golodness. Thanks also goes to Tatsuya Yano for careful reading of the draft and kindness to allow us to show his example in Section 11, and also to Lukas Katth\"an for letting us know his and De Stafani's results.


\section{Definition of polyhedral products}

In this section, we define polyhedral products, and show some examples.  Then we recall a homotopy fibration involving polyhedral products that we will use.

\begin{definition}
Let $(\underline{X},\underline{A})$ be a sequence of pairs of spaces $\{(X_i,A_i)\}_{i\in[m]}$. The polyhedral product $\Z_K(\underline{X},\underline{A})$ is defined by
$$\Z_K(\underline{X},\underline{A}):=\bigcup_{\sigma\in K}(\underline{X},\underline{A})^\sigma\quad(\subset X_1\times\cdots\times X_m)$$
where $(\underline{X},\underline{A})^\sigma=Y_1\times\cdots\times Y_m$ for $Y_i=X_i$ for $i\in\sigma$ and $A_i$ for $i\not\in\sigma$.
\end{definition}

The special polyhedral product $\Z_K(D^1,S^0)$ and $\Z_K(D^2,S^1)$ are called the real moment-angle complex for $K$ and the moment-angle complex for $K$ and are denoted by $\Z_K$ and $\RZ_K$, respectively, as mentioned above. We here give three easy examples of polyhedral products.

\begin{example}
\label{wedge-example}
If $K$ is the simplicial complex with discrete $m$-points, then we have
$$\Z_K(\underline{X},\underline{*})=X_1\vee\cdots\vee X_m.$$
On the other hand, if $K$ is the boundary of the full $(m-1)$-simplex, then $\Z_K(\underline{X},\underline{*})$ is the fat wedge of $X_1,\ldots,X_m$. More generally, if $K$ is the $(k-1)$-skeleton of the full $(m-1)$-simplex, then $\Z_K(\underline{X},\underline{*})$ is the $k^\text{th}$ generalized fat wedge 
$$T^k:=\{(x_1,\ldots,x_m)\in X_1\times\cdots\times X_m\,\vert\,\text{at least }m-k\text{ of }x_i\text{ are basepoints}\}$$
of $X_1,\ldots,X_m$.
\end{example}

\begin{example}
\label{boundary-example}
When $m=2$ and $K$ is the boundary of the full 1-simplex, we have
$$\Z_K(C\underline{X},\underline{X})=(CX_1\times X_2)\cup(X_1\times CX_2)=X_1*X_2$$
where $X*Y$ means the join of $X$ and $Y$. For general $m$, if $K$ is the boundary of the full $(m-1)$-simplex, it is proved in \cite{P} that there is a homotopy equivalence
$$\Z_K(C\underline{X},\underline{X})\simeq\Sigma^{m-1}X_1\wedge\cdots\wedge X_m$$
which can be recovered by the results of \cite{IK1}, or more generally by Theorem \ref{main-decomp}. If $K$ is a skeleton of the full $(m-1)$-simplex, the homotopy type of $\Z_K(C\underline{X},\underline{X})$ can be described also by \cite{IK1} or Theorem \ref{main-decomp}.
\end{example}

\begin{example}
\label{non-desuspension-example}
We observe the polyhedral product of the joint of two simplicial complexes. We set notation. For simplicial complexes $K_1,K_2$ on disjoint vertex sets, their join is defined by
$$K_1*K_2:=\{\sigma_1\sqcup\sigma_2\,\vert\,\sigma_1\in K_1,\,\sigma_2\in K_2\}.$$
Let $I$ be a non-empty subset of $[m]$, and let $K_I$ denote the full subcomplex of $K$ on $I$, that is, $K_I:=\{\sigma\subset I\,\vert\,\sigma\in K\}$. For a sequence of pairs of spaces $(\underline{X},\underline{A})=\{(X_i,A_i)\}_{i\in[m]}$, we put $(\underline{X}_I,\underline{A}_I):=\{(X_i,A_i)\}_{i\in I}$. We can deduce the following immediately from the definition of polyhedral products. For $\emptyset\ne I,J\subset[m]$ with $I\cap J=\emptyset$ and $I\cup J=[m]$, we have
$$\Z_{K_I*K_J}(\underline{X},\underline{A})\cong\Z_{K_I}(\underline{X}_I,\underline{A}_I)\times\Z_{K_J}(\underline{X}_J,\underline{A}_J).$$
Then we see that the polyhedral product $\Z_K(C\underline{X},\underline{X})$ is not always a suspension, which in particular implies that the BBCG decomposition does not always desuspend: for example, if $m=4$ and $K$ is a square which is the join of 2-copies of the simplicial complex with discrete 2-points, we have $\RZ_K\cong S^1\times S^1$ by Example \ref{boundary-example} and the above homeomorphism.
\end{example}

We recall from \cite{DS} a homotopy fibration involving polyhedral products, and we here produce an alternative proof.

\begin{lemma}
[cf. {\cite[Proposition, pp.180]{Fa}}]
\label{hocolim-fibration}
Let $\{F_i\to E_i\to B\}_{i\in I}$ be a diagram of homotopy fibrations over a fixed base $B$. Then 
$$\underset{i\in I}{\mathrm{hocolim}}\,F_i\to \underset{i\in I}{\mathrm{hocolim}}\,E_i\to B$$
is a homotopy fibration.
\end{lemma}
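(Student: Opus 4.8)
The plan is to realize the homotopy colimit over the index category $I$ as a two-sided bar construction and to exploit the fact that a homotopy fibration is, up to homotopy, a fibration, so that one can work with honest fibrations and a strictly commuting diagram. First I would replace each map $E_i \to B$ by a Hurewicz fibration $\tilde E_i \to B$ in a functorial way (using the standard path-space replacement $\tilde E_i = E_i \times_B B^{[0,1]}$), so that the $\tilde E_i$ assemble into a functor $I \to \mathrm{Top}/B$ with $\tilde E_i \simeq E_i$ and with fiber $\tilde F_i \simeq F_i$ over the basepoint of $B$. Since homotopy colimits are invariant under objectwise weak equivalences, it suffices to prove the statement for this replaced diagram, i.e. to show that $\mathrm{hocolim}\, \tilde F_i \to \mathrm{hocolim}\, \tilde E_i \to B$ is a homotopy fibration.

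Next I would identify $\mathrm{hocolim}_{i\in I} \tilde E_i$ with the bar construction $B(\ast, I, \tilde E_\bullet)$ and similarly for $\tilde F_\bullet$, and map everything to $B$ via the constant functor. The key point is a fiberwise statement: for each simplicial degree $n$, the space $\coprod_{i_0 \to \cdots \to i_n} \tilde E_{i_0}$ fibers over $\coprod_{i_0\to\cdots\to i_n} B$ with fiber $\coprod \tilde F_{i_0}$, and these are genuine fibrations because each $\tilde E_i \to B$ is. Then I would invoke the standard fact (this is exactly the content of the cited Farjoun reference, and also follows from a quasifibration/gluing argument) that the geometric realization of a simplicial space which is levelwise a fibration over a \emph{constant} simplicial space $B$ is again a fibration over $B$, with fiber the realization of the levelwise fibers; here one needs the simplicial spaces to be proper (Reedy cofibrant), which holds because the diagram is built from disjoint unions and the basepoints are non-degenerate. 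This yields that $|B(\ast,I,\tilde E_\bullet)| \to B$ is a quasifibration with fiber $|B(\ast,I,\tilde F_\bullet)| = \mathrm{hocolim}\,\tilde F_i$, which is precisely the assertion.

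An alternative, perhaps cleaner route I would keep in reserve: use that $\mathrm{hocolim}_I$ is a homotopy left Kan extension along $I \to \ast$, and that the projection $\mathrm{hocolim}_I \tilde E \to \mathrm{hocolim}_I B = B \times |BI| \to B$ composed appropriately has homotopy fiber computed by the Bousfield--Kan spectral sequence or directly by Puppe's theorem on homotopy colimits of fibrations; Puppe's theorem states exactly that a homotopy colimit of a diagram of fibrations over a fixed base, all fibrations, is a fibration over that base, which is the present lemma almost verbatim. I would cite this but still sketch the bar-construction proof for self-containedness.

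The main obstacle I expect is the point-set bookkeeping needed to pass from ``homotopy fibration'' to ``strict fibration in a diagram'': one must check that the path-space replacement can be done functorially over $I$ (straightforward, since it is functorial in the map to $B$) and, more delicately, that the resulting levelwise-fibration simplicial space is proper so that realization commutes with taking fibers. This properness is where the hypothesis that basepoints are non-degenerate enters, and it is the step I would be most careful to state precisely; everything after that is a formal consequence of the gluing lemma for fibrations and the invariance of homotopy colimits under weak equivalences.
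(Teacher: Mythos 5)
The paper does not prove this lemma at all; it is taken as a citation from Farjoun's book \cite[Proposition, pp.~180]{Fa}, and the burden of the argument is in that reference. Your sketch reconstructs the standard argument behind that citation, and the overall strategy is correct: replace each $E_i\to B$ functorially by a Hurewicz fibration $\tilde E_i\to B$ via the path-space construction (which is visibly natural in the map to $B$, so the replacement is still a diagram indexed by $I$); model $\mathrm{hocolim}$ by the two-sided bar construction; and then pass the fibration property through geometric realization.

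Two points of imprecision are worth flagging, though neither is fatal. First, what the realization theorem actually gives is a \emph{quasifibration}, not a Hurewicz or Serre fibration: the statement ``the geometric realization of a simplicial space which is levelwise a fibration over a constant simplicial space is again a fibration'' is too strong as written, and you should say quasifibration throughout (you do recover this in the next sentence). The argument behind it is the Dold--Thom/Puppe gluing criterion applied to the skeletal filtration of the bar construction, and this is precisely Farjoun's proof. Since a quasifibration has the homotopy type of its path-space fibration, this suffices for the claim that the sequence is a homotopy fibration. Second, the properness (Reedy cofibrancy) of the bar construction $B(\ast,I,\tilde E_\bullet)$ does not come from non-degenerate basepoints. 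The degeneracy maps of a two-sided bar construction of a small category insert identities, hence are inclusions of coproduct summands, and these are closed cofibrations automatically in the compactly generated setting. The non-degeneracy hypothesis on the basepoints in the paper serves other purposes (e.g.\ in forming reduced cones and smash products), not this one. With these adjustments your proof is a faithful account of the cited result, and the alternative you mention at the end (Puppe's theorem for homotopy colimits of fibrations over a fixed base) is essentially the same statement phrased without the bar-construction bookkeeping.
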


\begin{proposition}
[Denham and Suciu \cite{DS}]
\label{fibration}
There is a homotopy fibration
$$\Z_K(C\Omega\underline{X},\Omega\underline{X})\to\Z_K(\underline{X},*)\xrightarrow{\rm incl}X_1\times\cdots\times X_m.$$
\end{proposition}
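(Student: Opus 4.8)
The plan is to realize the fibration as a $\mathrm{hocolim}$ of fibrations over the fixed base $X_1\times\cdots\times X_m$, so that Lemma~\ref{hocolim-fibration} applies directly. First I would recall that for any simplicial complex $K$ on $[m]$, the polyhedral product $\Z_K(\underline{Y},\underline{B})$ is naturally the homotopy colimit of the diagram over the face poset of $K$ sending $\sigma$ to $(\underline{Y},\underline{B})^\sigma$; this is standard (e.g.\ it follows from the fact that the cube $[0,1]^m$ together with its subcomplex structure makes the union defining $\Z_K$ a genuine colimit along cofibrations, hence a homotopy colimit). Applying this with $(\underline{Y},\underline{B})=(\underline{X},\underline{*})$ gives $\Z_K(\underline{X},\underline{*})=\mathrm{hocolim}_{\sigma\in K}\,(\underline{X},\underline{*})^\sigma$, and with $(\underline{Y},\underline{B})=(C\Omega\underline{X},\Omega\underline{X})$ gives $\Z_K(C\Omega\underline{X},\Omega\underline{X})=\mathrm{hocolim}_{\sigma\in K}\,(C\Omega\underline{X},\Omega\underline{X})^\sigma$, where both homotopy colimits are over the \emph{same} indexing category, the face poset of $K$.

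Next, for each fixed face $\sigma\in K$, I would exhibit a homotopy fibration
$$(C\Omega\underline{X},\Omega\underline{X})^\sigma\longrightarrow (\underline{X},\underline{*})^\sigma\xrightarrow{\ \mathrm{incl}\ } X_1\times\cdots\times X_m.$$
This is a product, over $i\in[m]$, of three elementary fibrations: for $i\in\sigma$ it is $C\Omega X_i\to X_i\to X_i$ up to homotopy — more precisely the path-loop fibration $\Omega X_i\to PX_i\to X_i$, whose total space $PX_i\simeq CX_i\simeq C\Omega X_i$ after identifying $PX_i$ with the cone on $\Omega X_i$ via the standard contraction — while for $i\notin\sigma$ it is the trivial fibration $\Omega X_i\to *\to X_i$ (the fiber $\Omega X_i$ being the loop space, the total space a point). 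Taking the product over all $i$ yields a fibration over $X_1\times\cdots\times X_m$ whose total space is $(\underline{X},\underline{*})^\sigma$ and whose fiber is $(C\Omega\underline{X},\Omega\underline{X})^\sigma$. These fibrations are compatible with the face inclusions $\sigma\subset\tau$ in the obvious way, so we obtain a diagram of homotopy fibrations over the fixed base $X_1\times\cdots\times X_m$ indexed by the face poset of $K$.

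Finally I would invoke Lemma~\ref{hocolim-fibration} to conclude that
$$\mathrm{hocolim}_{\sigma\in K}\,(C\Omega\underline{X},\Omega\underline{X})^\sigma\longrightarrow \mathrm{hocolim}_{\sigma\in K}\,(\underline{X},\underline{*})^\sigma\longrightarrow X_1\times\cdots\times X_m$$
is a homotopy fibration, and then identify the first two terms with $\Z_K(C\Omega\underline{X},\Omega\underline{X})$ and $\Z_K(\underline{X},\underline{*})$ respectively, as above; one checks that under these identifications the second map is the inclusion $\Z_K(\underline{X},\underline{*})\hookrightarrow X_1\times\cdots\times X_m$ coming from $\sigma\mapsto[m]$. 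The main obstacle I anticipate is the careful bookkeeping needed to verify that the fiberwise path-loop fibrations assemble into an honest diagram of fibrations over a \emph{single} base $B=X_1\times\cdots\times X_m$ — in particular that the maps $PX_i\to X_i$ can be taken strictly compatible with the cone-collapse identifications, so that Lemma~\ref{hocolim-fibration} is applicable on the nose rather than only after further replacement; the homotopy-colimit identification of polyhedral products, while standard, also deserves a precise pointer since it is the step that transports everything into the shape required by the lemma.
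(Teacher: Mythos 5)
Your proposal follows the paper's proof essentially verbatim: express both polyhedral products as homotopy colimits over the face poset of $K$, observe that the inclusions $(\underline{X},*)^\sigma\to X_1\times\cdots\times X_m$ assemble into a diagram of homotopy fibrations over the fixed base with fibers $(C\Omega\underline{X},\Omega\underline{X})^\sigma$, apply Lemma~\ref{hocolim-fibration}, and identify the homotopy colimits with the honest colimits because the structure maps are cofibrations. One small slip: for $i\in\sigma$ the relevant fibration is not the path-loop fibration $\Omega X_i\to PX_i\to X_i$ (that one has contractible \emph{total space}) but rather the identity $X_i\to X_i$, whose fiber is a point $\simeq C\Omega X_i$; the path-loop fibration is instead what replaces the inclusion $*\hookrightarrow X_i$ in the coordinates $i\notin\sigma$.
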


\begin{proof}
For any $\sigma\subset[m]$ there is a homotopy fibration $(C\Omega\underline{X},\Omega\underline{X})^\sigma\to(\underline{X},*)^\sigma\xrightarrow{\rm incl}X_1\times\cdots\times X_m$ which is natural with respect to the inclusions of subsets of $[m]$. Then we have a diagram of homotopy fibrations $\{(C\Omega\underline{X},\Omega\underline{X})^\sigma\to(\underline{X},*)^\sigma\xrightarrow{\rm incl}X_1\times\cdots\times X_m\}_{\sigma\in K}$, so it follows from Lemma \ref{hocolim-fibration} that there is a homotopy fibration
$$\underset{\sigma\in K}{\mathrm{hocolim}}\,(C\Omega\underline{X},\Omega\underline{X})^\sigma\to\underset{\sigma\in K}{\mathrm{hocolim}}\,(\underline{X},*)^\sigma\to X_1\times\cdots\times X_m.$$
Since the maps $(C\Omega\underline{X},\Omega\underline{X})^\sigma\to(C\Omega\underline{X},\Omega\underline{X})^\tau$ and $(\underline{X},*)^\sigma\to(\underline{X},*)^\tau$ are cofibrations for all $\sigma\subset\tau\subset[m]$, the above homotopy colimits are naturally homotopy equivalent to the colimits which are $\Z_K(C\Omega\underline{X},\Omega\underline{X})$ and $\Z_K(\underline{X},*)$, completing the proof.
\end{proof}


\section{Fat wedge filtration of $\RZ_K$}

In this section, we introduce the fat wedge filtration of $\Z_K(C\underline{X},\underline{X})$ and investigate the filtration of the real moment-angle complex $\RZ_K$. We first define the fat wedge filtration of a general subspace of a product of spaces. Let $T^k$ be the $k^\text{th}$ generalized fat wedge of $X_1,\ldots,X_m$ as in Example \ref{wedge-example} for $k=0,\ldots,m$. Then we get a filtration
$$*=T^0\subset T^1\subset\cdots\subset T^m=X_1\times\cdots\times X_m.$$
For a subspace $Y\subset X_1\times\cdots\times X_m$ including the base point of $X_1\times\cdots\times X_m$, we put $Y^k:=Y\cap T^k$ for $i=0,\ldots,m$, so we get a filtration
$$*=Y^0\subset Y^1\subset\cdots\subset Y^m=Y$$
which is called the fat wedge filtration of $Y$.

We give a combinatorial description of the fat wedge filtration of the real moment-angle complex $\RZ_K$, where we choose the point $-1$ to be the basepoint of $S^0=\{-1,+1\}$. For any $\emptyset\ne I\subset[m]$ we identify $\RZ_{K_I}$ with the subspace $\{(x_1,\ldots,x_m)\in\RZ_K\,\vert\,x_i=-1\text{ for }i\not\in I\}$ of $\RZ_K$. Then by the definition of the fat wedge filtration, we have
\begin{equation}
\label{RZ^i}
\RZ_K^0=\{(-1,\ldots,-1)\}\quad\text{and}\quad\RZ_K^i=\bigcup_{I\subset[m],\,|I|=i}\RZ_{K_I}
\end{equation}
for $i=1,\ldots,m$. In order to describe the fat wedge filtration of $\RZ_K$ combinatorially, we employ the cubical decomposition of a simplicial complex presented in \cite{BP}. To nested subsets $\sigma\subset\tau\subset[m]$ (not necessarily simplices of $K$), we assign the $(|\tau|-|\sigma|)$-dimensional face
$$C_{\sigma\subset\tau}:=\{(x_1,\ldots,x_m)\in(D^1)^{\times m}\,\vert\,x_i=-1,+1\text{ according as }i\in\sigma\text{ and }i\not\in\tau\}$$
of the cube $(D^1)^{\times m}$. Notice that any face of the cube $(D^1)^{\times m}$ is expressed by $C_{\sigma\subset\tau}$ for some $\sigma\subset\tau\subset[m]$, and in particular, any vertex of $(D^1)^{\times m}$ is given by $C_{\sigma\subset\sigma}$ for some $\sigma\subset[m]$. Let $\Sd L$ denote the barycentric subdivision of a simplicial complex $L$. Then the vertices of $\Sd\Delta^{[m]}$ are non-empty subsets of $[m]$, so we can define a piecewise linear map
$$i_c\colon|\Sd\Delta^{[m]}|\to(D^1)^{\times m},\quad\sigma\mapsto C_{\sigma\subset\sigma}$$
which is an embedding onto the union of $(m-1)$-dimensional faces of $(D^1)^{\times m}$ including the vertex $(-1,\ldots,-1)$, where $\Delta^{[m]}$ denotes the simplex on the vertex set $[m]$. This embedding is the cubical decomposition of $\Delta^{[m]}$, where one can see the reason for the name ``cubical decomposition'' from Figure 1.

\begin{figure}[htbp]
\begin{center}
\setlength\unitlength{1mm} 
\begin{picture}(100,33)(0,-2)
\Thicklines
\drawline(20,0)(0,30)(40,30)(20,0)
\drawline(10,15)(20,20)(20,30)
\drawline(20,20)(30,15)
\put(20,20){\whiten\circle{2.2}}
\put(20,30){\circle*{2.2}}
\put(10,15){\circle*{2.2}}
\put(30,15){\circle*{2.2}}
\put(0,30){\shade\circle{2.2}}
\put(40,30){\shade\circle{2.2}}
\put(20,0){\shade\circle{2.2}}
\drawline(70,27)(70,7)(80,-3)(80,17)(70,27)
\drawline(80,-3)(100,2)(100,22)(80,17)
\drawline(70,7)(79,9.3)
\drawline(81,9.7)(90,12)(100,2)
\thinlines
\dashline{1.5}(70,27)(90,32)(100,22)
\dashline{1.5}(90,32)(90,12)
\Thicklines
\put(80,-3){\whiten\circle{2.2}}
\put(70,7){\circle*{2.2}}
\put(100,2){\circle*{2.2}}
\put(80,17){\circle*{2.2}}
\put(70,27){\shade\circle{2.2}}
\put(90,12){\shade\circle{2.2}}
\put(100,22){\shade\circle{2.2}}
\thicklines
\put(43,15){\vector(1,0){20}}
\put(52,18){$i_c$}
\end{picture}
\caption{the embedding $i_c$ for $m=3$}
\end{center}
\end{figure}
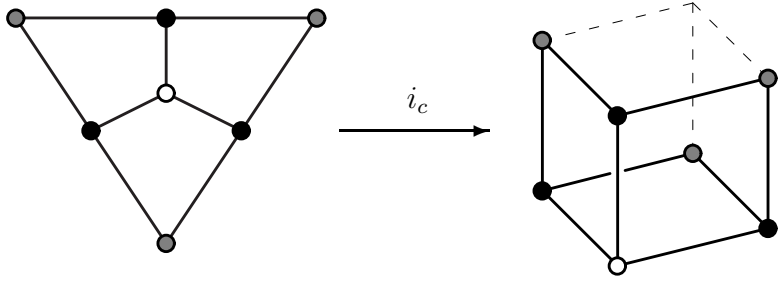

We define the cone and the suspension of $K$ by
$$\mathrm{Cone}(K):=\Delta^{[1]}*K\quad\text{and}\quad\Sigma K:=\partial\Delta^{[2]}*K$$
as usual. By extending the embedding $i_c$, we get a piecewise linear homeomorphism
$$\mathrm{Cone}(i_c)\colon|\mathrm{Cone}(\Sd\Delta^{[m]})|\to(D^1)^{\times m}$$
which sends the cone point of $|\mathrm{Cone}(\Sd\Delta^{[m]})|$ to the vertex $(+1,\ldots,+1)\in(D^1)^{\times m}$. Since the vertex set of $K$ is $[m]$, $K$ is a subcomplex of $\Delta^{[m]}$. Then by restricting $i_c$ and $\mathrm{Cone}(i_c)$, we obtain the embeddings
$$i_c\colon|\Sd K|\to(D^1)^{\times m},\quad\mathrm{Cone}(i_c)\colon|\mathrm{Cone}(\Sd K)|\to(D^1)^{\times m}$$
which are the cubical decompositions of $K$ and $\mathrm{Cone}(K)$. 

We express the difference $\mathrm{Cone}(i_c)(|\mathrm{Cone}(\Sd K)|)-i_c(|\Sd K|)$ in terms of the faces $C_{\sigma\subset\tau}$. For any $\tau\subset[m]$ we have $i_c(|\Sd\tau|)=\bigcup_{\emptyset\ne\sigma\subset\tau}C_{\sigma\subset\tau}$ and $\mathrm{Cone}(i_c)(|\mathrm{Cone}(\Sd\tau)|)=\bigcup_{\sigma\subset\tau}C_{\sigma\subset\tau}$, so we get
\begin{equation}
\label{i_c-image}
i_c(|\Sd K|)=\bigcup_{\emptyset\ne\sigma\subset\tau\in K}C_{\sigma\subset\tau}\quad\text{and}\quad\Cone(i_c)(|\Cone(\Sd K)|)=\bigcup_{\sigma\subset\tau\in K}C_{\sigma\subset\tau}.
\end{equation}
Then it follows that
\begin{equation}
\label{difference}
\Cone(i_c)(|\Cone(\Sd K)|)-i_c(|\Sd K|)=\bigcup_{\tau\in K}C_{\emptyset\subset\tau}-\bigcup_{\emptyset\ne\sigma\subset\tau\in K}C_{\sigma\subset\tau}.
\end{equation}
We next express $\RZ_K^i$ in terms of the faces $C_{\sigma\subset\tau}$ as well, and show that the cubical decompositions of full subcomplexes of $K$ naturally come into the fat wedge filtration of $\RZ_K$. We denote by $(D^1_I,S^0_I)$ the $|I|$-copies of the pair $(D^1,S^0)$ for $I\subset[m]$. Then for $\mu\subset I$, we have $(D^1_I,S^0_I)^\mu=\bigcup_{\sigma\subset\tau\subset I,\,\tau-\sigma=\mu}C_{\sigma\subset\tau}^I$, where $C_{\sigma\subset\tau}^I$ is the face $C_{\sigma\subset\tau}$ of $(D^1)^{\times I}$. We get
$$\RZ_{K_I}=\bigcup_{\mu\in K,\,\mu\subset I}(D^1_I,S^0_I)^\mu=\bigcup_{\mu\in K,\,\mu\subset I}\left(\bigcup_{\sigma\subset\tau\subset I,\,\tau-\sigma=\mu}C_{\sigma\subset\tau}^I\right)=\bigcup_{\sigma\subset\tau\subset I,\,\tau-\sigma\in K}C_{\sigma\subset\tau}^I$$
and 
\begin{equation}
\label{RZ-C}
\begin{aligned}
\RZ_{K_I}^{|I|-1}&=\bigcup_{\substack{J\subset I\\|J|=|I|-1}}\left(\bigcup_{\mu\in K,\,\mu\subset J}(D^1_I,S^0_I)^\mu\right)&&=\bigcup_{\substack{J\subset I\\|J|=|I|-1}}\left(\bigcup_{\mu\in K,\,\mu\subset J}\left(\bigcup_{\sigma\subset\tau\subset J,\,\tau-\sigma=\mu}C_{\sigma\subset\tau}^J\right)\right)\\
&=\bigcup_{\mu\in K,\,\mu\subset I}\left(\bigcup_{\emptyset\ne\sigma\subset\tau\subset I,\,\tau-\sigma=\mu}C_{\sigma\subset\tau}^I\right)&&=\bigcup_{\emptyset\ne\sigma\subset\tau\subset I,\,\tau-\sigma\in K}C_{\sigma\subset\tau}^I.
\end{aligned}
\end{equation}
Then by \eqref{i_c-image}, the embedding $\mathrm{Cone}(i_c)\colon|\mathrm{Cone}(\Sd K_I)|\to(D^1)^{\times I}$ descends to a map 
\begin{equation}
\label{Cone(i_c)}
(|\mathrm{Cone}(\Sd K_I)|,|\Sd K_I|)\to(\RZ_{K_I},\RZ_{K_I}^{|I|-1}).
\end{equation}
Moreover since
$$\RZ_{K_I}-\RZ_{K_I}^{|I|-1}=\bigcup_{\emptyset\subset\tau\subset I,\,\tau\in K}C_{\emptyset\subset\tau}^I-\bigcup_{\emptyset\ne\sigma\subset\tau\in K}C_{\sigma\subset\tau}^I=\Cone(i_c)(|\Cone(\Sd K_I)|)-i_c(|\Sd K_I|)$$
by \eqref{difference}, the map \eqref{Cone(i_c)} is actually a relative homeomorphism. Then as
$$\RZ_K^i-\RZ_K^{i-1}=\coprod_{I\subset[m],\,|I|=i}(\RZ_{K_I}-\RZ_{K_I}^{|I|-1}),$$
the disjoint union of the maps \eqref{Cone(i_c)}
$$\coprod_{I\subset[m],\,|I|=i}(|\mathrm{Cone}(\Sd K_I)|,|\Sd K_I|)\to(\RZ_K^i,\RZ_K^{i-1})$$
turns out to be a relative homeomorphism. Let $\varphi_{K_I}$ denote the map $|\Sd K_I|\to\RZ_{K_I}^{|I|-1}$ in \eqref{Cone(i_c)}. Then we have established:

\begin{theorem}
\label{cone-decomp}
For $i=1,\ldots,m$, $\RZ_K^i$ is obtained from $\RZ_K^{i-1}$ by attaching cones by $j_{K_I}\circ\varphi_{K_I}$ for all $I\subset[m]$ with $|I|=i$, where $j_{K_I}\colon\RZ_{K_I}^{i-1}\to\RZ_K^{i-1}$ is the inclusion.
\end{theorem}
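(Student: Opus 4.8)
The plan is to assemble the statement from the three facts already established in the discussion preceding the theorem: (i) the relative homeomorphism
$$\coprod_{I\subset[m],\,|I|=i}(|\mathrm{Cone}(\Sd K_I)|,|\Sd K_I|)\to(\RZ_K^i,\RZ_K^{i-1}),$$
obtained by taking the disjoint union of the maps \eqref{Cone(i_c)}; (ii) the identification $\RZ_K^i-\RZ_K^{i-1}=\coprod_{|I|=i}(\RZ_{K_I}-\RZ_{K_I}^{|I|-1})$ coming from \eqref{RZ^i}; and (iii) the fact that $|\mathrm{Cone}(\Sd K_I)|$ is the cone on $|\Sd K_I|$, so $(|\mathrm{Cone}(\Sd K_I)|,|\Sd K_I|)\cong(C|K_I|,|K_I|)$ using $|\Sd K_I|\cong|K_I|$. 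The combination of (i)--(iii) says precisely that $\RZ_K^i$ is obtained from $\RZ_K^{i-1}$ as a pushout
$$\xymatrix{
\coprod_{|I|=i}|\Sd K_I| \ar[r] \ar[d] & \RZ_K^{i-1} \ar[d]\\
\coprod_{|I|=i}|\mathrm{Cone}(\Sd K_I)| \ar[r] & \RZ_K^i
}$$
where the top map on the $I$-th summand is $j_{K_I}\circ\varphi_{K_I}$: indeed $\varphi_{K_I}$ is by definition the restriction of $\mathrm{Cone}(i_c)$ to $|\Sd K_I|$ landing in $\RZ_{K_I}^{|I|-1}$, and $j_{K_I}$ is the inclusion $\RZ_{K_I}^{|I|-1}\hookrightarrow\RZ_K^{|I|-1}=\RZ_K^{i-1}$.

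First I would verify that the composite $j_{K_I}\circ\varphi_{K_I}$ is well-defined, i.e.\ that $\varphi_{K_I}$ really lands in $\RZ_{K_I}^{|I|-1}$: this is immediate from \eqref{Cone(i_c)}, whose target pair is $(\RZ_{K_I},\RZ_{K_I}^{|I|-1})$, combined with the description \eqref{RZ-C} of $\RZ_{K_I}^{|I|-1}$ in terms of the faces $C_{\sigma\subset\tau}^I$ with $\sigma\ne\emptyset$. Next I would confirm that the map $\coprod_{|I|=i}|\mathrm{Cone}(\Sd K_I)|\to\RZ_K^i$ restricts to the given map on the subspaces $\coprod|\Sd K_I|$ and is the identity-type relative homeomorphism already proven; then invoke the standard fact that a relative homeomorphism $(C,\partial C)\to(Y,Y')$ from a (disjoint union of) cone pair(s), which is a homeomorphism of pairs off the subspace, exhibits $Y$ as the adjunction space $Y'\cup_\phi(\coprod C)$ where $\phi$ is the restriction to $\partial C$. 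This is where one uses that each pair is a good (cofibered, e.g.\ CW or PL) pair so that ``relative homeomorphism'' upgrades to ``pushout''; since all the spaces here are finite PL complexes and the maps are PL, this is routine.

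Finally, ``attaching a cone to a map $f\colon A\to Y$'' is by definition forming the pushout of $Y\xleftarrow{f}A\hookrightarrow CA$, and a disjoint union of such pushouts over $I$ is the iterated attachment of cones described in the statement; so the pushout square above is exactly the assertion of Theorem~\ref{cone-decomp}. Running this for $i=1,\ldots,m$ gives the full cone decomposition of $\RZ_K$, with $\RZ_K^0=*$ the starting point by \eqref{RZ^i}.

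I do not anticipate a serious obstacle: all the hard geometric work---producing the relative homeomorphism \eqref{Cone(i_c)} and its disjoint-union version, and matching $\RZ_{K_I}-\RZ_{K_I}^{|I|-1}$ with $\mathrm{Cone}(i_c)(|\mathrm{Cone}(\Sd K_I)|)-i_c(|\Sd K_I|)$ via \eqref{difference}---has already been carried out in the text preceding the theorem. The only point needing a little care is the passage from ``relative homeomorphism of pairs'' to ``pushout/adjunction-space presentation'': one must make sure the subcomplexes $|\Sd K_I|\subset|\mathrm{Cone}(\Sd K_I)|$ and $\RZ_K^{i-1}\subset\RZ_K^i$ are cofibrations and that the relative homeomorphism is compatible on the boundaries, which is clear from the PL structure. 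So the bulk of the ``proof'' is really bookkeeping: collecting \eqref{RZ^i}, \eqref{difference}, \eqref{RZ-C}, and \eqref{Cone(i_c)} into the single statement.
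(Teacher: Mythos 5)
Your proposal is correct and follows the same route the paper takes: the paper establishes exactly the relative homeomorphism $\coprod_{|I|=i}(|\mathrm{Cone}(\Sd K_I)|,|\Sd K_I|)\to(\RZ_K^i,\RZ_K^{i-1})$ from \eqref{Cone(i_c)}, \eqref{RZ-C}, and \eqref{difference}, defines $\varphi_{K_I}$ as the restriction, and then simply asserts the theorem. Your write-up makes explicit the routine but worth-flagging step of passing from a relative homeomorphism of (PL/NDR) pairs to an adjunction-space presentation, which the paper leaves tacit.
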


The above theorem shows that the fat wedge filtration of $\RZ_K$ is a cone decomposition in the usual sense. We say that the fat wedge filtration of $\RZ_K$ is trivial if the maps $\varphi_{K_I}$ are null homotopic for all $\emptyset\ne I\subset[m]$. Since $\RZ_{K_I}^{|I|-1}$ is a retract of $\RZ_K^{|I|-1}$, this is equivalent to the composite $j_{K_I}\circ\varphi_{K_I}$ is null homotopic for any $\emptyset\ne I\subset[m]$. We here consider two cases in which the fat wedge filtration of $\RZ_K$ is trivial. We first consider the flag complex of a chordal graph as in \cite{GPTW}. Here graphs mean one dimensional simplicial complexes, and the flag complex of a graph $\Gamma$ is the simplicial complex whose $n$-simplices are complete graphs with $n+1$ vertices in $\Gamma$. Recall that a graph is called chordal if its minimal cycles are of length at most 3. 

\begin{proposition}
\label{flag-varphi}
If $K$ is the flag complex of a chordal graph, then the fat wedge filtration of $\RZ_K$ is trivial.
\end{proposition}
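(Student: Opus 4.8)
The plan is to exploit the recursive structure of the fat wedge filtration via Theorem~\ref{cone-decomp}, together with two standard features of flag complexes of chordal graphs: such complexes are closed under taking full subcomplexes, and a chordal graph always possesses a simplicial vertex (a vertex whose neighborhood is a clique), i.e.\ chordal graphs admit perfect elimination orderings. Concretely, I would argue by induction on $m=|I|$ that $\varphi_{K_I}$ is null homotopic for every $\emptyset\ne I\subset[m]$; since the full subcomplex $K_I$ of a flag complex of a chordal graph is again the flag complex of a chordal graph (namely the induced subgraph on $I$, which is chordal), it suffices to treat $I=[m]$ and show $\varphi_K\colon|\Sd K|\to\RZ_K^{m-1}$ is null homotopic, assuming the statement for all proper full subcomplexes, hence that $\RZ_{K_J}$ decomposes as the wedge in Theorem~\ref{main-decomp} for every proper $J$.

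First I would record that $\varphi_K$ being null homotopic is equivalent to the inclusion $\RZ_K^{m-1}\hookrightarrow\RZ_K$ admitting a retraction up to homotopy after collapsing, i.e.\ to $\RZ_K$ splitting off the top cone; more usefully, by Theorem~\ref{cone-decomp}, $\RZ_K^{m-1}=\bigcup_{|J|=m-1}\RZ_{K_J}$, and I want to see this union as homotopy equivalent to something into which $|\Sd K|\simeq|K|$ maps trivially. The key geometric input is the simplicial vertex: choose $v\in[m]$ whose link $\lk_K(v)$ is a simplex (a full clique), so that $\st_K(v)=\Cone(\lk_K(v))$ is contractible and $K=\st_K(v)\cup K_{[m]\setminus v}$ with $\st_K(v)\cap K_{[m]\setminus v}=\lk_K(v)$, again contractible. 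This gives $|K|\simeq|K_{[m]\setminus v}|$, and one expects a parallel decomposition of $\RZ_K$ relative to the coordinate hyperplane $x_v=\pm1$: writing $V=[m]\setminus v$, the subspace of $\RZ_K$ with $x_v=-1$ is $\RZ_{K_V}$, and because $\lk_K(v)$ is a simplex the part with $x_v=+1$ is a product $D^1\times(\text{contractible})$-type piece that deformation retracts back onto $\RZ_{K_V}$. I would make this precise using the cubical-decomposition formula \eqref{RZ-C}: the faces $C_{\sigma\subset\tau}$ with $v\in\tau$ are governed by $\tau\setminus v\in K$ together with $\lk$-conditions, and simpliciality of $v$ collapses the combinatorics so that $\RZ_K$ deformation retracts onto $\RZ_{K_V}$ within $\RZ_K^{m-1}$.

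Granting that retraction, $\varphi_K$ factors (up to homotopy) through $\RZ_{K_V}\subset\RZ_K^{m-1}$, and on the source side the composite $|\Sd K|\to\RZ_K\to\RZ_{K_V}$ is, on homotopy groups, controlled by $|K|\simeq|K_V|$; by the inductive hypothesis $\RZ_{K_V}$ is a wedge of the form $\bigvee_{\emptyset\ne I\subset V}|\Sigma K_I|\wedge\widehat{S^0}^I$, and $\varphi_K$ lands in the $(m-2)$-skeleton-type subspace $\RZ_{K_V}\subset\RZ_K^{m-1}$ in a way that is null homotopic because $|K|$ is already carried into the lower filtration $\RZ_{K_V}^{|V|-1}$ (the attaching map of the top cone of $\RZ_{K_V}$ is itself null homotopic by induction, so the further inclusion kills it). The main obstacle I anticipate is the second step: verifying cleanly, from the face description \eqref{RZ-C}, that simpliciality of $v$ yields a \emph{filtration-preserving} deformation retraction $\RZ_K\searrow\RZ_{K_V}$ carrying $\RZ_K^{m-1}$ into $\RZ_{K_V}$ compatibly with $\varphi_K$ — the homotopy-type statement $|K|\simeq|K_V|$ is immediate, but tracking it through the fat wedge filtration and through the inductively-known splitting of $\RZ_{K_V}$, so that the composite is provably null rather than merely homologically trivial, is where the care is needed. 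An alternative route for this step, perhaps cleaner, is to invoke Theorem~\ref{main-decomp} inductively for $K_V$ to replace $\RZ_{K_V}$ by an explicit wedge of suspensions and then check directly that the map from the connected complex $|\Sd K|\simeq|K|$ into that wedge, being null on each wedge summand's bottom cell by the contractibility of $\lk_K(v)$, is null homotopic.
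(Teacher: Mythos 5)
Your strategy diverges sharply from the paper's, and the central geometric step in it is not merely delicate but false. The paper's own proof is essentially one line: a graph $\Gamma$ is chordal precisely when each connected component of its flag complex $K$ is contractible (a standard consequence of perfect elimination orderings, inherited by full subcomplexes because induced subgraphs of chordal graphs are chordal). Since $\RZ_{K_I}^{|I|-1}$ is path-connected, any map from a space whose components are contractible into it is null homotopic, so $\varphi_{K_I}\simeq*$ for every $\emptyset\neq I\subset[m]$. No induction through Theorem~\ref{main-decomp}, no simplicial-vertex surgery on the cubical decomposition, and no filtration-preserving retraction is needed.

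Your route instead hinges on the claim that if $v$ is a simplicial vertex then $\RZ_K$ deformation retracts onto $\RZ_{K_V}$ (with $V=[m]\setminus v$) in a filtration-preserving way. This does not hold. Take $K$ to be the path $1\text{--}2\text{--}3$, which is the flag complex of a chordal graph, and let $v=1$, a simplicial vertex. Writing $\RZ_K=(\RZ_{K_V}\times S^0)\cup(\RZ_{\lk_K(v)}\times D^1)$, one computes that $\RZ_K$ is the union of the four faces $x_1=\pm1$, $x_3=\pm1$ of $(D^1)^{\times3}$, a cylinder homotopy equivalent to $S^1$, while $\RZ_{K_V}=\RZ_{K_{\{2,3\}}}=D^1\times D^1$ is contractible. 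So no retraction $\RZ_K\searrow\RZ_{K_V}$ exists. The underlying issue is that although $|K|\simeq|K_V|$ when $\st_K(v)$ and $\lk_K(v)$ are contractible, this does not propagate to real moment-angle complexes: $\RZ_{\lk_K(v)}$ being a contractible cube does not let you slide the slice $\RZ_{K_V}\times\{+1\}$ back to $\RZ_{K_V}\times\{-1\}$, since the cylinder $\RZ_{\lk_K(v)}\times D^1$ between them is much smaller than $\RZ_{K_V}\times D^1$. You flagged this step yourself as the one requiring care; the problem is not bookkeeping but that the asserted retraction is wrong, and the rest of the argument (factoring $\varphi_K$ through $\RZ_{K_V}$, then invoking the inductively-known splitting) rests on it. The coarse connectivity argument in the paper sidesteps all of this.
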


\begin{proof}
Suppose $K$ is the flag complex of a graph $\Gamma$. It is known that $\Gamma$ is chordal if and only if each component of $K$ is contractible. Then since $\RZ_K^{m-1}$ is path-connected, $\varphi_K$ is null homotopic. For any $\emptyset\ne I\subset[m]$, the full subgraph $\Gamma_I$ is chordal, and $K_I$ is the flag complex of $\Gamma_I$. Then we obtain that $\varphi_{K_I}$ is null homotopic for any $\emptyset\ne I\subset[m]$.
\end{proof}

We next consider the case $\dim K\ge m-2$. We start with observing properties of the map $\varphi_K$ for general $K$. In \cite{IK2}, it is proved that the inclusion $\Sigma\RZ_{K_I}^{|I|-1}\to\Sigma\RZ_{K_I}$ admits a left homotopy inverse for $\emptyset\ne I\subset[m]$ which is obtained by patching together the retraction $\RZ_{K_I}\to\RZ_{K_J}$ for $J\subset I$. Then by Theorem \ref{cone-decomp}, we have the following.

\begin{proposition}
\label{suspension-varphi}
The maps $\Sigma\varphi_{K_I}$ are null homotopic for all $\emptyset\ne I\subset[m]$.
\end{proposition}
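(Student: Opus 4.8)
The plan is to combine the cone decomposition of Theorem~\ref{cone-decomp}, applied to each full subcomplex $K_I$, with the splitting result of \cite{IK2} recalled just above. Fix $\emptyset\ne I\subset[m]$. Since $K$ has vertex set $[m]$, every element of $I$ is a vertex of $K_I$, so $K_I$ is a simplicial complex on the vertex set $I$ and Theorem~\ref{cone-decomp} applies with $(K,m)$ replaced by $(K_I,|I|)$. The only subset of $I$ of cardinality $|I|$ is $I$ itself, and the inclusion attached to it in Theorem~\ref{cone-decomp} is the identity of $\RZ_{K_I}^{|I|-1}$; hence the top stage of that filtration says exactly that $\RZ_{K_I}=\RZ_{K_I}^{|I|}$ is obtained from $\RZ_{K_I}^{|I|-1}$ by attaching a single cone along $\varphi_{K_I}\colon|\Sd K_I|\to\RZ_{K_I}^{|I|-1}$, i.e.\ $\RZ_{K_I}$ is the mapping cone of $\varphi_{K_I}$.

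Next I would record the immediate consequence that the composite $\iota\circ\varphi_{K_I}$ is null homotopic, where $\iota\colon\RZ_{K_I}^{|I|-1}\hookrightarrow\RZ_{K_I}$ denotes the inclusion: this is the standard fact that the attaching map of a mapping cone becomes null homotopic in the cone, since $\iota\circ\varphi_{K_I}$ factors through the contractible cone glued on to form $\RZ_{K_I}$. Applying the suspension functor, $\Sigma\iota\circ\Sigma\varphi_{K_I}=\Sigma(\iota\circ\varphi_{K_I})\simeq *$. By \cite{IK2} the suspended inclusion $\Sigma\iota\colon\Sigma\RZ_{K_I}^{|I|-1}\to\Sigma\RZ_{K_I}$ admits a left homotopy inverse $s$, so $\Sigma\varphi_{K_I}\simeq s\circ\Sigma\iota\circ\Sigma\varphi_{K_I}\simeq *$. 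Since $\emptyset\ne I\subset[m]$ was arbitrary, the proposition follows.

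I do not expect a genuinely hard step here: the geometric work is already packaged into Theorem~\ref{cone-decomp}, and the one nontrivial homotopy-theoretic ingredient — that $\Sigma\RZ_{K_I}^{|I|-1}$ is a retract of $\Sigma\RZ_{K_I}$ — is quoted from \cite{IK2}. The only points needing care are bookkeeping: confirming that Theorem~\ref{cone-decomp} applies verbatim to $K_I$, so that $\RZ_{K_I}$ really is a \emph{single} cone attachment whose attaching map is precisely $\varphi_{K_I}$ and whose relevant inclusion is the identity. It is worth emphasising that the left homotopy inverse is available only after one suspension, so the argument proves nothing about $\varphi_{K_I}$ itself — consistently with the fact that the fat wedge filtration of $\RZ_K$ need not be trivial (Example~\ref{non-desuspension-example}).
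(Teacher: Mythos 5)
Your proof is correct and is essentially the same argument the paper gives: the paper cites the \cite{IK2} retraction $\Sigma\RZ_{K_I}^{|I|-1}\to\Sigma\RZ_{K_I}$ and invokes Theorem~\ref{cone-decomp} (so that $\RZ_{K_I}$ is the mapping cone of $\varphi_{K_I}$ and hence $\iota\circ\varphi_{K_I}\simeq*$), which is precisely what you spelled out. You have simply made explicit the short chain $\Sigma\varphi_{K_I}\simeq s\circ\Sigma\iota\circ\Sigma\varphi_{K_I}\simeq *$ that the paper leaves implicit.
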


For a simplex $\sigma$ of $K$, we denote the deletion and the link of $\sigma$ by $\dl_K(\sigma)$ and $\lk_K(\sigma)$, that is, $\dl_K(\sigma)=K_{[m]-\sigma}$ and $\lk_K(\sigma)=\{\tau\subset[m]\,\vert\,\sigma\cap\tau=\emptyset,\,\sigma\cup\tau\in K\}$. Since 
$$\RZ^{m-1}_K=(\RZ_{\dl_K(v)}^{m-2}\times S^0)\cup(\RZ_{\dl_K(v)}\times\{-1\})\cup(\RZ_{\lk_K(v)}^{m-2}\times D^1)$$ 
for a vertex $v$ of $K$, there is a projection $\RZ_K^{m-1}\to\Sigma\RZ_{\lk_K(v)}^{m-2}$ which pinches $\RZ_{\dl_K(v)}^{m-2}\times\{+1\}$ and $\RZ_{\dl_K(v)}\times\{-1\}$ to two points. Then it is straightforward to check that through the identification $|\Sd K|/|\Sd(\dl_K(v))|=\Sigma|\Sd(\lk_K(v))|$, we have a commutative diagram
$$\xymatrix{|\Sd K|\ar[rr]^{\varphi_K}\ar[d]^{\rm proj}&&\RZ_K^{m-1}\ar[d]^{\rm proj}\\
\Sigma|\Sd(\lk_K(v))|\ar[rr]^{\Sigma\varphi_{\lk_K(v)}}&&\Sigma\RZ_{\lk_K(v)}^{m-2}.}$$
So by Proposition \ref{suspension-varphi}, we get:

\begin{corollary}
\label{trivial-proj}
The composite $|\Sd K|\xrightarrow{\varphi_K}\RZ_K^{m-1}\xrightarrow{\rm proj}\Sigma\RZ_{\lk_K(v)}^{m-2}$ is null homotopic.
\end{corollary}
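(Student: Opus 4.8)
The plan is to verify the commutative square displayed immediately before the statement and then read off the conclusion from Proposition \ref{suspension-varphi}. First I would make the decomposition
$$\RZ^{m-1}_K=(\RZ_{\dl_K(v)}^{m-2}\times S^0)\cup(\RZ_{\dl_K(v)}\times\{-1\})\cup(\RZ_{\lk_K(v)}^{m-2}\times D^1)$$
explicit by separating the $v$-th coordinate: a point of $\RZ_K$ lying in $T^{m-1}$ has at least one coordinate equal to the basepoint $-1$; either the $v$-th coordinate is a basepoint, in which case the remaining coordinates form a point of $\RZ_{\dl_K(v)}$, or some coordinate $i\ne v$ is a basepoint, in which case the $v$-th coordinate is free in $D^1$ while the others lie in $\RZ_{\lk_K(v)}^{m-2}$ (using $v$ being a vertex so that $\{v\}\in K$). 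Collapsing the first two pieces, i.e.\ the subspace where the $v$-th coordinate is $-1$ together with $\RZ_{\dl_K(v)}\times\{-1\}$, yields the projection $\RZ_K^{m-1}\to\Sigma\RZ_{\lk_K(v)}^{m-2}$, where the suspension coordinate records the $v$-th coordinate in $D^1$ with its endpoints identified to the two cone points.

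Next I would match this with the simplicial side. The barycentric subdivision satisfies $|\Sd K|/|\Sd(\dl_K(v))|\cong\Sigma|\Sd(\lk_K(v))|$: a vertex of $\Sd K$ is a simplex $\tau\in K$, those with $v\notin\tau$ form $\Sd(\dl_K(v))$, and collapsing them leaves the cone-like pieces indexed by $\tau=\{v\}\cup\mu$ with $\mu\in\lk_K(v)$, glued along $\mu$; this is exactly an unreduced suspension of $|\Sd(\lk_K(v))|$. Under the cubical embedding $i_c$, the vertex $\tau\in K$ maps to $C_{\tau\subset\tau}$, whose $v$-th coordinate is $-1$ exactly when $v\notin\tau$; hence $\varphi_K$ carries $|\Sd(\dl_K(v))|$ into the subspace of $\RZ_K^{m-1}$ being collapsed, and the induced map on quotients is, by the same bookkeeping that produced $\varphi_{K_I}$ in \eqref{Cone(i_c)}, precisely $\Sigma\varphi_{\lk_K(v)}$ (the suspension coordinate on the target being the $v$-th cube coordinate, matched with the suspension parameter on the source). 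This gives the commutative diagram. Then $\mathrm{proj}\circ\varphi_K=\Sigma\varphi_{\lk_K(v)}\circ\mathrm{proj}$, and $\Sigma\varphi_{\lk_K(v)}$ is null homotopic by Proposition \ref{suspension-varphi} applied with $I=[m]-\{v\}$ to the complex $\lk_K(v)$ (a simplicial complex on $[m]-\{v\}$), so the composite is null homotopic.

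The main obstacle I anticipate is the bookkeeping in the second step: one must check that the identification $|\Sd K|/|\Sd(\dl_K(v))|=\Sigma|\Sd(\lk_K(v))|$ is compatible, on the nose up to homotopy, with the choice of basepoints and orientations so that the map induced by $\varphi_K$ is genuinely $\Sigma\varphi_{\lk_K(v)}$ rather than some twisted or reflected variant — in particular one should be careful that the two endpoints of the $D^1$-factor (the points $+1$ and $-1$) are being sent to the two suspension points consistently with how $\Cone(i_c)$ was set up in Section 3. Once the coordinate conventions are lined up, the commutativity is a routine verification on the cubical faces $C_{\sigma\subset\tau}$, and the null-homotopy conclusion is immediate.
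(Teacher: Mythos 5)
Your proposal follows the paper's own route: set up the decomposition of $\RZ_K^{m-1}$, describe the projection to $\Sigma\RZ_{\lk_K(v)}^{m-2}$, verify the commutative square relating $\varphi_K$ to $\Sigma\varphi_{\lk_K(v)}$, and invoke Proposition \ref{suspension-varphi} applied to the simplicial complex $\lk_K(v)$. That is exactly what the paper does; it states the decomposition and the diagram, calls the commutativity ``straightforward to check,'' and cites Proposition \ref{suspension-varphi}.

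Two small slips in your bookkeeping should be fixed, though neither undermines the argument. In your coordinate-wise case analysis you write that when some coordinate $i\ne v$ is a basepoint, ``the others lie in $\RZ_{\lk_K(v)}^{m-2}$.'' This is only correct when $x_v$ is interior to $D^1$; if $x_v=+1$ the restriction lies in $\RZ_{\dl_K(v)}^{m-2}$ but need not lie in $\RZ_{\lk_K(v)}^{m-2}$, which is precisely why the decomposition has the separate piece $\RZ_{\dl_K(v)}^{m-2}\times S^0$. Second, your description of what the projection collapses --- ``the subspace where the $v$-th coordinate is $-1$ together with $\RZ_{\dl_K(v)}\times\{-1\}$'' --- is redundant (these are the same set) and omits the piece $\RZ_{\dl_K(v)}^{m-2}\times\{+1\}$. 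The projection in fact collapses $\RZ_{\dl_K(v)}\times\{-1\}$ and $\RZ_{\dl_K(v)}^{m-2}\times\{+1\}$ to the two respective cone points of the (unreduced) suspension; this reading is what makes the homotopy-equivalence claim in the ensuing Proposition \ref{dim=m-2} work, since each collapsed piece is separately contractible there. With those corrections your verification of the square on the cubical faces is sound, and the conclusion via Proposition \ref{suspension-varphi} (applied to $\lk_K(v)$ with its full vertex set) is correct and matches the paper.
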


\begin{proposition}
\label{dim=m-2}
If $\dim K\ge m-2$, then the fat wedge filtration of $\RZ_K$ is trivial.
\end{proposition}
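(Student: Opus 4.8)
The plan is to prove the statement by induction on $m$, using the cone decomposition of Theorem \ref{cone-decomp} together with the projection formula of Corollary \ref{trivial-proj}. The case $m\le 2$ is elementary: either $K$ is a point or $\RZ_K^{m-1}$ is already the full $\RZ_K$ or a single point, so there is nothing to attach nontrivially. For the inductive step, fix $\emptyset\ne I\subset[m]$; since $K_I$ has vertex set $I$ and $\dim K_I\ge\dim K_{?}$ need not hold in general, I would first reduce to showing $\varphi_K$ itself is null homotopic, because for a proper subset $I$ the complex $K_I$ lives on fewer vertices but the hypothesis $\dim K\ge m-2$ need not pass to $K_I$. So the genuine content is the top attaching map $\varphi_K\colon|\Sd K|\to\RZ_K^{m-1}$, and I would handle smaller $I$ by a separate argument (see below).

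The core case splits according to $\dim K$. If $\dim K=m-1$, then $K=\Delta^{[m]}$, hence $\RZ_K=(D^1)^{\times m}$ is contractible and $\varphi_K$ is trivially null homotopic. If $\dim K=m-2$, then $K$ is obtained from $\partial\Delta^{[m]}$ by possibly deleting some top faces, equivalently $K$ is the $(m-2)$-skeleton of $\Delta^{[m]}$ union some extra structure; more to the point, every missing face of $K$ is a facet $[m]\setminus\{v\}$ for some vertex $v$, and crucially every vertex $v$ is contained in all but at most one facet, so $\lk_K(v)$ is very close to $\partial\Delta^{[m]\setminus\{v\}}$. The key observation is that $|\Sd K|$ is a wedge of spheres of dimension $m-2$ (indeed $\widetilde{H}_*(K)$ is concentrated in degree $m-2$ when $K$ is the boundary sphere with some facets removed), so to show $\varphi_K$ is null homotopic it suffices to show it is null homotopic on each such sphere, and by the suspension splitting of Proposition \ref{suspension-varphi} and a dimension/connectivity count on $\RZ_K^{m-1}$ one can try to lift. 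The cleanest route: pick a vertex $v$ not in some missing facet; then $K=\Cone_v(\dl_K(v))$ is a cone (if $v$ belongs to every facet, $K$ is a cone with apex $v$ and hence contractible, making $\varphi_K$ null homotopic immediately), so the only remaining subcase is when no vertex lies in every facet, i.e.\ every facet of $\Delta^{[m]}$ of size $m-1$ is missing, which forces $K$ to be the $(m-2)$-skeleton of $\Delta^{[m]}$ exactly. For that single complex, $\RZ_K$ is known (it is $\Z_K(D^1,S^0)$ for a skeleton, treated in \cite{IK1}), and $|\Sd K|\simeq\bigvee S^{m-2}$ with $\binom{m-1}{1}$ wait — more carefully, $\widetilde H_{m-2}$ of the $(m-2)$-skeleton of $\Delta^{[m]}$ is free of rank $\binom{m-1}{m-1}$... in any case finite rank, and the attaching map $\varphi_K$ into $\RZ_K^{m-1}$ can be shown null homotopic by combining Corollary \ref{trivial-proj} over all vertices $v$ with the fact that the projections $\RZ_K^{m-1}\to\Sigma\RZ_{\lk_K(v)}^{m-2}$ are jointly injective on homotopy groups in the relevant degree, or alternatively by directly appealing to the skeleton computation in \cite{IK1} which already exhibits $\Z_K(C\underline X,\underline X)$ as the expected wedge.

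For a proper nonempty $I\subsetneq[m]$ one argues by induction: $K_I$ is a simplicial complex on $|I|<m$ vertices, and one checks that the triviality of the fat wedge filtration only requires, inductively, that $\varphi_{K_I}$ be null homotopic, which holds by the inductive hypothesis applied to $K_I$ \emph{provided} $\dim K_I\ge|I|-2$. Since this last inequality can fail, the honest statement is that the full strength $\dim K\ge m-2$ guarantees triviality of the \emph{top} attaching map, and for lower filters one must instead observe directly that $\RZ_{K_I}^{|I|-1}$ is a retract of $\RZ_K^{|I|-1}$ (as noted after Theorem \ref{cone-decomp}) and that each $\varphi_{K_I}$ is null homotopic because, by Proposition \ref{flag-varphi}-style connectivity reasoning or by the skeleton description, the relevant $\RZ_{K_I}^{|I|-1}$ is highly connected relative to $\dim|\Sd K_I|$ — concretely, when $\dim K\ge m-2$ the full subcomplexes $K_I$ for $|I|\le m-1$ are skeleta-like and $\RZ_{K_I}^{|I|-1}$ is $(|I|-2)$-connected, while $|\Sd K_I|$ has dimension $\le|I|-2$, forcing null homotopy on dimension grounds. \textbf{The main obstacle} I anticipate is precisely this bookkeeping: verifying that for every $\emptyset\ne I\subset[m]$ the pair $(|\Sd K_I|,\RZ_{K_I}^{|I|-1})$ satisfies a connectivity-versus-dimension inequality forcing $\varphi_{K_I}\simeq *$, which requires knowing the connectivity of $\RZ_{K_I}^{|I|-1}$ for the specific complexes that arise; once that connectivity estimate is in hand the rest is formal. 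A clean way to get that estimate is to use the cofibration $\RZ_{K_I}^{|I|-1}\to\RZ_{K_I}\to\bigvee(\text{cone on }|\Sd K_I|)$ coming from Theorem \ref{cone-decomp} together with the known high connectivity of $\RZ_{K_I}$ itself when $K_I$ contains a large skeleton.
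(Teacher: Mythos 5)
Your proposal misses the one observation that makes the paper's proof go through, and as a result you manufacture an unnecessary and ultimately unresolved difficulty. You assert that the hypothesis $\dim K\ge m-2$ ``need not pass to $K_I$,'' and build your whole strategy around this worry. But it \emph{does} pass: if $\sigma\in K$ with $|\sigma|=m-1$ (such a $\sigma$ exists since $\dim K\ge m-2$; if $\dim K=m-1$ take a facet of size $m$ and drop a vertex), then for any $\emptyset\ne I\subset[m]$ the set $\sigma\cap I$ is a simplex of $K_I$ with $|\sigma\cap I|\ge|\sigma|+|I|-m\ge|I|-1$, hence $\dim K_I\ge|I|-2$. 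This is exactly the observation the paper records at the end of its proof, and it reduces the entire statement to the top attaching map $\varphi_K$.

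For the top map the paper's argument is short and avoids your case split. Since $\dim K\ge m-2$ there is a vertex $v$ with $[m]-\{v\}\in K$, so $\dl_K(v)=\Delta^{[m]-v}$ and $\RZ_{\dl_K(v)}$ is contractible; from the decomposition of $\RZ_K^{m-1}$ displayed just before Corollary \ref{trivial-proj}, this makes the projection $\RZ_K^{m-1}\to\Sigma\RZ_{\lk_K(v)}^{m-2}$ a homotopy equivalence, and Corollary \ref{trivial-proj} then forces $\varphi_K\simeq *$. Your version of this picks $v$ ``not in some missing facet,'' which is the \emph{wrong} vertex (you want $v$ to be the vertex \emph{omitted} by a facet that \emph{is} present), and your claim that the remaining subcase forces $K$ to be exactly the $(m-2)$-skeleton is false (e.g.\ $K$ on $\{1,2,3,4\}$ with facets $\{1,2,3\}$ and $\{2,3,4\}$). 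The subsequent connectivity-versus-dimension bookkeeping you flag as ``the main obstacle'' is never actually carried out and is not needed: once you know $\dim K_I\ge|I|-2$, you just apply the same vertex-deletion argument to each $K_I$.

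So while your proposal gestures toward the right ingredient (Corollary \ref{trivial-proj} and vertex deletion), the execution is wrong in the vertex choice, wrong in the claimed case classification, and incomplete in the inductive step. The fix is to replace all of this with the two-line inheritance observation above and the clean vertex-deletion argument for $\varphi_K$.
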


\begin{proof}
As $\dim K\ge m-2$, there is a vertex $v$ of $K$ such that $\dl_K(v)$ is the full simplex $\Delta^{[m]-v}$, implying $\RZ_{\dl_K(v)}$ is contractible. So the projection $\RZ_K^{m-1}\to\Sigma\RZ_{\lk_K(v)}^{m-2}$ is a homotopy equivalence, hence $\varphi_K$ is null homotopic by Corollary \ref{trivial-proj}. Since $\dim K_I\ge |I|-2$ for all $\emptyset\ne I\subset[m]$, the map $\varphi_{K_I}$ is null homotopic for each $\emptyset\ne I\subset[m]$ by the same observation.
\end{proof}


\section{Fat wedge filtration of $\Z_K(C\underline{X},\underline{X})$}

In this section, we investigate the fat wedge filtration of $\Z_K(C\underline{X},\underline{X})$ by using the maps $\varphi_{K_I}$ obtained in the previous section, and we prove Theorem \ref{main-decomp}.

As well as the real moment-angle complexes, we may regard $\Z_{K_I}(C\underline{X}_I,\underline{X}_I)$ for $\emptyset\ne I\subset[m]$ as a subspace of $\Z_K(C\underline{X},\underline{X})$ which is in fact a retract since every $X_i$ has a basepoint, so we have
$$\Z_K^0(C\underline{X},\underline{X})=*\quad\text{and}\quad\Z_K^i(C\underline{X},\underline{X})=\bigcup_{I\subset[m],\,|I|=i}\Z_{K_I}(C\underline{X}_I,\underline{X}_I)$$
for $i=1,\ldots,m$. We describe $\Z_{K_I}(C\underline{X}_I,\underline{X}_I)$ by using the map $\varphi_{K_I}$. Let $I=\{j_1<\cdots<j_i\}$ be a subset of $[m]$ and put $\underline{X}^{\times I}=X_{j_1}\times\cdots\times X_{j_i}$. Consider the composite of maps
\begin{equation}
\label{i_c-RZ-Z}
|\mathrm{Cone}(\Sd K_I)|\times\underline{X}^{\times I}\xrightarrow{\mathrm{Cone}(i_c)\times 1}\RZ_{K_I}\times\underline{X}^{\times I}\to CX_{j_1}\times\cdots\times CX_{j_i}
\end{equation}
where the second arrow maps $((t_1,\ldots,t_i);(x_1,\ldots,x_i))$ to $((t_1,x_1);.\ldots;(t_i,x_i))$ for $t_k\in D^1,x_k\in X_{j_k}$. One easily deduces that the composite descends to a surjection
$$\Phi_{K_I}\colon|\mathrm{Cone}(\Sd K_I)|\times\underline{X}^{\times I}\to\Z_{K_I}(C\underline{X}_I,\underline{X}_I)$$
which is homeomorphic on $|\mathrm{Cone}(\Sd K_I)|\times\underline{X}^{\times I}-\Phi_{K_I}^{-1}(\Z_{K_I}^{i-1}(C\underline{X}_I,\underline{X}_I))$, and since we are using reduced cones, we have
$$\Phi_{K_I}^{-1}(\Z_{K_I}^{i-1}(C\underline{X}_I,\underline{X}_I))=(|\mathrm{Cone}(\Sd K_I)|\times T^{i-1}(\underline{X}_I))\cup(|\Sd K_I|\times\underline{X}^{\times I}).$$
So we obtain a relative homeomorphism
$$\Phi_{K_I}\colon(|\mathrm{Cone}(\Sd K_I)|,|\Sd K_I|)\times(\underline{X}^{\times I},T^{i-1}(\underline{X}_I))\to(\Z_{K_I}(C\underline{X}_I,\underline{X}_I),\Z_{K_I}^{i-1}(C\underline{X}_I,\underline{X}_I))$$
where a product of pairs of spaces are given by $(A,B)\times(C,D)=(A\times B,(A\times D)\cup(B\times C))$ as usual. Then since
$$\Z_K^i(C\underline{X},\underline{X})-\Z_K^{i-1}(C\underline{X},\underline{X})=\coprod_{I\subset[m],\,|I|=i}(\Z_{K_I}(C\underline{X}_I,\underline{X}_I)-\Z_{K_I}^{i-1}(C\underline{X}_I,\underline{X}_I)),$$
we obtain the following.

\begin{theorem}
\label{Phi}
The map
$$\coprod_{I\subset[m],\,|I|=i}\Phi_{K_I}\colon\coprod_{I\subset[m],\,|I|=i}(|\mathrm{Cone}(\Sd K_I)|,|\Sd K_I|)\times(\underline{X}^{\times I},T^{i-1}(\underline{X}_I))\to(\Z_K^i(C\underline{X},\underline{X}),\Z_K^{i-1}(C\underline{X},\underline{X}))$$
is a relative homeomorphism.
\end{theorem}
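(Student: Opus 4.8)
The plan is to build the map $\coprod_I \Phi_{K_I}$ from the already-established ingredients and then check the two defining properties of a relative homeomorphism: that the restriction to the open part is a homeomorphism onto its image, and that the whole map is a quotient-compatible surjection carrying the subspace onto the subspace. First I would make precise the map $\Phi_{K_I}$ itself. Starting from the composite \eqref{i_c-RZ-Z}, I would observe that the second arrow $\RZ_{K_I}\times\underline{X}^{\times I}\to CX_{j_1}\times\cdots\times CX_{j_i}$ lands in $\Z_{K_I}(C\underline{X}_I,\underline{X}_I)$: if a point of $\RZ_{K_I}$ lies in the face $C^I_{\sigma\subset\tau}$ with $\tau-\sigma=\mu\in K$, then its image has cone-coordinate equal to the base point $X_{j_k}$ exactly for $j_k\notin\mu$ (the coordinates with $t_k=-1$, which is the cone point of the \emph{reduced} cone $CX_{j_k}=X_{j_k}\times D^1/(X_{j_k}\times\{-1\}\cup *\times D^1)$), so the point lies in $(C\underline{X}_I,\underline{X}_I)^\mu$. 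Because $\Cone(i_c)$ is a homeomorphism onto $(D^1)^{\times I}$, the induced map on the product is surjective onto $\Z_{K_I}(C\underline{X}_I,\underline{X}_I)$; it then factors through the identifications forced by the reduced cones, giving the claimed surjection $\Phi_{K_I}$.

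Next I would identify the preimage $\Phi_{K_I}^{-1}(\Z^{i-1}_{K_I}(C\underline{X},\underline{X}))$. A point of $\Z_{K_I}(C\underline{X}_I,\underline{X}_I)$ lies in $\Z^{i-1}_{K_I}$ iff at least one of its $i$ cone-coordinates is the cone point. Tracing this back through $\Phi_{K_I}$: a cone-coordinate $k$ is the cone point iff $t_k=-1$ \emph{or} $x_k=*$, i.e. iff the $\RZ_{K_I}$-coordinate lies in $\{t_k=-1\}$ or the $\underline{X}^{\times I}$-coordinate lies in the $k$-th coordinate face of $T^{i-1}(\underline{X}_I)$. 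The locus where some $t_k=-1$ is exactly $i_c(|\Sd K_I|)$ inside $\Cone(i_c)(|\Cone(\Sd K_I)|)$ by \eqref{i_c-image} and \eqref{difference}, which is the image of $|\Sd K_I|$ under $\Cone(i_c)$; the locus where some $x_k=*$ is $T^{i-1}(\underline{X}_I)$. This yields exactly the stated formula $\Phi_{K_I}^{-1}(\Z^{i-1}_{K_I}) = (|\Cone(\Sd K_I)|\times T^{i-1}(\underline{X}_I))\cup(|\Sd K_I|\times\underline{X}^{\times I})$, which is precisely the subspace of the product pair $(|\Cone(\Sd K_I)|,|\Sd K_I|)\times(\underline{X}^{\times I},T^{i-1}(\underline{X}_I))$ in the convention $(A,B)\times(C,D)=(A\times C,(A\times D)\cup(B\times C))$. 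On the complement of this preimage all $t_k\neq -1$ and all $x_k\neq *$, so neither the reduced-cone identifications nor the $\Cone(i_c)$-to-$\RZ$ identifications do anything, and $\Phi_{K_I}$ restricted there is a composite of homeomorphisms, hence a homeomorphism onto $\Z_{K_I}(C\underline{X}_I,\underline{X}_I)\setminus\Z^{i-1}_{K_I}$.

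Finally I would assemble the pieces over all $I$ with $|I|=i$. Using the displayed identity
$$\Z_K^i(C\underline{X},\underline{X})-\Z_K^{i-1}(C\underline{X},\underline{X})=\coprod_{I\subset[m],\,|I|=i}\bigl(\Z_{K_I}(C\underline{X}_I,\underline{X}_I)-\Z_{K_I}^{i-1}(C\underline{X}_I,\underline{X}_I)\bigr),$$
the individual relative homeomorphisms $\Phi_{K_I}$ glue to a bijection off the subspaces; it is surjective because $\Z^i_K=\Z^{i-1}_K\cup\bigcup_{|I|=i}\Z_{K_I}(C\underline{X}_I,\underline{X}_I)$, and it is a relative homeomorphism because each factor is, the source is a disjoint union, and a map out of a disjoint union is a relative homeomorphism onto a space that decomposes accordingly iff each restriction is. The main obstacle, and the only point requiring real care, is the bookkeeping in the previous paragraph: correctly matching the reduced-cone coordinate identifications with the two pieces $\{t_k=-1\}$ and $\{x_k=*\}$, and confirming that these two loci are carried respectively to $|\Sd K_I|$ (via \eqref{difference}) and to $T^{i-1}(\underline{X}_I)$, so that the preimage of the $(i-1)$st filtration is exactly the subspace of the product of pairs and nothing more. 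Everything else is a routine gluing argument.
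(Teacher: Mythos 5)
Your proposal is correct and follows essentially the same route as the paper: build $\Phi_{K_I}$ from the composite \eqref{i_c-RZ-Z}, verify via the reduced-cone identifications and \eqref{i_c-image}, \eqref{difference} that $\Phi_{K_I}^{-1}(\Z_{K_I}^{i-1})$ is exactly the subspace of the product of pairs, note the restriction to the complement is a homeomorphism, and then glue over $|I|=i$ using the disjoint-union decomposition of $\Z_K^i-\Z_K^{i-1}$. You merely supply more explicitly the bookkeeping that the paper compresses into ``since we are using reduced cones.''
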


Recall that a categorical sequence of a space $Y$ in the sense of Fox \cite{Fo} is a filtration $*=Y_0\subset Y_1\subset\cdots\subset Y_m=Y$ such that the inclusion $Y_i-Y_{i-1}\to Y$ is null homotopic for $i=1,\ldots,m$. By the above theorem one can easily deduce that the fat wedge filtration of $\Z_K(C\underline{X},\underline{X})$ is a categorical sequence whereas the fat wedge filtration of $\RZ_K$ is a cone decomposition, where a cone decomposition is a special categorical sequence. In the special case, we can show that this categorical sequence is a cone decomposition as in Section 5, but the authors do not know whether this is true or not in general.

One can reprove Theorem \ref{BBCG} by using Theorem \ref{Phi}, from which one can interpret more directly how full subcomplexes of $K$ appear in the BBCG decomposition.

\begin{corollary}
[Bahri, Bendersky, Cohen, and Gitler \cite{BBCG}]
\label{BBCG-natural}
There is a homotopy equivalence
$$\Sigma\Z_K(C\underline{X},\underline{X})\xrightarrow{\simeq}\Sigma\bigvee_{\emptyset\ne I\subset[m]}|\Sigma K_I|\wedge\widehat{X}^I$$
which is natural with respect to $\underline{X}$ and inclusions of subcomplexes of $K$, where $\widehat{X}^I=\bigwedge_{i\in I}X_i$.
\end{corollary}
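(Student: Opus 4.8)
The plan is to turn the filtration data of Theorem~\ref{Phi} into an explicit natural splitting of $\Sigma\Z_K(C\underline X,\underline X)$. The first step is to record the subquotients. Applying Theorem~\ref{Phi} with $K$ replaced by $K_I$ and $i=|I|$, together with the identifications $(A\times B)/((A\times D)\cup(C\times B))=(A/C)\wedge(B/D)$, $|\mathrm{Cone}(\Sd K_I)|/|\Sd K_I|=|\Sigma K_I|$ (both sides being the unreduced suspension of $|K_I|$), and $\underline X^{\times I}/T^{|I|-1}(\underline X_I)=\widehat X^I$, one gets a natural homeomorphism $\Z_{K_I}(C\underline X_I,\underline X_I)/\Z_{K_I}^{|I|-1}(C\underline X_I,\underline X_I)\cong|\Sigma K_I|\wedge\widehat X^I$. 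Since for $I\ne I'$ with $|I|=|I'|=i$ one has $\Z_{K_I}(C\underline X_I,\underline X_I)\cap\Z_{K_{I'}}(C\underline X_{I'},\underline X_{I'})=\Z_{K_{I\cap I'}}(C\underline X_{I\cap I'},\underline X_{I\cap I'})\subset\Z_K^{i-1}(C\underline X,\underline X)$, this yields a natural homeomorphism $\Z_K^i(C\underline X,\underline X)/\Z_K^{i-1}(C\underline X,\underline X)\cong Q_i:=\bigvee_{|I|=i}|\Sigma K_I|\wedge\widehat X^I$, so that $\bigvee_i Q_i=\bigvee_{\emptyset\ne I\subset[m]}|\Sigma K_I|\wedge\widehat X^I$.

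Next I would build the candidate equivalence. For each $\emptyset\ne I\subset[m]$ let $r_I\colon\Z_K(C\underline X,\underline X)\to\Z_{K_I}(C\underline X_I,\underline X_I)$ be the retraction sending the coordinates outside $I$ to the basepoint, and let $\pi_I$ be the composite of $r_I$ with the collapse $\Z_{K_I}(C\underline X_I,\underline X_I)\to\Z_{K_I}/\Z_{K_I}^{|I|-1}=|\Sigma K_I|\wedge\widehat X^I$ from the first step. Using the pinch comultiplication on the suspension, assemble $\{\Sigma\pi_I\}_I$ into a single map
\[
F\colon\Sigma\Z_K(C\underline X,\underline X)\longrightarrow\bigvee_{\emptyset\ne I\subset[m]}\Sigma\bigl(|\Sigma K_I|\wedge\widehat X^I\bigr)=\Sigma\bigvee_{\emptyset\ne I\subset[m]}|\Sigma K_I|\wedge\widehat X^I .
\]
As the retractions $r_I$, the collapse maps, and the pinch map are all natural in $\underline X$ and in inclusions of subcomplexes of $K$, the map $F$ is natural in the stated sense.

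I would then prove that $F$ is a homotopy equivalence by induction along $*=\Z_K^0\subset\Z_K^1\subset\cdots\subset\Z_K^m=\Z_K(C\underline X,\underline X)$. First, when $|J|>i$ a point of $\Z_K^i(C\underline X,\underline X)$ has at most $i\le|J|-1$ non-basepoint coordinates, hence so does its image under $r_J$, so $r_J(\Z_K^i(C\underline X,\underline X))\subset\Z_{K_J}^{|J|-1}(C\underline X_J,\underline X_J)$ and $\pi_J$ is nullhomotopic on $\Z_K^i(C\underline X,\underline X)$; thus $F$ restricts to $F_i\colon\Sigma\Z_K^i(C\underline X,\underline X)\to\bigvee_{|J|\le i}\Sigma(|\Sigma K_J|\wedge\widehat X^J)$, compatibly with the wedge inclusions, so that $F_i|_{\Sigma\Z_K^{i-1}}=F_{i-1}$. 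Second, the map induced by $F_i$ on the cofibre $\Sigma Q_i$ of $\Sigma\Z_K^{i-1}\to\Sigma\Z_K^i$ is the identity of $\Sigma Q_i=\bigvee_{|J|=i}\Sigma(|\Sigma K_J|\wedge\widehat X^J)$: for $|J|=i$ the restriction $\pi_J|_{\Z_K^i}$ equals the collapse $\Z_K^i\to\Z_K^i/\Z_K^{i-1}$ followed by the projection onto the $J$-th wedge summand, because $r_J$ is the identity on $\Z_{K_J}(C\underline X_J,\underline X_J)$ and carries $\Z_{K_I}(C\underline X_I,\underline X_I)$ into $\Z_{K_{I\cap J}}\subset\Z_{K_J}^{|J|-1}$ for $I\ne J$. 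Granting these two points, $F_i$ fits into a map of cofibre sequences whose sub- and quotient-term maps are $F_{i-1}$ and the identity; by the Puppe sequence and the five lemma applied to $[-,W]$ (a map of cofibre sequences that is a homotopy equivalence on the sub- and quotient-terms is one on the total term), $F_i$ is a homotopy equivalence once $F_{i-1}$ is. The base case is $F_0\colon*\to*$ (equivalently $F_1\colon\Sigma\Z_K^1\simeq*\to*$, using $\Z_K^1\simeq\bigvee_j CX_j\simeq*$ and $|\Sigma K_{\{j\}}|\simeq*$), and at $i=m$ one concludes that $F=F_m$ is a homotopy equivalence.

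The main obstacle is the second of the two points above: identifying the map induced on each successive cofibre with the identity. This forces one to unwind $\pi_J$ — hence the map $\Phi_{K_J}$ and the cubical embedding $\mathrm{Cone}(i_c)$ of Section~3 — through the relative homeomorphisms of Theorem~\ref{Phi}, and to verify that the retraction $r_J$ interacts with the fat wedge filtrations of $\Z_K(C\underline X,\underline X)$ and of $\Z_{K_J}(C\underline X_J,\underline X_J)$ precisely so that no contribution from an index set $I\ne J$ of the same cardinality survives on the associated graded. Everything else is formal manipulation of cofibre sequences and co-$H$-spaces, and in particular one may avoid any connectivity hypothesis on the $X_i$.
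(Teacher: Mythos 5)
Your proposal is correct, and it takes a genuinely different route from the paper. The paper's proof is very short: it observes that $\{\Z_{K_I}(C\underline{X}_I,\underline{X}_I)\}_{I\subset[m]}$, with the retraction squares displayed there, forms a ``space over the poset $2^{[m]}$ with natural retractions,'' and then simply invokes the abstract splitting theorem of the companion paper \cite{IK2}; the remark following notes that James's retractile argument is another alternative but loses naturality. Your argument instead constructs the splitting map $F$ by hand as a pinch-sum of the suspended ``retract-then-collapse'' maps $\Sigma\pi_I$, and verifies it is an equivalence by induction along the fat wedge filtration, using Theorem~\ref{Phi} to identify the associated graded $\Z_K^i/\Z_K^{i-1}\cong\bigvee_{|I|=i}|\Sigma K_I|\wedge\widehat X^I$ and a map-of-cofibre-sequences argument at each stage. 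The two key observations that make the induction work — that $\pi_J$ is constant on $\Z_K^i$ when $|J|>i$, and that for $|J|=i$ the retraction $r_J$ carries all $\Z_{K_I}$ with $|I|=i$, $I\ne J$ into $\Z_{K_J}^{|J|-1}$, so the map induced on the graded pieces is the wedge-of-projections, hence homotopic to the identity — are exactly right. What your approach buys is a proof that is self-contained modulo Theorem~\ref{Phi} and that displays explicitly how the fat wedge filtration (the paper's central object) produces the BBCG splitting, which is precisely the philosophical point the paper is making but obtains in Corollary~\ref{BBCG-natural} only by citation; what the paper's approach buys is brevity and a cleaner naturality statement delivered by the general \cite{IK2} framework. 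One small point to watch in a write-up: the identification $|\mathrm{Cone}(\Sd K_I)|/|\Sd K_I|\cong|\Sigma K_I|$ is an unreduced-suspension identification and the basepoint must be tracked consistently through the smash, but this affects only bookkeeping, not the argument.
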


\begin{proof}
Note that for $\emptyset\ne I\subset[m]$, $\Z_{K_I}(C\underline{X}_I,\underline{X}_I)$ is a retract of $\Z_K(C\underline{X},\underline{X})$ such that for $\emptyset\ne J\subset[m]$ there is a commutative diagram
$$\xymatrix{\Z_{K_I}(C\underline{X}_I,\underline{X}_I)\ar[r]^{\rm incl}\ar[d]^{\rm proj}&\Z_{K_{I\cup J}}(C\underline{X}_{I\cup J},\underline{X}_{I\cup J})\ar[d]^{\rm proj}\\
\Z_{K_{I\cap J}}(C\underline{X}_{I\cap J},\underline{X}_{I\cap J})\ar[r]^{\rm incl}&\Z_{K_J}(C\underline{X}_J,\underline{X}_J).}$$
Then $\{\Z_{K_I}(C\underline{X}_I,\underline{X}_I)\}_{I\subset[m]}$ is a space over the poset $2^{[m]}$ with natural retractions in the sense of \cite{IK2}, where $2^{[m]}$ is the power set of $[m]$ with the inclusion ordering and we put $\Z_{K_\emptyset}(C\underline{X}_\emptyset,\underline{X}_\emptyset)$ to be a point. Then the theorem follows from \cite{IK2}.
\end{proof}

\begin{remark}
The BBCG decomposition is obtained also by the retractile argument of James \cite{Ja}, but it is hard to get the naturality by the retractile argument.
\end{remark}

From the description of the fat wedge filtration of $\Z_K(C\underline{X},\underline{X})$ in \eqref{i_c-RZ-Z} and Theorem \ref{Phi}, one sees that the attaching maps $\varphi_{K_I}$ of the cone decomposition of $\RZ_K$ control the fat wedge filtration of $\Z_K(C\underline{X},\underline{X})$. We further investigate this control in the extreme case that the fat wedge filtration of $\RZ_K$ is trivial, that is, we prove Theorem \ref{main-decomp}. We will use the following technical lemma.

\begin{lemma}
\label{relative-homeo}
Let $(X,A),(Y,B)$ be NDR pairs. Suppose that $Y$ has the quotient topology by a relative homeomorphism $f\colon(X,A)\to(Y,B)$, and that the restriction $f\vert_A$ is null homotopic in $B$. Then there is a string of homotopy equivalences
$$Y\xleftarrow{\simeq}D_f\xrightarrow{\simeq}B\vee Y/B$$
which is natural with respect to the relative homeomorphism between NDR pairs satisfying the same conditions and having compatible homotopies.
\end{lemma}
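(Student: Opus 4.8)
The idea is to build the intermediate space $D$ as a double mapping cylinder that simultaneously sees the relative homeomorphism $f$ and the null homotopy of $f|_A$. First I would fix a null homotopy $H\colon A\times I\to B$ with $H_0=f|_A$ and $H_1$ constant at the basepoint. Using the NDR structure on $(X,A)$ choose a halo, i.e.\ a neighborhood $U$ of $A$ in $X$ that deformation retracts onto $A$; this lets me collar $A$ inside $X$ so that the homotopy $H$ can be transported a little way into $X$ without changing anything outside $U$. Concretely I would let $D$ be the pushout of
$$Y\xleftarrow{f} X\hookleftarrow A\times\{0\}\hookrightarrow A\times I$$
glued along $H$ at the other end, i.e.\ $D=Y\cup_{f|_A}(A\times I)\cup_{H}B$, the space obtained from $Y\sqcup (A\times I)\sqcup B$ by identifying $(a,0)\sim f(a)$ and $(a,1)\sim H_1(a)$ and then further collapsing via $H$; equivalently $D$ is the homotopy pushout of $B\xleftarrow{f|_A}A\xrightarrow{\mathrm{incl}\circ f|_A\text{-or-rather }H}\,$ ... . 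The cleanest formulation: let $D$ be the double mapping cylinder $M(H)\cup_{A\times\{0\}} X$ pushed out over $Y$, that is, $D := Y \cup_f \big(X\cup_{A\times\{0\}}(A\times I)\big)\cup_{H} B$.

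\textbf{The two equivalences.} For the map $D\xrightarrow{\simeq}Y$: the subspace $\big(X\cup_{A\times\{0\}}(A\times I)\big)\cup_H B$ deformation retracts onto $X$ (slide the cylinder $A\times I$ back to $A\times\{0\}$ and use that $(X,A)$ is an NDR pair to absorb the collapsed $B$-end, since $H_1$ factors through a point so that piece is just a cone on $A$ attached along $A$, which retracts into $X$ because $A\hookrightarrow X$ is a cofibration); hence $D$ deformation retracts onto $Y\cup_f X = Y$. For the map $D\xrightarrow{\simeq}B\vee Y/B$: here I would collapse $X$. Since $f$ is a relative homeomorphism and $Y$ carries the quotient topology, $Y/B = X/A$ canonically, and $D/\big(X\cup_{A\times\{0\}}(A\times I)\big) \cong B \vee Y/B$ because crushing the $X$-part and the cylinder identifies the two basepoints of the wedge; the quotient map $D\to B\vee Y/B$ is a homotopy equivalence because we are collapsing a contractible-up-to-cofibration subspace — more precisely $X\cup_{A}(A\times I)$ is a mapping cylinder so it deformation retracts to $A\times\{1\}$, a point after composing with $H_1$, and the pair $(D, X\cup_A(A\times I))$ is an NDR pair, so collapsing it is a homotopy equivalence. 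Both zig-zag arrows are manifestly natural in $(X,A)\to(Y,B)$ and in the chosen homotopies, since every construction used (mapping cylinders, pushouts, quotients) is functorial and the NDR deformation retractions can be chosen naturally once the NDR data are.

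\textbf{Main obstacle.} The routine part is the homotopy-equivalence bookkeeping; the real care is needed in two places. First, one must make sure the NDR/cofibration hypotheses genuinely let the $B$-end (the cone $C A$ coming from the constant $H_1$) and the cylinder $A\times I$ be absorbed without disturbing the point-set topology of $Y$ — this is where the hypothesis that $Y$ has the quotient topology from $f$ is essential, so that $Y/B=X/A$ and the gluings are well-behaved. Second, \emph{naturality} is the delicate clause: to get it one cannot just invoke abstract homotopy invariance of pushouts but must exhibit the retractions and the homotopy equivalences by explicit formulas built from the NDR data and the given homotopies $H$, and then observe that a map of NDR pairs together with compatible null homotopies induces a strictly commuting ladder of these explicit maps. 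I expect that verifying this compatibility — in particular that "compatible homotopies" is exactly the data needed to make the $B\vee Y/B$ side natural — is the step that needs the most attention, while the existence of the zig-zag itself is essentially formal.
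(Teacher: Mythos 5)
Your underlying idea --- build an intermediate space $D$ out of a double mapping cylinder and use the null homotopy $H$ of $f|_A$ to split off $B$ --- is the same idea the paper uses, but your execution has two problems, one of taste and one a genuine error.

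First, your $D$ is needlessly complicated and not well-defined as written. The paper simply takes $D$ to be the double mapping cylinder of the two maps $f|_A\colon A\to B$ and $A\hookrightarrow X$, i.e.\ $D=B\sqcup(A\times I)\sqcup X/\!\sim$ with $(a,0)\sim f(a)$ and $(a,1)\sim a$. This lives entirely over the source side; one does \emph{not} push anything out over $Y$. Your formula $D=Y\cup_f\bigl(X\cup_{A\times\{0\}}(A\times I)\bigr)\cup_H B$ glues in $Y$ (which already contains a copy of $B$, since $Y$ has the quotient topology and hence equals the pushout $B\cup_{f|_A}X$) and also glues in a second $B$ at the far end of the cylinder, so your space has two copies of $B$ floating around and the phrase ``$\cup_H B$'' does not describe a gluing in any standard sense ($H$ is a homotopy, not an attaching map). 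With the paper's $D$ the first equivalence $D\xrightarrow{\simeq}B\cup_{f|_A}X\cong Y$ is just the standard comparison between a homotopy pushout and a strict pushout when one leg ($A\hookrightarrow X$) is a cofibration.

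Second, and more seriously, your argument for $D\to B\vee Y/B$ rests on the claim that ``$X\cup_A(A\times I)$ is a mapping cylinder so it deformation retracts to $A\times\{1\}$.'' This is false: the mapping cylinder of the inclusion $A\hookrightarrow X$ deformation retracts onto $X$, \emph{not} onto $A$. So the subspace you want to collapse is not contractible, and collapsing it is not a homotopy equivalence. The correct route (the paper's) is different: use the null homotopy $H$ to replace $f|_A$ by the constant map inside the double mapping cylinder, giving a homotopy equivalence from $D$ to the double mapping cylinder of $B\xleftarrow{*}A\hookrightarrow X$; the latter is $B\vee(X\cup_A CA)\simeq B\vee X/A=B\vee Y/B$. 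This is where the null homotopy and the cofibration hypothesis are actually used, and it is this equivalence (not a collapse of $X$) whose naturality requires ``compatible homotopies.'' Your naturality remarks are in the right spirit, but they would have to be run on the corrected construction.
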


\begin{proof}
Let $D_f$ be the double mapping cylinder of $f\vert_A$ and the inclusion $A\to X$. Since $(X, A)$ and $(Y, B)$ are NDR pairs and $f\vert_A$ is null homotopic in $B$, there is a string of homotopy equivalences $B\vee X/A\xleftarrow{\simeq}D_f\xrightarrow{\simeq}B\cup_fX$. Since $Y$ is given the quotient topology by $f$, we have $X/A\cong Y/B$ and $B\cup_fX\cong Y$. Then we obtain the desired string. The naturality of the string follows from the naturality of double mapping cylinders.
\end{proof}

We first consider the special case that the basepoint of $X_i$ is isolated for each $i$.

\begin{lemma}
\label{Z-split-isolated}
If the fat wedge filtration of $\RZ_K$ is trivial and the basepoint $*_i$ of $X_i$ is isolated for each $i$, then there is a string of homotopy equivalences
$$\Z_K(C\underline{X},\underline{X})\xleftarrow{\delta}D(\underline{X})\xrightarrow{\epsilon}\bigvee_{\emptyset\ne I\subset[m]}|\Sigma K_I|\wedge\widehat{X}^I$$
which is natural with respect to $\underline{X}$.
\end{lemma}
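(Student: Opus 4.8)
The plan is to establish the splitting by induction along the fat wedge filtration. Writing $W_i:=\bigvee_{\emptyset\ne I\subset[m],\,|I|\le i}|\Sigma K_I|\wedge\widehat{X}^I$, I would prove by induction on $i=0,\dots,m$ that there is a string of homotopy equivalences $\Z_K^i(C\underline{X},\underline{X})\xleftarrow{\simeq}D_i(\underline{X})\xrightarrow{\simeq}W_i$ natural in $\underline{X}$; the case $i=m$ is the assertion, with $D(\underline{X}):=D_m(\underline{X})$, and $i=0$ is trivial since both sides are a point.

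At the $i$-th step, Theorem \ref{Phi} presents $\Z_K^i(C\underline{X},\underline{X})$ as obtained from $\Z_K^{i-1}(C\underline{X},\underline{X})$ by attaching, for each $I\subset[m]$ with $|I|=i$, the pair $(|\Cone(\Sd K_I)|,|\Sd K_I|)\times(\underline{X}^{\times I},T^{i-1}(\underline{X}_I))$ along $\Phi_{K_I}$. Here the isolated‑basepoint hypothesis is used decisively: since $\{*_k\}$ is open and closed in $X_k$, the fat wedge $T^{i-1}(\underline{X}_I)$ is open and closed in $\underline{X}^{\times I}$ with complement $\widehat{X}^I_{\circ}:=\prod_{k\in I}(X_k\setminus\{*_k\})$, so all of these pairs are NDR pairs, $\Z_K^i(C\underline{X},\underline{X})$ carries the quotient topology with respect to $\coprod_{|I|=i}\Phi_{K_I}$, and, crucially, the attaching region
$$A_I:=\Phi_{K_I}^{-1}\bigl(\Z_K^{i-1}(C\underline{X},\underline{X})\bigr)=\bigl(|\Cone(\Sd K_I)|\times T^{i-1}(\underline{X}_I)\bigr)\ \sqcup\ \bigl(|\Sd K_I|\times\widehat{X}^I_{\circ}\bigr)$$
is a genuinely disjoint union. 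I would then invoke Lemma \ref{relative-homeo}, for which the only thing to verify is that $\coprod_{|I|=i}\Phi_{K_I}|_{A_I}$ is null homotopic into $\Z_K^{i-1}(C\underline{X},\underline{X})$, and naturally in $\underline{X}$.

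To see the null homotopy I would kill the restriction of $\Phi_{K_I}$ to each of the two pieces of $A_I$ separately, which is legitimate because $\Z_K^{i-1}(C\underline{X},\underline{X})$ is path connected. By \eqref{i_c-RZ-Z}, $\Phi_{K_I}$ is $\Cone(i_c)\times1$ followed by the coning map $((t_1,\dots,t_i);(x_1,\dots,x_i))\mapsto((t_1,x_1);\dots;(t_i,x_i))$; reduced cones being used (with $-1$ the basepoint of $S^0$), a coordinate $(t,x_k)$ is the basepoint of $CX_{j_k}$ as soon as $t=-1$ or $x_k=*_{j_k}$, and one checks moreover that the coning map already carries all of $\RZ_{K_I}\times\underline{X}^{\times I}$ into $\Z_{K_I}(C\underline{X}_I,\underline{X}_I)$ (each point of $\RZ_{K_I}$ has a $\pm1$ entry in every coordinate outside some face of $K_I$). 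On $|\Cone(\Sd K_I)|\times T^{i-1}(\underline{X}_I)$ the second argument has a basepoint coordinate, so $(q,p)$ is sent into $\Z_K^{i-1}(C\underline{X},\underline{X})$ for every $q\in\RZ_{K_I}$; since $|\Cone(\Sd K_I)|$ is contractible and $\RZ_{K_I}$ is path connected, $\Cone(i_c)$ is homotopic to the constant at $(-1,\dots,-1)$, and running this homotopy through the coning map (keeping the identity on $T^{i-1}(\underline{X}_I)$) stays inside $\Z_K^{i-1}(C\underline{X},\underline{X})$ and ends at the basepoint, because the coning map sends $((-1,\dots,-1),p)$ to the basepoint. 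On $|\Sd K_I|\times\widehat{X}^I_{\circ}$, $\Phi_{K_I}$ is $\varphi_{K_I}\times1$ followed by the coning map; triviality of the fat wedge filtration of $\RZ_K$ gives a null homotopy of $\varphi_{K_I}$ in $\RZ_{K_I}^{|I|-1}$, which, $\RZ_{K_I}^{|I|-1}$ being path connected, may be taken to end at $(-1,\dots,-1)$, and running it through the coning map (keeping the identity on $\widehat{X}^I_{\circ}$) again stays inside $\Z_K^{i-1}(C\underline{X},\underline{X})$ — a coordinate of $\varphi_{K_I}$ is always $-1$ — and ends at the basepoint. Both homotopies are obtained by applying the ($\underline{X}$‑natural) coning map to an $\underline{X}$‑independent homotopy (of $\Cone(i_c)$, resp. of $\varphi_{K_I}$), so they are natural in $\underline{X}$; hence Lemma \ref{relative-homeo} yields a natural string from $\Z_K^i(C\underline{X},\underline{X})$ to $\Z_K^{i-1}(C\underline{X},\underline{X})\vee\bigl(\Z_K^i(C\underline{X},\underline{X})/\Z_K^{i-1}(C\underline{X},\underline{X})\bigr)$, and the relative homeomorphism identifies the quotient with $\bigvee_{|I|=i}\bigl(|\Cone(\Sd K_I)|/|\Sd K_I|\bigr)\wedge\bigl(\underline{X}^{\times I}/T^{i-1}(\underline{X}_I)\bigr)=\bigvee_{|I|=i}|\Sigma K_I|\wedge\widehat{X}^I$. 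Composing with the inductive string for $\Z_K^{i-1}(C\underline{X},\underline{X})$, via the naturality clause of Lemma \ref{relative-homeo}, closes the induction.

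The step I expect to be the real crux is the passage, powered by the isolated basepoints, from the single map $\Phi_{K_I}|_{A_I}$ to the two independent null homotopy problems above. For general $\underline{X}$ the two natural pieces of $A_I$ are not disjoint but share $|\Sd K_I|\times T^{i-1}(\underline{X}_I)$, and fusing the two partial null homotopies over this overlap would amount to killing a residual map $|\Sigma K_I|\to\RZ_{K_I}$ that need not be null homotopic — under the splitting $\RZ_{K_I}\simeq\RZ_{K_I}^{|I|-1}\vee|\Sigma K_I|$ provided by Lemma \ref{relative-homeo} it is essentially the inclusion of the $|\Sigma K_I|$‑summand. It is precisely the hypothesis that every $*_k$ is isolated that severs this overlap, reducing the problem to two pieces killed independently. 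The remaining points — that the homotopies above do stay inside $\Z_K^{i-1}(C\underline{X},\underline{X})$, and that the string assembled through the induction is natural in $\underline{X}$ — are routine: the first is a bookkeeping check against the reduced cone collapses, and the second holds because every homotopy used is pulled along the coning map from an $\underline{X}$‑independent one.
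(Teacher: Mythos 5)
Your proof is correct and takes essentially the same approach as the paper: combine Theorem \ref{Phi} with Lemma \ref{relative-homeo}, exploit the isolated-basepoint hypothesis to disconnect the attaching region into the two clopen pieces $|\Cone(\Sd K_I)|\times T^{i-1}(\underline{X}_I)$ and $|\Sd K_I|\times\check{X}^I$, and null-homotope each piece separately and naturally (the second via the assumed triviality of $\varphi_{K_I}$). The only cosmetic difference is that where you push $\Cone(i_c)$ directly to the basepoint $(-1,\dots,-1)$ inside $\RZ_{K_I}$, the paper first contracts to the cone point $(+1,\dots,+1)$, identifying the restriction with the inclusion $T^{i-1}(\underline{X}_I)\hookrightarrow\Z_{K_I}^{i-1}(C\underline{X}_I,\underline{X}_I)$, and then null-homotopes that inclusion as a second step.
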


\begin{proof}
The lemma follows from Theorem \ref{Phi} and Lemma \ref{relative-homeo} if we show that the restriction 
$$(|\Cone(\Sd K_I)|\times T^{|I|-1}(\underline{X}_I))\cup(|\Sd K_I|\times X^I)\to\Z_{K_I}^{|I|-1}(C\underline{X}_I,\underline{X}_I)$$
of $\Phi_{K_I}$ is null homotopic for all $\emptyset\ne I\subset[m]$ by a homotopy which is natural with respect to $\underline{X}$. Since all basepoints are isolated, we have
$$(|\Cone(\Sd K_I)|\times T^{|I|-1}(\underline{X}_I))\cup(|\Sd K_I|\times X^I)=(|\Cone(\Sd K_I)|\times T^{|I|-1}(\underline{X}_I))\sqcup(|\Sd K_I|\times\check{X}^I)$$
where $\check{X}^I=(X_{j_1}-*_{j_1})\times\cdots\times(X_{j_i}-*_{j_i})$ and $I=\{j_1<\cdots<j_i\}$. Then we can consider the restriction of $\Phi_{K_I}$ to $|\Cone(\Sd K_I)|\times T^{|I|-1}(\underline{X}_I)$ and $|\Sd K_I|\times\check{X}^I$ independently. By deforming $|\Cone(\Sd K_I)|$ to its cone point, the restriction of $\Phi_{K_I}$ to $|\Cone(\Sd K_I)|\times T^{|I|-1}(\underline{X}_I)$ is naturally homotopic to the inclusion $T^{|I|-1}(\underline{X}_I)\to\Z_{K_I}^{|I|-1}(C\underline{X}_I,\underline{X}_I)$ which is also naturally null homotopic. Note that the restriction of $\Phi_{K_I}$ to $|\Sd K_I|\times\check{X}^I$ factors so that
$$|\Sd K_I|\times\check{X}^I\xrightarrow{\varphi_{K_I}\times 1}\RZ_{K_I}^{|I|-1}\times\check{X}^I\to\Z_{K_I}^{|I|-1}(C\underline{X}_I,\underline{X}_I)$$
by the construction of $\Phi_{K_I}$. Then by assumption it is naturally homotopic to the inclusion $\check{X}^I\to\Z_{K_I}^{|I|-1}(C\underline{X}_I,\underline{X}_I)$ which is also naturally null homotopic. Therefore the proof is completed.
\end{proof}

\begin{proof}[Proof of Theorem \ref{main-decomp}]
For a space $A$, we always assume that the basepoint of $A\sqcup *$ is $*$ even when $A$ itself has a basepoint. We define $\underline{X}^k=\{X_i^k\}_{i\in[m]}$ by
$$X_i^k:=\begin{cases}X_i&i\le k\\X_i\sqcup*&i>k\end{cases}$$
for $k=0,\ldots,m$, where we may allow $X_i=\emptyset$ for $i>k$. We also define $\underline{X}^{(k)}=\{X_i^{(k)}\}_{i\in[m]}$ and $\underline{X}^{[k]}=\{X_i^{[k]}\}_{i\in[m]}$ by
$$X_i^{(k)}:=\begin{cases}X_i^k&i\ne k+1\\\ast_{k+1}\sqcup*&i=k+1\end{cases}\quad\text{and}\quad X_i^{[k]}:=\begin{cases}X_i^k&i\ne k+1\\\ast_{k+1}&i=k+1\end{cases}$$
for $k=0,\ldots,m$, where $*_i$ is the basepoint of $X_i$ as above. Note that $\underline{X}^{(k)}$ and $\underline{X}^{[k]}$ are the special cases of $\underline{X}^k$ when $X_{k+1}$ is $*_{k+1}$ and $\emptyset$, respectively. Note also that a map $f:\underline{X}\to\underline{Y}$ induces maps $f^k:\underline{X}^k\to\underline{Y}^k$, $f^{(k)}:\underline{X}^{(k)}\to\underline{Y}^{(k)}$ and $f^{[k]}:\underline{X}^{[k]}\to\underline{Y}^{[k]}$. Let $\iota:\underline{X}^{(k)}\to\underline{X}^k$ and $\pi:\underline{X}^{(k)}\to\underline{X}^{[k]}$ denote the inclusion and the projection, respectively. Then $\iota$ and $\pi$ are natural with respect to $\underline{X}$, that is, $f^k\circ\iota=\iota\circ f^{(k)}$ and $f^{[k]}\circ\pi=\pi\circ f^{(k)}$.  

Put $\W_K(\underline{X})=\bigvee_{\emptyset\ne I\subset[m]}|\Sigma K_I|\wedge\widehat{X}^I$. We construct a string of homotopy equivalences $\Z_K(C\underline{X}^k,\underline{X}^k)\xleftarrow{\delta^k}D(\underline{X}^k)\xrightarrow{\epsilon^k}\W_K(\underline{X}^k)$ which is natural with respect to $\underline{X}$  by induction on $k$, so we obtain the desired homotopy equivalence when $k=m$ since $\underline{X}^m=\underline{X}$. We define the string of homotopy equivalences for $k=0$ by Lemma \ref{Z-split-isolated} which is natural in $\underline{X}$. Suppose that we have constructed a string of homotopy equivalences $\Z_K(C\underline{X}^k,\underline{X}^k)\xleftarrow{\delta^k}D(\underline{X}^k)\xrightarrow{\epsilon^k}\W_K(\underline{X}^k)$ which is natural in $\underline{X}$. Then there is a commutative diagram
$$\xymatrix{\Z_K(C\underline{X}^{[k]},\underline{X}^{[k]})&\Z_K(C\underline{X}^{(k)},\underline{X}^{(k)})\ar[r]^\iota\ar[l]_\pi&\Z_K(C\underline{X}^k,\underline{X}^k)\\
D(\underline{X}^{[k]})\ar[d]^{\epsilon^k}\ar[u]_{\delta^k}&D(\underline{X}^{(k)})\ar[d]^{\epsilon^k}\ar[r]^\iota\ar[l]_\pi\ar[u]_{\delta^k}&D(\underline{X}^k)\ar[d]^{\epsilon^k}\ar[u]_{\delta^k}\\
\W_K(\underline{X}^{[k]})&\W_K(\underline{X}^{(k)})\ar[r]^\iota\ar[l]_\pi&\W_K(\underline{X}^k)}$$
which is natural with respect to $\underline{X}$. Observe that the pushouts of the top and the bottom rows are $\Z_K(C\underline{X}^{k+1},\underline{X}^{k+1})$ and $\W_K(\underline{X}^{k+1})$, respectively. We put $D(\underline{X}^{k+1})$  to be the double mapping cylinder of the middle row and a string of maps $\Z_K(C\underline{X}^{k+1},\underline{X}^{k+1})\xleftarrow{\delta^{k+1}}D(\underline{X}^{k+1})\xrightarrow{\epsilon^{k+1}}\W_K(\underline{X}^{k+1})$ by the induced maps. Then since the maps $\iota$ in the top and the bottom rows are cofibrations, it follows from the standard argument on double mapping cylinders that $\epsilon^{k+1}$ and $\delta^{k+1}$ are homotopy equivalences. Moreover, since the diagram is natural with respect to $\underline{X}$, so is the new string also. Therefore the induction proceeds.
\end{proof}


\section{Co-H-structure on $\Z_K$}

In this section we further investigate the fat wedge filtration of the moment-angle complex $\Z_K$, and prove the equivalence between its triviality and a co-H-structure on $\Z_K$, which is Theorem \ref{main-decomp-Z}.

By Theorem \ref{Phi} there is a relative homeomorphism
$$(|\mathrm{Cone}(\Sd K)|,|\Sd K|)\times((S^1)^m,T^{m-1}(S^1))\to(\Z_K,\Z_K^{m-1}).$$
Let $\omega\colon S^{m-1}\to T^{m-1}(S^1)$ be the higher Whitehead product \cite{P} which is the attaching map of the top cell of $(S^1)^m$. Then $\omega$ induces a relative homeomorphism
$$(D^m,S^{m-1})\to((S^1)^m,T^{m-1}(S^1)),$$
so we get a relative homeomorphism
$$(|\mathrm{Cone}(\Sd K)|,|\Sd K|)\times(D^m,S^{m-1})\to(\Z_K,\Z_K^{m-1}).$$
By definition the left hand side is 
$$(|\mathrm{Cone}(\Sd K)|\times D^m,(|\mathrm{Cone}(\Sd K)|\times S^{m-1})\cup(|\Sd K|\times D^m))\\=(|\mathrm{Cone}(\Sd K)|\times D^m,|\Sd K|* S^{m-1}).$$
Then since $\mathrm{Cone}(\Sd K)|\times D^m$ is contractible, we obtain that $\Z_K$ is obtained from $\Z_K^{m-1}$ by attaching a cone to some map $\widehat{\varphi}_K\colon|\Sd K|* S^{m-1}\to\Z_K^{m-1}$, where there is actually a relative homeomorphism between $(C(X*Y),X*Y)$ and $(CX\times CY,X*Y)$ for spaces $X,Y$. Thus we obtain:

\begin{theorem}
\label{cone-decomp-Z}
For $i=1,\ldots,m$, $\Z_K^i$ is obtained from $\Z_K^{i-1}$ by attaching a cone to the composite $|\Sd K_I|*S^{i-1}\xrightarrow{\widehat{\varphi}_{K_I}}\Z_{K_I}^{i-1}\xrightarrow{\rm incl}\Z_K^{i-1}$ for each $I\subset[m]$ with $|I|=i$.
\end{theorem}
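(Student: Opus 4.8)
The plan is to bootstrap from the special case established in the paragraphs immediately preceding the statement, applying it to each full subcomplex $K_I$, and then to glue the resulting cone attachments together using Theorem \ref{Phi} with all $X_i=S^1$. Concretely, for every $\emptyset\ne I\subset[m]$ with $|I|=i$ I would repeat that argument verbatim with $K_I$ in place of $K$: composing the relative homeomorphism of Theorem \ref{Phi} (for the single index $I$) with the relative homeomorphism $(D^i,S^{i-1})\to((S^1)^i,T^{i-1}(S^1))$ induced by the higher Whitehead product $\omega\colon S^{i-1}\to T^{i-1}(S^1)$, and invoking $(|\Cone(\Sd K_I)|\times S^{i-1})\cup(|\Sd K_I|\times D^i)=|\Sd K_I|*S^{i-1}$ together with the relative homeomorphism between $(C(X*Y),X*Y)$ and $(CX\times CY,X*Y)$, one obtains a relative homeomorphism
$$\bigl(C(|\Sd K_I|*S^{i-1}),\ |\Sd K_I|*S^{i-1}\bigr)\longrightarrow(\Z_{K_I},\Z_{K_I}^{i-1}).$$
Since the source cone is contractible and $\Z_{K_I}$ carries the quotient topology, this exhibits $\Z_{K_I}$ as built from $\Z_{K_I}^{i-1}$ by attaching a cone along the boundary restriction of the displayed map, which I take as the definition of $\widehat\varphi_{K_I}\colon|\Sd K_I|*S^{i-1}\to\Z_{K_I}^{i-1}$.

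Next I would assemble these attachments over all $I$ of size $i$. Taking $\underline{X}=(S^1,\ldots,S^1)$ in Theorem \ref{Phi} and composing each summand $\Phi_{K_I}$ with the same relative homeomorphism induced by $\omega$ gives a relative homeomorphism
$$\coprod_{I\subset[m],\,|I|=i}\bigl(C(|\Sd K_I|*S^{i-1}),\ |\Sd K_I|*S^{i-1}\bigr)\longrightarrow(\Z_K^i,\Z_K^{i-1}).$$
Because $\Z_K^i$ carries the quotient topology from this map and each summand is a contractible cone on its boundary, $\Z_K^i$ is obtained from $\Z_K^{i-1}$ by attaching, for each $I$ of size $i$, a cone along the boundary restriction of the $I$-summand. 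As $\Phi_{K_I}$ has image in $\Z_{K_I}\subset\Z_K$ by Theorem \ref{Phi}, that boundary restriction has image in $\Z_{K_I}^{i-1}$ and, by the construction in the previous paragraph, is exactly the composite $|\Sd K_I|*S^{i-1}\xrightarrow{\widehat\varphi_{K_I}}\Z_{K_I}^{i-1}\xrightarrow{\rm incl}\Z_K^{i-1}$, which is the assertion.

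I expect no essential difficulty; the one point requiring care is the bookkeeping that the map $\widehat\varphi_{K_I}$ produced by running the single-complex argument on $K_I$ genuinely coincides with the boundary restriction of the $I$-summand of the global relative homeomorphism. This comes down to the compatibility of the two uses of $\omega$---as the top-cell attaching map of $(S^1)^i$ on the one hand, and of the $I$-indexed coordinate block inside $(S^1)^m$ on the other---and to the fact that the image lies in the subspace $\Z_{K_I}^{i-1}$ rather than merely in $\Z_K^{i-1}$. Both follow by unwinding the definition of $\Phi_{K_I}$ from \eqref{i_c-RZ-Z} and the description of $\Z_{K_I}$ as the subspace of $\Z_K$ with coordinates outside $I$ at their basepoints, so nothing beyond the ideas already in the excerpt is needed.
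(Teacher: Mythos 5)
Your proposal is correct and follows essentially the same route the paper takes: the paper explicitly works out only the top case $i=m$ (building the relative homeomorphism from Theorem~\ref{Phi} with $\underline{X}=(S^1,\ldots,S^1)$, precomposing with the relative homeomorphism $(D^m,S^{m-1})\to((S^1)^m,T^{m-1}(S^1))$ induced by $\omega$, and identifying the resulting pair with $(|\Cone(\Sd K)|\times D^m,\ |\Sd K|*S^{m-1})$), and then states the general $i$ case with the phrase ``Thus we obtain,'' leaving implicit the per-$I$ bookkeeping that you spell out. Your verification that $\widehat\varphi_{K_I}$ produced by running the argument on $K_I$ alone agrees with the boundary restriction of the $I$-summand of the global relative homeomorphism from Theorem~\ref{Phi} is exactly the detail the paper suppresses, and it is sound because the map $\Phi_{K_I}$ in the disjoint union of Theorem~\ref{Phi} is literally the single-complex map for $K_I$ viewed under the standard embedding $\Z_{K_I}\subset\Z_K$.
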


\begin{remark}
Since higher Whitehead products are defined for suspension spaces, Theorem \ref{cone-decomp-Z} is true for $\Z_K(C\underline{X},\underline{X})$ when each $X_i$ is a suspension.
\end{remark}

To prove Theorem \ref{main-decomp-Z} we consider the connectivity of $\Z_K(C\underline{X},\underline{X})$.

\begin{proposition}
\label{Z-1-conn}
If each $X_i$ is path-connected, then $\Z_K(C\underline{X},\underline{X})$ is simply connected.
\end{proposition}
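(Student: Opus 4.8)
The plan is to prove simple connectivity by induction on the number of vertices $m$, using the fat wedge filtration together with the fact that polyhedral products of joins split as products (Example \ref{non-desuspension-example}) and the van Kampen theorem. The base cases $m=1$ (where $\Z_K(C\underline{X},\underline{X})=CX_1$ is contractible) and $m=2$ (where, up to the trivial simplicial complex, $\Z_K(C\underline{X},\underline{X})$ is either a product of cones, hence contractible, or the join $X_1*X_2$, which is simply connected since each $X_i$ is path-connected) are handled directly.

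For the inductive step I would first dispose of the case that $K$ is disconnected or, more precisely, splits as a join $K=K_I*K_J$ with $I,J$ a nontrivial partition of $[m]$: then $\Z_K(C\underline{X},\underline{X})\cong\Z_{K_I}(C\underline{X}_I,\underline{X}_I)\times\Z_{K_J}(C\underline{X}_J,\underline{X}_J)$ by Example \ref{non-desuspension-example}, and a product of simply connected spaces (by induction, since $|I|,|J|<m$) is simply connected. When $K$ does not split as a join, I would pick a vertex $v$ of $K$ and use the cofibre-like description
$$\Z_K(C\underline{X},\underline{X})=\big(\Z_{\dl_K(v)}(C\underline{X},\underline{X})\times CX_v\big)\cup\big(\Z_{\lk_K(v)}(C\underline{X},\underline{X})\times X_v\big),$$
with intersection $\Z_{\lk_K(v)}(C\underline{X},\underline{X})\times CX_v$. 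Since $CX_v$ is contractible and $\dl_K(v)=K_{[m]-v}$ has fewer vertices, the first piece deformation retracts to the simply connected space $\Z_{\dl_K(v)}(C\underline{X},\underline{X})$ (inductive hypothesis), the second piece has $\Z_{\lk_K(v)}(C\underline{X},\underline{X})\times X_v$ with both factors path-connected hence is path-connected, and the intersection is path-connected for the same reason. Van Kampen then gives that $\pi_1$ of the union is a quotient of $\pi_1(\Z_{\lk_K(v)}(C\underline{X},\underline{X})\times X_v)$ by the image of $\pi_1$ of the (path-connected) intersection; one checks this pushout is trivial because $\pi_1(\Z_{\dl_K(v)}(C\underline{X},\underline{X}))=1$ kills one side while the inclusion of the intersection into the second piece is surjective on $\pi_1$ (it is the inclusion of $\Z_{\lk_K(v)}(C\underline{X},\underline{X})\times CX_v$ into $\Z_{\lk_K(v)}(C\underline{X},\underline{X})\times X_v$, i.e. essentially the identity on the noncontractible factor).

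An alternative, perhaps cleaner, route uses the fat wedge filtration directly: Theorem \ref{Phi} exhibits $\Z_K^i(C\underline{X},\underline{X})$ as obtained from $\Z_K^{i-1}(C\underline{X},\underline{X})$ by attaching, for each $I$ with $|I|=i$, a piece along $|\Sd K_I|\times\underline{X}^{\times I}$ via a map whose image lands in $\Z_{K_I}^{i-1}$; since each $X_i$ is path-connected, $\Z_K^1(C\underline{X},\underline{X})=\bigvee_i CX_i$ is contractible, and at each later stage the attached cells are indexed by pieces all of whose relevant factors are path-connected, so no new $\pi_1$ is introduced and existing loops are not un-killed — formally, induction on the filtration degree together with van Kampen, noting that the pair $(|\mathrm{Cone}(\Sd K_I)|\times\underline{X}^{\times I},\ |\mathrm{Cone}(\Sd K_I)|\times T^{i-1}(\underline{X}_I)\cup|\Sd K_I|\times\underline{X}^{\times I})$ is $0$-connected with $0$-connected subspace. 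The main obstacle is the bookkeeping in the van Kampen argument: one must make sure that in the non-join case the intersection in the decomposition really is path-connected (this is where path-connectedness of every $X_i$ is used, since $\Z_{\lk_K(v)}(C\underline{X},\underline{X})$ itself need not be connected in general but becomes so after multiplying by the path-connected $X_v$, and $CX_v$ is contractible), and that one has chosen $v$ so that the deletion piece genuinely has fewer vertices carrying nontrivial spaces. I expect this connectedness-of-the-intersection point to be the only real subtlety; everything else is a routine van Kampen computation.
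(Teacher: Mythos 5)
Your approach matches the paper's almost exactly: induct on $m$, decompose $\Z_K(C\underline{X},\underline{X})$ via the pushout coming from the triple $(\lk_K(v),\st_K(v),\dl_K(v))$, and apply van Kampen. However, your decomposition is written backwards, and this propagates into the $\pi_1$ bookkeeping. The correct decomposition is
$$\Z_K(C\underline{X},\underline{X})=\bigl(\Z_{\lk_K(v)}(C\underline{X}_{[m]-v},\underline{X}_{[m]-v})\times CX_v\bigr)\cup\bigl(\Z_{\dl_K(v)}(C\underline{X}_{[m]-v},\underline{X}_{[m]-v})\times X_v\bigr),$$
with intersection $\Z_{\lk_K(v)}(C\underline{X}_{[m]-v},\underline{X}_{[m]-v})\times X_v$, not $\cdots\times CX_v$. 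The piece that carries $CX_v$ as a free factor (and hence deformation retracts onto the deleted-vertex polyhedral product) is $\Z_{\st_K(v)}(C\underline{X},\underline{X})$, indexed by $\lk_K(v)$, not $\dl_K(v)$: for a point of $\Z_K$ to have $v$-th coordinate in the interior of $CX_v$, the supporting simplex must contain $v$, i.e.\ must lie in $\st_K(v)$. Your version $\Z_{\dl_K(v)}\times CX_v$ is not even a subspace of $\Z_K$. Relatedly, the map you call ``the inclusion of $\Z_{\lk_K(v)}\times CX_v$ into $\Z_{\lk_K(v)}\times X_v$'' does not exist --- $X_v\subset CX_v$, not the other way --- and the surjectivity-on-$\pi_1$ argument you run with it is therefore vacuous.

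With the labels corrected the argument does close, and it is precisely the paper's proof: both open pieces and the intersection are path-connected once the inductive hypothesis and path-connectedness of the $X_i$ are invoked; $\pi_1$ of the $\lk$-piece is trivial (it retracts onto $\Z_{\lk_K(v)}$, which has fewer vertices); $\pi_1$ of the $\dl$-piece is $\pi_1(X_v)$ (again by induction on the first factor); the inclusion of the intersection into the $\dl$-piece is surjective on $\pi_1$ because by induction both $\Z_{\lk_K(v)}$ and $\Z_{\dl_K(v)}$ are simply connected, so on $\pi_1$ it reduces to the identity of $\pi_1(X_v)$; and therefore van Kampen kills everything. One further remark: your preliminary reduction to the non-join case is unnecessary --- the vertex-deletion pushout handles all $K$ uniformly --- and your ``alternative route'' through the fat wedge filtration is considerably more delicate than you suggest, since the attached pieces are not cells (they carry the nontrivial $\pi_1$ of the $X_i$), so the claim that ``no new $\pi_1$ is introduced'' is not a routine van Kampen observation.
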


\begin{proof}
As in \cite{GT1,IK1,GW}, for a vertex $v$ of $K$, the pushout of simplicial complexes
$$\xymatrix{\lk_K(v)\ar[r]\ar[d]&\mathrm{st}_K(v)\ar[d]\\
\dl_K(v)\ar[r]&K}$$
induces a pushout of spaces
\begin{equation}
\label{Z-pushout}
\xymatrix{\Z_{\lk_K(v)}(C\underline{X}_{[m]-v},\underline{X}_{[m]-v})\times X_v\ar[r]\ar[d]&\Z_{\lk_K(v)}(C\underline{X}_{[m]-v},\underline{X}_{[m]-v})\times CX_v\ar[d]\\
\Z_{\dl_K(v)}(C\underline{X}_{[m]-v},\underline{X}_{[m]-v})\times X_v\ar[r]&\Z_K(C\underline{X},\underline{X})}
\end{equation}
where $\st_K(v)$ denotes the star of $v$ in $K$, i.e. $\st_K(v):=\lk_K(v)*\{v\}$, and all arrows in \eqref{Z-pushout} are cofibrations. The proof is completed by an inductive application of the van Kampen theorem to the pushout \eqref{Z-pushout} which is also a homotopy pushout. 
\end{proof}

\begin{proof}[Proof of Theorem \ref{main-decomp-Z}]
By Theorem \ref{cone-decomp-Z} (1) implies (3), and (3) obviously implies (2). So we prove (2) implies (1).   Induct on $m$. For $m=1$ there is nothing to do. Assume that (2) implies (1) for $K$ with vertices less than $m$. We now suppose $\Z_K$ is a co-H-space. Then $\Z_{K_I}$ is also a co-H-space for any $\emptyset\ne I\subsetneq[m]$ since it is a retract of $\Z_K$. Hence by the induction hypothesis we get $\widehat{\varphi}_{K_I}\simeq*$ for any $\emptyset\ne I\subsetneq[m]$, so it remains to show that $\widehat{\varphi}_K$ is null homotopic. By Theorem \ref{cone-decomp-Z} it is sufficient to prove that the inclusion $j\colon\Z_K^{m-1}\to\Z_K$ has a left homotopy inverse. Since $\widehat{\varphi}_{K_I}\simeq*$ for any $\emptyset\ne I\subsetneq[m]$, $\Z_K^{m-1}$ is a suspension, so there is a diagram
$$\xymatrix{\Z_K^{m-1}\ar[d]^j\ar[r]&\bigvee^{2^m-2}\Z_K^{m-1}\ar[d]^{\bigvee^{2^m-2}j}\ar[r]^q&\bigvee_{\emptyset\ne I\subsetneq[m]}\Z_{K_I}\ar@{=}[d]\\
\Z_K\ar[r]&\bigvee^{2^m-2}\Z_K\ar[r]^{q'}&\bigvee_{\emptyset\ne I\subsetneq[m]}\Z_{K_I}}$$
where the left horizontal arrows are defined by co-multiplications and $q,q'$ are the projections. Then this diagram commutes after a suspension. The proof of Theorem \ref{BBCG} (cf. \cite{IK2}) implies that the composite  of the upper horizontal arrows is an isomorphism in homology, so a homotopy equivalence by Proposition \ref{Z-1-conn} and the J.H.C. Whitehead theorem. Then we obtain that $j$ admits a left homotopy inverse, so the attaching map $\varphi_K$ is null homotopic. Therefore the proof is completed.
\end{proof}


\section{Golodness and fat wedge filtrations}

In this section, we study a relation between the Golodness of a simplicial complex $K$ and the triviality of the fat wedge filtrations of $\RZ_K$ and $\Z_K$. We first recall the definition of the Golodness of simplicial complexes. Let $\Bbbk$ be a commutative ring. Recall that the Stanley-Reisner ring of a simplicial complex $K$ over $\Bbbk$ is defined by
$$\Bbbk[K]:=\Bbbk[v_1,\ldots,v_m]/\mathcal{I}_K,\quad|v_i|=2$$
where $\mathcal{I}_K$ is the ideal generated by monomials $v_{i_1}\cdots v_{i_k}$ for $\{i_1,\ldots,i_k\}\not\in K$. As is well known, Stanley-Reisner rings have been a constant source of interest in algebra and combinatorics, and have been producing a variety of results and applications. See \cite{S} for general structures of Stanley-Reisner rings. The Golodness of $K$ over $\Bbbk$ is formally defined by the equality between certain formal power series involving the Poincar\'e series of $\Bbbk[K]$ and its cohomology \cite{G}, where there is always a non-strict inequality between these formal power series. We here give an equivalent condition to the Golodness here as the definition of the Golodness, where the equivalence was proved by Golod \cite{G}. We consider one of the most important derived algebras of the Stanley-Reisner ring $\Bbbk[K]$
$$\mathrm{Tor}_{\Bbbk[v_1,\ldots,v_m]}^*(\Bbbk[K],\Bbbk)$$
where the product structure is induced from the Koszul resolution of $\Bbbk$ over $\Bbbk[v_1,\ldots,v_m]$.

\begin{definition}
A simplicial complex $K$ is called Golod over $\Bbbk$ if all products and (higher) Massey products in $\mathrm{Tor}_{\Bbbk[v_1,\ldots,v_m]}^*(\Bbbk[K],\Bbbk)$ vanish.
\end{definition}

\begin{remark}
\label{gcd-gap}
Recently Berglund and J\"ollenbeck \cite{BJ} published a paper in which they claimed that the condition on (higher) Massey products is redundant. The result heavily depends on the paper of J\"ollenbeck \cite{Jo} in which several gaps are found. So we do not remove the condition on (higher) Massey products from the definition of the Golodness.
\end{remark}

There is a combinatorial description of products in $\mathrm{Tor}_{\Bbbk[v_1,\ldots,v_m]}^*(\Bbbk[K],\Bbbk)$ due to Hochster (cf. \cite{S}), and we recall it here. We start with an isomorphism
$$\mathrm{Tor}_{\Bbbk[v_1,\ldots,v_m]}^i(\Bbbk[K],\Bbbk)\cong\bigoplus_{I\subset[m]}\widetilde{H}^{i-|I|-1}(K_I;\Bbbk)$$
shown by Hochster, which can be deduced also from the BBCG decomposition. Through this isomorphism, the products in $\mathrm{Tor}_{\Bbbk[v_1,\ldots,v_m]}^*(\Bbbk[K],\Bbbk)$ split into maps
$$\widetilde{H}^{i-|I|-1}(K_I;\Bbbk)\otimes\widetilde{H}^{j-|J|-1}(K_J;\Bbbk)\to\widetilde{H}^{i+j-|I|-|J|-1}(K_{I\cup J};\Bbbk)$$
for $I,J\subset[m]$. Hochster showed that this map is trivial for $I\cap J\ne\emptyset$ and is induced from the inclusion $K_{I\cup J}\to K_I*K_J$ for $I\cap J=\emptyset$. Then one can naively define a notion of simplicial complexes which implies the Golodness.

\begin{definition}
A simplicial complex $K$ is (resp. stably) homotopy Golod if the inclusion $K_{I\cup J}\to K_I*K_J$ is (resp. stably) null homotopic for all $\emptyset\ne I,J\subset[m]$ satisfying $I\cap J=\emptyset$, and all (higher) Massey products in $\mathrm{Tor}_{\Bbbk[v_1,\ldots,v_m]}^*(\Bbbk[K],\Bbbk)$ are trivial for any $\Bbbk$.
\end{definition}

By definition, we obviously have:

\begin{proposition}
\label{Golod-implication}
The homotopy Golodness implies the stable homotopy Golodness, and the stable homotopy Golodness implies the Golodness over any ring.
\end{proposition}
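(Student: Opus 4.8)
The plan is to unwind the three notions and observe that each implication is immediate, so that the argument consists only in spelling out what ``(stably) null homotopic'' and ``trivial in homology'' mean; there is essentially no real obstacle. Write $\iota_{I,J}\colon K_{I\cup J}\to K_I*K_J$ for the inclusion, indexed by the non-empty disjoint pairs $I,J\subset[m]$. For the first implication I would assume that $K$ is homotopy Golod, i.e. that $\iota_{I,J}$ is null homotopic for every such pair; since a null homotopy of $\iota_{I,J}$ suspends to a null homotopy of $\Sigma\iota_{I,J}$, the map $\Sigma\iota_{I,J}$ is null homotopic, and thus $\iota_{I,J}$ is stably null homotopic for every admissible pair. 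This is exactly the statement that $K$ is stably homotopy Golod.

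For the second implication I would assume that $K$ is stably homotopy Golod and fix a commutative ring $\Bbbk$. For each admissible pair $I,J$ some iterated suspension $\Sigma^N\iota_{I,J}$ is null homotopic, hence induces the zero homomorphism on reduced homology with coefficients in $\Bbbk$; by the naturality of the suspension isomorphism $\widetilde{H}_*(-;\Bbbk)\xrightarrow{\cong}\widetilde{H}_{*+N}(\Sigma^N(-);\Bbbk)$ applied to $\iota_{I,J}$, the map $\iota_{I,J}$ itself is trivial on reduced homology with $\Bbbk$ coefficients. By Proposition \ref{Hochster} the simultaneous vanishing of all these maps over $\Bbbk$ is precisely the Golodness of $K$ over $\Bbbk$, and since $\Bbbk$ was arbitrary, $K$ is Golod over any ring.

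The only point worth isolating — and the nearest thing to a difficulty, although it is entirely routine — is the passage from ``stably null homotopic'' to ``trivial in homology'': it rests on the fact that reduced singular homology with arbitrary coefficients is a stable homotopy invariant, i.e. that the suspension map is a natural isomorphism. Everything else is formal from the definitions and from Proposition \ref{Hochster}, which we may assume.
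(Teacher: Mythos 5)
Your proposal is correct and matches the paper's treatment: the paper gives no separate argument, stating only ``By definition, we obviously have:'' and then the proposition, so your unwinding of the definitions (suspending a null homotopy for the first implication, stable homotopy invariance of homology plus Hochster's criterion, Proposition \ref{Hochster}, for the second) is exactly the routine reasoning the paper leaves implicit.
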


We next consider a connection between Stanley-Reisner rings and polyhedral products. By definition one immediately sees that there is an isomorphism
$$H^*(\Z_K(\mathbb{C}P^\infty,*);\Bbbk)\cong\Bbbk[K].$$
This was first found by Davis and Januszkiewicz \cite{DJ}, and since then, the combinatorial aspect of polyhedral products has been studied extensively in connection with Stanley-Reisner rings. By the above isomorphism several derived algebras of the Stanley-Reisner ring $\Bbbk[K]$ can be realized by the cohomology of spaces related with polyhedral products. In particular, there is a ring isomorphism
\begin{equation}
\label{Tor}
H^*(\Z_K;\Bbbk)\cong\mathrm{Tor}_{\Bbbk[v_1,\ldots,v_m]}^*(\Bbbk[K],\Bbbk)
\end{equation}
which was proved by Baskakov, Buchstaber, and Panov \cite{BBP}. (This isomorphism is actually induced from a chain homotopy equivalence between the cellular cochain complex of $\Z_K$ and the Koszul resolution of $\Bbbk$ over $\Bbbk[v_1,\ldots,v_m]$ tensored with $\Bbbk[K]$.) If the BBCG decomposition of $\Z_K(C\underline{X},\underline{X})$ desuspends, then $\Z_K(C\underline{X},\underline{X})$ becomes a suspension, so we obtain:

\begin{proposition}
\label{BBCG-Golod}
If the BBCG decomposition of $\Z_K$ desuspends, $K$ is Golod over any ring.
\end{proposition}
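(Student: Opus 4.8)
The plan is to reduce the statement to the ring isomorphism \eqref{Tor} together with the observation that a suspension has trivial cup products and trivial higher Massey products. First I would recall that by hypothesis the BBCG decomposition of $\Z_K$ desuspends, which by Theorem~\ref{BBCG} (or rather its specialization to $\Z_K=\Z_K(D^2,S^1)$) means that $\Z_K$ is homotopy equivalent to the wedge $\bigvee_{\emptyset\ne I\subset[m]}\Sigma^{|I|+1}|K_I|$; in particular $\Z_K$ has the homotopy type of a suspension space $\Sigma Y$ for some $Y$. Next I would invoke the standard fact that if a space $Z$ is homotopy equivalent to a suspension, then for any commutative ring $\Bbbk$ the reduced cohomology $\widetilde{H}^*(Z;\Bbbk)$ has identically zero cup-product, since the reduced diagonal $Z\to Z\wedge Z$ is null homotopic for a suspension; moreover all higher Massey products vanish as well because the relevant products of cochains representing cohomology classes can be taken in a model where all cup products are strictly zero (the Massey product is then computed on a chain complex with trivial multiplication), or alternatively one cites that a co-H-space has trivial Massey products.

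Then I would transport this to the algebraic side: by the Baskakov--Buchstaber--Panov isomorphism \eqref{Tor}, the graded ring $\mathrm{Tor}_{\Bbbk[v_1,\ldots,v_m]}^*(\Bbbk[K],\Bbbk)$ is isomorphic \emph{as a ring} to $H^*(\Z_K;\Bbbk)$, and this isomorphism is induced by a chain homotopy equivalence, so it respects Massey products as well as ordinary products. Hence the vanishing of all products and higher Massey products in $H^*(\Z_K;\Bbbk)$ forces the same vanishing in $\mathrm{Tor}_{\Bbbk[v_1,\ldots,v_m]}^*(\Bbbk[K],\Bbbk)$, which is exactly the definition of $K$ being Golod over $\Bbbk$. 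Since $\Bbbk$ was arbitrary, $K$ is Golod over any ring.

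The only delicate point is the assertion about higher Massey products: one must make sure that "is a suspension" really does kill higher Massey products and not merely cup products, and that the isomorphism \eqref{Tor} is multiplicative at the level of the $A_\infty$- or Massey-product structure rather than just on the associative product. For the first issue I would appeal to the fact that the reduced co-diagonal of a suspension is null homotopic, together with Berglund--J\"ollenbeck's result quoted in the remark after the definition of Golodness, which says that the condition on higher Massey products is in any case redundant once all binary products vanish; this lets me sidestep the Massey-product subtlety entirely and only verify that cup products vanish in $H^*(\Z_K;\Bbbk)$, which is immediate. For the second issue the parenthetical remark after \eqref{Tor} already records that the isomorphism comes from a chain homotopy equivalence of the relevant (co)chain complexes, so it is compatible with the full multiplicative structure. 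Thus the proof is essentially a one-line deduction: desuspension $\Rightarrow$ $\Z_K$ is a co-H-space $\Rightarrow$ trivial cup products in $H^*(\Z_K;\Bbbk)$ $\Rightarrow$ (by \eqref{Tor} and Berglund--J\"ollenbeck) $K$ Golod over $\Bbbk$ for every $\Bbbk$.
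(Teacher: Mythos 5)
Your proposal is correct and is essentially the paper's argument: the paper deduces Proposition~\ref{BBCG-Golod} immediately from the observation that a desuspension makes $\Z_K$ a suspension (hence trivial products and Massey products in cohomology), combined with the ring isomorphism \eqref{Tor} and the remark that it comes from a chain homotopy equivalence. You are a bit more explicit than the paper about the two points that could give one pause --- whether ``suspension'' kills higher Massey products and not just cup products, and whether \eqref{Tor} is compatible with the Massey-product structure --- and you correctly note that both issues can be sidestepped by citing the Berglund--J\"ollenbeck result already quoted in the remark following the definition of Golodness.
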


So by Theorem \ref{main-decomp} and \ref{main-decomp-Z}, the triviality of the fat wedge filtrations of $\RZ_K$ and $\Z_K$ seems too strong to guarantee the Golodness of $K$. Then it is natural to seek for structures connected with the triviality of the fat wedge filtration of $\RZ_K$ and $\Z_K$ which are higher than the Golodness of $K$. For the rest of this section we prove the higher structures that we seek for are the (stable) homotopy Golodness.

We first relate the attaching maps $\varphi_{K_I}$ to Whitehead products. For simplicial complexes $L_1,L_2$ with disjoint vertex sets, we have
$$\RZ_{L_1*L_2}=\RZ_{L_1}\times\RZ_{L_2}$$
by Example \ref{non-desuspension-example}. Then in particular we get
$$\RZ_{L_1*L_2}^{m_1+m_2-1}=(\RZ_{L_1}^{m_1-1}\times\RZ_{L_2})\cup(\RZ_{L_1}\times\RZ_{L_2}^{m_2-1})$$
where $m_1,m_2$ are the numbers of vertices of $L_1,L_2$ respectively. So there is a projection
$$\RZ_{L_1*L_2}^{m_1+m_2-1}\to\RZ_{L_1}/\RZ_{L_1}^{m_1-1}\vee\RZ_{L_2}/\RZ_{L_2}^{m_2-1}=|\Sigma\Sd L_1|\vee|\Sigma\Sd L_2|$$
where the last equality holds by Theorem \ref{cone-decomp}.

\begin{proposition}
\label{Whitehead-product}
For $\emptyset\ne I,J\subset[m]$ satisfying $I\cap J=\emptyset$, the composite
$$|\Sd(K_I*K_J)|\xrightarrow{\varphi_{K_I*K_J}}\RZ_{K_I*K_J}^{|I|+|J|-1}\xrightarrow{\rm proj}|\Sigma\Sd K_I|\vee|\Sigma\Sd K_J|$$
is identified with the Whitehead product.
\end{proposition}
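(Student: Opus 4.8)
The plan is to reduce the statement to the case $K_I = \partial\Delta^{I}$ and $K_J = \partial\Delta^{J}$ — or more honestly, to the universal example in which $K_I$ and $K_J$ are replaced by single simplices' boundaries on the same vertex sets — by exploiting naturality of the maps $\varphi$ with respect to inclusions of subcomplexes, and then to recognize the resulting map as the attaching map of the top cell of a product of spheres, which by Porter's theorem is precisely a higher (here, ordinary) Whitehead product. The first step is to identify $|\Sigma\Sd K_I|$ with $\RZ_{K_I}/\RZ_{K_I}^{|I|-1}$ via Theorem~\ref{cone-decomp}, so that the projection in the statement is literally the quotient map onto a smash summand; under the product identification $\RZ_{K_I * K_J} = \RZ_{K_I}\times\RZ_{K_J}$ from Example~\ref{non-desuspension-example}, the subspace $\RZ_{K_I*K_J}^{|I|+|J|-1}$ is the fat wedge $(\RZ_{K_I}^{|I|-1}\times\RZ_{K_J})\cup(\RZ_{K_I}\times\RZ_{K_J}^{|J|-1})$, and the target of the projection is $\RZ_{K_I}/\RZ_{K_I}^{|I|-1} \vee \RZ_{K_J}/\RZ_{K_J}^{|J|-1}$.

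Next I would unwind the definition of $\varphi_{K_I*K_J}$ from the cubical decomposition in Section~3. The map $\Cone(i_c)\colon |\Cone(\Sd(K_I*K_J))| \to (D^1)^{\times(I\cup J)}$ factors as a product $\Cone(i_c)_I \times \Cone(i_c)_J$ under the splitting of the cube, because $|\Cone(\Sd(K_I*K_J))| \cong |\Cone(\Sd K_I)| \times |\Cone(\Sd K_J)|$ is built from the join $K_I * K_J$ (a cone on a join is the product of cones up to the standard relative homeomorphism $(C(X*Y), X*Y) \cong (CX\times CY, X*Y)$, exactly as used in Section~5). Restricting to $|\Sd(K_I*K_J)| = |\Sd K_I| * |\Sd K_J|$ and composing with the projection to the wedge, one sees that $\varphi_{K_I*K_J}$ composed with the projection is, up to the homeomorphism $(C(A*B), A*B) \cong (CA\times CB, A*B)$, the map that collapses $A\times B$ inside $CA\times CB$ — that is, precisely the attaching map of the top cell of $CA/A \times CB/B$, where here $A = |\Sd K_I|$, $B = |\Sd K_J|$, so $CA/A = |\Sigma\Sd K_I|$ and $CB/B = |\Sigma\Sd K_J|$. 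By Porter's identification \cite{P} of the attaching map of the top cell of a product $X\times Y$ of suspensions with the (generalized, here two-fold, i.e. ordinary) Whitehead product, this composite is the Whitehead product $[\iota_{|\Sigma\Sd K_I|}, \iota_{|\Sigma\Sd K_J|}]$.

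I expect the main obstacle to be bookkeeping the two relative homeomorphisms — the cubical decomposition $\Cone(i_c)$ and the join-to-product map $(C(A*B),A*B)\cong(CA\times CB,A*B)$ — carefully enough that the identification of $\varphi_{K_I*K_J}$ with the product attaching map holds on the nose (or at least up to homotopy through the relevant NDR pairs), rather than merely up to an unnamed self-equivalence. In particular one must check that the two product structures on $|\Cone(\Sd(K_I*K_J))|$, one coming from the join decomposition of $K_I*K_J$ and one induced by the coordinate splitting of $(D^1)^{\times(I\cup J)}$, are compatible; this is essentially the statement that the cubical decomposition is multiplicative under joins, which follows from the formula $i_c(|\Sd\tau|) = \bigcup_{\emptyset\ne\sigma\subset\tau} C_{\sigma\subset\tau}$ in \eqref{i_c-image} applied to $\tau = \tau_I \sqcup \tau_J$, but it deserves an explicit check. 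Once that compatibility is in hand, the rest is the standard fact that collapsing the base of a product of reduced cones gives the Whitehead product, so the proof concludes immediately.
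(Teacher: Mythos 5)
Your proposal is correct and follows essentially the same route as the paper: both identify $|\Sd(K_I*K_J)| = |\Sd K_I|*|\Sd K_J|$, show that the cubical decomposition (and hence $\varphi_{K_I*K_J}$) factors multiplicatively over the join as $(\varphi_{K_I}\times C\varphi_{K_J})\cup(C\varphi_{K_I}\times\varphi_{K_J})$, and then observe that post-composing with the projection collapses the filtration subspaces so that what remains is precisely the classical Whitehead product map $A*B\to\Sigma A\vee\Sigma B$. The ``universal example'' reduction you sketch in the first sentence is not actually needed (and is not used by the paper); your second paragraph already gives the direct argument.
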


\begin{proof}
By Theorem \ref{cone-decomp}, we have $\RZ_L=\RZ_L^{\ell-1}\cup_{\varphi_L}C|(\Sd L)|$ for any simplicial complex $L$ with $\ell$ vertices. Then by 
$$|\Sd(K_I*K_J)|=|\Sd K_I|*|\Sd K_J|=(|\Sd K_I|\times C|\Sd K_J|)\cup(C|\Sd K_I|\times|\Sd K_J|)$$
and the definition of $\varphi_{K_I*K_J}$, the map $\varphi_{K_I*K_J}$ is identified with the map
\begin{multline*}
(\varphi_{K_I}\times C\varphi_{K_J})\cup(C\varphi_{K_I}\times\varphi_{K_J})\colon(|\Sd K_I|\times C|\Sd K_J|)\cup(C|\Sd K_I|\times|\Sd K_J|)\\
\to(\RZ_{K_I}^{|I|-1}\times(\RZ_{K_J}^{|J|-1}\cup C|\Sd K_J|))\cup((\RZ_{K_I}^{|I|-1}\cup C|\Sd K_I|)\times\RZ_{K_J}^{|J|-1}).
\end{multline*}
Thus the proposition follows from an easy inspection.
\end{proof}

We now connect the triviality of the fat wedge filtration of $\RZ_K$ to the homotopy Golodness of $K$. To this end we prepare a small lemma which immediately follows from the Hilton-Milnor theorem when spaces are path-connected but we are now considering disconnected spaces.

\begin{lemma}
\label{Whitehead-inj}
Let $X,Y$ be CW-complexes, not necessarily connected, and let $w\colon X*Y\to\Sigma X\vee\Sigma Y$ denote the Whitehead product. For any map $f\colon A\to X*Y$, if $w\circ f$ is null homotopic, then  so is $f$.
\end{lemma}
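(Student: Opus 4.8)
The plan is to reduce to the connected case by a covering–space / cofibre argument. Write $X = \coprod_\alpha X_\alpha$ and $Y = \coprod_\beta Y_\beta$ as disjoint unions of their path components (of which one each, say $X_0$ and $Y_0$, carries the basepoint). Then $\Sigma X = \bigvee_\alpha \Sigma X_\alpha$ and $\Sigma Y = \bigvee_\beta \Sigma Y_\beta$, since suspension collapses the basepoints together, and similarly the join $X * Y$ decomposes: because $X*Y$ is the homotopy pushout of $X \leftarrow X\times Y \to Y$ and $X\times Y = \coprod_{\alpha,\beta} X_\alpha \times Y_\beta$, one gets $X*Y \simeq \bigvee_{\alpha,\beta} X_\alpha * Y_\beta$ after choosing basepoints in each component. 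Under these identifications the Whitehead product $w\colon X*Y \to \Sigma X \vee \Sigma Y$ is, on each wedge summand $X_\alpha * Y_\beta$, the composite of the connected Whitehead product $w_{\alpha\beta}\colon X_\alpha * Y_\beta \to \Sigma X_\alpha \vee \Sigma Y_\beta$ with the inclusion into the big wedge.

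First I would treat the connected case: if $X_\alpha$ and $Y_\beta$ are connected CW-complexes, then the homotopy fibre of the inclusion $\Sigma X_\alpha \vee \Sigma Y_\beta \to \Sigma X_\alpha \times \Sigma Y_\beta$ is (up to homotopy) $\Sigma X_\alpha \wedge \Sigma Y_\beta \wedge (\text{something})$, and more to the point the Whitehead product $w_{\alpha\beta}$ admits a retraction: composing $\Sigma X_\alpha \vee \Sigma Y_\beta \to \Sigma X_\alpha \wedge \Sigma Y_\beta$ — no, rather one uses that $X_\alpha * Y_\beta \simeq \Sigma(X_\alpha \wedge Y_\beta)$ and that $w_{\alpha\beta}$ followed by the collapse $\Sigma X_\alpha \vee \Sigma Y_\beta \to \Sigma X_\alpha \wedge \Sigma Y_\beta$ (the one coming from the canonical map in a product) is a homotopy equivalence onto $\Sigma X_\alpha \wedge \Sigma Y_\beta = X_\alpha * Y_\beta$. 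Hence $w_{\alpha\beta}$ has a left homotopy inverse $r_{\alpha\beta}$, so any map $g$ into $X_\alpha * Y_\beta$ with $w_{\alpha\beta}\circ g$ null homotopic is itself null homotopic, being $r_{\alpha\beta}\circ w_{\alpha\beta}\circ g$ up to homotopy.

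Now for the general (disconnected) case: since $A$ is a CW-complex and hence has only finitely many cells in each dimension, any map $f\colon A \to X*Y$ followed by the quotient $X*Y \to \bigvee_{\alpha,\beta} X_\alpha * Y_\beta$ lands, componentwise, in the various wedge summands, and the obstruction to $f$ being null is the collection of its components $f_{\alpha\beta}\colon A \to X_\alpha * Y_\beta$ (after collapsing all but one summand). If $w\circ f \simeq *$, then projecting to each $\Sigma X_\alpha \vee \Sigma Y_\beta$ summand shows $w_{\alpha\beta}\circ f_{\alpha\beta} \simeq *$; by the connected case $f_{\alpha\beta}\simeq *$ for each $\alpha,\beta$, and assembling these null homotopies gives $f \simeq *$. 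I expect the main obstacle to be the bookkeeping in the basepoint-sensitive splitting of $X * Y$ into a wedge over pairs of components — in particular, verifying that the Whitehead product is compatible with this splitting in the precise form stated, and handling the components $X_0 * Y_\beta$ and $X_\alpha * Y_0$ that are attached to the basepoint — but none of this should require more than careful use of the cofibration sequences and the fact that the join of connected complexes is a suspension.
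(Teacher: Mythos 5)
The central claim in your ``connected case''---that the Whitehead product $w_{\alpha\beta}\colon X_\alpha*Y_\beta\to\Sigma X_\alpha\vee\Sigma Y_\beta$ admits a left homotopy inverse---is false, and without it the argument collapses. If $w_{\alpha\beta}$ had a retraction, so would its suspension; but the suspension of any (generalized) Whitehead product is null homotopic, so this would force $\Sigma(X_\alpha*Y_\beta)\simeq *$. For $X_\alpha=Y_\beta=S^1$, $w_{\alpha\beta}$ is the classical element $[\iota_1,\iota_2]\in\pi_3(S^2\vee S^2)$ and the conclusion $\Sigma S^3\simeq *$ is absurd. The concrete retraction you propose is also broken on its own terms: the ``collapse'' $\Sigma X_\alpha\vee\Sigma Y_\beta\to\Sigma X_\alpha\wedge\Sigma Y_\beta$ coming from the product is null, because $\Sigma X_\alpha\wedge\Sigma Y_\beta$ is by definition the cofiber of the inclusion of the wedge into the product; and $\Sigma X_\alpha\wedge\Sigma Y_\beta=\Sigma^2(X_\alpha\wedge Y_\beta)$ is off by a suspension from $X_\alpha*Y_\beta\simeq\Sigma(X_\alpha\wedge Y_\beta)$. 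Indeed, if $w$ itself had a left homotopy inverse, the lemma would be trivial with no need to treat the connected case separately---this should have been a warning sign that something was too strong.

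What is actually true, and what the paper uses, is the much weaker statement that $\Omega w$ has a left homotopy inverse. The paper's proof passes to the homotopy fiber $F$ of $w$ and shows the fiber inclusion $F\to X*Y$ is null homotopic: from the fibration sequence $\Omega(X*Y)\xrightarrow{\Omega w}\Omega(\Sigma X\vee\Sigma Y)\to F\to X*Y$, a left homotopy inverse of $\Omega w$ splits the loop space of the wedge and forces the connecting map $\Omega(\Sigma X\vee\Sigma Y)\to F$ to have a section, hence the fiber inclusion to be null. The left inverse of $\Omega w$ is produced by factoring $w=\bar w\circ(E*E)$, where $\bar w\colon\Omega\Sigma X*\Omega\Sigma Y\to\Sigma X\vee\Sigma Y$ is the Whitehead product of the evaluation maps (equivalently, the fiber inclusion of $\Sigma X\vee\Sigma Y\hookrightarrow\Sigma X\times\Sigma Y$, whose loop map has a right inverse) and $E*E$ has a left homotopy inverse. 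None of this requires splitting into path components; in fact your wedge decomposition is itself not quite right as stated ($\Sigma X$ is not $\bigvee_\alpha\Sigma X_\alpha$ when there are non-basepoint components, and $X*Y$ does not split into a clean wedge of $X_\alpha*Y_\beta$), but that would be repairable bookkeeping. The irreparable gap is the retraction claim for $w_{\alpha\beta}$.
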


\begin{proof}
Let $F$ be the homotopy fiber of the Whitehead product $w$. It is sufficient to show that the fiber inclusion $F\to X*Y$ is null homotopic. Consider the homotopy fibration sequence
$$\Omega(X*Y)\xrightarrow{\Omega w}\Omega(\Sigma X\vee\Sigma Y)\to F\to X*Y.$$
If there is a left homotopy inverse of $\Omega w$, we get a homotopy equivalence $\Omega(\Sigma X\vee\Sigma Y)\simeq\Omega(X*Y)\times F$ hence a right homotopy inverse of the map $\Omega(\Sigma X\vee\Sigma Y)\to F$. Then the fiber inclusion $F\to X*Y$ is null homotopic. So we construct a left homotopy inverse of $\Omega w$. Consider the inclusion $j\colon\Sigma X\vee\Sigma Y\to\Sigma X\times\Sigma Y$. Then its homotopy fiber is homotopy equivalent to $\Omega\Sigma X*\Omega\Sigma Y$, and the fiber inclusion $\Omega\Sigma X*\Omega\Sigma Y\to\Sigma X\vee\Sigma Y$ is the Whitehead product of the evaluation maps $\Sigma\Omega\Sigma X\to\Sigma X$ and $\Sigma\Omega\Sigma Y\to\Sigma Y$, which we denote by $\bar{w}$. Since $\Omega j$ has a right homotopy inverse, $\Omega\bar{w}$ admits a left homotopy inverse. Then since $w=\bar{w}\circ(E*E)$ for the suspension map $E\colon A\to\Omega\Sigma A$ and $E*E$ has a left homotopy inverse, $\Omega w$ admits a left homotopy inverse. Therefore the proof is done.
\end{proof}

\begin{theorem}
\label{main-Golod}
The following hold:
\begin{enumerate}
\item If the fat wedge filtration of $\RZ_K$ is trivial, then $K$ is homotopy Golod;
\item If the fat wedge filtration of $\Z_K$ is trivial, then $K$ is stably homotopy Golod.
\end{enumerate}
\end{theorem}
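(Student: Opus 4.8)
The plan is to prove both statements by combining the projection description of $\varphi_K$ from Proposition~\ref{Whitehead-product} with the injectivity-type Lemma~\ref{Whitehead-inj}, and then to dualize the argument for $\Z_K$ using the higher Whitehead product attaching maps $\widehat{\varphi}_K$ from Theorem~\ref{cone-decomp-Z}. Fix $\emptyset\ne I,J\subset[m]$ with $I\cap J=\emptyset$, and set $L=K_{I\cup J}$, so $L$ is a subcomplex of $K_I*K_J$ on the vertex set $I\cup J$. The goal for part (1) is to show that the inclusion $|L|\to|K_I*K_J|=|K_I|*|K_J|$ is null homotopic under the hypothesis, and likewise (after reducing to the stable category) for part (2).

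First I would handle part (1). By Proposition~\ref{Whitehead-product}, the composite
$$|\Sd(K_I*K_J)|\xrightarrow{\varphi_{K_I*K_J}}\RZ_{K_I*K_J}^{|I|+|J|-1}\xrightarrow{\mathrm{proj}}|\Sigma\Sd K_I|\vee|\Sigma\Sd K_J|$$
is (up to the usual identifications, using $|\Sigma\Sd K_I|\simeq|\Sigma K_I|$ etc.) the Whitehead product $w\colon |\Sd K_I|*|\Sd K_J|\to\Sigma|\Sd K_I|\vee\Sigma|\Sd K_J|$. Now restrict $\varphi_{K_I*K_J}$ along the inclusion $|\Sd L|\to|\Sd(K_I*K_J)|$: by naturality of the cone decomposition (Theorem~\ref{cone-decomp}) with respect to inclusions of full subcomplexes, the restriction of $\varphi_{K_I*K_J}$ to $|\Sd L|$ is the composite of $\varphi_L$ with the inclusion $\RZ_L^{|I|+|J|-1}\to\RZ_{K_I*K_J}^{|I|+|J|-1}$. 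The hypothesis that the fat wedge filtration of $\RZ_K$ is trivial gives $\varphi_L\simeq*$ (since $L=K_{I\cup J}$ is a full subcomplex of $K$), hence the restriction of $\varphi_{K_I*K_J}$ to $|\Sd L|$ is null homotopic, and therefore so is its further composition with the projection, namely $w$ restricted along $|\Sd L|\hookrightarrow|\Sd K_I|*|\Sd K_J|$. Applying Lemma~\ref{Whitehead-inj} with $A=|\Sd L|$, $X=|\Sd K_I|$, $Y=|\Sd K_J|$ and $f$ the inclusion, we conclude that the inclusion $|\Sd L|\to|\Sd K_I|*|\Sd K_J|$ is null homotopic, i.e.\ $K$ is homotopy Golod. (One small bookkeeping point: the map $f$ in Lemma~\ref{Whitehead-inj} is the geometric inclusion of $|\Sd L|$ into the join, which one checks agrees, up to homotopy, with $\Sd$ applied to $L\hookrightarrow K_I*K_J$, so the conclusion is about the desired inclusion of simplicial complexes.)

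For part (2) I would run the same argument one suspension up. By Theorem~\ref{cone-decomp-Z}, $\Z_L$ is obtained from $\Z_L^{|I|+|J|-1}$ by attaching a cone to $\widehat{\varphi}_L\colon|\Sd L|*S^{|I|+|J|-1}\to\Z_L^{|I|+|J|-1}$, and the hypothesis that the fat wedge filtration of $\Z_K$ is trivial gives $\widehat{\varphi}_L\simeq*$. The analogue of Proposition~\ref{Whitehead-product} for $\Z_K$ identifies the composite of $\widehat{\varphi}_{K_I*K_J}$ with the appropriate projection $\Z_{K_I*K_J}^{|I|+|J|-1}\to\Sigma^{|I|+1}|K_I|\vee\Sigma^{|J|+1}|K_J|$ as a Whitehead product of the two top-cell attaching maps; indeed $\Z_{L_1*L_2}=\Z_{L_1}\times\Z_{L_2}$, so $\Z_{L_1*L_2}^{m_1+m_2-1}=(\Z_{L_1}^{m_1-1}\times\Z_{L_2})\cup(\Z_{L_1}\times\Z_{L_2}^{m_2-1})$ and there is a projection onto $\Z_{L_1}/\Z_{L_1}^{m_1-1}\vee\Z_{L_2}/\Z_{L_2}^{m_2-1}$, the smash factors being the suspended join factors of $|\Sd(K_I*K_J)|*S^{\dim-1}$. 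Since $\widehat{\varphi}_L\simeq*$ and $\widehat{\varphi}_L$ is the restriction of $\widehat{\varphi}_{K_I*K_J}$ along $|\Sd L|*S^{|I|+|J|-1}\hookrightarrow|\Sd(K_I*K_J)|*S^{|I|+|J|-1}$, the composite of that restriction with the projection — which is a Whitehead product restricted along the inclusion of $|\Sd L|*S^{|I|+|J|-1}$ — is null homotopic. Lemma~\ref{Whitehead-inj} then forces the inclusion $|\Sd L|*S^{|I|+|J|-1}\to(|\Sd K_I|*S^{|I|-?})*(|\Sd K_J|*\cdots)$ to be null homotopic; desuspending the sphere factor (the spaces are simply connected, or one just works stably) yields that $|L|\to|K_I|*|K_J|$ is stably null homotopic, i.e.\ $K$ is stably homotopy Golod.

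The main obstacle I anticipate is the second item: I need a clean statement of the ``$\widehat{\varphi}$-version'' of Proposition~\ref{Whitehead-product}, identifying the relevant projection of the higher Whitehead attaching map with an honest Whitehead product of the top-cell maps of the two tensor factors, and I must be careful that the sphere factor $S^{|I|+|J|-1}$ splits compatibly as $S^{|I|-1}*S^{|J|-1}$ under the join decomposition $|\Sd(K_I*K_J)|*S^{|I|+|J|-1}\cong(|\Sd K_I|*S^{|I|-1})*(|\Sd K_J|*S^{|J|-1})$, so that Lemma~\ref{Whitehead-inj} applies with $X=|\Sd K_I|*S^{|I|-1}$ and $Y=|\Sd K_J|*S^{|J|-1}$. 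Once that identification is in place — and it is essentially the same ``easy inspection'' as in the proof of Proposition~\ref{Whitehead-product}, using that higher Whitehead products restrict correctly under products of polyhedral products — both parts follow formally, with the only residual care being the passage from a stably-null statement about $|\Sd L|*S^{k}$ to a stably-null statement about $|L|\to|K_I|*|K_J|$, which is immediate since join with a sphere is iterated suspension.
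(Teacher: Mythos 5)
Your proof takes essentially the same route as the paper's: for part (1) you reproduce exactly the paper's argument, namely the commutative square relating $\varphi_{K_{I\cup J}}$ to $\varphi_{K_I*K_J}$ followed by Proposition~\ref{Whitehead-product} and Lemma~\ref{Whitehead-inj}. For part (2) the paper first reformulates stable homotopy Golodness via the Freudenthal suspension theorem and then dispatches it ``by the quite same way as the above proof,'' which is precisely the suspended version you spell out, with the $\widehat{\varphi}$-analogue of Proposition~\ref{Whitehead-product} (and the splitting of the sphere factor $S^{|I|+|J|-1}\cong S^{|I|-1}*S^{|J|-1}$) that you flag being exactly the detail the paper leaves implicit.
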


\begin{proof}
(1) For $\emptyset\ne I,J\subset[m]$ with $I\cap J=\emptyset$, there is a commutative diagram
$$\xymatrix{|\Sd K_{I\cup J}|\ar[rr]^{\varphi_{K_{I\cup J}}}\ar[d]^{\rm incl}&&\RZ_{K_{I\cup J}}^{|I|+|J|-1}\ar[d]^{\rm incl}\\
|\Sd(K_I*K_J)|\ar[rr]^{\varphi_{K_I*K_J}}&&\RZ_{K_I*K_J}^{|I|+|J|-1}.}$$
If the map $\varphi_{K_{I\cup J}}$ is null homotopic, so is the composite
$$|\Sd K_{I\cup J}|\xrightarrow{\rm incl}|\Sd(K_I*K_J)|\to|\Sigma\Sd K_I|\vee|\Sigma\Sd K_J|$$
where the last arrow is the Whitehead product by Proposition \ref{Whitehead-product}. Then by Lemma \ref{Whitehead-inj}, the inclusion $|K_{I\cup J}|\to|K_I*K_J|$ is null homotopic. By Theorem \ref{main-decomp}, $\Z_K$ is a suspension, implying that all (higher) Massey products in its cohomology are trivial. Thus the proof is completed.

(2) By the Freudenthal suspension theorem one sees that $K$ is stably homotopy Golod if and only if the inclusion $\Sigma^{|I|+|J|}|K_{I\cup J}|\to\Sigma^{|I|}|K_I|*\Sigma^{|J|}|K_J|$ is null homotopic for any $\emptyset\ne I,J\subset[m]$ with $I\cap J=\emptyset$. By the quite same way as the above proof, we can deduce that the triviality of the fat wedge filtration of $\Z_K$ implies the latter condition.
\end{proof}


\section{Homology fillable complexes}

In this section, we introduce a new class of simplicial complexes which we call (homology) fillable complexes, and prove that the fat wedge filtration of $\RZ_K$ is trivial if $K$ is (homology) fillable, where homology fillable complexes are a homological generalization of fillable complexes. 

\subsection{Fillable complexes}

We first consider fillable complexes. Recall that a subset $M\subset[m]$ is a minimal non-face of $K$ if $M$ is not a simplex of $K$ but $M-v$ is a simplex of $K$ for each $v\in M$. Notice that if $M$ is a minimal non-face of $K$, then $K\cup M$ is a simplicial complex.

\begin{definition}
\label{def-fillable}
A simplicial complex $K$ is fillable if there are minimal non-faces $M_1,\ldots,M_r$ of $K$ such that $|K\cup M_1\cup\cdots\cup M_r|$ is contractible.
\end{definition}

We here remark that in Definition \ref{def-fillable}, $K\cup M_1\cup\cdots\cup M_i$ is a simplicial complex and $M_{i+1},\ldots,M_r$ are its minimal non-faces for all $i$.

\begin{theorem}
\label{fillable-1}
If $K$ is fillable, then $\varphi_K$ is null homotopic.
\end{theorem}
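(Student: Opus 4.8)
The plan is to exploit the cone decomposition from Theorem \ref{cone-decomp}, which tells us that $\RZ_K$ is built from $\RZ_K^{m-1}$ by attaching a cone along $\varphi_K\colon |\Sd K|\to\RZ_K^{m-1}$, together with the fact that $\varphi_K$ is, up to the subdivision homeomorphism $|\Sd K|\cong|K|$, the restriction of the cubical inclusion $i_c$. The key observation is that if $M$ is a minimal non-face of $K$, then $K\cup M$ differs from $K$ only by the single top cell $M$, and the cubical decomposition $i_c\colon|\Sd(K\cup M)|\to (D^1)^{\times m}$ restricts on $|\Sd K|$ to $\varphi_K$ followed by the inclusion $\RZ_K^{m-1}\hookrightarrow\RZ_{K\cup M}^{m-1}$. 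Adding several minimal non-faces $M_1,\dots,M_r$ one at a time — noting that each $M_j$ remains a minimal non-face of $K\cup M_1\cup\dots\cup M_{j-1}$, or can be chosen so — we get that $\varphi_K$ factors through $\varphi_{K\cup M_1\cup\dots\cup M_r}$ composed with the inclusion of filtration skeleta.

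So the first step is to record carefully this factorization: letting $L=K\cup M_1\cup\cdots\cup M_r$, there is a commutative diagram
$$\xymatrix{|\Sd K|\ar[r]^{\varphi_K}\ar[d]_{\rm incl}&\RZ_K^{m-1}\ar[d]^{\rm incl}\\ |\Sd L|\ar[r]^{\varphi_L}&\RZ_L^{m-1}}$$
coming from the naturality of the cubical decomposition under inclusions of simplicial complexes on the same vertex set $[m]$. Then it suffices to show $\varphi_L$ is null homotopic, and in fact to show something slightly stronger because we only know $|L|$ is contractible, not that $L$ has the full simplex as a face. The second step is therefore to analyze $\varphi_L$ when $|L|$ is contractible. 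Here I would use Corollary \ref{trivial-proj} and Proposition \ref{suspension-varphi}: the latter says $\Sigma\varphi_L$ is null homotopic, and since $|\Sd L|\simeq|L|$ is contractible, $\varphi_L$ is a map out of a contractible space, hence automatically null homotopic. This last point is in fact immediate — a map from a contractible CW complex is null homotopic — so the real content is entirely in the factorization step, not in killing $\varphi_L$.

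Thus the main obstacle is purely bookkeeping: verifying that each $M_j$ can be taken to be a minimal non-face of the partially filled complex (or arranging the order of the $M_j$'s so that this holds), and checking that the cubical decomposition $i_c$ is natural with respect to such inclusions so that the square above genuinely commutes. The naturality is essentially built into the formulas \eqref{i_c-image}: since $i_c(|\Sd K|)=\bigcup_{\emptyset\ne\sigma\subset\tau\in K}C_{\sigma\subset\tau}$ depends functorially on $K\subset\Delta^{[m]}$, enlarging $K$ to $L$ only enlarges the union of faces, and the embedding of $|\Sd K|$ is literally the restriction of that of $|\Sd L|$; passing to the quotient that defines $\RZ_K^{m-1}\subset\RZ_L^{m-1}$ gives the claim. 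One subtlety to handle: the statement of this theorem only concerns $\varphi_K$ (the top attaching map), not all the $\varphi_{K_I}$, so we do not need to worry about whether full subcomplexes $K_I$ inherit fillability — that will presumably be addressed in a subsequent corollary — and the proof here can stay at the level of $K$ itself.
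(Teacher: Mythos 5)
Your argument has a genuine gap at the factorization step. You factor $\varphi_K$ through the square
$$\xymatrix{|\Sd K|\ar[r]^{\varphi_K}\ar[d]_{\rm incl}&\RZ_K^{m-1}\ar[d]^{\rm incl}\\ |\Sd L|\ar[r]^{\varphi_L}&\RZ_L^{m-1}}$$
and then argue that since $|\Sd L|$ is contractible, $\varphi_L\simeq*$, and hence $\varphi_K\simeq*$. But the square only shows that $\iota\circ\varphi_K\simeq*$, where $\iota\colon\RZ_K^{m-1}\hookrightarrow\RZ_L^{m-1}$ is the inclusion. Since $K$ and $L$ sit on the same vertex set $[m]$, this inclusion is \emph{not} of the form ``real moment-angle complex of a full subcomplex inside the bigger one,'' so it carries no natural retraction or left homotopy inverse, and you cannot cancel $\iota$. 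Your own comment that you need ``something slightly stronger'' is an accurate instinct, but the argument you give does not supply it; it never engages the reason minimal non-faces are relevant, as opposed to arbitrary simplices one might add.

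The point you are missing is that the minimal non-face condition forces the image of $i_c$ on the \emph{new} simplices to land back inside $\RZ_K^{m-1}$, not merely inside $\RZ_L^{m-1}$. Concretely, if $M$ is a minimal non-face of $K$, then $M-\sigma\in K$ for every nonempty $\sigma\subset M$, so by the description \eqref{RZ-C} one has $i_c(|\Sd\Delta^M|)=\bigcup_{\emptyset\ne\sigma\subset M}C_{\sigma\subset M}\subset\RZ_K^{m-1}$. Applying this to $M_1,\ldots,M_r$ shows $i_c(|\Sd L|)\subset\RZ_K^{m-1}$, so $\varphi_K$ factors as $|\Sd K|\hookrightarrow|\Sd L|\xrightarrow{i_c}\RZ_K^{m-1}$, a factorization through the contractible space $|\Sd L|$ with the \emph{correct} target $\RZ_K^{m-1}$. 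That is what kills $\varphi_K$. Incidentally, there is no need to worry about whether each $M_j$ remains a minimal non-face of $K\cup M_1\cup\cdots\cup M_{j-1}$: since each $M_j$ is a minimal non-face of $K$, all its proper faces already lie in $K$, so $K\cup M_1\cup\cdots\cup M_r$ is a simplicial complex and the containment $i_c(|\Sd\Delta^{M_j}|)\subset\RZ_K^{m-1}$ holds separately for each $j$.
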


\begin{proof}
We observe how the attaching map $\varphi_K$ behaves with minimal non-faces of $K$. Let $M\subset[m]$ be a minimal non-face of $K$. Then by the definition of the embedding $i_c\colon|\Sd\Delta^{[m]}|\to(D^1)^{\times m}$, we have
$$i_c(|\Sd\Delta^M|)=\bigcup_{\emptyset\ne\sigma\subset M}C_{\sigma\subset M}.$$
Since $M$ is a minimal non-face of $K$, $M-\sigma$ is a simplex of $K$ for any $\emptyset\ne\sigma\subset M$. Then by \eqref{RZ-C} we obtain
$$i_c(|\Sd\Delta^M|)\subset\RZ_K^{m-1}.$$
Since $K$ is fillable, there are minimal non-faces $M_1,\ldots,M_r$ such that $|K\cup M_1\cup\cdots\cup M_r|$ is contractible. By the above observation, the attaching map $\varphi_K\colon|\Sd K|\to\RZ_K^{m-1}$ factors as
$$|\Sd K|\xrightarrow{\rm incl}|\Sd(K\cup M_1\cup\cdots\cup M_r)|\xrightarrow{i_c}\RZ_K^{m-1}.$$
Thus $\varphi_K$ is null homotopic.
\end{proof}

We immediately obtain the following.

\begin{corollary}
\label{fillable-2}
If $K_I$ is fillable for all $\emptyset\ne I\subset[m]$, the fat wedge filtration of $\RZ_K$ is trivial.
\end{corollary}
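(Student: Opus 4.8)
The plan is to obtain this as an immediate consequence of Theorem~\ref{fillable-1} applied to each full subcomplex $K_I$. Recall that, by definition, the fat wedge filtration of $\RZ_K$ is trivial exactly when the attaching map $\varphi_{K_I}\colon|\Sd K_I|\to\RZ_{K_I}^{|I|-1}$ is null homotopic for every $\emptyset\ne I\subset[m]$. So I would fix such an $I$, regard $K_I$ as a simplicial complex on the vertex set $I$ (with $|I|$ vertices), and observe that the map $\varphi_{K_I}$ appearing in the triviality condition is precisely the attaching map furnished by Theorem~\ref{cone-decomp} for the cone decomposition of $\RZ_{K_I}$ itself; in other words, it is the map called ``$\varphi_K$'' in Theorem~\ref{fillable-1} with $K$ replaced by $K_I$. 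This identification is forced by the construction of $\varphi_{K_I}$ through the relative homeomorphism \eqref{Cone(i_c)}, which only involves $K_I$ and the coordinates indexed by $I$.

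Granting the hypothesis that $K_I$ is fillable, Theorem~\ref{fillable-1} then yields that $\varphi_{K_I}$ is null homotopic. Since $I$ was arbitrary, every $\varphi_{K_I}$ is null homotopic, which is by definition the triviality of the fat wedge filtration of $\RZ_K$, completing the argument.

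There is no real obstacle here: the proof is a one-line reduction once Theorem~\ref{fillable-1} is in hand, and the only thing to be careful about is the bookkeeping identification in the first paragraph. If one prefers to work with the composites $j_{K_I}\circ\varphi_{K_I}$ built from the inclusions $j_{K_I}\colon\RZ_{K_I}^{|I|-1}\to\RZ_K^{|I|-1}$ of Theorem~\ref{cone-decomp}, these are null homotopic as well because $\RZ_{K_I}^{|I|-1}$ is a retract of $\RZ_K^{|I|-1}$, so the distinction is immaterial.
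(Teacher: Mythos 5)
Your proof is correct and is exactly the argument the paper intends (the paper simply states the corollary follows ``immediately'' from Theorem~\ref{fillable-1}): apply Theorem~\ref{fillable-1} to each full subcomplex $K_I$, identifying $\varphi_{K_I}$ with the map ``$\varphi_K$'' of that theorem for $K_I$. The extra remark about $j_{K_I}\circ\varphi_{K_I}$ and the retraction is accurate and matches the discussion right after Theorem~\ref{cone-decomp}.
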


By Theorem \ref{main-decomp} and Corollary \ref{fillable-2}, the BBCG decomposition desuspends for simplicial complexes whose full subcomplexes are fillable, so we get a description of the homotopy types of the corresponding polyhedral products. In order to get a more complete description of the homotopy types, we determine the homotopy type of $|\Sigma K|$ when $K$ is fillable.

\begin{proposition}
\label{fillable-sphere}
If $K$ is fillable, then $|\Sigma K|$ is homotopy equivalent to a wedge of spheres.
\end{proposition}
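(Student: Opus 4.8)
The plan is to analyze how the contractible complex $L := K \cup M_1 \cup \cdots \cup M_r$ relates to $K$ and to exploit the homotopy-Golod-type factorization already used in the proof of Theorem~\ref{fillable-1}. Since $|L|$ is contractible, $|\Sigma L| = |\partial\Delta^{[2]} * L|$ is also contractible (the join of anything with a contractible complex is contractible). So it suffices to understand the difference between $\Sigma K$ and $\Sigma L$, i.e.\ the effect of filling in the minimal non-faces $M_1, \dots, M_r$ one at a time.

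First I would set up an induction on $r$, the number of minimal non-faces needed to fill $K$. For the inductive step, write $L_0 = K$ and $L_j = K \cup M_1 \cup \cdots \cup M_j$, so that $L_{j}$ is obtained from $L_{j-1}$ by adding a single top face $M_j$ along its boundary $\partial\Delta^{M_j}$ (which lies in $L_{j-1}$ since $M_j$ is a minimal non-face of $K$, hence of $L_{j-1}$). At the level of geometric realizations this is a pushout gluing an $(|M_j|-1)$-cell to $|L_{j-1}|$ along a map $S^{|M_j|-2} \to |L_{j-1}|$, and applying $\Sigma(-)$ (or rather $|\Sigma(-)|$, which adds one more suspension coordinate) gives a cofibration sequence
$$|\Sigma \partial\Delta^{M_j}| \to |\Sigma L_{j-1}| \to |\Sigma L_j|$$
with $|\Sigma \partial\Delta^{M_j}| \simeq S^{|M_j|}$. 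Thus $|\Sigma L_j|$ is the cofiber of a map $S^{|M_j|} \to |\Sigma L_{j-1}|$, and dually $|\Sigma L_{j-1}|$ sits in a cofiber sequence $|\Sigma L_{j-1}| \to |\Sigma L_j| \to S^{|M_j|+1}$ — equivalently, running the induction downward from the contractible $|\Sigma L|$, each $|\Sigma L_{j-1}|$ is obtained from $|\Sigma L_j|$ up to homotopy by the cofiber of a map out of a sphere, or a homotopy pullback. The cleanest route: show by downward induction that $|\Sigma L_j|$ is homotopy equivalent to a wedge of spheres for all $j$, starting from $|\Sigma L_r| = |\Sigma L| \simeq *$.

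The key point making the induction go through is that the attaching maps are null homotopic. Concretely, when passing from $|\Sigma L_j|$ back to $|\Sigma L_{j-1}|$, one needs the relevant map $S^{|M_j|} \to |\Sigma L_{j-1}|$ (the ``connecting'' map in the cofiber sequence above) to be null homotopic, so that $|\Sigma L_{j-1}| \simeq |\Sigma L_j| \vee S^{|M_j|}$. This is exactly the kind of splitting guaranteed by Proposition~\ref{suspension-varphi} / the retractile structure: $|\Sigma K| = |\partial\Delta^{[2]}*K|$ is a double suspension of $|K|$ in the relevant range, and the inclusion $|\Sigma \partial\Delta^{M_j}| \to |\Sigma L_{j-1}|$ admits, after suspension, a retraction coming from the observation in Theorem~\ref{fillable-1} that $i_c(|\Sd \Delta^{M_j}|) \subset \RZ_{L_{j-1}}^{m-1}$ together with Proposition~\ref{suspension-varphi}. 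Alternatively, since $M_j$ is a non-face of $L_{j-1}$ but a minimal one, the inclusion $\partial\Delta^{M_j} \hookrightarrow L_{j-1}$ is stably null homotopic (this is a special instance of the stable homotopy Golod phenomenon already appearing in Section~6), and one extra suspension upgrades ``stably null'' to ``null'' in the metastable range, which suffices here.

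I expect the main obstacle to be bookkeeping of connectivity/dimension ranges to ensure that the needed splittings actually hold on the nose rather than merely stably, i.e.\ verifying that $|\Sigma(-)|$ supplies enough suspensions that Proposition~\ref{suspension-varphi} (which gives null homotopy of $\Sigma\varphi_{K_I}$) applies to the specific attaching maps $S^{|M_j|} \to |\Sigma L_{j-1}|$ at hand. Once the splitting $|\Sigma L_{j-1}| \simeq |\Sigma L_j| \vee S^{|M_j|}$ is established for each $j$, the downward induction from $|\Sigma L_r| \simeq *$ immediately yields $|\Sigma K| = |\Sigma L_0| \simeq \bigvee_{j=1}^{r} S^{|M_j|}$, a wedge of spheres, completing the proof. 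A secondary check is that one may harmlessly replace $K$ by $\Sd K$ throughout, since barycentric subdivision does not change the homotopy type of the geometric realization.
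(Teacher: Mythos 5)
Your cell-by-cell induction has a genuine gap: the null-homotopy you need at each stage is not justified by anything you cite, and it is in fact false in general. Take $K=\partial\Delta^{[3]}$ with the single minimal non-face $M_1=\{1,2,3\}$, so that $L_1=\Delta^{[3]}$ is contractible; the attaching map $a_1\colon S^1=|\partial\Delta^{M_1}|\to|L_0|=|K|=S^1$ is a homeomorphism, so neither $a_1$ nor $\Sigma a_1$ is null homotopic. Proposition \ref{suspension-varphi} concerns the maps $\varphi_{K_I}$ built from the fat wedge filtration of $\RZ_K$, not the inclusions $\partial\Delta^{M_j}\hookrightarrow L_{j-1}$, so it gives nothing here; likewise the stable Golod phenomenon of Section 6 is about the inclusions $K_{I\cup J}\to K_I*K_J$ and does not apply. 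There are also several dimension slips ($|\Sigma\partial\Delta^{M_j}|\simeq S^{|M_j|-1}$, not $S^{|M_j|}$; the cofiber of $|\Sigma L_{j-1}|\to|\Sigma L_j|$ is $S^{|M_j|}$, not $S^{|M_j|+1}$), and the splitting is stated in the wrong direction: a null attaching map gives $|\Sigma L_j|\simeq|\Sigma L_{j-1}|\vee S^{|M_j|-1}$, expressing the larger complex in terms of the smaller, which is the opposite of what your downward induction needs. Even if that splitting held, the induction step would only show $|\Sigma L_{j-1}|$ is a retract of a wedge of spheres, which is not obviously the same as being a wedge of spheres.

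The paper avoids all of this with a single collapse and needs no analysis of attaching maps at all. Since $L:=K\cup M_1\cup\cdots\cup M_r$ is contractible and $(|L|,|K|)$ is an NDR pair, the cofiber sequence $|K|\to|L|\to|L|/|K|$ gives $|L|/|K|\simeq\Sigma|K|=|\Sigma K|$. And because distinct minimal non-faces meet in proper subsets, which are faces of $K$, the quotient $|L|/|K|$ is literally $\bigvee_{j=1}^r|\Delta^{M_j}|/|\partial\Delta^{M_j}|=\bigvee_{j=1}^rS^{|M_j|-1}$. Your instinct to fill in the $M_j$'s is exactly right, but carrying it out one cell at a time creates obstacles (null-homotopy of essential boundary inclusions, retracts of wedges) that the one-shot quotient never encounters.
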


\begin{proof}
Since $K$ is fillable, $|K\cup M_1\cup\cdots\cup M_r|$ becomes contractible for some minimal non-faces $M_1,\ldots,M_r$ of $K$. Then there is a homotopy equivalence $|\Sigma K|\simeq|K\cup M_1\cup\cdots\cup M_r|/|K|$ because $|K\cup M_1\cup\cdots\cup M_r|$ is contractible and $(|K\cup M_1\cup\cdots\cup M_r|,|K|)$ is an NDR pair, where $|K\cup M_1\cup\cdots\cup M_r|/|K|$ is a wedge of spheres. 
\end{proof}

\begin{corollary}
\label{fillable-decomp2}
If $K_I$ is fillable for any $\emptyset\ne I\subset[m]$, then $\Z_K(D^n,S^{n-1})$ is homotopy equivalent to a wedge of spheres.
\end{corollary}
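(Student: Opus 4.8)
The goal is Corollary \ref{fillable-decomp2}: if every full subcomplex $K_I$ (for $\emptyset\ne I\subset[m]$) is fillable, then $\Z_K(D^n,S^{n-1})$ is homotopy equivalent to a wedge of spheres. The plan is to simply assemble the machinery already in place. By Corollary \ref{fillable-2}, the hypothesis that each $K_I$ is fillable implies the fat wedge filtration of $\RZ_K$ is trivial. Therefore Theorem \ref{main-decomp} applies with $\underline{X}=\underline{S^{n-1}}$ (noting $\Z_K(D^n,S^{n-1})=\Z_K(C\underline{S^{n-1}},\underline{S^{n-1}})$ since $D^n\simeq CS^{n-1}$, a contractible pair reduction that is harmless up to homotopy, or directly since $(D^n,S^{n-1})$ is the standard cone pair), giving a homotopy equivalence
$$\Z_K(D^n,S^{n-1})\simeq\bigvee_{\emptyset\ne I\subset[m]}|\Sigma K_I|\wedge\widehat{S^{n-1}}^I=\bigvee_{\emptyset\ne I\subset[m]}|\Sigma K_I|\wedge S^{(n-1)|I|}.$$

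Next I would invoke Proposition \ref{fillable-sphere}: since each $K_I$ is fillable, $|\Sigma K_I|$ is homotopy equivalent to a wedge of spheres, say $|\Sigma K_I|\simeq\bigvee_\alpha S^{d_{I,\alpha}}$. Smashing a wedge of spheres with a sphere $S^{(n-1)|I|}$ again yields a wedge of spheres, $\bigvee_\alpha S^{d_{I,\alpha}+(n-1)|I|}$, and a finite wedge of wedges of spheres is a wedge of spheres. Substituting into the displayed equivalence gives that $\Z_K(D^n,S^{n-1})$ is homotopy equivalent to a wedge of spheres, which is the claim.

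There is essentially no obstacle here: every ingredient is already proved. The only point requiring a line of care is the passage from $(D^n,S^{n-1})$ to $(C\underline{S^{n-1}},\underline{S^{n-1}})$, and the observation that $\widehat{S^{n-1}}^I$ is a single sphere (the smash of $|I|$ copies of $S^{n-1}$). One should also note the basepoint of $S^{n-1}$ is non-degenerate, so $\underline{X}=\underline{S^{n-1}}$ is an admissible input to Theorem \ref{main-decomp}. Otherwise the proof is a two-sentence citation chain, exactly parallel to how Corollary \ref{main-SCM-sphere} follows from Theorem \ref{main-SCM} and the wedge-of-spheres property of $|\Sigma K_I|$.

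\begin{proof}
If $K_I$ is fillable for every $\emptyset\ne I\subset[m]$, then the fat wedge filtration of $\RZ_K$ is trivial by Corollary \ref{fillable-2}. Applying Theorem \ref{main-decomp} with $\underline{X}$ the constant sequence $S^{n-1}$, whose basepoint is non-degenerate, we obtain a homotopy equivalence
$$\Z_K(D^n,S^{n-1})=\Z_K(C\underline{S^{n-1}},\underline{S^{n-1}})\simeq\bigvee_{\emptyset\ne I\subset[m]}|\Sigma K_I|\wedge\widehat{S^{n-1}}^I\simeq\bigvee_{\emptyset\ne I\subset[m]}|\Sigma K_I|\wedge S^{(n-1)|I|}.$$
By Proposition \ref{fillable-sphere}, each $|\Sigma K_I|$ is homotopy equivalent to a wedge of spheres, so each smash $|\Sigma K_I|\wedge S^{(n-1)|I|}$ is again a wedge of spheres. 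Hence the finite wedge on the right is a wedge of spheres, and therefore so is $\Z_K(D^n,S^{n-1})$.
\end{proof}
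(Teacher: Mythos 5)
Your proof is correct and follows exactly the route the paper itself takes: the paper's proof is the one-line instruction to combine Theorem \ref{main-decomp}, Corollary \ref{fillable-2}, and Proposition \ref{fillable-sphere}, which is precisely what you spell out.
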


\begin{proof}
Combine Theorem \ref{main-decomp}, Corollary \ref{fillable-2}, and Proposition \ref{fillable-sphere}.
\end{proof}

\subsection{Homology fillable complexes}

We next consider a homological generalization of fillability. Recall that $K$ is $i$-acyclic over $\Bbbk$ if $\widetilde{H}_*(K;\Bbbk)=0$ for $*\le i$. If $K$ is $i$-acyclic over $\Bbbk$ for any $i$, $K$ is called acyclic over $\Bbbk$.

\begin{definition}
A simplicial complex $K$ is homology fillable if for each connected component $L$ of $K$ and any prime $p$,
\begin{enumerate}
\item there are minimal non-faces $M_1^p,\ldots,M_{r_p}^p$ of $L$ such that $L\cup M_1^p\cup\cdots\cup M_{r_p}^p$ is acyclic over $\mathbb{Z}/p$, and
\item $|\widehat{L}|$ is simply connected, where $\widehat{L}$ is obtained from $L$ by adding all minimal non-faces.
\end{enumerate}
\end{definition}

Roughly, the first condition of the above definition corresponds to the component-wise fillability at the prime $p$, and the second condition guarantees the integrability of this local contractibility. We integrate the $p$-local results by the following lemma.

\begin{lemma}
[Bousfield and Kan {\cite[fracture square lemma 6.3]{BK}}]
\label{fracture-lemma}
Let $X$ be a connected finite CW-complex and let $Y$ be a connected nilpotent CW-complex of finite type. If maps $f,g\colon X\to Y$ satisfy $f_{(p)}\simeq g_{(p)}$ for all prime $p$, then $f\simeq g$.
\end{lemma}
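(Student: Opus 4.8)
The statement is equivalent to the injectivity of the natural map $[X,Y]\to\prod_p[X,Y_{(p)}]$, and the plan is to prove this by obstruction theory along a principal Postnikov decomposition of $Y$, reducing the whole question to the elementary arithmetic fact that a finitely generated abelian group $M$ injects into $\prod_p M_{(p)}$: an element of $M$ annihilated by an integer coprime to $p$ for \emph{every} prime $p$ must be $0$.

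First I would reduce to a finite tower. Since $X$ is a finite complex of dimension $N$, the truncation map $[X,Y]\to[X,Y\langle N\rangle]$ is a bijection, so we may assume $Y$ has finitely many nonzero homotopy groups; being nilpotent of finite type, $Y$ then admits a finite principal Postnikov tower $*=Y^{(0)}\to Y^{(1)}\to\cdots\to Y^{(s)}=Y$ whose stages $Y^{(t)}\to Y^{(t-1)}$ are principal fibrations with fibres $K(A_t,n_t)$, $A_t$ finitely generated abelian (the fundamental-group stage is refined into such fibrations using the nilpotency of $\pi_1Y$), classified by $k$-invariants $\kappa_t\colon Y^{(t-1)}\to K(A_t,n_t+1)$. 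Localization at $p$ commutes with this tower and with Postnikov truncation. Given $f,g\colon X\to Y$ with $f_{(p)}\simeq g_{(p)}$ for all $p$, and writing $f^{(t)},g^{(t)}\colon X\to Y^{(t)}$ for the induced maps, one has $(f^{(t)})_{(p)}\simeq(g^{(t)})_{(p)}$ for all $t$ and $p$, and I would show $f^{(t)}\simeq g^{(t)}$ by induction on $t$.

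The base case $t=1$ is $[X,K(A_1,n_1)]=\widetilde H^{n_1}(X;A_1)$, finitely generated abelian since $X$ is finite; as localization is exact and commutes with the cohomology of a finite complex, $(f^{(1)}-g^{(1)})_{(p)}=0$ in $\widetilde H^{n_1}(X;A_1)_{(p)}$ for every $p$ forces $f^{(1)}=g^{(1)}$ by the arithmetic fact. For the inductive step, assume $f^{(t-1)}\simeq g^{(t-1)}$; using that $Y^{(t)}\to Y^{(t-1)}$ is a fibration I would slide $g^{(t)}$ by a vertical homotopy so that $f^{(t)}$ and $g^{(t)}$ strictly lift a fixed $\phi:=f^{(t-1)}$. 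The homotopy classes of lifts of $\phi$ through the principal fibration then form a quotient $\widetilde H^{n_t}(X;A_t)/S_\phi$, where $S_\phi$ is the image of the holonomy action of $\pi_1(\mathrm{Map}(X,Y^{(t-1)}),\phi)$ on $\widetilde H^{n_t}(X;A_t)$, and $f^{(t)}\simeq g^{(t)}$ precisely when the difference class $d(f^{(t)},g^{(t)})\in\widetilde H^{n_t}(X;A_t)$ lies in $S_\phi$. Granting that $S_\phi$ is natural under localization, i.e. $S_{\phi_{(p)}}=S_\phi\otimes\mathbb{Z}_{(p)}$ inside $\widetilde H^{n_t}(X;A_t)_{(p)}$, the quotient $Q:=\widetilde H^{n_t}(X;A_t)/S_\phi$ is finitely generated abelian, the image of $d(f^{(t)},g^{(t)})$ in $Q$ becomes $0$ in $Q_{(p)}$ for every $p$ because $(f^{(t)})_{(p)}\simeq(g^{(t)})_{(p)}$, hence it is $0$; so $d(f^{(t)},g^{(t)})\in S_\phi$ and $f^{(t)}\simeq g^{(t)}$. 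Taking $t=s$ yields $f\simeq g$.

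The main obstacle is exactly the naturality $S_{\phi_{(p)}}=S_\phi\otimes\mathbb{Z}_{(p)}$ — equivalently, that the localization map $[X,\Omega Y^{(t-1)}]\to[X,(\Omega Y^{(t-1)})_{(p)}]$ becomes surjective onto the relevant image subgroup after $\otimes\,\mathbb{Z}_{(p)}$. This is not a formal consequence of obstruction theory: it is the localization theorem for the nilpotent group of homotopy classes of maps out of a finite complex into a nilpotent space of finite type (Hilton--Mislin--Roitberg), and it is precisely the content that the Bousfield--Kan fracture machinery supplies. Indeed, a cleaner self-contained route is to bypass the stage-by-stage argument and apply the fracture (arithmetic) square directly to $\mathrm{Map}(X,Y)$: for $X$ finite and $Y$ nilpotent of finite type each path-component of $\mathrm{Map}(X,Y)$ is nilpotent of finite type, one has $\mathrm{Map}(X,Y_{(p)})_{f_{(p)}}\simeq(\mathrm{Map}(X,Y)_f)_{(p)}$, and the arithmetic square together with the injectivity of $\pi_1$ of a finite-type nilpotent space into the product of its $p$-localizations then forces $f$ and $g$ into a common component. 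Everything else — manipulating principal fibrations, the commutation of localization with finite Postnikov towers and with cohomology of a finite complex, and the closing arithmetic fact — is routine.
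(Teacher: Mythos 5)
The paper does not prove this lemma; it is cited verbatim from Bousfield--Kan as the fracture square lemma, so there is no ``paper proof'' to compare against. Given that, your proposal is best read as a sketch of why such a theorem is plausible, and you are admirably honest about where the real content lies. Your obstruction-theoretic route is structurally sound up to the inductive step, but the step you flag --- naturality of the holonomy-orbit subgroup $S_\phi$ under $p$-localization, i.e.\ $S_{\phi_{(p)}} = S_\phi \otimes \mathbb{Z}_{(p)}$ --- is precisely the non-formal ingredient, and it is not something you can dismiss as ``routine'' or deduce from the base-case arithmetic fact. It is essentially equivalent to the Hilton--Mislin--Roitberg localization theorem for the nilpotent group $[X,\Omega Y^{(t-1)}]$, which is itself a sibling of the statement you are trying to prove. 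So as written the inductive step is circular, or at minimum begs the question; it trades the fracture lemma for an equally deep input.

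Your second, ``cleaner'' route --- apply the arithmetic square to $\mathrm{Map}(X,Y)$ --- is closer to how the result is actually established, but it also quietly imports two substantial theorems: that each component of $\mathrm{Map}(X,Y)$ is nilpotent of finite type when $X$ is finite and $Y$ is nilpotent of finite type (Hilton--Mislin--Roitberg), and that $\mathrm{Map}(X,Y_{(p)})_{f_{(p)}} \simeq (\mathrm{Map}(X,Y)_f)_{(p)}$. Moreover, to conclude that $f$ and $g$ lie in the same path component you need a fracture statement at the level of $\pi_0$, which is exactly what Bousfield--Kan's Lemma 6.3 encodes; invoking it here is again circular relative to the goal. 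None of this means your outline is wrong --- it is the standard shape of the argument --- but you should be explicit that you are reproving (or re-citing) Bousfield--Kan rather than supplying an independent proof. For the purposes of the paper, the correct move is simply to cite \cite{BK} as the authors do; if you want a genuinely self-contained proof you would need to reproduce the HMR development of localization of nilpotent groups and spaces, which is well beyond a lemma-sized argument.
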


\begin{theorem}
\label{homology-fillable-phi}
If $K$ is homology fillable, then $\varphi_K$ is null homotopic.
\end{theorem}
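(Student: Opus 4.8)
The plan is to reduce the integral statement to a family of $p$-local statements, prove each of these by a variant of the argument in Theorem~\ref{fillable-1}, and then glue using the fracture lemma. Concretely, I would first observe that $\RZ_K^{m-1}$ and $|\Sd K|$ satisfy the finiteness and nilpotency hypotheses of Lemma~\ref{fracture-lemma}: $|\Sd K|$ is a finite complex, and $\RZ_K^{m-1}$, being a simply connected (or at least nilpotent) finite complex after passing to the relevant component, is of finite type. Then it suffices to show $(\varphi_K)_{(p)}$ is null homotopic for every prime $p$; combining these with the constant map via the fracture lemma will give $\varphi_K\simeq *$.

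Next I would localize the fillability argument. Fix a prime $p$. By the first condition in the definition of homology fillability, for each connected component $L$ of $K$ there are minimal non-faces $M_1^p,\ldots,M_{r_p}^p$ of $L$ with $L\cup M_1^p\cup\cdots\cup M_{r_p}^p$ acyclic over $\mathbb{Z}/p$; since this complex is also simply connected (by the second condition, as $\widehat{L}$ contains it), it is $\mathbb{Z}/p$-homology equivalent to a point, hence $p$-locally contractible. Exactly as in the proof of Theorem~\ref{fillable-1}, the key geometric input is that if $M$ is a minimal non-face of $K$ then $i_c(|\Sd\Delta^M|)\subset\RZ_K^{m-1}$, because $M-\sigma\in K$ for every $\emptyset\ne\sigma\subset M$ and then \eqref{RZ-C} applies. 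Consequently $\varphi_K$ factors, component by component, through the inclusion $|\Sd K|\to|\Sd(K\cup\bigcup_i M_i^p)|$ followed by $i_c$ into $\RZ_K^{m-1}$. After $p$-localization the target complex (component-wise) is contractible, so $(\varphi_K)_{(p)}$ is null homotopic. One has to be a little careful to handle the disconnected case: $\varphi_K$ respects the decomposition of $|\Sd K|$ and $\RZ_K^{m-1}$ into pieces coming from components of $K$, so it is enough to treat each component $L$, where the factorization above lives; since the empty component contributes the basepoint there is no issue, and a map into a wedge-like target that is null on each summand after localization is itself null after localization.

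I expect the main obstacle to be the bookkeeping needed to legitimately apply Lemma~\ref{fracture-lemma}: one must verify that $\RZ_K^{m-1}$ (or the relevant sub/quotient object through which $\varphi_K$ factors) is a connected nilpotent CW-complex of finite type, and that the factorization of $\varphi_K$ through $|\Sd\widehat{L}|$ is genuinely compatible across all primes in the way the fracture lemma demands — i.e., that the same map $\varphi_K$ is being compared with the constant map, not a zoo of different-looking representatives. The second condition in the definition of homology fillability ($|\widehat{L}|$ simply connected) is exactly what is inserted to make this nilpotency/connectivity hypothesis hold, so the argument should go through once one unwinds that the factorization $|\Sd L|\to|\Sd\widehat{L}|\xrightarrow{i_c}\RZ_K^{m-1}$ is independent of $p$ while only the $p$-local triviality of the first map varies. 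A subsidiary check is that acyclicity over $\mathbb{Z}/p$ for all $p$ plus simple connectivity gives $p$-local contractibility for each $p$ (a direct consequence of the $\mathbb{Z}/p$-homology Whitehead theorem for simply connected spaces), which lets the factorization kill $(\varphi_K)_{(p)}$.
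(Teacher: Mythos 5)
Your overall strategy is exactly the paper's: factor $\varphi_K$ componentwise through $i_c\colon|\Sd\widehat{K}_i|\to\RZ_K^{m-1}$ (via the key observation from the proof of Theorem~\ref{fillable-1} that minimal non-faces land in $\RZ_K^{m-1}$), observe that for each prime $p$ the inclusion $|K_i|\to|\widehat{K}_i|$ factors through the $\mathbb{Z}/p$-acyclic complex $|K_i\cup M_1^p\cup\cdots\cup M_{r_p}^p|$, and apply the fracture lemma with the simply connected target $|\widehat{K}_i|$. Two details in your write-up need correction, though neither derails the argument.

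First, your opening suggestion to apply Lemma~\ref{fracture-lemma} with $\RZ_K^{m-1}$ as target is not available: $\RZ_K^{m-1}$ is a union of real moment-angle complexes $\RZ_{\dl_K(v)}$ whose fundamental groups can be far from nilpotent, so the nilpotency hypothesis fails. You do flag this and correctly switch to comparing the inclusion $|\Sd L|\to|\Sd\widehat{L}|$ with the constant map, using the simple connectivity of $|\widehat{L}|$ — that is the actual argument, and it is what the second condition in the definition of homology fillability is designed to enable. Second, your parenthetical claim that $L\cup M_1^p\cup\cdots\cup M_{r_p}^p$ is simply connected ``since $\widehat{L}$ contains it'' is false in general: subcomplexes of simply connected complexes need not be simply connected (e.g.\ $\partial\Delta^2\subset\Delta^2$). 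Fortunately this claim is never needed — the $\mathbb{Z}/p$-acyclicity of $L\cup M_1^p\cup\cdots\cup M_{r_p}^p$ together with the simple connectivity of the \emph{target} $|\widehat{L}|$ already forces the $p$-localized inclusion to be null (a map from a $\mathbb{Z}_{(p)}$-acyclic finite complex into a simply connected $p$-local space of finite type is null by obstruction theory, since the obstruction groups $\tilde H^n(\,\cdot\,;\pi_n)$ all vanish). With those two points cleaned up, your proposal coincides with the paper's proof.
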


\begin{proof}
We have observed in the proof of Theorem \ref{fillable-1} that the attaching map $\varphi_K\colon|\Sd K|\to\RZ_K^{m-1}$ factors through the inclusion
$$|\Sd K|=|\Sd K_1|\sqcup\cdots\sqcup|\Sd K_s|\to|\Sd\widehat{K}_1|\sqcup\cdots\sqcup|\Sd\widehat{K}_s|$$
where $K_1,\ldots,K_s$ are the connected components of $K$. Then since $\RZ_K^{m-1}$ is connected, it is sufficient to show that the inclusion $|K_i|\to|\widehat{K}_i|$ is null homotopic for $i=1,\ldots,s$. Since $K$ is homology fillable, there are minimal non-faces $M_1^{i,p},\ldots,M_{r(i,p)}^{i,p}$ such that $K_i\cup M_1^{i,p}\cup\cdots\cup M_{r(i,p)}^{i,p}$ is acyclic over $\mathbb{Z}/p$. Since $K_i\cup M_1^{i,p}\cup\cdots\cup M_{r(i,p)}^{i,p}$ is of finite type, its acyclicity over $\mathbb{Z}/p$ implies the acyclicity over $\mathbb{Z}_{(p)}$, so its $p$-localization is contractible. Then the $p$-localization of the inclusion $|K_i|\to|\widehat{K}_i|$ is null homotopic since it factors through $|K_i\cup M_1^{i,p}\cup\cdots\cup M_{r(i,p)}^{i,p}|$. Then by Lemma \ref{fracture-lemma} and the assumption that $|\widehat{K}_i|$ is simply connected, we obtain that the inclusion itself is null homotopic, completing the proof.
\end{proof}

\begin{corollary}
\label{homology-fillable-decomp}
If $K_I$ is homology fillable for any $\emptyset\ne I\subset[m]$, then the fat wedge filtration of $\RZ_K$ is trivial.
\end{corollary}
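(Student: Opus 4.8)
The plan is to reduce Corollary \ref{homology-fillable-decomp} to Theorem \ref{homology-fillable-phi} applied not to $K$ itself but to each full subcomplex $K_I$. Recall from Theorem \ref{cone-decomp} that the fat wedge filtration of $\RZ_K$ is a cone decomposition whose attaching maps are $j_{K_I}\circ\varphi_{K_I}\colon|\Sd K_I|\to\RZ_K^{|I|-1}$, and that by the discussion after Theorem \ref{cone-decomp} (using that $\RZ_{K_I}^{|I|-1}$ is a retract of $\RZ_K^{|I|-1}$) the fat wedge filtration of $\RZ_K$ is trivial if and only if $\varphi_{K_I}$ is null homotopic for every $\emptyset\ne I\subset[m]$. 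So the entire content of the corollary is to produce, for each such $I$, null homotopy of $\varphi_{K_I}$.

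The key step, then, is simply to observe that $\varphi_{K_I}$ is literally the map called $\varphi_L$ in Theorem \ref{homology-fillable-phi} applied to the simplicial complex $L=K_I$ on the vertex set $I$. Indeed, $\RZ_{K_I}$ depends only on $K_I$ as a simplicial complex on $I$, its fat wedge filtration is the one constructed in Section 3 with $[m]$ replaced by $I$, and the attaching map of its top cone is exactly $\varphi_{K_I}\colon|\Sd K_I|\to\RZ_{K_I}^{|I|-1}$. By hypothesis $K_I$ is homology fillable, so Theorem \ref{homology-fillable-phi} gives that $\varphi_{K_I}$ is null homotopic. Since this holds for every $\emptyset\ne I\subset[m]$, the fat wedge filtration of $\RZ_K$ is trivial by definition.

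There is essentially no obstacle here; the only thing to be slightly careful about is the bookkeeping identification that the construction of the fat wedge filtration and of the map $\varphi_{(-)}$ in Section 3 is natural in the ambient vertex set, so that running it for $K_I$ on $I$ yields the same map that appears, restricted, in Theorem \ref{cone-decomp} for $K$ on $[m]$. This is immediate from the explicit formulas \eqref{RZ-C} and \eqref{Cone(i_c)}, which involve only $I$ and $K_I$. Thus the proof is a one-line deduction, mirroring exactly how Corollary \ref{fillable-2} was obtained from Theorem \ref{fillable-1}. The write-up will just say: for each $\emptyset\ne I\subset[m]$, $K_I$ is homology fillable by hypothesis, so $\varphi_{K_I}$ is null homotopic by Theorem \ref{homology-fillable-phi}; hence the fat wedge filtration of $\RZ_K$ is trivial.
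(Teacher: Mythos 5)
Your proof is correct and is exactly the intended deduction: the paper gives no explicit proof for this corollary precisely because it follows immediately from Theorem \ref{homology-fillable-phi} applied to each $K_I$, together with the definition of triviality of the fat wedge filtration as the null homotopy of all $\varphi_{K_I}$. Your careful remark that the construction of $\varphi_{(-)}$ in Section 3 is natural in the ambient vertex set (so $\varphi_{K_I}$ computed for $K$ on $[m]$ agrees with $\varphi_{L}$ for $L=K_I$ on $I$) is the right thing to check, and it is indeed immediate from the formulas.
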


So we obtain the decomposition of polyhedral products and the homotopy Golodness of simplicial complexes whose full subcomplexes are homology fillable by Theorem \ref{main-decomp} and \ref{main-Golod}. As well as fillable complexes, we can determine the homotopy type of a suspension of a homology fillable complex. We prepare a technical lemma.

\begin{lemma}
\label{wedge-localization}
Let $X$ be a connected CW-complex of finite type. If $\Sigma X$ has the $p$-local homotopy type of a wedge of spheres for all prime $p$, then $\Sigma X$ itself has the homotopy type of a wedge of spheres.
\end{lemma}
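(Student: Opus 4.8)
The plan is to prove this by combining the rational case with the mod-$p$ cases and then gluing via a Bousfield--Kan type fracture argument. First I would dispose of the rational homotopy type: since $X$ is a connected CW-complex of finite type, $\Sigma X$ is a simply connected suspension, so its rational homotopy type is a wedge of rational spheres — a co-$H$-space whose rational homology is a free graded vector space, and the rational Hurewicz/minimal-model argument shows $\Sigma X_{\mathbb{Q}}\simeq\bigvee_\alpha S^{n_\alpha}_{\mathbb{Q}}$. Crucially, the ranks $b_n=\dim_{\mathbb{Q}}\widetilde{H}_n(\Sigma X;\mathbb{Q})$ must agree with the mod-$p$ Betti numbers for all but finitely many $p$, and in fact the hypothesis that $\Sigma X_{(p)}$ is $p$-locally a wedge of spheres forces $\widetilde{H}_*(\Sigma X;\mathbb{Z})$ to be torsion-free (any $p$-torsion would survive $p$-localization and obstruct the wedge-of-spheres form). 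So $\widetilde{H}_*(\Sigma X;\mathbb{Z})$ is a finite-type free abelian graded group, say with basis in degrees giving a formal wedge $W:=\bigvee_\alpha S^{n_\alpha}$.

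Next I would construct a candidate homotopy equivalence $f\colon W\to\Sigma X$ and check it is an equivalence by localizing. For each generator of $\widetilde{H}_{n_\alpha}(\Sigma X;\mathbb{Z})$, choose (using the Hurewicz theorem in the relevant range, after passing up the Postnikov/skeletal filtration, or more simply using that $\Sigma X$ is a co-$H$-space so one can realize integral homology classes by maps from spheres up to the usual obstruction-theoretic issues) a map $S^{n_\alpha}\to\Sigma X$; taking the wedge gives $f\colon W\to\Sigma X$, a map inducing an isomorphism on integral homology in... here is the subtlety: realizing \emph{all} homology generators simultaneously by genuine maps from spheres is exactly where the hypothesis is needed. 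Instead I would argue locally: for each prime $p$, the hypothesis gives a $p$-local equivalence $g_p\colon W_{(p)}\xrightarrow{\simeq}\Sigma X_{(p)}$ (after possibly reindexing, but since the mod-$p$ Betti numbers equal the integral ranks by torsion-freeness, $W_{(p)}$ is the correct $p$-local wedge). The plan is then to assemble a single integral map. Pick the rational equivalence $g_0\colon W_{\mathbb{Q}}\xrightarrow{\simeq}\Sigma X_{\mathbb{Q}}$. For each $p$, the composite $g_p$ rationalized and $g_0$ are two self-equivalences of $\Sigma X_{\mathbb{Q}}$ differing by an element of the (finite-type, hence finitely generated in each degree) group of rational self-homotopy-equivalences of a rational wedge of spheres, which acts through $GL$ of the rational homology; after adjusting each $g_p$ by a $p$-local self-equivalence of $W_{(p)}$ (realizing an integral matrix that is a unit mod $p$) I can arrange all the $g_p$ to agree rationally with $g_0$. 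Then the arithmetic square / Bousfield--Kan gluing assembles $\{g_p\}_p$ and $g_0$ into an integral map $g\colon W\to\Sigma X$ which is a $p$-local equivalence for every $p$ and a rational equivalence, hence an integral homology isomorphism between simply connected finite-type complexes, hence a homotopy equivalence by Whitehead.

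The main obstacle I expect is the bookkeeping in the gluing step: the arithmetic square requires the compatibility data (the maps $W_{(p)}\to W_{\mathbb{Q}}$ and $\Sigma X_{(p)}\to\Sigma X_{\mathbb{Q}}$ commuting with the $g$'s up to coherent homotopy), and the self-equivalence group of a wedge of spheres is nonabelian with a genuinely nontrivial action on homology, so one must be careful that the adjustments made prime-by-prime can be chosen compatibly — this is essentially the finiteness of the relevant $\lim^1$ or, more concretely, the surjectivity of $GL_n(\mathbb{Z})\to GL_n(\mathbb{Z}/p)$ together with strong approximation, which is standard but needs to be invoked correctly. An alternative, possibly cleaner route that avoids the matrix juggling: induct on the cells of $X$ (equivalently on skeleta), using that $\Sigma X$ is a co-$H$-space and that a co-$H$-space which is $p$-locally a wedge of spheres for all $p$ and whose integral homology is free must split off its bottom sphere integrally — here one uses that the attaching map of each next cell is null homotopic $p$-locally for every $p$ and lands in a simply connected finite complex, so Lemma \ref{fracture-lemma} applies directly to kill it integrally. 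This inductive approach mirrors the pattern already used in Theorem \ref{homology-fillable-phi} and is likely the one the authors intend, so I would present that as the primary argument and the arithmetic-square version as a remark.
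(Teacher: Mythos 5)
Your primary (inductive) route is not the paper's proof, and as written it has a genuine gap at the step you treat as free: you assert that ``the attaching map of each next cell is null homotopic $p$-locally for every $p$'' and then invoke Lemma~\ref{fracture-lemma}. But the hypothesis only says that the \emph{whole space} $\Sigma X_{(p)}$ is a wedge of spheres; it gives no direct information about the $p$-localized attaching maps of a chosen CW structure, since the $p$-local wedge decomposition of $\Sigma X_{(p)}$ need not be compatible with the $p$-localized skeleta. To extract what you want, you must first fix a \emph{minimal} CW structure on $\Sigma X$ (possible because $\widetilde H_*(\Sigma X;\mathbb{Z})$ is free, as you correctly observe), then show that each skeleton $Y_k$ is $p$-locally a wedge of spheres by composing $(Y_k)_{(p)}\hookrightarrow\Sigma X_{(p)}\simeq\bigvee_\alpha S^{m_\alpha}_{(p)}$ with the projection onto the sub-wedge of spheres of dimension $\le k$ and checking the composite is a homology isomorphism, then deduce that $(Y_k)_{(p)}\hookrightarrow(Y_{k+1})_{(p)}$ admits a retraction, and finally apply the Puppe exact sequence of $\bigvee S^k\xrightarrow{\alpha}Y_k\to Y_{k+1}$ to conclude that the existence of such a retraction is equivalent to $\alpha_{(p)}\simeq *$. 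Each of these steps can be carried out, but none appears in your sketch; and the arithmetic-square variant you also propose has the compatibility problems you yourself flag without resolving.

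The paper's argument is different and avoids all of this; it never touches attaching maps or skeleta. For each basis element $x_i^j$ of $H_i(\Sigma X;\mathbb{Z})$ and each prime $p$, the $p$-local splitting supplies a map $S^i\to\Sigma X_{(p)}$ hitting $x_i^j$; the telescope model $\Sigma X_{(p)}=\mathrm{hocolim}\bigl(\Sigma X\xrightarrow{\underline{l_1}}\Sigma X\xrightarrow{\underline{l_2}}\cdots\bigr)$ and compactness of $S^i$ then produce an integral map ${}_p\bar\theta_i^j\colon S^i\to\Sigma X$ with $({}_p\bar\theta_i^j)_*(u_i)={}_pa_i^j\,x_i^j$ and $p\nmid{}_pa_i^j$. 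Since the integers ${}_pa_i^j$ over varying $p$ have greatest common divisor $1$, a B\'ezout combination $\lambda_i^j=\sum_l\underline{d_l}\circ{}_{q_l}\bar\theta_i^j$, formed with the suspension comultiplication of $\Sigma X$, hits $x_i^j$ on the nose; the wedge $\bigvee_{i,j}\lambda_i^j$ is a homology isomorphism, hence an equivalence by Whitehead. So the co-$H$-structure is used to \emph{add} maps and run B\'ezout, not to split off cells; your guess that the inductive route is ``the one the authors intend'' is incorrect.
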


\begin{proof}
By assumption, $H_i(\Sigma X;\mathbb{Z})$ is a free abelian group of finite rank for each $i$. Choose a basis $x^1_i,\ldots,x^{n_i}_i$ of $H_i(\Sigma X;\mathbb{Z})$ for $i>0$. Using a $p$-local homotopy equivalence between $\Sigma X$ and a wedge of spheres, we can easily construct a map ${}_p\theta^j_i\colon S^i\to\Sigma X_{(p)}$ satisfying $({}_p\theta^j_i)_*(u_i)=x^j_i$ in homology with coefficient $\mathbb{Z}_{(p)}$ for any $i>0$ and $j=1,\ldots,n_i$, where $u_i$ is a generator of $H_i(S^i;\mathbb{Z})\cong\mathbb{Z}$. Let $\{p_1,p_2,\ldots\}$ be the set of all primes except for $p$. It is well-known that the $p$-localization $\Sigma X_{(p)}$ is given by the homotopy colimit of the sequence of maps
$$\Sigma X\xrightarrow{\underline{l_1}}\Sigma X\xrightarrow{\underline{l_2}}\Sigma X\xrightarrow{\underline{l_3}}\Sigma X\xrightarrow{\underline{l_4}}\cdots$$
where $l_k=p_1\cdots p_k$ and $\underline{q}\colon\Sigma X\to\Sigma X$ is the degree $q$ map.
By the compactness of $S^i$, ${}_p\theta^j_i$ factors through the finite step of the above sequence. Then there is a map ${}_p\bar{\theta}^j_i\colon S^i\to\Sigma X$ satisfying $({}_p\bar{\theta}^j_i)_*(u_i)={}_pa^j_ix^j_i$ for $a^j_i\in\mathbb{Z}$ with $p\nmid {}_pa^j_i$ in integral homology. Now we can choose primes $q_1,\ldots,q_n$ such that ${}_{q_1}a^j_i,\ldots,{}_{q_n}a^j_i$ are relatively prime, so there are integers $d_1,\ldots,d_n$ satisfying $d_1({}_{q_1}a^j_i)+\ldots+d_n({}_{q_n}a^j_i)=1$. Then the map
$$\lambda^j_i:=\underline{d_1}\circ{}_{q_1}\bar{\theta}^j_i+\cdots+\underline{d_n}\circ{}_{q_n}\bar{\theta}^j_i$$
satisfies $(\lambda^j_i)_*(u_i)=x^j_i$ in integral homology, where
 the sum is defined by using the suspension comultiplication of $\Sigma X$. Thus the map $\bigvee_{i\ge 1}\bigvee_{j=1}^{n_i}\lambda^j_i\colon\bigvee_{i\ge 1}\bigvee_{j=1}^{n_i}S^i\to\Sigma X$ is an isomorphism in integral homology, hence a homotopy equivalence by the J.H.C. Whitehead theorem, where $\Sigma X$ is simply connected since $X$ is connected. Therefore the proof is completed.
\end{proof}

\begin{proposition}
\label{homology-fillable-sphere}
If $K$ is homology fillable, $|\Sigma K|$ is homotopy equivalent to a wedge of spheres.
\end{proposition}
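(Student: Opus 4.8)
The plan is to reduce the integral statement to the prime-by-prime information already encoded in the definition of homology fillable, and then invoke Lemma \ref{wedge-localization}. First I would observe that $|\Sigma K| = |\partial\Delta^{[2]} * K|$ decomposes according to the connected components of $K$: if $K = K_1\sqcup\cdots\sqcup K_s$, then $|\Sigma K|$ is a wedge of the $|\Sigma K_i|$ together with some extra spheres coming from the suspension coordinates joining the components, so it suffices to treat each connected component separately. Since a connected component $L$ of a homology fillable complex is itself homology fillable (in fact vacuously, being its own unique component), the problem reduces to showing $|\Sigma L|$ is a wedge of spheres for connected homology fillable $L$.

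Next, fix a prime $p$. By condition (1) in the definition of homology fillable, there are minimal non-faces $M_1^p,\ldots,M_{r_p}^p$ of $L$ such that $L' := L\cup M_1^p\cup\cdots\cup M_{r_p}^p$ is acyclic over $\mathbb{Z}/p$, hence (being of finite type) acyclic over $\mathbb{Z}_{(p)}$, so $|L'|_{(p)}$ is contractible. As in the proof of Proposition \ref{fillable-sphere}, $(|L'|,|L|)$ is an NDR pair, so there is a homotopy equivalence $|L'|/|L| \simeq \Sigma|L|/|L'| \cdots$ — more precisely, from the cofibration $|L|\to|L'|\to |L'|/|L|$ and the $p$-local contractibility of $|L'|$, one gets $|L'|/|L| \simeq_{(p)} \Sigma|L|$, and $|L'|/|L|$ is literally a CW complex with cells only in the dimensions of the added non-faces, so it has a natural structure as a wedge of spheres (the quotient collapses $|L|$, leaving one sphere of dimension $|M_j^p|-1$ for each added facet, up to the obvious cell structure). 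Thus $\Sigma|L|$ has the $p$-local homotopy type of a wedge of spheres, for every prime $p$.

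Finally, I would apply Lemma \ref{wedge-localization} with $X = |L|$: $L$ is connected (so $|L|$ is a connected CW complex of finite type) and $\Sigma|L|$ has the $p$-local homotopy type of a wedge of spheres for every $p$, hence $\Sigma|L|$ itself is a wedge of spheres. Reassembling over the components of $K$ and the suspension coordinates gives the claim for $|\Sigma K|$.

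The main obstacle I anticipate is bookkeeping rather than conceptual: one must be careful that the $p$-local wedge-of-spheres structure on $\Sigma|L|$ really is a wedge of spheres and not merely a space with free homology concentrated in those degrees — this is why the argument goes through the explicit cofiber $|L'|/|L|$, whose CW structure makes it a genuine wedge of spheres on the nose, rather than trying to build the equivalence abstractly from homology. A secondary point to verify cleanly is that condition (2) (simple connectivity of $|\widehat{L}|$) is not actually needed here — it was used in Theorem \ref{homology-fillable-phi} to integrate a null-homotopy via Lemma \ref{fracture-lemma}, whereas here Lemma \ref{wedge-localization} already supplies the integration directly from the $p$-local homotopy types, so the proof of this proposition only uses condition (1) together with connectivity of the components.
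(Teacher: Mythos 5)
Your proposal follows the same route as the paper's own proof: reduce to a connected component $L$, use condition (1) of homology fillability to produce, for each prime $p$, a complex $L'$ with $|L'|_{(p)}$ contractible, identify $|\Sigma L|_{(p)}$ with the $p$-localization of the wedge of spheres $|L'|/|L|$, and conclude with Lemma~\ref{wedge-localization}. Your concluding observation — that condition (2) of the definition is not invoked here, since Lemma~\ref{wedge-localization} already integrates the prime-by-prime data — is accurate and a reasonable thing to flag, though the paper does not comment on it; one minor slip is the phrase ``for each added facet,'' which should read ``for each added minimal non-face,'' and the $\simeq \Sigma|L|/|L'|$ fragment is garbled before you restate it correctly as $|L'|/|L| \simeq_{(p)} \Sigma|L|$.
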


\begin{proof}
It is sufficient to consider the case that $K$ is connected. Let $M_1^p,\ldots,M_r^p$ be minimal non-faces of $K$ such that $K\cup M_1^p\cup\cdots\cup M_r^p$ is acyclic over $\mathbb{Z}/p$. Since $|K\cup M_1^p\cup\cdots\cup M_r^p|$ is a finite complex, it is also acyclic over $\mathbb{Z}_{(p)}$, so its $p$-localization is contractible. Then as in the proof of Proposition \ref{fillable-sphere}, $|\Sigma K|_{(p)}$ is homotopy equivalent to $(|K\cup M_1^p\cup\cdots\cup M_r^p|/|K|)_{(p)}$ which is a wedge of $p$-local spheres. (Note that the dimension of each sphere in the wedge is greater than 1 since $K$ is connected. So we can commute the localization and the wedge.) Thus the proof is completed by Lemma \ref{wedge-localization}.
\end{proof}

\begin{corollary}
\label{Z-homology-fillable-sphere}
If $K_I$ is homology fillable for all $\emptyset\ne I\subset[m]$, then $\Z_K(D^n,S^{n-1})$ is homotopy equivalent to a wedge of spheres.
\end{corollary}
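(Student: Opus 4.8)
The plan is to simply combine the three ingredients that are already available: Theorem \ref{main-decomp}, Corollary \ref{homology-fillable-decomp}, and Proposition \ref{homology-fillable-sphere}. First I would note that since $K_I$ is homology fillable for every $\emptyset\ne I\subset[m]$, Corollary \ref{homology-fillable-decomp} tells us that the fat wedge filtration of $\RZ_K$ is trivial. Feeding this into Theorem \ref{main-decomp} (applied with $\underline{X}$ the constant collection $S^{n-1}$), we obtain a homotopy equivalence
$$\Z_K(D^n,S^{n-1})\simeq\bigvee_{\emptyset\ne I\subset[m]}|\Sigma K_I|\wedge\widehat{(S^{n-1})}^I=\bigvee_{\emptyset\ne I\subset[m]}|\Sigma K_I|\wedge S^{(n-1)|I|}.$$

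Next I would invoke Proposition \ref{homology-fillable-sphere}: since each $K_I$ is homology fillable, $|\Sigma K_I|$ is homotopy equivalent to a wedge of spheres. Smashing a wedge of spheres with a single sphere $S^{(n-1)|I|}$ again yields a wedge of spheres (each $S^a\wedge S^b\simeq S^{a+b}$), so every wedge summand $|\Sigma K_I|\wedge S^{(n-1)|I|}$ on the right-hand side is a wedge of spheres. Taking the wedge over all $\emptyset\ne I\subset[m]$ of wedges of spheres is still a wedge of spheres, and therefore $\Z_K(D^n,S^{n-1})$ is homotopy equivalent to a wedge of spheres. Thus the statement follows formally, exactly paralleling the proof of Corollary \ref{fillable-decomp2}.

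There is essentially no obstacle here, since all the real work has been done in the cited results; the only points requiring a word of care are that $\widehat{(S^{n-1})}^I\cong S^{(n-1)|I|}$ (which is immediate) and that smashing and wedging preserve the class of wedges of spheres (standard). Hence the proof is a one-line assembly:

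\begin{proof}
Combine Theorem \ref{main-decomp}, Corollary \ref{homology-fillable-decomp}, and Proposition \ref{homology-fillable-sphere}, noting that $\widehat{(S^{n-1})}^I$ is a sphere for each $\emptyset\ne I\subset[m]$ and that smashing a wedge of spheres with a sphere gives a wedge of spheres.
\end{proof}
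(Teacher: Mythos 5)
Your proof is correct and is exactly the paper's proof: both combine Theorem \ref{main-decomp}, Corollary \ref{homology-fillable-decomp}, and Proposition \ref{homology-fillable-sphere}, with the only additional content in your write-up being the (routine) observation that smashing a wedge of spheres with $S^{(n-1)|I|}$ again gives a wedge of spheres.
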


\begin{proof}
Combine Theorem \ref{main-decomp} and Corollary \ref{homology-fillable-decomp}, and Proposition \ref{homology-fillable-sphere}.
\end{proof}


\section{Shellable and sequentially Cohen-Macaulay comlplexes}

In this section, we show that the fat wedge filtrations of the real moment-angle complexes for dual shellable and dual sequentially Cohen-Macaulay complexes are trivial by proving their fillability and homology fillability, where the definitions of shellable and sequentially Cohen-Macaulay complexes will be given in Definition \ref{def-shellable} and \ref{def-SCM}. Our choice of dual shellable and dual sequentially Cohen-Macaulay complexes are motivated by the following, where dual shellable complexes are dual sequentially Cohen-Macaulay complexes as in \eqref{SCM-implication}, and the easier cases of shifted and dual vertex-decomposable complexes were studied in \cite{GT1,IK1,GW}.

\begin{proposition}
[Herzog, Reiner, and Welker \cite{HRW}]
\label{Golod}
The Alexander duals of sequentially Cohen-Macaulay complexes over $\Bbbk$ are Golod over $\Bbbk$.
\end{proposition}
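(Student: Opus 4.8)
## Proof Proposal for Proposition~\ref{Golod}

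\textbf{Overall strategy.} The plan is to reduce the statement to Proposition~\ref{Hochster}, i.e.\ to the assertion that for a sequentially Cohen-Macaulay complex $L$ over $\Bbbk$ with Alexander dual $K = L^\vee$, the inclusion $K_{I\cup J}\to K_I * K_J$ is trivial in $\Bbbk$-homology for all disjoint non-empty $I, J\subset[m]$. The key point is to translate everything about $K$ into statements about $L$ via Alexander duality, where the SCM hypothesis can actually be used. Recall Alexander duality: $\widetilde{H}_{i}(K_I;\Bbbk)\cong\widetilde{H}^{|I|-i-3}(L_{[m]\setminus I};\Bbbk)$ (with appropriate indexing conventions), and more precisely that full subcomplexes of $K$ dualize to links (or deletions) inside $L$. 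So the first step is to set up the precise dictionary: $K_{I\cup J}$ and $K_I * K_J$ on one side correspond, on the dual side, to the two complexes $L_{[m]\setminus(I\cup J)}$ and a complex built from $L_{[m]\setminus I}$ and $L_{[m]\setminus J}$ glued along $L_{[m]\setminus(I\cup J)}$ (a union of two full subcomplexes inside $L$). Under this dictionary the map in Proposition~\ref{Hochster} becomes, up to a degree shift, a connecting-type or restriction map in the (co)homology of $L$ and its full subcomplexes.

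\textbf{Key steps in order.} First, I would recall the combinatorial definition of sequentially Cohen-Macaulay: $L$ is SCM over $\Bbbk$ iff the pure $i$-skeleton $L^{[i]}$ (the subcomplex generated by all $i$-faces lying in facets of dimension $\ge i$) is Cohen-Macaulay over $\Bbbk$ for every $i$, equivalently (by Reisner's criterion applied skeleton-by-skeleton) a vanishing statement: $\widetilde{H}_j(\lk_L(\sigma);\Bbbk)=0$ for $j<\dim\lk_{L}(\sigma)$ in the appropriate skeleton — the Duval reformulation. Second, I would invoke the known characterization of the Alexander duals of SCM complexes due to Eagon-Reiner type results: $L$ is SCM over $\Bbbk$ iff the Stanley-Reisner ideal $\mathcal I_K$ of $K=L^\vee$ has componentwise linear resolution over $\Bbbk$. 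Third — and this is the heart — I would use that componentwise linearity forces the maps appearing in Hochster's description of products in $\mathrm{Tor}$ to vanish: the multigraded pieces of $\mathrm{Tor}_{\Bbbk[v_1,\dots,v_m]}(\Bbbk[K],\Bbbk)$ are concentrated, for each $|I|$, in a single homological degree (linearity strand by strand), and a product $\widetilde{H}^{*}(K_I)\otimes\widetilde{H}^{*}(K_J)\to\widetilde{H}^{*}(K_{I\cup J})$ for $I\cap J=\emptyset$ must raise homological degree by exactly the ``wrong'' amount to land in the linear strand of $K_{I\cup J}$, hence is zero. Concretely: with componentwise linear resolution, $\widetilde{H}^{i-|I|-1}(K_I;\Bbbk)$ can only be nonzero for one value of $i$ depending on $|I|$, and checking indices shows the target group $\widetilde{H}^{i+j-|I|-|J|-1}(K_{I\cup J};\Bbbk)$ of the product map sits outside the allowed strand whenever the source groups are nonzero. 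This gives Golodness via Proposition~\ref{Hochster} and the Berglund-J\"ollenbeck remark that products suffice.

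\textbf{Main obstacle.} The delicate part is the index bookkeeping in the componentwise-linear argument: one must verify that the homological degree shift in a nontrivial product $\mathrm{Tor}^i\otimes\mathrm{Tor}^j\to\mathrm{Tor}^{i+j}$ is genuinely incompatible with all three groups lying in their respective linear strands, and handle the multigrading carefully (the relevant internal degrees are $|I|$, $|J|$, $|I|+|J|$ on the three sides, but the homological degrees obey $i+j > $ the linear-strand value for $K_{I\cup J}$ strictly once the source groups are nonzero). An alternative, perhaps cleaner route that sidesteps the algebra is purely topological: dualize to show $K_{I\cup J}\hookrightarrow K_I*K_J$ is, via Alexander duality, a map whose $\Bbbk$-homology triviality is equivalent to a restriction map $\widetilde{H}^*(L_{[m]\setminus(I\cup J)})\to\widetilde{H}^*(\text{two full subcomplexes of }L)$ being injective/surjective in the right degrees, and then feed the SCM vanishing of $\widetilde{H}_j$ of links below top dimension into a Mayer-Vietoris sequence for $L_{[m]\setminus I}\cup L_{[m]\setminus J}$ inside $L$; the SCM condition kills the obstruction term. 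I expect to present the latter topological argument as primary, citing Herzog-Reiner-Welker for the algebraic packaging, since it localizes the difficulty to a single Mayer-Vietoris computation where the SCM hypothesis is manifestly what is needed.
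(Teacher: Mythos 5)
The paper does not actually prove Proposition~\ref{Golod}: it is stated with an attribution to Herzog--Reiner--Welker~\cite{HRW} and no argument is given, so there is no ``paper proof'' to compare against. (What the paper \emph{does} prove, in Sections 7--8, is a topological strengthening over $\mathbb{Z}$ via homology fillability, Corollary~\ref{SCM-trivial}; that argument runs through spanning facets and Alexander duality rather than through componentwise linearity.)

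Your primary route has a genuine gap. The pivotal claim that, for $I_K$ componentwise linear, $\widetilde{H}^{i-|I|-1}(K_I;\Bbbk)$ ``can only be nonzero for one value of $i$ depending on $|I|$'' is false, and even the weaker concentration ``in a single degree for each fixed $I$'' is false. Concretely, take $L$ on $[5]$ with facets $\{1,2,3\},\{1,2,4\},\{1,3,4\},\{2,3,4\},\{1,5\}$ (a $2$-sphere with a whisker); $L^{(0)}$, $L^{(1)}$, $L^{(2)}$ are all CM, so $L$ is SCM and $K:=L^\vee$ is dual SCM. Taking $W=\{2,3,4,5\}$, one has $K_W^\vee=\lk_L(1)\cong S^1\sqcup\{\mathrm{pt}\}$, and combinatorial Alexander duality (Theorem~\ref{duality}) gives $\widetilde{H}_0(K_W;\Bbbk)\cong\Bbbk$ \emph{and} $\widetilde{H}_1(K_W;\Bbbk)\cong\Bbbk$. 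So for a dual SCM complex a full subcomplex can have reduced (co)homology in two different degrees, and the strand bookkeeping that is supposed to kill the product $\widetilde{H}^a(K_I)\otimes\widetilde{H}^b(K_J)\to\widetilde{H}^{a+b+1}(K_{I\cup J})$ does not go through on degree grounds alone; the vanishing really uses the structure of the Hochster maps, not just the placement of the nonzero groups. (This is exactly why the pure CM/linear-resolution case is easy but the SCM/componentwise-linear case is not.) Your ``alternative topological route'' --- dualizing the inclusion $K_{I\cup J}\hookrightarrow K_I*K_J$ via Lemma~\ref{duality-dictionary} to a map of links inside $L$ and feeding the Reisner/Duval-type vanishing (Proposition~\ref{SCM-def}) into a Mayer--Vietoris computation --- is the more promising direction and is closer in spirit both to Berglund--J\"ollenbeck's combinatorial proof and to this paper's own homology-fillability argument, but as written it is a one-sentence sketch and would need the identification $(K_I*K_J)^\vee=(K_I^\vee*\Delta^J)\cup(\Delta^I*K_J^\vee)$ and the relevant acyclicity bounds spelled out before it could be called a proof.
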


We first consider the case of dual shellable complexes, and next generalize the arguments for dual shellable complexes homologically for dual sequentially Cohen-Macaulay complexes.

\subsection{Shellable complex}

We first recall the definition of shellable complexes from \cite{BW}, where shellability is one of the most active subject studied in combinatorics. Maximal simplices of a simplicial complex are called facets, and if all facets have the same dimension, then the simplicial complex is called pure.

\begin{definition}
\label{def-shellable}
A simplicial complex $K$ is shellable if there is an ordering of facets $F_1,\ldots,F_t$, called a shelling, such that
$$\langle F_k\rangle\cap\langle F_1,\ldots,F_{k-1}\rangle$$
is pure and $(|F_k|-2)$-dimensional for $k=2,\ldots,t$, where $\langle F_1,\ldots,F_{k-1}\rangle$ means a subcomplex of $K$ generated by $F_1,\ldots,F_{k-1}$.
\end{definition}

Interesting examples of shellable complexes can be found in \cite{BW,H}. We next recall the Alexander dual of a simplicial complex.

\begin{definition}
Let $L$ be a simplicial complex whose vertex set is a subset of a finite set $S$. The Alexander dual of $L$ with respect to $S$ is defined by
$$L^\vee:=\{\sigma\subset S\,\vert\, S-\sigma\not\in L\}.$$
\end{definition}

Of course the Alexander dual of $L$ changes if we alter the ambient set $S$, so we must be careful for the ambient set to take the Alexander dual. The Alexander dual of $K$ and $\dl_K(v)$ for $v\in[m]$ will be always taken over $[m]$ and $[m]-v$, respectively. It is easy to verify
$$(L^\vee)^\vee=L$$
where the duals of $L$ and $L^\vee$ are taken over $S$. As well as the topological Alexander dual, the duality of (co)homology holds for the Alexander dual of a simplicial complex.

\begin{theorem}
[cf. \cite{BT}] 
\label{duality}
Let $L$ be a simplicial complex whose vertex set is a subset of a finite set $S$. Then for any $i$,
$$\widetilde{H}_i(L;\Bbbk)\cong\widetilde{H}^{|S|-i-3}(L^\vee;\Bbbk)$$
where the Alexander dual of $L$ is taken over $S$.
\end{theorem}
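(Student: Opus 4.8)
The plan is to prove this combinatorial Alexander duality by the elementary chain-level argument of \cite{BT}: exploit the acyclicity of the full simplex on $S$ together with the complementation bijection $\sigma\mapsto S\setminus\sigma$ on subsets of $S$. Write $n=|S|$, let $\Delta^S$ be the full simplex on $S$, and regard $L$ as a subcomplex of $\Delta^S$; the augmented simplicial chain complex $\widetilde{C}_*(\Delta^S;\Bbbk)$ is chain contractible, hence acyclic.

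First I would form the short exact sequence of augmented chain complexes
$$0\to\widetilde{C}_*(L;\Bbbk)\to\widetilde{C}_*(\Delta^S;\Bbbk)\to Q_*\to 0$$
and use the acyclicity of the middle term to get, from the long exact sequence, $\widetilde{H}_i(L;\Bbbk)\cong H_{i+1}(Q_*)$ for all $i$. Here $Q_k$ is the free $\Bbbk$-module on the subsets $\sigma\subset S$ with $|\sigma|=k+1$ and $\sigma\notin L$ (the $k$-dimensional non-faces of $L$), with differential induced by the simplicial boundary of $\Delta^S$. Next I would feed in the complementation bijection: $\sigma\notin L$ is equivalent to $S\setminus(S\setminus\sigma)\notin L$, i.e.\ to $S\setminus\sigma\in L^\vee$, while $|\sigma|=k+1$ becomes $\dim(S\setminus\sigma)=n-k-2$, and ``delete a vertex from $\sigma$'' corresponds to ``add a vertex to $S\setminus\sigma$''. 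Hence, after reindexing by $k\mapsto n-k-2$, the complex $Q_*$ is identified with the augmented simplicial cochain complex $\widetilde{C}^*(L^\vee;\Bbbk)$, and tracking degrees gives $H_{i+1}(Q_*)\cong\widetilde{H}^{\,n-(i+1)-2}(L^\vee;\Bbbk)=\widetilde{H}^{\,n-i-3}(L^\vee;\Bbbk)$, which is the assertion since $n=|S|$.

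The step needing care is this last identification: one must fix an ordering of $S$ and choose the isomorphism $Q_k\xrightarrow{\cong}\widetilde{C}^{\,n-k-2}(L^\vee;\Bbbk)$ — sending $\sigma$ to a sign times the cochain dual to $S\setminus\sigma$ — so that the simplicial boundary on $Q_*$ is carried \emph{exactly} to the simplicial coboundary on $\widetilde{C}^*(L^\vee;\Bbbk)$. This is a routine but slightly fiddly bookkeeping of Koszul-type signs, and is really the only content of the proof. I would also note that the argument is uniform in the degenerate cases provided one keeps the $(-1)$-dimensional empty face in play and allows the void complex: when $L=\Delta^S$ we have $Q_*=0$ and $L^\vee$ void, and when $L$ is void we have $Q_*=\widetilde{C}_*(\Delta^S;\Bbbk)$ acyclic and $L^\vee=\Delta^S$, so in both extremes the two sides vanish consistently. (Alternatively one could deduce the statement from topological Alexander duality by realizing $|L|$ inside $|\partial\Delta^S|\cong S^{n-2}$ and checking that its complement is homotopy equivalent to $|L^\vee|$, but the chain-level proof is shorter and handles the boundary cases automatically.)
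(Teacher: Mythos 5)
The paper states this result without proof, simply citing Bj\"orner and Tancer \cite{BT}, and your chain-level argument is precisely the ``short and elementary proof'' from that reference: form the quotient $Q_*=\widetilde{C}_*(\Delta^S)/\widetilde{C}_*(L)$, use acyclicity of $\Delta^S$ to shift the degree by one, and identify $Q_*$ with the regraded reduced cochain complex of $L^\vee$ via the complementation bijection, with a Koszul-type sign making the identification a chain isomorphism. Your degree bookkeeping checks out (a $k$-dimensional non-face $\sigma$ of $L$ corresponds to the $(|S|-k-2)$-dimensional face $S\setminus\sigma$ of $L^\vee$, so $H_{i+1}(Q_*)\cong\widetilde{H}^{|S|-i-3}(L^\vee)$), and you correctly flag the sign normalization as the one point requiring care; the proof is correct and matches the cited source's approach.
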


The following properties of the Alexander duals will play a fundamental role in showing the fillability of dual shellable complexes.

\begin{lemma}
\label{duality-dictionary}
The following hold:
\begin{enumerate}
\item $F$ is a facet of $K^\vee$ if and only if $F^\vee:=[m]-F$ is a minimal non-face of $K$;
\item $\dl_K(v)^\vee=\lk_{K^\vee}(v)$ for any $v\in[m]$.
\end{enumerate}
\end{lemma}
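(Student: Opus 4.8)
\textbf{Proof proposal for Lemma \ref{duality-dictionary}.}

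The plan is to unwind both statements directly from the definitions of facet, minimal non-face, Alexander dual, deletion, and link, using the complementation map $\sigma \mapsto [m]-\sigma$ as the organizing bijection. For (1), I would first recall that $F$ is a facet of $K^\vee$ exactly when $F \in K^\vee$ and no proper superset of $F$ lies in $K^\vee$; and that $G \in K^\vee$ iff $[m]-G \notin K$. Translating through the complement, $F$ is a facet of $K^\vee$ iff $[m]-F \notin K$ and for every $v \in F$ the set $[m]-(F-v) = ([m]-F)\cup\{v\}$ is a simplex of $K$ — since enlarging $F$ inside $K^\vee$ means removing a vertex from $F^\vee:=[m]-F$. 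But that is precisely the statement that $F^\vee$ is not a simplex of $K$ while $F^\vee - v$ is a simplex of $K$ for each $v \in F^\vee$, i.e. that $F^\vee$ is a minimal non-face of $K$. The only bookkeeping point to get right is matching ``$v$ ranges over $F$'' with ``$w$ ranges over $F^\vee$'', which is handled by the observation that removing $v\in F$ from $F$ corresponds under complementation to adjoining $v$ to $F^\vee$, and minimal-non-face-ness of $F^\vee$ is about removing vertices of $F^\vee$, which complements back to adjoining vertices to $F$ — so one should phrase it as: $F$ maximal in $K^\vee$ $\iff$ for all $v\notin F$, $F\cup\{v\}\notin K^\vee$ $\iff$ for all $v \notin F$, $[m]-(F\cup\{v\}) = F^\vee - v \in K$, and since $v\notin F$ ranges exactly over $v \in F^\vee$, this is the minimal-non-face condition on $F^\vee$. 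Combined with $[m]-F = F^\vee \notin K$ (from $F \in K^\vee$), we get the equivalence.

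For (2), I would fix $v \in [m]$ and compute both sides as subsets of $[m]-v$. By definition $\dl_K(v)^\vee = \{\sigma \subset [m]-v \mid ([m]-v)-\sigma \notin \dl_K(v)\}$, and since $\dl_K(v) = K_{[m]-v}$ consists of the simplices of $K$ contained in $[m]-v$, the condition $([m]-v)-\sigma \notin \dl_K(v)$ is just $([m]-v)-\sigma \notin K$ (the set already misses $v$, so being a simplex of $K$ is the same as being a simplex of $\dl_K(v)$). On the other side, $\lk_{K^\vee}(v) = \{\tau \subset [m]-v \mid \tau \cup \{v\} \in K^\vee\} = \{\tau \subset [m]-v \mid [m]-(\tau\cup\{v\}) \notin K\}$, and $[m]-(\tau\cup\{v\}) = ([m]-v)-\tau$. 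So both sides equal $\{\sigma \subset [m]-v \mid ([m]-v)-\sigma \notin K\}$, proving the identity. I would also remark that $\lk_{K^\vee}(v)$ is taken with $K^\vee$ regarded over $[m]$ while $\dl_K(v)^\vee$ is taken over $[m]-v$, which is exactly the convention fixed just before the lemma, so no ambiguity arises.

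Neither part presents a genuine obstacle; the whole content is a careful translation through the complementation bijection, and the only place one can slip is in (1), keeping straight which vertex quantifier corresponds to which after complementing (inside $F$ versus outside $F$). I would therefore write (1) slowly and treat (2) as a short direct set-theoretic computation.
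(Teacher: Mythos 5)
Your proposal is correct and follows the same route as the paper: in both parts the whole content is unwinding the definitions through the complementation bijection $\sigma \mapsto [m]-\sigma$, and your final statements match the paper's computation line by line. One small remark: the first version of the translation in (1) (``for every $v \in F$ the set $[m]-(F-v)$ is a simplex of $K$'') is not just awkwardly phrased but actually wrong --- for $v \in F$ one always has $F-v \in K^\vee$ since $K^\vee$ is closed under taking subsets, so $[m]-(F-v)$ is never a simplex of $K$ --- but you catch this yourself and the corrected statement (for all $v \notin F$, $F^\vee - v \in K$) is exactly right and is what the paper uses, so in a clean write-up the first sentence should simply be deleted.
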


\begin{proof}
(1) $F$ is a facet of $K^\vee$ if and only if $F^\vee\not\in K$ and $(F\cup v)^\vee\in K$ for any $v\not\in F$. Since $F^\vee-v=[m]-(F\cup v)=(F\cup v)^\vee$ for any $v\in F^\vee$, the proof is done.

(2) For any $v\in[m]$, we have
\begin{alignat*}{3}
\dl_K(v)^\vee&=\{\sigma\subset[m]-v\,\vert\,([m]-v)-\sigma\not\in\dl_K(v)\}&&=\{\sigma\subset[m]-v\,\vert\,[m]-(\sigma\cup v)\not\in K\}\\
&=\{\tau\in K^\vee\,\vert v\not\in\tau\text{ and }\tau\cup v\in K^\vee\}&&=\lk_{K^\vee}(v).
\end{alignat*}
\end{proof}

We show that the dual shellability is preserved by a vertex deletion. 

\begin{lemma}
[Bj\"orner and Wachs {\cite[Proposition 10.14]{BW}}]
\label{shellable-lk}
For a shellable complex $L$ and its vertex $v$, the link $\lk_L(v)$ is shellable.
\end{lemma}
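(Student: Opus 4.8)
The plan is to prove Lemma \ref{shellable-lk} directly from the definition of shellability, by taking a shelling of $L$ and extracting from it an ordering of the facets of $\lk_L(v)$ that satisfies the shelling condition. First I would recall that a facet of $\lk_L(v)$ is exactly a set $G$ with $v\notin G$ and $G\cup v$ a facet of $L$ containing $v$; equivalently, the facets of $\lk_L(v)$ are in bijection with those facets of $L$ that contain $v$, via $G\mapsto G\cup v$. So given a shelling $F_1,\ldots,F_t$ of $L$, let $F_{i_1},\ldots,F_{i_s}$ (with $i_1<\cdots<i_s$) be the subsequence of facets containing $v$, and set $G_k := F_{i_k}\setminus v$. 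The claim is that $G_1,\ldots,G_s$ is a shelling of $\lk_L(v)$.

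The key computation is to analyze $\langle G_k\rangle\cap\langle G_1,\ldots,G_{k-1}\rangle$ and show it is pure of dimension $|G_k|-2 = |F_{i_k}|-3$. The natural approach is to relate this intersection inside $\lk_L(v)$ to the corresponding intersection $\langle F_{i_k}\rangle\cap\langle F_1,\ldots,F_{i_k-1}\rangle$ inside $L$. Concretely, a face $\tau\subset F_{i_k}\setminus v = G_k$ lies in $\langle G_1,\ldots,G_{k-1}\rangle$ iff $\tau\subset G_j$ for some $j<k$, iff $\tau\cup v\subset F_{i_j}$ for some $j<k$; and since the $F_{i_j}$ with $j<k$ are precisely the facets preceding $F_{i_k}$ in the shelling that contain $v$, and $\tau\cup v$ (containing $v$) can only sit inside a facet of $L$ that contains $v$, this happens iff $\tau\cup v\in\langle F_1,\ldots,F_{i_k-1}\rangle$. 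Thus $\langle G_k\rangle\cap\langle G_1,\ldots,G_{k-1}\rangle = \{\tau\subset G_k \mid \tau\cup v\in\langle F_1,\ldots,F_{i_k-1}\rangle\}$, which is exactly the link of $v$ in $\langle F_{i_k}\rangle\cap\langle F_1,\ldots,F_{i_k-1}\rangle$. By the shelling property of $L$, that intersection is pure of dimension $|F_{i_k}|-2$, and taking the link of the vertex $v$ (which belongs to it, since $v\in F_{i_k}$ and each facet of the intersection, being a facet of some earlier $F_j$ intersected with $F_{i_k}$, contains $v$ when... ) preserves purity and drops the dimension by one, giving purity in dimension $|F_{i_k}|-3 = |G_k|-2$, as required.

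The one point that needs care — and I expect it to be the main obstacle, though it is more of a bookkeeping subtlety than a real difficulty — is checking that $v$ actually belongs to every facet of $\langle F_{i_k}\rangle\cap\langle F_1,\ldots,F_{i_k-1}\rangle$, so that taking $\lk(\cdot)(v)$ behaves as expected (a facet of the intersection not containing $v$ would be lost when passing to the link, potentially ruining purity). This follows because the facets of the intersection $\langle F_{i_k}\rangle\cap\langle F_1,\ldots,F_{i_k-1}\rangle$ are of the form $F_{i_k}\cap F_j$ for various $j<i_k$, and by the shelling axiom each such facet has codimension one in $F_{i_k}$, i.e. equals $F_{i_k}\setminus w$ for a single vertex $w$; if $w\neq v$ then $v\in F_{i_k}\setminus w$, and if $w=v$ then $F_{i_k}\setminus v = G_k$ would lie in some earlier facet, forcing $G_k$ to be non-maximal in $\lk_L(v)$ — a case one handles by noting it simply cannot be a facet of $\lk_L(v)$, contradiction, or more cleanly by observing such a $w=v$ would mean $F_j\supset F_{i_k}\setminus v$ with $F_j$ preceding $F_{i_k}$, and then $F_j\cup v$ is a face of $L$ strictly containing... (this forces $F_j\cup v$ to be a facet containing $v$ earlier than $F_{i_k}$ with $G_k\subsetneq F_j\cup v\setminus v$, contradicting that $G_k$ is a facet). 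So the purity is genuinely inherited. Finally I would also note that when $s\le 1$ the statement is trivial, and that $\lk_L(v)$ could be empty or a simplex in degenerate cases, which the definition of shellable accommodates.
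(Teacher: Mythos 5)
Your overall strategy is exactly the paper's: take a shelling $F_1,\ldots,F_t$ of $L$, restrict to the facets $F_{i_1},\ldots,F_{i_s}$ containing $v$, set $G_k = F_{i_k}\setminus v$, and verify these form a shelling of $\lk_L(v)$. Your identification
$\langle G_k\rangle\cap\langle G_1,\ldots,G_{k-1}\rangle = \lk_{\langle F_{i_k}\rangle\cap\langle F_1,\ldots,F_{i_k-1}\rangle}(v)$
is correct (using, as you note, that a face of $L$ containing $v$ can only lie in a facet that contains $v$), and the reduction to ``link of a vertex in a pure complex'' is the right idea. The paper's proof is terser, invoking (rather loosely) the alternative codimension-one characterization of a shelling, but it is the same restriction-and-delete construction.

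However, the ``subtle point'' you flag is misdiagnosed and the argument you sketch for it is wrong. You claim you must check that $v$ lies in \emph{every} facet of $\Delta_k := \langle F_{i_k}\rangle\cap\langle F_1,\ldots,F_{i_k-1}\rangle$, on the grounds that a facet missing $v$ would ``ruin purity'' of the link. Neither half is right. First, $v$ need not lie in every facet of $\Delta_k$: if $G_k = F_{i_k}\setminus v$ happens to be a face of some earlier $F_j$ (necessarily with $v\notin F_j$), then $G_k$ is a facet of $\Delta_k$ not containing $v$, and your attempt to derive a contradiction does not work --- $G_k$ is still a bona fide facet of $\lk_L(v)$ because $F_{i_k}$ is a facet of $L$, and $F_j\cup v$ is certainly not a face of $L$ (since $F_j$ is a facet), so the ``more cleanly'' version also collapses. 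Second, and more to the point, you do not need this: if $\Delta$ is any pure complex of dimension $d$ and $v$ is a vertex of $\Delta$, then $\lk_\Delta(v)$ is automatically pure of dimension $d-1$, because every face $\tau$ with $\tau\cup v\in\Delta$ sits inside some facet $\sigma$ of $\Delta$, that $\sigma$ necessarily contains $v$, and then $\tau\subseteq\sigma\setminus v$ with $\dim(\sigma\setminus v)=d-1$. Facets of $\Delta$ not containing $v$ are simply invisible to the link and cause no harm. The only thing you do need to verify is that $v$ is a vertex of $\Delta_k$, and that is immediate: for $k\ge 2$ one has $i_1 < i_k$ and $v\in F_{i_1}\cap F_{i_k}$, so $\{v\}\in\Delta_k$. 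With this fix the proof is clean; the digression about $w=v$ should be deleted.
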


\begin{proof}
Let $F_1,\ldots,F_t$ be a shelling of $L$ such that $F_{i_1},\ldots,F_{i_r}$ are all facets including the vertex $v$ with $i_1<\cdots<i_r$. Put $G_k=F_{i_k}-v$. Then $G_1,\ldots,G_r$ are all facets of $\lk_L(v)$. Since $F_1,\ldots,F_t$ is a shelling of $L$, there exists $j<k$ for $k=2,\ldots,t$ and $w\in F_k$, such that $F_k-w\subset F_j$, implying that $G_1,\ldots,G_r$ is a shelling of $\lk_L(v)$. 
\end{proof}

\begin{proposition}
\label{shellable-dl}
If $K^\vee$ is shellable, then so is $\dl_K(v)^\vee$ for any $v\in[m]$.
\end{proposition}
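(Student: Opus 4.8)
The plan is to combine Lemma~\ref{duality-dictionary}(2) with Lemma~\ref{shellable-lk}. By Lemma~\ref{duality-dictionary}(2) we have $\dl_K(v)^\vee=\lk_{K^\vee}(v)$, where the Alexander dual of $\dl_K(v)$ is taken over $[m]-v$ as declared in the conventions. So the statement ``$\dl_K(v)^\vee$ is shellable'' is literally the statement ``$\lk_{K^\vee}(v)$ is shellable'', and this is exactly what Lemma~\ref{shellable-lk} gives us, applied to the shellable complex $L=K^\vee$ and its vertex $v$. Thus the proof is a one-line identification followed by a citation of the two lemmas.

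The one point that needs a word of care is the ambient set used for the Alexander dual, since the paper has emphasized that the dual depends on the ambient set: the dual of $K$ is taken over $[m]$, while the dual of $\dl_K(v)$ is taken over $[m]-v$. Lemma~\ref{duality-dictionary}(2) is stated precisely with these conventions, so no extra bookkeeping is required; I would simply remind the reader of this when invoking it. One should also note that $v$ must actually be a vertex of $K^\vee$ for $\lk_{K^\vee}(v)$ to be nonempty/meaningful, but if $v\notin K^\vee$ then $\dl_K(v)^\vee=\lk_{K^\vee}(v)=\varnothing$ (equivalently $[m]-v\in K$, i.e.\ $v$ is contained in every facet of $K$ up to the full simplex), which is vacuously shellable; I would mention this only if the later use of the proposition requires it, and otherwise treat the nonempty case as the substantive one.

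There is essentially no obstacle here: the proposition is a formal consequence of two results already in hand, and the proof is:
\begin{proof}
By Lemma~\ref{duality-dictionary}(2), $\dl_K(v)^\vee=\lk_{K^\vee}(v)$, where the Alexander dual of $\dl_K(v)$ is taken over $[m]-v$. Since $K^\vee$ is shellable, Lemma~\ref{shellable-lk} implies that $\lk_{K^\vee}(v)$ is shellable, hence so is $\dl_K(v)^\vee$.
\end{proof}

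The only reason to write more than this would be to spell out the edge case $v\notin K^\vee$, in which case both sides of the displayed identity are the empty complex and the claim is trivial; I expect the paper simply omits that remark. So the ``hard part'' is nothing more than correctly threading the ambient-set convention through the identification $\dl_K(v)^\vee=\lk_{K^\vee}(v)$, which Lemma~\ref{duality-dictionary}(2) has already done for us.
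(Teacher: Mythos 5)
Your proof matches the paper's proof exactly: both identify $\dl_K(v)^\vee=\lk_{K^\vee}(v)$ via Lemma~\ref{duality-dictionary}(2) and then apply Lemma~\ref{shellable-lk}, and both observe that the degenerate case $[m]-v\in K$ (equivalently $v\notin K^\vee$) is trivial. Nothing to add.
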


\begin{proof}
If $[m]-v$ is a simplex of $K$, $\dl_K(v)^\vee$ is trivially shellable. Then we may assume that $[m]-v$ is not a simplex of $K$, or equivalently $v$ is a vertex of $K^\vee$. Thus the proof is done by combining Lemma \ref{duality-dictionary} and \ref{shellable-lk}.
\end{proof}

We next show the fillability of dual shellable complexes. 

\begin{lemma}
\label{collapsible}
If the Alexander dual of $K$ is collapsible, then $|K|$ is contractible.
\end{lemma}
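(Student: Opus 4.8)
The plan is to exploit the combinatorial Alexander duality between $K$ and $K^\vee$ together with Theorem \ref{duality} relating the reduced homology of $L$ to the reduced cohomology of $L^\vee$. First I would recall that ``collapsible'' is a strong, elementary-homotopy-type condition: a simplicial complex is collapsible if it can be reduced to a point by a sequence of elementary collapses, removing a free face together with the unique facet containing it. In particular a collapsible complex is contractible, hence acyclic over $\mathbb{Z}$, so $\widetilde{H}^i(K^\vee;\mathbb{Z})=0$ for all $i$ (and over any coefficient ring $\Bbbk$). Applying Theorem \ref{duality} with $S=[m]$ then gives $\widetilde{H}_i(K;\Bbbk)\cong\widetilde{H}^{m-i-3}(K^\vee;\Bbbk)=0$ for every $i$, so $|K|$ is acyclic over $\mathbb{Z}$.

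The remaining issue is the passage from acyclicity to contractibility, which requires simple connectivity of $|K|$. The natural way to get this is again via the collapsibility of $K^\vee$. A collapse of $K^\vee$ down to a point realizes, on geometric realizations, a deformation retraction of $|K^\vee|$ onto a point, so $|K^\vee|$ is contractible; but what I actually need is that $|K|$ is simply connected. Here I would use the combinatorial observation that a collapsible complex on vertex set $[m]$ whose Alexander dual has no missing low-dimensional simplices forces $K$ to be highly connected: more precisely, when $K^\vee$ is collapsible one can run the collapse so that at no stage does $K^\vee$ acquire all the facets of a small boundary sphere, and dually $K$ contains the $1$-skeleton of $\Delta^{[m]}$, hence is simply connected. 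Combining simple connectivity with the vanishing of all reduced homology groups and the Hurewicz theorem and the J.H.C. Whitehead theorem yields that $|K|$ is contractible.

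Alternatively, and perhaps more cleanly, I would avoid the delicate connectivity bookkeeping by noting that collapsibility of $K^\vee$ gives more than acyclicity of $K$: it gives that $K$ itself is contractible by a direct argument. Indeed each elementary collapse of $K^\vee$ removing a free face $\sigma$ of a facet $\tau$ corresponds, under $F\mapsto[m]-F$, to \emph{adding} to $K$ the minimal non-face $[m]-\tau$ and then the larger non-face $[m]-\sigma$; tracking this shows that $K$ is obtained from the full simplex $\Delta^{[m]}$ (which is contractible) by a sequence of ``anti-collapses'' that are homotopy equivalences, so $|K|$ is contractible. The technical heart of the argument — and the step I expect to be the main obstacle — is precisely this translation of an elementary collapse on $K^\vee$ into a homotopy equivalence at the level of $|K|$: one must verify that removing a free pair $(\sigma,\tau)$ from $K^\vee$ with $\tau$ a facet and $\sigma\subsetneq\tau$ a free face corresponds to attaching to $K$ a cell along a nullhomotopic (indeed collapsible) attaching map, so that the inclusion is a homotopy equivalence. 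This is a routine but careful check using the description of faces of the cube via $C_{\sigma\subset\tau}$ already set up in Section 3, and once it is in hand the contractibility of $|K|$ follows by induction on the number of collapse steps, the base case being $\Delta^{[m]}$.
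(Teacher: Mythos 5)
Your second (``cleaner'') approach is the one the paper actually uses, and it is correct in outline: dualize the elementary collapses of $K^\vee$ down to a point, observe that each dualized step changes $|K|$ only by an elementary expansion (adding a free pair), and conclude that $|K|$ has the homotopy type of the Alexander dual of a point, which is a cone and hence contractible. A few corrections to the way you set this up. First, the terminal complex is not the full simplex $\Delta^{[m]}$: if $K^\vee$ collapses to the vertex $\{v\}$, then $K$ expands to $(\Delta^{\{v\}})^\vee$, which consists of all proper subsets of $[m]$ except $[m]-\{v\}$; this is a cone with apex $v$, hence contractible, but it is strictly smaller than $\Delta^{[m]}$. Second, the dual of an elementary collapse of $K^\vee$ is not ``attaching a cell along a nullhomotopic attaching map''; it is an elementary expansion: removing the free pair $(\tau^\vee,\sigma^\vee)$ from $K^\vee$ (with $\tau^\vee\subset\sigma^\vee$, $\dim\sigma^\vee=\dim\tau^\vee+1$) corresponds to adding the pair $\{\sigma,\tau\}$ to $K$ (with $\sigma\subset\tau$, $\dim\tau=\dim\sigma+1$), and one checks by inspection of the face poset that $\sigma$ is then a free face of $K\cup\{\sigma,\tau\}$ with unique coface $\tau$, so that $K\cup\{\sigma,\tau\}$ collapses back to $K$ and the inclusion $|K|\hookrightarrow|K\cup\{\sigma,\tau\}|$ is a deformation retract. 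Third, this verification is purely combinatorial and has nothing to do with the cubical decomposition of Section~3: the faces $C_{\sigma\subset\tau}$ there live inside the cube $(D^1)^{\times m}$ used to build $\RZ_K$, not in $|K|$, so your expectation that the technical obstacle sits in that cubical bookkeeping is misplaced.

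Your first approach has a genuine gap. Acyclicity of $K$ over every ring does follow from Theorem~\ref{duality}, but the route you propose to simple connectivity of $|K|$ is false: collapsibility of $K^\vee$ does not force $K$ to contain the $1$-skeleton of $\Delta^{[m]}$. For instance, take $m=3$ and $K^\vee=\Delta^{\{1\}}$ (a single vertex, trivially collapsible); then $K=(\Delta^{\{1\}})^\vee$ omits the edge $\{2,3\}$. Since acyclicity alone does not give simple connectivity, this line of argument cannot be repaired by ``running the collapse carefully''; the elementary-expansion argument of the second approach is what is actually needed.
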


\begin{proof}
Suppose that for $\sigma\subset\tau\subset[m]$, $\tau^\vee$ is a free face of $K^\vee$ such that $\sigma^\vee$ is the only simplex of $K^\vee$ satisfying $\tau^\vee\subset\sigma^\vee$ and $\dim\sigma^\vee=\dim\tau^\vee+1$. Then by inspection, one deduces that $K\cup\{\sigma,\tau\}$ is a simplicial complex and $\sigma$ is a free face of $K\cup\{\sigma,\tau\}$ such that $\tau$ is the only simplex of $K\cup\{\sigma,\tau\}$ satisfying $\sigma\subset\tau$ with $\dim\tau=\dim\sigma+1$. In particular $|K|$ and $|K\cup\{\sigma,\tau\}|$ have the same homotopy type. On the other hand, we have
$$(K\cup\{\sigma,\tau\})^\vee=K^\vee-\{\sigma^\vee,\tau^\vee\},$$
where the right hand side is the elementary collapse of $K^\vee$ with respect to the free face $\tau^\vee$. Then since $K^\vee$ is collapsible, by iterating the above procedure, we see that $|K|$ is homotopy equivalent to the Alexander dual of the 0-simplex $\Delta^{\{v\}}$ for some $v\in[m]$, where the dual of $\Delta^{\{v\}}$ is taken over $[m]$. This Alexander dual of $\Delta^{\{v\}}$ is obviously the star of the vertex $v$ in $\Delta^{[m]}$ which is contractible. Therefore the proof is completed.
\end{proof}

\begin{proposition}
\label{shellable-fillable}
If the Alexander dual of $K$ is shellable, then $K$ is fillable.
\end{proposition}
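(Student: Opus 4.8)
The plan is to turn a shelling of $K^\vee$ into a recipe for which minimal non-faces of $K$ to fill in, and then to recognize the filled-in complex as a complex whose Alexander dual is collapsible, so that Lemma \ref{collapsible} applies. If $K^\vee$ is void then $K=\Delta^{[m]}$ is contractible, hence fillable with $r=0$; and if $K^\vee=\{\emptyset\}$ then $K=\partial\Delta^{[m]}$ is fillable by the single minimal non-face $[m]$, since $K\cup[m]=\Delta^{[m]}$. So assume neither, and fix a shelling $F_1,\dots,F_t$ of $K^\vee$ with restriction faces $\mathcal{R}(F_k)$, the minimal face of $F_k$ not contained in $\langle F_1,\dots,F_{k-1}\rangle$, so that the faces of $K^\vee$ are partitioned into the Boolean intervals $[\mathcal{R}(F_k),F_k]$; note $\mathcal{R}(F_1)=\emptyset$ while $\mathcal{R}(F_k)\ne\emptyset$ for $k\ge 2$.

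First I would single out the facets $G_1,\dots,G_r$ for which $\mathcal{R}(F_k)=F_k$; under our assumptions these exclude $F_1$, and $K^\vee\setminus\{G_1,\dots,G_r\}$ is still non-void. Put $M_j:=[m]\setminus G_j$. By Lemma \ref{duality-dictionary}(1) each $M_j$ is a minimal non-face of $K$, and unwinding the definition of the Alexander dual gives $(K\cup M_1\cup\dots\cup M_r)^\vee=K^\vee\setminus\{G_1,\dots,G_r\}=:C'$. By Lemma \ref{collapsible} it therefore suffices to show that $C'$ is collapsible.

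The heart of the matter is that deleting precisely the facets $F_k$ with $\mathcal{R}(F_k)=F_k$ leaves a collapsible complex. I would first argue that $C'$ is again shellable, with shelling the remaining $F_j$ in the same order and with the same restriction faces: deleting a single facet $F_k$ with $\mathcal{R}(F_k)=F_k$ creates no new facet (every proper face of $F_k$ lies in an earlier facet) and leaves each intersection $\langle F_j\rangle\cap\langle F_1,\dots,\widehat{F_k},\dots,F_{j-1}\rangle$ equal to $\langle F_j\rangle\cap\langle F_1,\dots,F_{j-1}\rangle$ (because $F_k\not\subseteq F_j$ for $j\ne k$), so shellability and all restriction faces survive; iterating over $G_1,\dots,G_r$ shows $C'$ is shellable with every restriction face a \emph{proper} face of its facet. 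Then I would collapse $C'$ to a point by reverse induction along this shelling: writing $C'=\langle H_1,\dots,H_s\rangle$, the interval $I_s=[\mathcal{R}(H_s),H_s]$ is exactly the set of faces of $C'$ not in $\langle H_1,\dots,H_{s-1}\rangle$, and fixing $w\in H_s\setminus\mathcal{R}(H_s)$ one checks that the pairs $(\sigma,\sigma\cup w)$ with $\mathcal{R}(H_s)\subseteq\sigma\subseteq H_s\setminus w$, removed in order of decreasing $|\sigma|$, form a sequence of legitimate elementary collapses sweeping out $I_s$; thus $C'\searrow\langle H_1,\dots,H_{s-1}\rangle$, and the induction terminates at the non-empty simplex $\langle H_1\rangle$, which collapses to a point. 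Then Lemma \ref{collapsible} yields that $|K\cup M_1\cup\dots\cup M_r|$ is contractible, so $K$ is fillable.

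The main obstacle is the last step: checking that the displayed pairs are genuine elementary collapses. The key is that each $\sigma\in I_s$ is a \emph{new} face of $H_s$, hence contained in no earlier facet; combining this with the simplicial-complex structure of $C'$ and with the order of removal, one verifies that at the moment $(\sigma,\sigma\cup w)$ is processed the face $\sigma$ has $\sigma\cup w$ as its unique coface in the current complex and $\sigma\cup w$ is maximal there. This is essentially the classical fact that a shellable complex all of whose restriction faces are proper is collapsible, but since the verification is short I would include it for completeness; everything else (the dual identity $(K\cup M_1\cup\dots\cup M_r)^\vee=K^\vee\setminus\{G_1,\dots,G_r\}$ and the persistence of shellability under deleting the $G_j$) is routine bookkeeping.
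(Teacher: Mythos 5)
Your proposal follows the same route as the paper: identify the shelling facets whose restriction face is the whole facet (the paper calls these spanning facets), delete them to get a collapsible complex, use the duality $(K\cup M_1\cup\cdots\cup M_r)^\vee=K^\vee\setminus\{G_1,\ldots,G_r\}$ together with Lemma~\ref{collapsible}. The only difference is that you supply a careful verification of the collapsibility of $K^\vee\setminus\{G_1,\ldots,G_r\}$, which the paper dismisses with ``one immediately sees''; your verification is correct.
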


\begin{proof}
We first note that $K-F$ is a subcomplex of $K$ whenever $F$ is a facet of $K$. Let $F_1,\ldots,F_t$ be a shelling of $K^\vee$ such that $F_{i_1},\ldots,F_{i_r}$ are all facets of $K^\vee$ satisfying $\langle F_{i_k}\rangle\cap\langle F_1,\ldots,F_{i_k-1}\rangle=\partial F_{i_k}$, which are called the spanning facets. Then one immediately sees that $K^\vee-\{F_{i_1},\ldots,F_{i_r}\}$ is collapsible. By Lemma \ref{duality-dictionary}, each $F_i^\vee$ is a minimal non-face of $K$, implying that $K\cup\{F_{i_1}^\vee,\ldots,F_{i_r}^\vee\}$ is a simplicial complex. Since 
$$(K\cup\{F_{i_1}^\vee,\ldots,F_{i_r}^\vee\})^\vee=K^\vee-\{F_{i_1},\ldots,F_{i_r}\},$$
it follows from Lemma \ref{collapsible} that $|K\cup\{F_{i_1}^\vee,\ldots,F_{i_r}^\vee\}|$ is contractible, completing the proof.
\end{proof}

We now obtain:

\begin{theorem}
\label{shellable-fillable-sub}
If the Alexander dual of $K$ is shellable, then $K_I$ is fillable for any $\emptyset\ne I\subset[m]$.
\end{theorem}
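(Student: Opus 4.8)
The plan is to reduce the claim to Proposition \ref{shellable-fillable} by showing that the property ``the Alexander dual is shellable'' passes to full subcomplexes, suitably interpreted. The key point to exploit is that taking full subcomplexes on one side of the Alexander duality corresponds to taking deletions on the other side: if $I\subset[m]$ and we write $J=[m]-I$, then there is a natural identification of $(K_I)^\vee$ (taken over $I$) with the deletion $\dl_{K^\vee}(J)=(K^\vee)_I$ up to the ambient-set bookkeeping. Indeed, for $\sigma\subset I$ one has $\sigma\in(K_I)^\vee$ iff $I-\sigma\notin K_I$ iff $I-\sigma\notin K$ (since $I-\sigma\subset I$), and one checks this is equivalent to $\sigma\in(K^\vee)_I$ by unwinding the definition of $K^\vee$ over $[m]$; the cleanest route is to iterate the single-vertex statement. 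So first I would record the one-vertex lemma: for any $v\in[m]$, $(\dl_K(v))^\vee=\lk_{K^\vee}(v)$, which is exactly Lemma \ref{duality-dictionary}(2) — wait, that is the link, not the deletion. The correct companion statement, which I would prove in the same style as Lemma \ref{duality-dictionary}, is that $(K_{[m]-v})^\vee$ taken over $[m]-v$ equals $\lk_{K^\vee}(v)$; but note $K_{[m]-v}=\dl_K(v)$, so this is precisely Lemma \ref{duality-dictionary}(2). Hence passing to a full subcomplex on $I$ corresponds, after deleting the vertices of $J=[m]-I$ one at a time, to iterating links in $K^\vee$.

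Next I would assemble the induction. Since $K_I=(\cdots((\dl_K(v_1))_{\,\cdot})\cdots)$ obtained by successively deleting the vertices $v_1,\dots,v_s$ of $J$, and since at each stage the Alexander dual passes to a link by the lemma above, we get that $(K_I)^\vee$ (over $I$) is an iterated link $\lk_{K^\vee}(v_1)$ then $\lk(v_2)$, etc., i.e. $\lk_{K^\vee}(J)$. Now I invoke Lemma \ref{shellable-lk}: the link of a vertex in a shellable complex is shellable, and iterating, the link of any simplex of $K^\vee$ is shellable; moreover if some $v_i$ fails to be a vertex of the current complex then the relevant deletion is a full simplex and its dual is trivially shellable (cf. the proof of Proposition \ref{shellable-dl}). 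Either way $(K_I)^\vee$ is shellable. Then Proposition \ref{shellable-fillable}, applied with ambient set $I$ in place of $[m]$ — the proof of that proposition never used any special feature of $[m]$ — yields that $K_I$ is fillable. This is the whole argument; it is essentially an induction on $|[m]-I|$ with Proposition \ref{shellable-dl} providing the inductive step and Proposition \ref{shellable-fillable} providing the base/conclusion.

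The main obstacle I anticipate is the bookkeeping around the ambient set for the Alexander dual, which the paper has already flagged as delicate. Specifically, ``$K_I$ is fillable'' is a statement intrinsic to $K_I$ as a complex on vertex set $I$ (minimal non-faces, contractibility of a filled-in complex), so when I apply Proposition \ref{shellable-fillable} to $K_I$ I must take its Alexander dual over $I$, not over $[m]$; and I must verify that this dual is exactly the iterated link $\lk_{K^\vee}(J)$ rather than something involving the larger ground set. Checking $(\dl_K(v))^\vee_{\text{over }[m]-v}=\lk_{K^\vee}(v)$ is already done (Lemma \ref{duality-dictionary}(2)); what remains is to confirm that iterating this $s$ times is consistent — i.e. that $((\dl_K(v))^\vee)^\vee=\dl_K(v)$ over $[m]-v$ so the induction hypothesis can be re-applied to $\dl_K(v)$ with ground set $[m]-v$. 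That is routine from $(L^\vee)^\vee=L$, but it is the spot where an off-by-one in the ground set would break things, so I would state the iterated-link identity $(K_I)^\vee_{\text{over }I}=\lk_{K^\vee}(J)$ as an explicit intermediate lemma and prove it by induction on $|J|$ before combining with Lemma \ref{shellable-lk}.
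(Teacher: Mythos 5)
Your argument is correct and is essentially the paper's own: the paper states the proof as ``combine Proposition \ref{shellable-dl} and Proposition \ref{shellable-fillable}, where every full subcomplex is obtained by consecutive vertex deletions,'' and your write-up simply unfolds Proposition \ref{shellable-dl} back into its ingredients (Lemma \ref{duality-dictionary}(2) and Lemma \ref{shellable-lk}) while spelling out the ambient-set bookkeeping and the corner case where a deleted vertex is absent from $K^\vee$. No new idea is needed beyond what the paper already supplies.
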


\begin{proof}
Combine Proposition \ref{shellable-dl} and \ref{shellable-fillable}, where every full subcomplex is obtained by consecutive vertex deletions.
\end{proof}

\begin{corollary}
\label{shellable-trivial}
If the Alexander dual of $K$ is shellable, the fat wedge filtration of $\RZ_K$ is trivial.
\end{corollary}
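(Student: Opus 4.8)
The plan is to combine the two results already established in this section: Theorem \ref{shellable-fillable-sub}, which says that if the Alexander dual of $K$ is shellable then $K_I$ is fillable for every $\emptyset\ne I\subset[m]$, together with Corollary \ref{fillable-2}, which says that if $K_I$ is fillable for every $\emptyset\ne I\subset[m]$ then the fat wedge filtration of $\RZ_K$ is trivial. So the proof is essentially a one-line concatenation of these two statements.

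More concretely, first I would invoke Theorem \ref{shellable-fillable-sub} to conclude that $K_I$ is fillable for each nonempty $I\subset[m]$, using the hypothesis that $K^\vee$ is shellable. The only subtlety worth recalling is why Theorem \ref{shellable-fillable-sub} applies to all full subcomplexes and not just to $K$ itself: every full subcomplex $K_I$ is obtained from $K$ by a sequence of vertex deletions $K_I = \dl_{\cdots}\dl_K(v)\cdots$, and Proposition \ref{shellable-dl} shows that dual shellability is preserved under each such deletion, so $(K_I)^\vee$ (taken over $I$) is again shellable, whence $K_I$ is fillable by Proposition \ref{shellable-fillable}. This is exactly what Theorem \ref{shellable-fillable-sub} packages.

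Then I would feed this into Corollary \ref{fillable-2}: since $K_I$ is fillable for every $\emptyset\ne I\subset[m]$, the maps $\varphi_{K_I}$ are all null homotopic by Theorem \ref{fillable-1} applied to each $K_I$, which is precisely the definition of the fat wedge filtration of $\RZ_K$ being trivial. This completes the proof.

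There is no real obstacle here; the corollary is a formal consequence of results proved earlier in the section, and all the genuine work — the collapsibility argument in Lemma \ref{collapsible}, the duality dictionary in Lemma \ref{duality-dictionary}, the stability of dual shellability under deletion in Proposition \ref{shellable-dl}, and the fillability criterion Theorem \ref{fillable-1} — has already been carried out. The proof is simply:
\begin{proof}
Combine Theorem \ref{shellable-fillable-sub} and Corollary \ref{fillable-2}.
\end{proof}
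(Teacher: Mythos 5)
Your proof is correct and is exactly the paper's proof: the authors also simply combine Corollary \ref{fillable-2} with Theorem \ref{shellable-fillable-sub}. The extra commentary you give about why Theorem \ref{shellable-fillable-sub} covers all full subcomplexes is accurate and matches how the paper established that theorem (via Propositions \ref{shellable-dl} and \ref{shellable-fillable}).
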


\begin{proof}
Combine Corollary \ref{fillable-2} and Theorem \ref{shellable-fillable-sub}.
\end{proof}

\begin{corollary}
If the Alexander dual of $K$ is shellable, then $\Z_K(D^n,S^{n-1})$ has the homotopy type of a wedge of spheres.
\end{corollary}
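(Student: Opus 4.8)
The plan is to assemble pieces that are already in place, so this is a corollary in the literal sense. First I would note that $\Z_K(D^n,S^{n-1})$ is precisely the polyhedral product $\Z_K(C\underline{X},\underline{X})$ for the constant sequence $X_i=S^{n-1}$, since $D^n=CS^{n-1}$. Thus the two ingredients needed are (a) the triviality of the fat wedge filtration of $\RZ_K$, which is exactly Corollary \ref{shellable-trivial} under the hypothesis that $K^\vee$ is shellable, and (b) control of the homotopy type of $|\Sigma K_I|$ for every $\emptyset\ne I\subset[m]$.

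Given (a), Theorem \ref{main-decomp} yields a homotopy equivalence
$$\Z_K(D^n,S^{n-1})\simeq\bigvee_{\emptyset\ne I\subset[m]}|\Sigma K_I|\wedge\widehat{X}^I.$$
For the constant sequence $X_i=S^{n-1}$ we have $\widehat{X}^I=\bigwedge_{i\in I}S^{n-1}\cong S^{(n-1)|I|}$, so each wedge summand is $|\Sigma K_I|\wedge\widehat{X}^I\cong\Sigma^{(n-1)|I|}|\Sigma K_I|$. Hence it suffices to know that each $|\Sigma K_I|$ is homotopy equivalent to a wedge of spheres: an iterated reduced suspension of a wedge of spheres is again a wedge of spheres, and a wedge of such wedges is a single wedge of spheres.

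For (b) I would invoke Theorem \ref{shellable-fillable-sub}, which says that if $K^\vee$ is shellable then $K_I$ is fillable for every $\emptyset\ne I\subset[m]$ — the point being that every full subcomplex is obtained by successive vertex deletions, and dual shellability is preserved under vertex deletion by Proposition \ref{shellable-dl}. Then Proposition \ref{fillable-sphere} gives that $|\Sigma K_I|$ is homotopy equivalent to a wedge of spheres. Combining this with the displayed decomposition finishes the argument; equivalently one may simply cite Theorem \ref{shellable-fillable-sub} together with Corollary \ref{fillable-decomp2}. There is no real obstacle: the only things to verify are the routine identification $\widehat{X}^I\cong S^{(n-1)|I|}$ and the elementary fact that wedges of wedges of (suspended) spheres collapse to a single wedge of spheres.
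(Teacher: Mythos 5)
Your proposal is correct and follows essentially the same route the paper takes: the paper's proof is literally "Combine Theorem \ref{main-decomp}, Proposition \ref{fillable-sphere}, and Corollary \ref{shellable-trivial}," and you assemble exactly these ingredients (with \ref{shellable-fillable-sub} as the intermediate step the paper packages into \ref{shellable-trivial}), adding only the routine identification $\widehat{X}^I\cong S^{(n-1)|I|}$ and the elementary observation that a wedge of suspended wedges of spheres is a wedge of spheres.
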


\begin{proof}
Combine Theorem \ref{main-decomp}, Proposition \ref{fillable-sphere}, and Corollary \ref{shellable-trivial}.
\end{proof}

\subsection{Sequentially Cohen-Macaulay complex}

Recall that a simplicial complex $K$ is Cohen-Macaulay (CM, for short) over a ring $\Bbbk$ if its Stanley-Reisner ring $\Bbbk[K]$ is a Cohen-Macaulay ring, that is, the Krull dimension and the depth of $\Bbbk[K]$ are the same. By definition CM complexes are pure, and sequentially Cohen-Macaulay (SCM, for short) complexes were introduced as a non-pure generalization of CM complexes \cite{S}. 

\begin{definition}
\label{def-SCM}
A simplicial complex $K$ is sequentially Cohen-Macaulay over $\Bbbk$ if the subcomplex of $K$ generated by $i$-dimensional faces is Cohen-Macaulay over $\Bbbk$ for $i\ge 0$.
\end{definition}

 By definition, we have
 $$\text{pure and SCM over }\Bbbk\quad\Longleftrightarrow\quad\text{CM over }\Bbbk.$$
 As well as CM complexes, there is a useful homological characterization of SCM complexes. For a simplicial complex $L$ and $i\ge 0$, let $L^{\langle i\rangle}$ denote the subcomplex of $L$ generated by faces of dimension $\ge i$. 

\begin{proposition}
[Bj\"orner and Wachs \cite{BW}]
\label{SCM-def}
A simplicial complex $K$ is SCM over $\Bbbk$ if and only if for any $\sigma\in K$ and $i\ge 0$, $\lk_K(\sigma)^{\langle i\rangle}$ is $(i-1)$-acyclic over $\Bbbk$
\end{proposition}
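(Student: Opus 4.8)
The plan is to reduce the proposition to Reisner's criterion for Cohen-Macaulayness by means of a bookkeeping lemma relating links to skeleta. Throughout I would write $K^{[i]}$ for the subcomplex of $K$ generated by its $i$-dimensional faces, so that by definition $K$ is SCM over $\Bbbk$ exactly when every $K^{[i]}$ is Cohen-Macaulay over $\Bbbk$. The point to exploit is that $K^{[i]}$ is pure of dimension $i$ whenever it is nonempty, so Reisner's criterion applies to it without the usual purity caveat and says that $K^{[i]}$ is CM over $\Bbbk$ iff $\widetilde H_k(\lk_{K^{[i]}}(\tau);\Bbbk)=0$ for every $\tau\in K^{[i]}$ and every $k<\dim\lk_{K^{[i]}}(\tau)=i-|\tau|$.

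Two elementary lemmas would drive the argument. First, for every $\sigma\in K^{[N]}$ one has $\lk_{K^{[N]}}(\sigma)=(\lk_K(\sigma))^{[N-|\sigma|]}$; I would prove this by unwinding definitions, noting that $\sigma\cup\tau$ is contained in an $N$-face of $K$ precisely when $\tau$ is contained in an $(N-|\sigma|)$-face of $\lk_K(\sigma)$. Second, for any complex $L$ and any $j\ge 0$ the chains of $L^{\langle j\rangle}$ and of $L^{[j]}$ coincide in every degree $\le j$, because a face of dimension $\le j$ lies inside some face of dimension $\ge j$ iff it lies inside some face of dimension exactly $j$; consequently $\widetilde H_k(L^{\langle j\rangle};\Bbbk)\cong\widetilde H_k(L^{[j]};\Bbbk)$ for all $k\le j-1$, so $L^{\langle j\rangle}$ is $(j-1)$-acyclic iff $L^{[j]}$ is.

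With these in hand both implications become short. For necessity I would fix $\sigma\in K$ and $i\ge0$, set $N=i+|\sigma|$, and first dispose of the case where $\lk_K(\sigma)$ has no face of dimension $\ge i$ (then $(\lk_K(\sigma))^{\langle i\rangle}$ is void and there is nothing to prove); otherwise $\sigma\in K^{[N]}$, and since $K^{[N]}$ is CM of pure dimension $N$, Reisner's criterion makes $\lk_{K^{[N]}}(\sigma)$ $(i-1)$-acyclic, which by the two lemmas is the same as $(\lk_K(\sigma))^{\langle i\rangle}$ being $(i-1)$-acyclic. For sufficiency I would verify Reisner's criterion for each $K^{[N]}$: given $\tau\in K^{[N]}$, the first lemma identifies $\lk_{K^{[N]}}(\tau)$ with $(\lk_K(\tau))^{[N-|\tau|]}$, which by the hypothesis (applied to $\sigma=\tau$, $i=N-|\tau|$) together with the second lemma is $(N-|\tau|-1)$-acyclic, i.e. $\widetilde H_k=0$ for $k<\dim\lk_{K^{[N]}}(\tau)$; hence every $K^{[N]}$ is CM and $K$ is SCM.

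The part that would need care is the bookkeeping rather than anything deep: one must keep the three skeleton-type operations apart ($K^{[i]}$ generated by $i$-faces, $K^{\langle i\rangle}$ generated by faces of dimension $\ge i$, and the ordinary $i$-skeleton), track the dimension shift $|\sigma|$ under passage to links, and handle the degenerate cases (empty links, $i>\dim\lk_K(\sigma)$, and the convention that the void complex counts as vacuously Cohen-Macaulay). All the homological substance is carried by Reisner's criterion, which I would treat as a black box.
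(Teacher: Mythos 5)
The paper states this proposition as a cited result of Bj\"orner and Wachs and gives no proof, so there is no in-paper argument to compare against; your derivation has to stand on its own, and it does. Both bookkeeping lemmas are correct: $\lk_{K^{[N]}}(\sigma)=(\lk_K(\sigma))^{[N-|\sigma|]}$ because $\sigma\cup\tau$ extends to an $N$-face of $K$ exactly when $\tau$ extends to an $(N-|\sigma|)$-face of $\lk_K(\sigma)$, and $L^{\langle j\rangle}$ and $L^{[j]}$ have the same $j$-skeleton (any face of dimension $\le j$ that lies in a face of dimension $\ge j$ lies in one of dimension exactly $j$), hence the same reduced homology through degree $j-1$. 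Feeding these into Reisner's criterion for the pure skeleta $K^{[N]}$ gives precisely the claimed equivalence, and the dimension arithmetic ($\dim\lk_{K^{[N]}}(\sigma)=N-|\sigma|=i$, using purity of $K^{[N]}$) works out as you say.

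Two points deserve to be made explicit rather than waved at. First, the degenerate cases do need a sentence each: you should note that $\sigma\in K^{[N]}$ exactly when $\lk_K(\sigma)$ has a face of dimension $\ge i$ (so that in the necessity direction Reisner actually says something, and otherwise $(\lk_K(\sigma))^{\langle i\rangle}$ is void and the conclusion is vacuous), and that facets $\tau$ of $K^{[N]}$, where $i=N-|\tau|=-1$, give a vacuous Reisner condition in the sufficiency direction. Second, and more substantively, Reisner's criterion in the form you quote is the field version, whereas the paper allows $\Bbbk$ to be an arbitrary commutative ring. For the coefficient rings the paper actually uses ($\mathbb{Z}$ and $\mathbb{Z}/p$) the needed form of Reisner holds, but you should either cite the $\mathbb{Z}$-coefficient version explicitly or restrict $\Bbbk$. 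Finally, be aware that the brevity of your argument is bought by the paper's choice to take Duval's pure-skeleton characterization as the \emph{definition} of SCM; starting from the algebraic definition, as Bj\"orner and Wachs do, the proof would also have to absorb that equivalence.
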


Pure shellability of simplicial complexes were introduced as a combinatorial criterion for CMness, and we also have an implication \eqref{SCM-implication} in the non-pure case. We now start to show all full subcomplexes of a dual SCM complex over $\mathbb{Z}$ are homology fillable by generalizing the above arguments for dual shellable complexes. The key is the following homological generalization of spanning facets which play the important role in the proof of Proposition \ref{shellable-fillable}. Facets $F_1,\ldots,F_r$ of a simplicial complex $L$ are called homology spanning facets over $\Bbbk$ if $L-\{F_1,\ldots,F_r\}$ is acyclic over $\Bbbk$. Let us search for homology spanning facets of SCM complexes.

\begin{lemma}
\label{cycle-facet}
Let $\Bbbk$ be a field and $L$ be a simplicial complex satisfying $\widetilde{H}_i(L^{\langle i+1\rangle};\Bbbk)=0$. Then any non-boundary $i$-cycle of $L$ over $\Bbbk$ involves a facet of dimension $i$. 
\end{lemma}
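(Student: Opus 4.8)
The plan is to prove the contrapositive: assume $z$ is an $i$-cycle of $L$ over $\Bbbk$ none of whose $i$-simplices is a facet of $L$, and deduce that $z$ is a boundary in $L$. Throughout I would work with the reduced simplicial chain complex $\widetilde{C}_*(-;\Bbbk)$, so that the argument covers the case $i=0$ uniformly.

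The key observation is a reinterpretation of the hypothesis on the support of $z$. If an $i$-simplex $\sigma$ in the support of $z$ is not a facet of $L$, then $\sigma$ is a proper face of some simplex of $L$, and since $\dim\sigma=i$ that simplex possesses an $(i+1)$-dimensional face containing $\sigma$; hence $\sigma$ is a face of an $(i+1)$-simplex of $L$, i.e. $\sigma\in L^{\langle i+1\rangle}$. Consequently every simplex in the support of $z$ lies in the subcomplex $L^{\langle i+1\rangle}$, so $z$ is in fact a chain of $L^{\langle i+1\rangle}$. Since $L^{\langle i+1\rangle}$ is a genuine subcomplex of $L$, the inclusion $\widetilde{C}_*(L^{\langle i+1\rangle};\Bbbk)\hookrightarrow\widetilde{C}_*(L;\Bbbk)$ is a chain map, so the relation $\partial z=0$ in $\widetilde{C}_{i-1}(L;\Bbbk)$ shows that $z$ is already a cycle in $\widetilde{C}_i(L^{\langle i+1\rangle};\Bbbk)$.

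Now I would invoke the hypothesis $\widetilde{H}_i(L^{\langle i+1\rangle};\Bbbk)=0$: it provides a chain $w\in\widetilde{C}_{i+1}(L^{\langle i+1\rangle};\Bbbk)$ with $\partial w=z$. Pushing $w$ forward along the above inclusion exhibits $z$ as a boundary in $L$, contradicting the assumption that $z$ is a non-boundary cycle; this completes the proof.

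I do not expect a serious obstacle here. The only points that need care are bookkeeping ones: recording precisely why membership of an $i$-simplex in $L^{\langle i+1\rangle}$ is equivalent to its being contained in an $(i+1)$-simplex, and using reduced chains so that the case $i=0$ — where ``$i$-cycle'' must be read as a $0$-chain with vanishing augmentation and ``$\widetilde{H}_0(L^{\langle 1\rangle};\Bbbk)=0$'' means connectedness — is not an exception.
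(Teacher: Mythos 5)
Your proof is correct and takes essentially the same route as the paper: observe that every $i$-simplex of $L$ that is not a facet lies in $L^{\langle i+1\rangle}$, so a cycle avoiding facets of dimension $i$ is a cycle of $L^{\langle i+1\rangle}$, and the acyclicity hypothesis then forces it to be a boundary. The paper's proof is identical in substance, just terser; your extra bookkeeping (the explicit chain-map observation and the reduced-homology remark for $i=0$) is harmless and fills in the same argument.
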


\begin{proof}
Let $x$ be an $i$-cycle of $L$ over $\Bbbk$. If $x$ involves no facet of dimension $i$, it is a cycle of $L^{\langle i+1\rangle}$ over $\Bbbk$. Then since $\widetilde{H}_i(L^{\langle i+1\rangle};\Bbbk)=0$, $x$ is a boundary, completing the proof.
\end{proof}

\begin{proposition}
\label{homology-spanning}
If $L$ is an SCM complex over a field $\Bbbk$, then it has homology spanning facets over $\Bbbk$.
\end{proposition}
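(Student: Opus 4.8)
The plan is to exhibit homology spanning facets explicitly, by deleting facets of $L$ in order of decreasing dimension and killing the reduced homology $\widetilde{H}_*(L;\Bbbk)$ one degree at a time, from the top down (so that, $L$ being non-pure, the facets deleted in successive rounds have successively smaller dimensions). The only input I take from the hypothesis is, via Proposition \ref{SCM-def} with $\sigma=\emptyset$, that $\widetilde{H}_i(L^{\langle i+1\rangle};\Bbbk)=0$ for every $i\ge 0$; I will carry this as an invariant of the complex throughout, since it is exactly the hypothesis that makes Lemma \ref{cycle-facet} applicable and hence guarantees, at each stage, that a non-boundary cycle meets a facet of the right dimension.

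The heart is the following round, performed for $k=\dim L,\dim L-1,\dots,0$ in turn; at the start of round $k$ the current complex $X$ (initially $X=L$) will satisfy: $\widetilde{H}_j(X;\Bbbk)=0$ for $j>k$, $\widetilde{H}_j(X;\Bbbk)\cong\widetilde{H}_j(L;\Bbbk)$ for $j\le k$, $\widetilde{H}_i(X^{\langle i+1\rangle};\Bbbk)=0$ for all $i$, and every facet of $X$ is a facet of $L$. While $\widetilde{H}_k(X;\Bbbk)\ne 0$ I pick a $k$-cycle $z$ that is not a boundary; by Lemma \ref{cycle-facet} it involves a $k$-dimensional facet $F$ of $X$, and I pass to $X-F$. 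The effect is read off the short exact sequence $0\to C_*(X-F)\to C_*(X)\to\Bbbk[k]\to 0$, where $\Bbbk[k]$ is $\Bbbk$ in degree $k$ and $C_*(X-F)$ is a subcomplex precisely because $F$ is a facet: the connecting homomorphism $\Bbbk\to\widetilde{H}_{k-1}(X-F;\Bbbk)$ carries a generator to $[\partial F]$, but it vanishes since $z$ witnesses that the induced map $\widetilde{H}_k(X;\Bbbk)\to\Bbbk$ is onto, so $\widetilde{H}_{k-1}$ is unchanged while $\dim_{\Bbbk}\widetilde{H}_k$ drops by exactly one. The same bookkeeping inside $X^{\langle k\rangle}$ (where $z$ is still a non-boundary cycle, $B_{k-1}$ being unaffected by the restriction) gives $\widetilde{H}_{k-1}((X-F)^{\langle k\rangle};\Bbbk)=0$, and since deleting a $k$-face disturbs only degrees $k$ and $k-1$ the invariant $\widetilde{H}_i(X^{\langle i+1\rangle};\Bbbk)=0$ is restored. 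A short combinatorial remark keeps the last invariant: if a facet $G$ of $F$ had $F$ as its unique coface, then $G$ would appear in $\partial z$ with nonzero coefficient, contradicting $\partial z=0$, so $X-F$ acquires no new facets. Each round terminates (there are finitely many faces) after $\dim_{\Bbbk}\widetilde{H}_k(L;\Bbbk)$ deletions, restoring all four conditions with $k$ replaced by $k-1$.

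Running the rounds down to $k=0$ then produces facets $F_1,\dots,F_r$ of $L$ with $L-\{F_1,\dots,F_r\}$ acyclic over $\Bbbk$, which is the assertion. The step I expect to be the real obstacle is keeping $\widetilde{H}_i(X^{\langle i+1\rangle};\Bbbk)=0$ alive after every single deletion: in the top degree every $d$-face is already a facet and $\widetilde{H}_d(L)=Z_d(L)$, so nothing needs arranging, but below the top a non-boundary cycle need not a priori touch any facet, so a carelessly chosen $k$-face to delete may fail to be a facet (then $X-F$ is not even a simplicial complex) or may enlarge the homology one degree down. Checking that the face $F$ delivered by Lemma \ref{cycle-facet} behaves as required, and that the hypotheses of Lemma \ref{cycle-facet} propagate, is what makes the whole procedure legitimate; the remaining ingredients are just long exact sequences and linear algebra over the field $\Bbbk$.
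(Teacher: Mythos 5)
Your argument is correct and arrives at the same set of facets, but by a genuinely different route from the paper. The paper's proof chooses a $\Bbbk$-basis of $\widetilde{H}_i(L;\Bbbk)$ for each $i$ all at once, performs a Gaussian elimination so that distinct basis cycles involve distinct $i$-facets (each produced by Lemma \ref{cycle-facet}), and then finishes in a single step by noting that the projection $|L|\to|L|/|\Delta|\simeq\bigvee S^i_j$ sends that basis to generators, hence is a homology isomorphism; crucially, every invocation of Lemma \ref{cycle-facet} there is made against the original $L$, so the SCM-derived hypothesis $\widetilde{H}_i(L^{\langle i+1\rangle};\Bbbk)=0$ is used once and never has to be re-verified. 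You instead delete one facet at a time, top dimension down, and the price you pay is the need to carry $\widetilde{H}_i(X^{\langle i+1\rangle};\Bbbk)=0$ as an invariant of the shrinking complex so that Lemma \ref{cycle-facet} stays applicable. You do close this loop, but one dependency deserves to be made explicit: your ``same bookkeeping inside $X^{\langle k\rangle}$'' long-exact-sequence step actually computes $\widetilde{H}_{k-1}\bigl((X^{\langle k\rangle})-F;\Bbbk\bigr)$, whereas what the invariant requires is $\widetilde{H}_{k-1}\bigl((X-F)^{\langle k\rangle};\Bbbk\bigr)$. These subcomplexes agree precisely because no proper subface of $F$ becomes ``orphaned'' when $F$ is deleted, and that in turn is guaranteed by $\partial z=0$ --- every $(k-1)$-subface of $F$ must cancel against another $k$-face of the current $X$, and then every smaller subface sits inside one of those. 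This is exactly the cycle argument you deploy a sentence later to rule out new facets, so the combinatorial remark should be stated first and cited for both purposes: the identification $(X^{\langle k\rangle})-F=(X-F)^{\langle k\rangle}$ and the preservation of ``every facet of $X$ is a facet of $L$.'' With that reordering the argument is airtight. What your approach buys is an explicit, algorithmic description with a clean termination count per round; what the paper's approach buys is never having to re-establish the SCM-type hypothesis after modifying the complex.
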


\begin{proof}
Choose a basis $x_i^1,\ldots,x_i^{n_i}$ of $\widetilde{H}_i(L;\Bbbk)$ for $i\ge 0$. By Lemma \ref{cycle-facet}, $x_i^1$ involves a facet $F_i^1$, and by subtracting a multiple of $x_i^1$ from $x_i^2,\ldots,x_i^{n_i}$ if necessary, we may assume that $x_i^2,\ldots,x_i^{n_i}$ do not involve $F_i^1$. Then by induction, we see that for $j=1,\ldots,n_i$ and $k\ne j$, $x_i^j$ involves a facet $F_i^j$ and $x_i^k$ does not involve a facet $F_i^j$. We shall show that facets $F_0^1,\ldots,F_0^{n_0},\ldots,F_d^1,\ldots,F_d^{n_d}$ are homology spanning facets of $L$ over $\Bbbk$, where $d=\dim L$. Put $\Delta=L-\{F_0^1,\ldots,F_0^{n_0},\ldots,F_d^1,\ldots,F_d^{n_d}\}$. 
Then we have
$$|L|/|\Delta|=\bigvee_{i=0}^d\bigvee_{j=1}^{n_i}|F_i^j|/|\partial F_i^j|=\bigvee_{i=0}^d\bigvee_{j=1}^{n_i}S^i_j$$
where $S^i_j$ is a copy of $S^i$. Note that the projection $|L|\to |L|/|\Delta|$ sends $x_i^j$ to a generator of $H_i(S^i_j;\Bbbk)$. Then this projection is an isomorphism in homology with coefficient $\Bbbk$, hence the proof is completed by the Puppe exact sequence of the homotopy cofibration $|\Delta|\to|L|\to|L|/|\Delta|$.
\end{proof}

Regarding the second condition of homology fillability, we prove the following. Recall from Section 7 that for a connected simplicial complex $L$, the simplicial complex $\widehat{L}$ is defined by adding all minimal non-faces to $L$.

\begin{proposition}
\label{Golod-1-conn}
If a connected simplicial complex $L$ is Golod over some ring $\Bbbk$, then $|\widehat{L}|$ is simply connected.
\end{proposition}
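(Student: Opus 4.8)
The plan is to prove the contrapositive-style statement directly by analyzing $\pi_1(|\widehat{L}|)$. Since $L$ is connected, $\widehat{L}$ is connected, so it suffices to show $\pi_1(|\widehat{L}|)=1$. The simplicial complex $\widehat{L}$ is obtained from $L$ by adjoining every minimal non-face $M$ of $L$ as a simplex; because $M$ is a minimal non-face, its boundary $\partial M$ already lies in $L$, so attaching $M$ fills in the $(|M|-1)$-simplex spanning an $(|M|-2)$-sphere. Passing to geometric realizations, $|\widehat{L}|$ is built from $|L|$ by coning off a family of embedded spheres $|\partial M_\alpha|$. The fundamental group is only affected by the minimal non-faces with $|M|=2$, i.e. the non-edges of $L$: adjoining such an $M=\{u,v\}$ attaches a $1$-simplex along the $0$-sphere $\{u,v\}$, which kills a loop in $\pi_1$. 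So the first step is to reduce to understanding which loops in $|L|$ become null-homotopic in $|\widehat{L}|$, and to see that $\pi_1(|\widehat{L}|)$ is the quotient of $\pi_1(|L|)$ by the normal subgroup generated by the loops $\ell_{uv}$ coming from non-edges $\{u,v\}$ of $L$ (together with a chosen path from the basepoint).

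The heart of the argument is then to show this quotient is trivial using Golodness, via the homological criterion of Proposition \ref{Hochster}. First I would note that $\pi_1(|L|)$ is generated (after choosing a spanning tree in the $1$-skeleton $L^{(1)}$) by loops $\ell_{uv}$ running along edges $\{u,v\}\in L$, and the relations come from $2$-simplices of $L$. In $\widehat{L}$ we additionally have a generator-killing relation for every non-edge $\{u,v\}$. So $\pi_1(|\widehat{L}|)$ is generated by the loops $\ell_{uv}$ over all pairs $\{u,v\}$ with $u,v$ vertices of $L$, modulo: (i) $\ell_{uv}=1$ whenever $\{u,v\}\notin L$; (ii) the relations from $2$-faces of $L$, i.e. $\ell_{uv}\ell_{vw}=\ell_{uw}$ whenever $\{u,v,w\}\in L$. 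It now suffices to prove every $\ell_{uv}$ with $\{u,v\}\in L$ is also trivial. Apply the Golodness hypothesis to the two-element sets $I=\{u\},J=\{v\}$ — no, that gives nothing — so instead apply it to full subcomplexes $I\sqcup J=[m]$ with a careful choice, but the cleanest route is: for any edge $\{u,v\}\in L$, Golodness with $I,J$ chosen so that $I\cup J$ is the vertex set of the relevant subcomplex forces the inclusion $L_{I\cup J}\to L_I*L_J$ to be trivial on $H_1$; extracting the right piece of information about $1$-cycles, I expect to conclude that the $1$-cycle represented by $\ell_{uv}$ maps trivially, hence already bounds in a controlled way after filling non-faces.

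A more robust variant I would pursue in parallel: work at the level of the $2$-skeleton. Let $\widehat{L}^{(\le 2)}$ be the $2$-skeleton of $\widehat{L}$; since attaching simplices of dimension $\ge 3$ does not change $\pi_1$, it is enough to treat $\widehat{L}^{(\le 2)}$. Now $\widehat{L}^{(\le 2)}$ contains the full $1$-skeleton of the simplex $\Delta^{[m]}$ on the vertices of $L$ (because every non-edge is added), so $|\widehat{L}^{(\le 2)}|$ is simply connected as soon as the $2$-cells present kill all of $\pi_1$ of the complete graph. The complete graph on $n$ vertices has $\pi_1$ free of rank $\binom{n-1}{2}$, with a basis of triangular loops $\ell_{1i}\ell_{ij}\ell_{j1}^{-1}$-type elements. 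Each $2$-simplex of $\widehat{L}$ kills one such triangular loop. So the question becomes purely combinatorial: do the $2$-faces of $\widehat{L}$ (those of $L$, plus every non-face of size $3$ whose boundary lies in $\widehat{L}^{(1)}$ — but the size-$3$ minimal non-faces of $L$ may not exhaust these) generate all triangular relations? This is where Golodness must enter: I expect to show that the homological triviality of $L_{\{i,j,k\}}\to L_{\{i\}}*L_{\{j\}}*L_{\{k\}}$-type maps, summed over all triples, is exactly what guarantees every triangle $\{i,j,k\}$ is "filled" in $\widehat L$ up to the relations already present, giving $\pi_1(|\widehat L^{(\le2)}|)=1$.

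The main obstacle is the bookkeeping translating "Golod" (a statement about triviality of $H_1$ of inclusions $L_{I\cup J}\to L_I*L_J$ for all disjoint $I,J$) into the precise statement that the complete-graph relations are all consequences of the $2$-faces of $\widehat L$. The delicate point is that $H_1(L_I*L_J)$ and the join structure interact with the size-$2$ minimal non-faces in a way one has to unwind carefully; I expect to spend most of the effort verifying that the $I,J$ of size $1$ and $2$ already carry enough information, and that no higher Massey-type obstruction is needed — which is consistent with the fact that we only need $\pi_1$, a "degree one" piece of the homotopy type, so only "products" and not higher operations should be relevant.
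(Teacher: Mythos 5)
There is a basic topological error that derails the plan at the outset: you assert that adjoining a minimal non-face $M=\{u,v\}$ ``attaches a $1$-simplex along the $0$-sphere $\{u,v\}$, which kills a loop in $\pi_1$.'' Since $L$ is connected this is backwards---attaching a $1$-cell along $S^0$ does not kill anything in $\pi_1$, it wedges on a new circle and adds a free generator; only $2$-cells impose relations. So your item (i), ``$\ell_{uv}=1$ whenever $\{u,v\}\notin L$,'' is not a relation at all. Worse, no $2$-face of $\widehat{L}$ can ever involve such an $\ell_{uv}$: a minimal non-face of size $3$ must have all its $2$-element subsets in $L$, hence cannot have the non-edge $\{u,v\}$ on its boundary. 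Your guiding question in the last paragraph---whether the $2$-faces of $\widehat{L}$ generate all the triangular relations of the complete graph---therefore has the answer no whenever $L$ has a non-edge, independently of any Golod hypothesis, and the $\pi_1$-presentation strategy as you set it up cannot close.

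Independently of that, the proposal never locates where Golodness is actually consumed, and the degrees in which you try to apply Proposition \ref{Hochster} carry no information: for $|I|=1$ or $|J|=1$ the join $L_I*L_J$ is a cone, so the homology map is automatically zero, and you acknowledge as much but never replace it with anything that works. The paper's proof makes a sharp geometric observation that is entirely absent from your outline: an induced cycle $C$ of length $\ge 4$ in the $1$-skeleton of $L$ is a full subcomplex, so $\Z_C(D^2,S^1)$ is a retract of $\Z_L(D^2,S^1)$; the moment-angle complex of such a cycle has a nontrivial cup product by \cite[Proposition 7.23]{BP}, and via the isomorphism \eqref{Tor} this contradicts Golodness. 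Golodness therefore forces the $1$-skeleton of $L$ to be chordal, and the simple connectivity is then read off from the contractibility of flag complexes of connected chordal graphs. This use of the cohomology ring of $\Z_C$ for a cycle $C$ is the single place the hypothesis does real work, and it does not appear in your argument.
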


\begin{proof}
If there is a minimal cycle in $L$ of length $\ge 4$, say $C$, then $C$ is a full subcomplex of $L$, hence $\Z_C$ is a retract of $\Z_L$. It follows from \cite[Proposition 7.23]{BP} that there is a non-trivial product in $\widetilde{H}^*(\Z_C;\Bbbk)$, and then so is in $\widetilde{H}^*(\Z_L;\Bbbk)$. This contradicts to the assumption by the ring isomorphism \eqref{Tor}. Hence we get that the 1-skeleton of $L$ is chordal, that is, every minimal cycle in $L$ is of length $\le 3$. In particular we get that the 2-skeleton of $\widehat{L}$ is isomorphic to the 2-skeleton of the flag complex of a chordal graph. Thus since the flag complex of a connected chordal graph is contractible, $|\widehat{L}|$ is simply connected.
\end{proof}

The dual SCMness is preserved by vertex deletions as well as dual shellability.

\begin{proposition}
\label{dl-SCM}
If the Alexander dual of $K$ is SCM over $\Bbbk$, then so is $\dl_K(v)^\vee$ for any $v\in[m]$.
\end{proposition}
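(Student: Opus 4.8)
The plan is to reduce the statement to the homological characterisation of sequential Cohen--Macaulayness recorded in Proposition \ref{SCM-def}, combined with the identification $\dl_K(v)^\vee=\lk_{K^\vee}(v)$ of Lemma \ref{duality-dictionary}(2). Concretely, it suffices to prove the combinatorial fact that the link of a vertex of an SCM complex over $\Bbbk$ is again SCM over $\Bbbk$ --- the exact analogue, for the homological notion, of Lemma \ref{shellable-lk} in the shellable case --- and this follows almost immediately from the homological criterion.

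First I would dispose of the trivial case: if $[m]-v\in K$, then $\dl_K(v)=K_{[m]-v}$ is the full simplex on $[m]-v$, so $\dl_K(v)^\vee$ is void and trivially SCM, exactly as in Proposition \ref{shellable-dl}. So assume $[m]-v\notin K$, i.e. $v$ is a vertex of $K^\vee$; then $L:=\lk_{K^\vee}(v)$ is a genuine simplicial complex, and by Lemma \ref{duality-dictionary}(2) we have $L=\dl_K(v)^\vee$, so it is enough to show that $L$ is SCM over $\Bbbk$.

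Now I would verify the criterion of Proposition \ref{SCM-def} for $L$. Fix $\tau\in L$ and $i\ge 0$. Since $\tau\in\lk_{K^\vee}(v)$ we have $v\notin\tau$ and $\tau\cup v\in K^\vee$, and the standard iterated-link identity gives $\lk_L(\tau)=\lk_{K^\vee}(\tau\cup v)$ --- both sides are $\{\rho\,\vert\,\rho\cap\tau=\emptyset,\ v\notin\rho,\ \rho\cup\tau\cup v\in K^\vee\}$, with the degenerate value $\tau=\emptyset$ included. Hence $\lk_L(\tau)^{\langle i\rangle}=\lk_{K^\vee}(\tau\cup v)^{\langle i\rangle}$, which is $(i-1)$-acyclic over $\Bbbk$ by Proposition \ref{SCM-def} applied to the SCM complex $K^\vee$ at the face $\sigma=\tau\cup v$. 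As $\tau$ and $i$ were arbitrary, Proposition \ref{SCM-def} shows that $L$ is SCM over $\Bbbk$, which is the assertion. I do not foresee a real obstacle: the only slightly delicate points are the bookkeeping in the trivial case and the set-level verification of the iterated-link identity, and everything else is a direct application of Lemma \ref{duality-dictionary} and Proposition \ref{SCM-def}.
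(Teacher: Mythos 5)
Your proof is correct and follows essentially the same route as the paper's: identify $\dl_K(v)^\vee=\lk_{K^\vee}(v)$ via Lemma \ref{duality-dictionary}(2), then conclude SCMness of the link from Proposition \ref{SCM-def}. The paper's proof is terser and leaves the iterated-link computation $\lk_{\lk_{K^\vee}(v)}(\tau)=\lk_{K^\vee}(\tau\cup v)$ implicit, which you spell out; your separate treatment of the case $[m]-v\in K$ is harmless but unnecessary, since the identity $\dl_K(v)^\vee=\lk_{K^\vee}(v)$ holds there too (both sides are the void complex, vacuously SCM).
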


\begin{proof}
By Lemma \ref{duality-dictionary}, $\dl_K(v)^\vee=\lk_{K^\vee}(v)$, and by Proposition \ref{SCM-def}, $\lk_{K^\vee}(v)$ is SCM over $\Bbbk$.
\end{proof}

Then in particular, any connected component of a dual SCM complex over $\Bbbk$ is dual SCM over $\Bbbk$. So by Proposition \ref{Golod} and \ref{Golod-1-conn}, we get:

\begin{corollary}
\label{SCM-1-conn}
If the Alexander dual of $K$ is SCM over some ring, then every connected component of $|\widehat{K}|$ is simply connected.
\end{corollary}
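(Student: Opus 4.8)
The plan is to reduce to one connected component at a time, upgrade the dual sequential Cohen--Macaulay hypothesis to Golodness, and then quote Proposition~\ref{Golod-1-conn}.

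First I would fix a connected component $L$ of $K$ and let $I\subset[m]$ be its vertex set. Since $L$ is a maximal connected subcomplex of $K$, any simplex of $K$ contained in $I$ already belongs to $L$, so $L=K_I$ is the full subcomplex of $K$ on $I$; in particular $L$ is obtained from $K$ by deleting the vertices of $[m]-I$ one at a time. Iterating Proposition~\ref{dl-SCM}, with the Alexander dual taken over the current ambient vertex set at each stage --- which is precisely the passage recorded just before the corollary --- we conclude that the Alexander dual $L^\vee=(K_I)^\vee$, formed over $I$, is SCM over the same ring $\Bbbk$ over which $K^\vee$ is SCM.

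Next I would use the involutivity $(L^\vee)^\vee=L$ (with duals taken over $I$): since $L^\vee$ is SCM over $\Bbbk$, Proposition~\ref{Golod} shows that its Alexander dual $(L^\vee)^\vee=L$ is Golod over $\Bbbk$. As $L$ is connected, Proposition~\ref{Golod-1-conn} then gives that $|\widehat L|$ is simply connected. Since $L$ was an arbitrary connected component of $K$ and $\widehat K$ is the disjoint union of the complexes $\widehat L$ over these components (each of which is itself connected), every connected component of $|\widehat K|$ is simply connected.

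There is essentially no hard analysis in this argument. The only points requiring care are the bookkeeping of ambient vertex sets when iterating the Alexander dual against vertex deletion in the first step (already dispatched by the discussion preceding the corollary), and matching the notation $\widehat K$ with its component-wise meaning, so that the conclusion is literally the assertion that $|\widehat L|$ is simply connected for every connected component $L$ of $K$.
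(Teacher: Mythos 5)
Your proof is correct and takes the same route as the paper's: reduce to a connected component $L=K_I$, observe it is dual SCM by iterating Proposition~\ref{dl-SCM}, hence Golod by Proposition~\ref{Golod}, and conclude that $|\widehat L|$ is simply connected by Proposition~\ref{Golod-1-conn}. One caution on your final sentence: if $\widehat K$ is taken literally as $K$ with all of its minimal non-faces filled in, then for disconnected $K$ any pair $\{v,w\}$ of vertices lying in different components is itself a minimal non-face, so $\widehat K$ would be connected (and typically carry unkilled bipartite cycles) rather than a disjoint union of the $\widehat L$; the corollary is meant, and is used in Proposition~\ref{SCM-homology-fillable}, only in the component-wise sense you actually proved, namely that $|\widehat L|$ is simply connected for every connected component $L$ of $K$.
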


We now prove homology fillability of dual SCM complexes over $\mathbb{Z}$.

\begin{proposition}
\label{SCM-homology-fillable}
If the Alexander dual of $K$ is SCM over $\mathbb{Z}$, then $K$ is homology fillable.
\end{proposition}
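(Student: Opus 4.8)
The plan is to treat $K$ one connected component at a time and, for each prime $p$ separately, to build the required filling by pulling back through Alexander duality the homology spanning facets of the dual supplied by Proposition \ref{homology-spanning}. First I would reduce to the connected case: each connected component $L$ of $K$ is the full subcomplex $K_{V(L)}$ on its own vertex set $V(L)$, so, as noted just after Proposition \ref{dl-SCM}, $L^\vee$ (taken over $V(L)$) is again SCM over $\mathbb{Z}$, and it suffices to verify the two defining conditions of homology fillability for such an $L$. Condition (2) is then immediate: $L=(L^\vee)^\vee$ is Golod over $\mathbb{Z}$ by Proposition \ref{Golod}, and $L$ is connected, so $|\widehat{L}|$ is simply connected by Proposition \ref{Golod-1-conn}.

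For condition (1), fix a prime $p$. By the homological characterization of Proposition \ref{SCM-def}, SCMness of $L^\vee$ over $\mathbb{Z}$ makes the relevant links $i$-acyclic over $\mathbb{Z}$, hence $i$-acyclic over the field $\mathbb{Z}/p$ by the universal coefficient theorem, so $L^\vee$ is SCM over $\mathbb{Z}/p$. Proposition \ref{homology-spanning} then produces homology spanning facets $F_1,\ldots,F_r$ of $L^\vee$ over $\mathbb{Z}/p$, i.e.\ $L^\vee-\{F_1,\ldots,F_r\}$ is acyclic over $\mathbb{Z}/p$. By Lemma \ref{duality-dictionary}(1) each $M_j:=V(L)-F_j$ is a minimal non-face of $L$, so $L\cup M_1\cup\cdots\cup M_r$ is a simplicial complex whose Alexander dual over $V(L)$ equals $L^\vee-\{F_1,\ldots,F_r\}$, exactly as in the proof of Proposition \ref{shellable-fillable}. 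Feeding the acyclicity of this dual into the Alexander duality isomorphism of Theorem \ref{duality} over the field $\mathbb{Z}/p$ (where vanishing of reduced homology and of reduced cohomology coincide) shows that $L\cup M_1\cup\cdots\cup M_r$ is acyclic over $\mathbb{Z}/p$, which is condition (1). Since $p$ and $L$ were arbitrary, $K$ is homology fillable.

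The step needing the most care is the homological replacement for Lemma \ref{collapsible}: in the dual shellable case one deduced contractibility of the filled complex from collapsibility of its dual, whereas here only acyclicity of the dual over $\mathbb{Z}/p$ is available, so one must route the conclusion through Theorem \ref{duality} and be careful that it is precisely the vanishing of all reduced $\mathbb{Z}/p$-(co)homology --- not contractibility --- that is transported. One should also check that the implication ``SCM over $\mathbb{Z}$ $\Rightarrow$ SCM over $\mathbb{Z}/p$'' is invoked legitimately, and that degenerate components such as isolated vertices or simplices are handled trivially (with $r=0$).
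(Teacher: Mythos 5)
Your proposal is correct and follows essentially the same route as the paper: reduce to a connected component, invoke Proposition \ref{homology-spanning} prime by prime, pass the acyclicity of the dual through Lemma \ref{duality-dictionary} and Theorem \ref{duality}, and settle simple-connectivity via Golodness (the paper packages your use of Propositions \ref{Golod} and \ref{Golod-1-conn} as Corollary \ref{SCM-1-conn}). The only difference is that you spell out, via Proposition \ref{SCM-def} and the universal coefficient theorem, why SCM over $\mathbb{Z}$ implies SCM over $\mathbb{Z}/p$ — a step the paper leaves implicit when it applies Proposition \ref{homology-spanning} over $\mathbb{Z}/p$.
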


\begin{proof}
As mentioned above, each connected component of $K$ is dual SCM, so since the homology fillability is a condition for connected components, we may assume $K$ is connected. Fix a prime $p$. By Proposition \ref{homology-spanning}, $K^\vee$ has homology spanning facets $F_1^p,\ldots,F_{r_p}^p$ over $\mathbb{Z}/p$. Put $\Delta^p=K^\vee-\{F_1^p,\ldots,F_{r_p}^p\}$. Since $\partial F_i^p\subset\Delta^p$, $F_i^p$ is a minimal non-face of $\Delta^p$ for all $i$. Then as in the proof of Proposition \ref{shellable-fillable}, we have
$$(\Delta^p)^\vee=K\cup(F_1^p)^\vee\cup\cdots\cup(F_{r_p}^p)^\vee$$
and $(F_i^p)^\vee$ is a minimal non-face of $K$ for all $i$ by Lemma \ref{duality-dictionary}. It follows from Theorem \ref{duality} that
$$\widetilde{H}_i((\Delta^p)^\vee;\mathbb{Z}/p)\cong\widetilde{H}^{m-i-3}(\Delta^p;\mathbb{Z}/p)$$
implying $\widetilde{H}_*((\Delta^p)^\vee;\mathbb{Z}/p)=0$. Then the first condition of the homology fillability is satisfied. The second condition is also satisfied by Corollary \ref{SCM-1-conn}, completing the proof.
\end{proof}

Summarizing, we have established:

\begin{theorem}
\label{SCM-homology-fillable-sub}
If the Alexander dual of $K$ is SCM over $\mathbb{Z}$, then $K_I$ is homology fillable for any $\emptyset\ne I\subset[m]$.
\end{theorem}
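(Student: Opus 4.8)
The plan is to mimic the proof of Theorem \ref{shellable-fillable-sub}, replacing the shellability input by its homological counterpart developed in this subsection. The two ingredients are Proposition \ref{dl-SCM}, which says that dual SCMness over $\mathbb{Z}$ is inherited by vertex deletions, and Proposition \ref{SCM-homology-fillable}, which says that a simplicial complex whose Alexander dual is SCM over $\mathbb{Z}$ is homology fillable. Since every full subcomplex $K_I$ with $\emptyset\ne I\subset[m]$ is obtained from $K$ by deleting, one at a time, the vertices in $[m]-I$, it suffices to iterate these two facts.

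First I would fix $\emptyset\ne I\subset[m]$ and write $[m]-I=\{v_1,\ldots,v_s\}$. Set $K^{(0)}=K$ and $K^{(j)}=\dl_{K^{(j-1)}}(v_j)$ for $j=1,\ldots,s$, so that $K^{(s)}=K_I$ and the Alexander dual of $K^{(j)}$ is taken over the shrinking ambient set $[m]-\{v_1,\ldots,v_j\}$. By Lemma \ref{duality-dictionary}(2) we have $(K^{(j)})^\vee=\lk_{(K^{(j-1)})^\vee}(v_j)$, so by Proposition \ref{dl-SCM} (which in turn rests on the link characterization of Proposition \ref{SCM-def}) the hypothesis that $(K^{(j-1)})^\vee$ is SCM over $\mathbb{Z}$ propagates to $(K^{(j)})^\vee$. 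An immediate induction then yields that $(K_I)^\vee$, taken over the ambient set $I$, is SCM over $\mathbb{Z}$. Applying Proposition \ref{SCM-homology-fillable} to the complex $K_I$, whose vertex set is contained in $I$, we conclude that $K_I$ is homology fillable; since $I$ was arbitrary, the theorem follows.

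The content here is essentially bookkeeping, so I do not expect a genuine obstacle: the real work was already done in Propositions \ref{homology-spanning}, \ref{Golod-1-conn}, \ref{dl-SCM} and \ref{SCM-homology-fillable}. The one point that needs a little care is that the Alexander dual is sensitive to the ambient set, so one must track the shrinking ground set through the iterated deletions and invoke the link formula $\dl^\vee=\lk_{(\cdot)^\vee}$ at each step, rather than attempting to relate $(K_I)^\vee$ directly to $K^\vee$ inside $[m]$.
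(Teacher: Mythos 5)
Your proof is correct and is essentially the paper's own argument: the paper likewise proves the theorem by combining Proposition~\ref{dl-SCM} (dual SCMness is preserved under vertex deletion, via the link formula of Lemma~\ref{duality-dictionary}) with Proposition~\ref{SCM-homology-fillable}, using that every full subcomplex arises by consecutive vertex deletions. Your extra care about the shrinking ambient set is a sensible elaboration of what the paper leaves implicit.
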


\begin{proof}
Combine Proposition \ref{dl-SCM} and \ref{SCM-homology-fillable}.
\end{proof}

\begin{corollary}
[Theorem \ref{main-SCM}]
\label{SCM-trivial}
If the Alexander dual of $K$ is SCM over $\mathbb{Z}$, then the fat wedge filtration of $\RZ_K$ is trivial.
\end{corollary}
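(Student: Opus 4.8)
The plan is to deduce the statement directly from the two structural results already in hand. By Theorem \ref{SCM-homology-fillable-sub}, if the Alexander dual of $K$ is SCM over $\mathbb{Z}$ then every full subcomplex $K_I$ with $\emptyset\ne I\subset[m]$ is homology fillable; and by Corollary \ref{homology-fillable-decomp}, the homology fillability of all the $K_I$ already forces the fat wedge filtration of $\RZ_K$ to be trivial. So the proof is nothing more than chaining these two facts, and the only thing left to check is that the hypothesis of Theorem \ref{SCM-homology-fillable-sub} really propagates from $K$ to each $K_I$ — which is exactly Proposition \ref{dl-SCM} applied along a sequence of vertex deletions, since each full subcomplex $K_I$ is obtained from $K$ by deleting the vertices of $[m]-I$ one at a time.

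For completeness I would also recall why the key inputs to Theorem \ref{SCM-homology-fillable-sub} work, since that is where the actual content sits. Homology fillability of a connected $L$ with $L^\vee$ SCM over $\mathbb{Z}$ has two clauses. The first is verified prime by prime: over the field $\mathbb{Z}/p$ the SCM complex $L^\vee$ has homology spanning facets by Proposition \ref{homology-spanning}, their Alexander-dual complements are minimal non-faces of $L$ by Lemma \ref{duality-dictionary}(1), and the simplicial Alexander duality of Theorem \ref{duality} turns the $\mathbb{Z}/p$-acyclicity of $L^\vee-\{F_1,\dots,F_r\}$ into the $\mathbb{Z}/p$-acyclicity of the corresponding filling of $L$. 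The second clause — simple connectivity of $|\widehat{L}|$ — follows because $L^\vee$ SCM implies $L^\vee$, hence $L$, is Golod (Proposition \ref{Golod}), and Golodness of a connected complex forces $|\widehat{L}|$ to be simply connected (Proposition \ref{Golod-1-conn}). Then Theorem \ref{homology-fillable-phi} gives $\varphi_{K_I}\simeq *$ for all $\emptyset\ne I\subset[m]$, which is precisely the triviality of the fat wedge filtration of $\RZ_K$, using that $\RZ_{K_I}^{|I|-1}$ retracts off $\RZ_K^{|I|-1}$ so the composites $j_{K_I}\circ\varphi_{K_I}$ are null as well.

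Since essentially everything has been done by this point in the paper, there is no serious obstacle remaining here. Conceptually, though, the crux of the whole argument — and the step I would expect to be hardest if it had not already been carried out — is the construction of homology spanning facets in Proposition \ref{homology-spanning}, together with the realization that the mod-$p$ acyclicity produced there, glued over all primes via the integrality clause (simple connectivity of $|\widehat{K_I}|$) and the fracture square Lemma \ref{fracture-lemma} inside the proof of Theorem \ref{homology-fillable-phi}, suffices even though one never exhibits a single integrally acyclic filling of $K_I$.
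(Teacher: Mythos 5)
Your proof is correct and is exactly the paper's own argument: Corollary \ref{SCM-trivial} is obtained by chaining Theorem \ref{SCM-homology-fillable-sub} (dual SCMness over $\mathbb{Z}$ implies all full subcomplexes $K_I$ are homology fillable, via Propositions \ref{dl-SCM} and \ref{SCM-homology-fillable}) with Corollary \ref{homology-fillable-decomp}. One small slip in your recap: Proposition \ref{Golod} gives that $L$, not $L^\vee$, is Golod when $L^\vee$ is SCM, but this does not affect the argument since it is the Golodness of $L$ that feeds into Proposition \ref{Golod-1-conn}.
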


\begin{proof}
Combine Corollary \ref{homology-fillable-decomp} and Theorem \ref{SCM-homology-fillable-sub}.
\end{proof}

\begin{corollary}
[Corollary \ref{main-SCM-sphere}]
If the Alexander dual of $K$ is SCM over $\mathbb{Z}$, then $\Z_K(D^n,S^{n-1})$ is homotopy equivalent to a wedge of spheres.
\end{corollary}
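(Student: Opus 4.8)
The plan is to derive the statement by specializing the general decomposition of Theorem~\ref{main-decomp} to the constant collection $\underline{X}=\{S^{n-1}\}_{i\in[m]}$ and then analyzing the resulting wedge summands. First I would record that $D^n$ is homotopy equivalent to the reduced cone $CS^{n-1}$, so that $\Z_K(D^n,S^{n-1})$ is the polyhedral product $\Z_K(C\underline{X},\underline{X})$ for this $\underline{X}$ (whose basepoints are certainly non-degenerate). The hypothesis that the Alexander dual of $K$ is SCM over $\mathbb{Z}$ feeds into Corollary~\ref{SCM-trivial} (that is, Theorem~\ref{main-SCM}) to show that the fat wedge filtration of $\RZ_K$ is trivial, which is exactly the input Theorem~\ref{main-decomp} requires. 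Applying Theorem~\ref{main-decomp} then yields a homotopy equivalence
$$\Z_K(D^n,S^{n-1})\simeq\bigvee_{\emptyset\ne I\subset[m]}|\Sigma K_I|\wedge\widehat{X}^I.$$

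Next I would unwind the summands. For the constant collection we have $\widehat{X}^I=\bigwedge_{i\in I}S^{n-1}\cong S^{|I|(n-1)}$, so each summand equals $\Sigma^{|I|(n-1)}|\Sigma K_I|$. It therefore suffices to prove that each $|\Sigma K_I|$ has the homotopy type of a wedge of spheres: an iterated suspension of a wedge of spheres is again a wedge of spheres, and a wedge of wedges of spheres is a wedge of spheres, so the desired conclusion follows. (When $K_I$ is a full simplex the summand is contractible, that is, an empty wedge, which causes no difficulty.)

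To handle $|\Sigma K_I|$, I would invoke Theorem~\ref{SCM-homology-fillable-sub}, which says precisely that the dual SCM over $\mathbb{Z}$ hypothesis is inherited by every full subcomplex in the form that $K_I$ is homology fillable for all $\emptyset\ne I\subset[m]$. Proposition~\ref{homology-fillable-sphere} applied to each $K_I$ then gives that $|\Sigma K_I|$ is homotopy equivalent to a wedge of spheres, completing the argument.

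I do not expect a genuine obstacle here: the substance is already carried by the cited results --- the triviality of the fat wedge filtration via homology fillability (which itself rests on Alexander duality, the homology-spanning-facets argument, the localization Lemma~\ref{wedge-localization}, and the fracture square), the desuspension of Theorem~\ref{main-decomp}, and the wedge-of-spheres conclusion of Proposition~\ref{homology-fillable-sphere}. The corollary is an assembly of these, and the only points needing a moment's care are the identification $\widehat{X}^I\cong S^{|I|(n-1)}$ and the stability of the class of wedges of spheres under suspension and (infinite) wedge.
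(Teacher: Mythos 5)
Your argument is correct and follows the same chain as the paper: the paper's one-line proof combines Corollary~\ref{Z-homology-fillable-sphere} with Theorem~\ref{SCM-homology-fillable-sub}, and Corollary~\ref{Z-homology-fillable-sphere} is itself exactly the assembly of Theorem~\ref{main-decomp}, Corollary~\ref{homology-fillable-decomp}, and Proposition~\ref{homology-fillable-sphere} that you have unwound. The small observations you flag (the identification $\widehat{X}^I\cong S^{|I|(n-1)}$ and closure of wedges of spheres under suspension and wedge) are exactly the right ones and are unproblematic.
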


\begin{proof}
Combine Corollary \ref{Z-homology-fillable-sphere} and Theorem \ref{SCM-homology-fillable-sub}.
\end{proof}


\section{Extractible complexes}

We have showed that the suspension of a dual shellable complex is homotopy equivalent to a wedge of spheres, and then proved that dual shellability implies the fillability. This homotopy equivalence actually has a further property regarding vertex deletions as follows. Suppose $K$ is dual shellable such that $F_1,\ldots,F_t$ is a shelling of $K^\vee$. If $F_{i_1},\ldots,F_{i_s}$ are all facets  including the vertex $v$ of $K^\vee$ with $i_1<\ldots<i_s$, then as in the proof of Lemma \ref{shellable-lk}, $F_{i_1}-v,\ldots,F_{i_s}-v$ is a shelling of $\lk_{K^\vee}(v)$. Suppose $F_{j_1},\ldots,F_{j_r}$ are spanning facets of $K^\vee$ such that $F_{j_1},\ldots,F_{j_q}$ include the vertex $v$ for $q\le r$, so $F_{j_1}-v,\ldots,F_{j_q}-v$ are, not necessarily all, spanning facets of $\lk_{K^\vee}(v)$. Then the proof of Proposition \ref{shellable-fillable} shows that there are homotopy equivalences
$$|\Sigma K|\simeq|K\cup F_{j_1}^\vee\cup\cdots\cup F_{j_r}^\vee|/|K|=\bigvee_{i=1}^rS^{m-|F_{j_i}|-1}$$
and 
$$|\Sigma\dl_K(v)|\simeq|\dl_K(v)\cup(F_{j_1}-v)^\vee\cup\cdots\cup(F_{j_q}-v)^\vee\cup G_1\cup\cdots\cup G_u|/|\dl_K(v)|=\bigvee_{i=1}^qS^{m-|F_{j_i}|-1}\vee\bigvee_{i=1}^uS^{|G_i|-1}$$
for some minimal non-faces $G_1,\ldots,G_u$ of $\dl_K(v)$, where $F_i^\vee=[m]-F_i$ and $(F_i-v)^\vee=([m]-v)-(F_i-v)=[m]-F_i$ as above. Then the inclusion $|\Sigma\dl_K(v)|\to|\Sigma K|$ restricts to the inclusion $\bigvee_{i=1}^qS^{m-|F_{j_i}|-1}\to\bigvee_{i=1}^rS^{m-|F_{j_i}|-1}$. Thus we can easily deduce the following.

\begin{proposition}
\label{right-inverse}
If the Alexander dual of $K$ is shellable, then the wedge of inclusions 
$$\bigvee_{v\in[m]}|\Sigma\dl_K(v)|\to|\Sigma K|$$
admits a right homotopy inverse.
\end{proposition}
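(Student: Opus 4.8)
The plan is to make the homotopy equivalences sketched just before the statement into honest maps, then read off the section one wedge summand at a time. First I would record the precise shape of the equivalence $|\Sigma K|\simeq\bigvee_{i=1}^{r}S^{m-|F_{j_i}|-1}$ coming from the proof of Proposition~\ref{shellable-fillable}. Let $F_1,\dots,F_t$ be a shelling of $K^\vee$ with spanning facets $F_{j_1},\dots,F_{j_r}$, and set $N:=K\cup F_{j_1}^\vee\cup\dots\cup F_{j_r}^\vee$ where $F^\vee:=[m]-F$. By Lemma~\ref{duality-dictionary} each $F_{j_i}^\vee$ is a minimal non-face of $K$, distinct $F_{j_i}^\vee$ meet only in faces of $K$, and $|N|$ is contractible by Lemma~\ref{collapsible}; hence the cofibre sequence $|K|\to|N|\to|N|/|K|$ together with $|N|\simeq\ast$ produces a homotopy equivalence
$$\partial^N\colon |N|/|K|=\bigvee_{i=1}^{r}S^{m-|F_{j_i}|-1}\xrightarrow{\simeq}\Sigma|K|=|\Sigma K|,$$
under which the $i$-th summand $S^{m-|F_{j_i}|-1}$ is $|K\cup F_{j_i}^\vee|/|K|$.

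Next, fix a vertex $v$ of $K^\vee$, i.e.\ $[m]-v\notin K$. By Lemma~\ref{duality-dictionary}(2) and the argument of Lemma~\ref{shellable-lk}, the facets of $K^\vee$ containing $v$, taken in the shelling order and with $v$ removed, form a shelling of $\dl_K(v)^\vee=\lk_{K^\vee}(v)$ whose spanning facets include every $F_{j_i}-v$ with $v\in F_{j_i}$. Put $M:=\dl_K(v)\cup\bigcup_{i\colon v\in F_{j_i}}F_{j_i}^\vee$. For such $i$ one has $F_{j_i}^\vee\subset[m]-v$, and $F_{j_i}^\vee$ is a minimal non-face of $\dl_K(v)$ with $\partial F_{j_i}^\vee\subset\dl_K(v)$, so $M$ is a subcomplex of $N$ containing $\dl_K(v)$, $|M|/|\dl_K(v)|=\bigvee_{i\colon v\in F_{j_i}}S^{m-|F_{j_i}|-1}$, and the map $|M|/|\dl_K(v)|\to|N|/|K|$ induced by $M\subset N$ is the inclusion of the corresponding wedge summands. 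Applying naturality of the connecting map to the square of inclusions $\dl_K(v)\subset M$, $\dl_K(v)\subset K$, $M\subset N$, $K\subset N$, I get a homotopy-commutative square whose bottom edge is $\partial^N$, whose right edge is the suspended inclusion $|\Sigma\dl_K(v)|\to|\Sigma K|$, and whose top edge is a map $\partial^M_v\colon\bigvee_{i\colon v\in F_{j_i}}S^{m-|F_{j_i}|-1}\to|\Sigma\dl_K(v)|$. So, after identifying $|\Sigma K|$ with $\bigvee_iS^{m-|F_{j_i}|-1}$ via $\partial^N$, the composite of $\partial^M_v$ with $|\Sigma\dl_K(v)|\to|\Sigma K|$ is exactly the inclusion of the summands indexed by $\{i\colon v\in F_{j_i}\}$. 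This is the precise form of the informal claim that the inclusion $|\Sigma\dl_K(v)|\to|\Sigma K|$ restricts to a wedge-summand inclusion, and it is the one place the proof really does something.

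Finally I would assemble the section. For each $i$ fix a vertex $v_i\in F_{j_i}$ and let $g_i\colon S^{m-|F_{j_i}|-1}\to|\Sigma\dl_K(v_i)|$ be the restriction of $\partial^M_{v_i}$ to the $i$-th summand; by the previous step the composite of $g_i$ with $|\Sigma\dl_K(v_i)|\to|\Sigma K|$ equals, under $\partial^N$, the inclusion of the $i$-th summand. Define $s\colon|\Sigma K|\xrightarrow{(\partial^N)^{-1}}\bigvee_{i=1}^{r}S^{m-|F_{j_i}|-1}\to\bigvee_{v\in[m]}|\Sigma\dl_K(v)|$ by sending the $i$-th summand through $g_i$ followed by the inclusion of the $v_i$-th wedge factor. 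Since a map out of a finite wedge is determined up to homotopy by its restrictions to the summands, precomposing the wedge of inclusions $\bigvee_{v}|\Sigma\dl_K(v)|\to|\Sigma K|$ with $s$ yields a self-map of $|\Sigma K|$ which, via $\partial^N$, restricts on each summand to that summand's inclusion, hence is homotopic to the identity. Thus $s$ is the required right homotopy inverse.

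The main obstacle is the middle step: verifying that the explicit wedge-of-spheres models for $K$ and for $\dl_K(v)$ can be chosen compatibly, which boils down to tracking the spanning facets through the link construction of Lemma~\ref{shellable-lk} and checking that the induced map on cofibres $|M|/|\dl_K(v)|\to|N|/|K|$ is the naive inclusion of summands. Everything else—contractibility of $|N|$ from Lemma~\ref{collapsible}, the minimal-non-face bookkeeping, and the formal juggling of cofibre sequences—is routine given the lemmas already available.
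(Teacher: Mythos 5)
Your proof is correct and follows the same line as the paper's sketch preceding the statement: identify $|\Sigma K|$ with the wedge of spheres indexed by the spanning facets $F_{j_1},\ldots,F_{j_r}$ via the contractible complex $N=K\cup F_{j_1}^\vee\cup\cdots\cup F_{j_r}^\vee$, pick a vertex $v_i$ in each $F_{j_i}$, and observe that the $i$-th sphere factors through $|\Sigma\dl_K(v_i)|\to|\Sigma K|$, which lets you assemble a section summand by summand. Your appeal to naturality of the Puppe connecting map for the inclusion of pairs $(M,\dl_K(v))\subset(N,K)$ is a minor, welcome streamlining --- it sidesteps the paper's explicit completion of $\dl_K(v)$ by the extra minimal non-faces $G_1,\ldots,G_u$ --- but the underlying argument is the same.
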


This section proves a homological generalization of the property of Proposition \ref{right-inverse} guarantees a $p$-local desuspension of the BBCG decomposition under some conditions on $\underline{X}$. Before generalizing the property of Proposition \ref{right-inverse}, it is helpful to recall some properties of the Bousfield-Kan (almost) localization. For a space $X$, there is a canonical homotopy equivalence
$$\Sigma X\simeq S\vee Y$$
where $S$ is a bouquet of circles and $Y$ is simply connected. Then we can define the almost $p$-localization of $\Sigma X$ by
$$(\Sigma X)_{(p)}:=S\vee Y_{(p)}$$
which is natural with respect to $X$. Although we ambiguously use the same notation for the usual localization and the almost localization of suspensions, there will be no confusion since we will deal only with simply connected spaces except for suspensions. We freely use the following properties of the (almost) $p$-localization, where the property of the usual localization of wedges of simply connected spaces was already used in Section 5 implicitly. 

\begin{proposition}
[Bousfield and Kan {\cite[Proposition 4.6, Chapter V]{BK}}]
\begin{enumerate}
\item If $X$ and $Y$ are simply connected, 
$$(X\vee Y)_{(p)}\simeq X_{(p)}\vee Y_{(p)}\quad\text{and}\quad(X\wedge Y)_{(p)}\simeq X_{(p)}\wedge Y_{(p)}.$$
\item For any spaces $X,Y$ and a simply connected space $Z$, it holds that 
$$(\Sigma X)_{(p)}\vee(\Sigma Y)_{(p)}\simeq\Sigma(X\vee Y)_{(p)}\quad\text{and}\quad(\Sigma X)_{(p)}\wedge Z_{(p)}\simeq(\Sigma X\wedge Z)_{(p)}.$$
\end{enumerate}
\end{proposition}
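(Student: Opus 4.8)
The plan is to reduce everything to the homological recognition principle for $p$-localization of simply connected spaces: for a simply connected space $Z$, a map $Z\to Z'$ with $Z'$ simply connected and $p$-local is a $p$-localization precisely when it induces an isomorphism on $\widetilde{H}_*(-;\mathbb{Z}_{(p)})$, and a simply connected space $Z'$ is $p$-local precisely when $\widetilde{H}_*(Z';\mathbb{Z})$ consists of $\mathbb{Z}_{(p)}$-modules. The only algebraic input is that $\mathbb{Z}_{(p)}$ is flat over $\mathbb{Z}$, so localization is exact and commutes with $\otimes_{\mathbb{Z}}$, $\mathrm{Tor}^{\mathbb{Z}}_*$, and arbitrary direct sums; in particular $\widetilde{H}_*(W;\mathbb{Z}_{(p)})\cong\widetilde{H}_*(W;\mathbb{Z})\otimes\mathbb{Z}_{(p)}$ for every space $W$, and the comparison maps below can all be taken to be the evident natural ones.

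For part (1) in the wedge case, consider the wedge of the localization maps $X\vee Y\to X_{(p)}\vee Y_{(p)}$. The target is simply connected, and since $\widetilde{H}_*(X_{(p)}\vee Y_{(p)};\mathbb{Z})\cong(\widetilde{H}_*(X;\mathbb{Z})\oplus\widetilde{H}_*(Y;\mathbb{Z}))\otimes\mathbb{Z}_{(p)}$ is a $\mathbb{Z}_{(p)}$-module it is $p$-local; the map is a $\mathbb{Z}_{(p)}$-homology isomorphism because reduced homology of a wedge is a direct sum and localization commutes with direct sums. Hence the map is a $p$-localization, which gives $(X\vee Y)_{(p)}\simeq X_{(p)}\vee Y_{(p)}$. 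For the smash, run the same argument with $X\wedge Y\to X_{(p)}\wedge Y_{(p)}$: the target is simply connected (a smash of simply connected CW-complexes is $3$-connected), and the reduced Künneth theorem gives, up to the usual degree shift,
$$\widetilde{H}_*(X_{(p)}\wedge Y_{(p)};\mathbb{Z})\cong\big(\widetilde{H}_*(X_{(p)};\mathbb{Z})\otimes\widetilde{H}_*(Y_{(p)};\mathbb{Z})\big)\oplus\mathrm{Tor}^{\mathbb{Z}}\big(\widetilde{H}_*(X_{(p)};\mathbb{Z}),\widetilde{H}_*(Y_{(p)};\mathbb{Z})\big).$$
Substituting $\widetilde{H}_*(X_{(p)};\mathbb{Z})\cong\widetilde{H}_*(X;\mathbb{Z})\otimes\mathbb{Z}_{(p)}$ and likewise for $Y$, and using flatness of $\mathbb{Z}_{(p)}$ so that $(A\otimes\mathbb{Z}_{(p)})\otimes(B\otimes\mathbb{Z}_{(p)})\cong(A\otimes B)\otimes\mathbb{Z}_{(p)}$ and similarly for $\mathrm{Tor}$, this is identified with $\widetilde{H}_*(X\wedge Y;\mathbb{Z})\otimes\mathbb{Z}_{(p)}$. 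So the target is $p$-local and the map is a $\mathbb{Z}_{(p)}$-homology isomorphism, hence a $p$-localization.

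For part (2), write $\Sigma X\simeq S_X\vee W_X$ and $\Sigma Y\simeq S_Y\vee W_Y$ as in the definition of the almost localization, with $S_X,S_Y$ bouquets of circles and $W_X,W_Y$ simply connected. Since $\Sigma(X\vee Y)=\Sigma X\vee\Sigma Y\simeq(S_X\vee S_Y)\vee(W_X\vee W_Y)$ is again a bouquet of circles wedged with a simply connected space, its almost localization is $(S_X\vee S_Y)\vee(W_X\vee W_Y)_{(p)}$, which by part (1) is $(S_X\vee W_{X,(p)})\vee(S_Y\vee W_{Y,(p)})=(\Sigma X)_{(p)}\vee(\Sigma Y)_{(p)}$. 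For the smash with a simply connected $Z$, note first that $\Sigma X\wedge Z$ is simply connected (a smash of a connected space with a simply connected one is $2$-connected), so its almost localization is the genuine $p$-localization. Distributing the smash over $\Sigma X\simeq S_X\vee W_X$ gives $\Sigma X\wedge Z\simeq(S_X\wedge Z)\vee(W_X\wedge Z)$ with both summands simply connected, so by part (1),
$$(\Sigma X\wedge Z)_{(p)}\simeq(S_X\wedge Z)_{(p)}\vee(W_X\wedge Z)_{(p)}\simeq(S_X\wedge Z)_{(p)}\vee(W_{X,(p)}\wedge Z_{(p)}).$$
Finally $S_X\wedge Z$ is a wedge of copies of $\Sigma Z$, so $(S_X\wedge Z)_{(p)}$ is a wedge of copies of $(\Sigma Z)_{(p)}\simeq\Sigma(Z_{(p)})$ — the last equivalence because $\Sigma(Z_{(p)})$ is simply connected and $p$-local with $\widetilde{H}_*(\Sigma Z_{(p)};\mathbb{Z})\cong\widetilde{H}_*(\Sigma Z;\mathbb{Z})\otimes\mathbb{Z}_{(p)}$ — so $(S_X\wedge Z)_{(p)}\simeq S_X\wedge Z_{(p)}$, and this matches $(\Sigma X)_{(p)}\wedge Z_{(p)}=(S_X\vee W_{X,(p)})\wedge Z_{(p)}=(S_X\wedge Z_{(p)})\vee(W_{X,(p)}\wedge Z_{(p)})$.

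The argument is bookkeeping once the recognition principle is in hand; the only point requiring care is the presence of the bouquet of circles in suspensions of arbitrary, possibly disconnected and non-simply-connected, spaces $X$ and $Y$ — this is precisely why part (2) uses the almost localization rather than the honest Bousfield--Kan localization, and one must split the circles off, localize the simply connected complement by part (1), and verify that reassembling the circle summands is compatible with wedging and with smashing against a simply connected space. A secondary, purely formal check is that nothing above uses finite type, since the homology recognition principle, the Künneth theorem, and flatness of $\mathbb{Z}_{(p)}$ all hold without it.
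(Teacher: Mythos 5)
The paper does not prove this proposition; it simply cites Bousfield--Kan, so there is no internal proof to compare against. Your argument is correct, and it uses the standard approach one would expect: the homological recognition principle for $p$-localization of simply connected spaces (a map from a simply connected space to a simply connected $p$-local space is the localization iff it is a $\mathbb{Z}_{(p)}$-homology isomorphism, and a simply connected space is $p$-local iff its integral homology consists of $\mathbb{Z}_{(p)}$-modules), together with flatness of $\mathbb{Z}_{(p)}$ to push localization through direct sums and K\"unneth. The reduction of part (2) to part (1) by splitting $\Sigma X\simeq S_X\vee W_X$ and tracking the bouquet of circles through wedges and smashes is exactly the right bookkeeping for the almost localization as the paper defines it, and your treatment of the smash case — observing that $\Sigma X\wedge Z$, $S_X\wedge Z$, and $W_X\wedge Z$ are all simply connected when $Z$ is, so that genuine localization and part~(1) apply throughout — is clean. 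Your closing remark that none of this requires finite type is accurate and worth keeping, since the connectivity and homological hypotheses are all that enter. The only point you leave tacit is that the almost localization is independent of the chosen splitting of $\Sigma X$ into circles plus a simply connected piece; the paper's word ``canonical'' is doing that work, and it would be fine to note explicitly that any two such splittings yield the same result up to homotopy.
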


We now generalize the property of Proposition \ref{right-inverse}, and define extractible complexes.

\begin{definition}
A simplicial complex $K$ is extractible over $\mathbb{Z}/p$ if
\begin{enumerate}
\item the vertex deletion $\dl_K(v)$ is a simplex for some vertex $v$, or
\item there is a map $\theta\colon|\Sigma K|_{(p)}\to\bigvee_{i=1}^m|\Sigma\dl_K(i)|_{(p)}$ such that the composite
$$|\Sigma K|_{(p)}\xrightarrow{\theta}\bigvee_{i=1}^m|\Sigma\dl_K(i)|_{(p)}\to|\Sigma K|_{(p)}$$
is the identity map in mod $p$ homology, where the second map is the wedge of inclusions.
\end{enumerate}
\end{definition}

By Proposition \ref{right-inverse}, dual shellable complexes are extractible over $\mathbb{Z}/p$ for any prime $p$. So it is natural to ask whether dual SCM complexes over $\mathbb{Z}/p$ are extractible over $\mathbb{Z}/p$ or not. We shall prove the answer is yes. For a chain $x=\sum_{\sigma\in K}a_\sigma\sigma$ ($a_\sigma\in\Bbbk$) of a simplicial complex $K$, we put $x_v=\sum_{\sigma\in K,\,v\not\in\sigma}a_\sigma\sigma$. Note that if a cycle $x$ of $K$ includes a facet of $K$, then $x$ is not a boundary. We consider a relation between cycles of $K$ and of $\lk_K(v)$.

\begin{lemma}
\label{dl-cycle}
Let $x$ be a cycle of $K$ over a ring $\Bbbk$ which involves a facet $F$. For $v\in F$, $\partial(x_v)$ is a cycle of $\lk_K(v)$ over $\Bbbk$ involving $F-v$ which is a facet of $\lk_K(v)$. 
\end{lemma}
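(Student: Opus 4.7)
The plan is to split $x$ according to whether simplices contain $v$, read off $\partial x_v$ from the vanishing of $\partial x$, and then identify the coefficient of $F-v$.

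First I would write $x = x_v + y$ where $y := \sum_{\sigma \in K,\, v \in \sigma} a_\sigma\, \sigma$ collects the terms containing $v$. For each $\sigma \ni v$, the boundary $\partial\sigma$ has exactly one face, namely $\sigma - v$, that does not contain $v$, so one can decompose $\partial y = \partial^v y + \partial^{\bar v} y$ where $\partial^{\bar v} y = \sum_{\sigma \ni v} \pm a_\sigma (\sigma - v)$ and $\partial^v y$ collects the remaining faces (all of which still contain $v$). Since $\partial x_v$ involves only simplices avoiding $v$, matching the $v$-free parts of $\partial x = 0$ yields $\partial x_v = -\partial^{\bar v} y$.

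Each $\sigma - v$ appearing above lies in $\lk_K(v)$ by the very definition of the link, so $\partial x_v$ is a chain of $\lk_K(v)$. Because $\lk_K(v)$ is a subcomplex of $K$ the two boundary operators agree, so $\partial \circ \partial = 0$ makes $\partial x_v$ a cycle of $\lk_K(v)$. Maximality of $F$ in $K$ forces $F-v$ to be a facet of $\lk_K(v)$, since any proper coface $(F-v)\cup\{w\}$ in $\lk_K(v)$ would give $F \cup \{w\} \in K$, contradicting facetness of $F$.

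Finally I would compute the coefficient of $F-v$ in $\partial x_v$. The only $\sigma$ with $v \in \sigma$ and $\sigma - v = F - v$ is $\sigma = F$ itself, so the coefficient of $F-v$ in $\partial^{\bar v} y$, and hence in $\partial x_v$, equals $\pm a_F$, which is nonzero by the assumption that $x$ involves $F$. The only conceivable obstacle is sign bookkeeping in the decomposition $\partial y = \partial^v y + \partial^{\bar v} y$, but this is immaterial here since we only need nonvanishing of a single coefficient.
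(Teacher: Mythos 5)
Your proof is correct, and it is essentially the paper's argument made explicit: the paper invokes the Mayer--Vietoris connecting homomorphism $\delta\colon H_{*+1}(K)\to H_*(\lk_K(v))$ for the cover $K=\dl_K(v)\cup\st_K(v)$, and the standard chain-level description of $\delta$ (split the cycle $x$ into its $\dl_K(v)$- and $\st_K(v)$-parts and take the boundary of the first) is exactly your decomposition $x=x_v+y$ with the matching of $v$-free parts in $\partial x=0$. Your unwound version has the small advantage of being self-contained, but there is no difference of substance.
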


\begin{proof}
In the Mayer-Vietoris exact sequence
$$\cdots\to H_{*+1}(K;\Bbbk)\xrightarrow{\delta}H_*(\lk_K(v);\Bbbk)\to H_*(\dl_K(v);\Bbbk)\oplus H_*(\mathrm{st}_K(v);\Bbbk)\to H_*(K;\Bbbk)\to\cdots$$
we have $\delta x=\partial(x_v)$ for the boundary map $\partial$, so the proof is completed by an easy inspection.
\end{proof}

\begin{proposition}
\label{SCM-extractible}
If the Alexander dual of $K$ is SCM over $\mathbb{Z}/p$, $K$ is extractible over $\mathbb{Z}/p$.
 \end{proposition}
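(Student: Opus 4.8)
The plan is to mimic the structure of Proposition~\ref{right-inverse} but replace the shelling bookkeeping with the homology-spanning-facet technology of Proposition~\ref{homology-spanning}. If $\dl_K(v)$ is a simplex for some vertex $v$, then $K$ is extractible by the first clause of the definition, so assume no vertex deletion is a full simplex; equivalently every vertex of $[m]$ is a vertex of $K^\vee$ (by Lemma~\ref{duality-dictionary}(1), since $[m]-v$ being a face of $K$ means $\{v\}$ is a non-face of $K^\vee$, hence not a vertex). Fix the prime $p$ and work over $\mathbb{Z}/p$ throughout. By Proposition~\ref{homology-spanning} applied to $K^\vee$, choose homology spanning facets $F_1,\ldots,F_r$ over $\mathbb{Z}/p$ together with cycles $x_i$ of $K^\vee$, one for each basis element of $\widetilde H_*(K^\vee;\mathbb{Z}/p)$, arranged as in that proof so that $x_i$ involves $F_i$ and no $x_k$ with $k\ne i$ involves $F_i$. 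Via the duality $\widetilde H_j(K;\mathbb{Z}/p)\cong\widetilde H^{m-j-3}(K^\vee;\mathbb{Z}/p)$ of Theorem~\ref{duality} and the identification $|\Sigma K|\simeq\Sigma|K|$, the wedge $\bigvee_{i=1}^r S^{m-|F_i|-1}$ models $|\Sigma K|_{(p)}$ up to mod-$p$ homology, exactly as in the shellable case; concretely $K\cup F_1^\vee\cup\cdots\cup F_r^\vee=(\Delta^p)^\vee$ is acyclic over $\mathbb{Z}/p$, so $|\Sigma K|_{(p)}\simeq(|(\Delta^p)^\vee|/|K|)_{(p)}$ is (after $p$-localization) a wedge of spheres indexed by $F_1,\ldots,F_r$, using Lemma~\ref{wedge-localization}-style reasoning only mod $p$.

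The heart of the argument is to distribute these spheres among the vertex deletions. For each generator $F_i$, I want to find a vertex $v=v(i)\in F_i$ such that the corresponding sphere $S^{m-|F_i|-1}$ already appears in $|\Sigma\dl_K(v)|_{(p)}$, compatibly with the inclusion $|\Sigma\dl_K(v)|\hookrightarrow|\Sigma K|$. Pick any $v\in F_i$ (such $v$ exists since $F_i\ne\emptyset$). By Lemma~\ref{dl-cycle} applied to $K^\vee$, the chain $\partial((x_i)_v)$ is a cycle of $\lk_{K^\vee}(v)=\dl_K(v)^\vee$ (Lemma~\ref{duality-dictionary}(2)) that involves the facet $F_i-v$ of $\dl_K(v)^\vee$; hence it is a non-boundary cycle of $\dl_K(v)^\vee$ of the appropriate degree. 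Moreover, by Proposition~\ref{dl-SCM}, $\dl_K(v)^\vee$ is again SCM over $\mathbb{Z}/p$, so by Proposition~\ref{homology-spanning} it too has homology spanning facets over $\mathbb{Z}/p$, and one can arrange $F_i-v$ to be one of them (it is a facet carried by a non-boundary cycle, so after the usual subtraction procedure it extends to a spanning set). Dualizing back as in Proposition~\ref{SCM-homology-fillable}, $F_i-v$ corresponds to the minimal non-face $(F_i-v)^\vee=[m]-F_i=F_i^\vee$ of $\dl_K(v)$, and the contractibility argument there shows $S^{m-|F_i|-1}$ is a wedge summand of $|\Sigma\dl_K(v)|_{(p)}$ on which the inclusion into $|\Sigma K|_{(p)}$ restricts to (a mod-$p$ homology isomorphism onto) the $F_i$-summand. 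Assembling: define $\theta\colon|\Sigma K|_{(p)}\to\bigvee_{j=1}^m|\Sigma\dl_K(j)|_{(p)}$ by sending, for each $i$, the $F_i$-sphere of $|\Sigma K|_{(p)}$ (a generator of mod-$p$ homology) into the chosen summand of $|\Sigma\dl_K(v(i))|_{(p)}$; this is possible because a map out of a wedge of spheres is determined up to mod-$p$ homology by choosing a mod-$p$ homology class on each sphere, and the target is simply connected in positive degrees (the circle part of each $\Sigma\dl_K(j)$ can be ignored since, by Corollary~\ref{SCM-1-conn}, after adding minimal non-faces the components become simply connected, so all spanning spheres sit in degree $\ge 2$ once one passes to the full subcomplexes, and in any case the relevant generators live in the simply connected part). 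By construction the composite $|\Sigma K|_{(p)}\xrightarrow{\theta}\bigvee_j|\Sigma\dl_K(j)|_{(p)}\to|\Sigma K|_{(p)}$ is the identity in mod-$p$ homology.

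The main obstacle I anticipate is the bookkeeping in the previous paragraph: one must check that the homology spanning facets of $\dl_K(v)^\vee$ can be chosen coherently so that each $F_i-v$ is among them and so that the resulting wedge decomposition of $|\Sigma\dl_K(v)|_{(p)}$ maps correctly — i.e. that the summand indexed by $F_i-v$ is sent by the inclusion precisely to the $F_i$-summand of $|\Sigma K|_{(p)}$ and not to some linear combination. This is where the precise form of Lemma~\ref{dl-cycle} (identifying $\delta x=\partial(x_v)$ via Mayer--Vietoris) is essential: it pins down the image of the dual homology class under the vertex-deletion inclusion, so that no degree-mixing or basis-change correction is needed. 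A secondary point to handle carefully is that the ``identity in mod $p$ homology'' condition only asks for a retraction after $\mathbb{Z}/p$-localization, so we never need the integral wedge-of-spheres splitting of Lemma~\ref{wedge-localization} — only its mod-$p$ shadow — which keeps the argument entirely within the $p$-local world and avoids the finite-type and simple-connectivity subtleties that would otherwise arise. Once these compatibilities are verified, the proof is a direct assembly, parallel to but strictly weaker (being only a mod-$p$ homology statement) than the integral right-inverse produced in Proposition~\ref{right-inverse} for the dual shellable case.
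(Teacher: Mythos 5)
Your proposal is essentially the paper's own proof: both proceed by taking homology spanning facets $F_1,\ldots,F_r$ of $K^\vee$ over $\mathbb{Z}/p$ from Proposition~\ref{homology-spanning}, pushing them into vertex deletions via Lemma~\ref{dl-cycle} and Lemma~\ref{duality-dictionary}(2), using Proposition~\ref{dl-SCM} to iterate, and then reading off the $p$-local wedge-of-spheres presentation of $|\Sigma K|_{(p)}$ and $|\Sigma\dl_K(v)|_{(p)}$ to build $\theta$. The one organizational difference worth noting is that the paper first reduces to \emph{connected} $K$ (this is what actually disposes of the $S^1$/$\pi_1$ complications that you try to argue around at the end of your second paragraph), whereas you instead peel off the ``some $\dl_K(v)$ is a simplex'' clause; the paper's reduction is cleaner, since with $K$ connected every spanning sphere sits in degree $\ge 2$ and the hand-waving about ``the relevant generators live in the simply connected part'' becomes unnecessary.
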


\begin{proof}
The argument is quite similar to the dual shellable case. By Proposition \ref{dl-SCM} and the definition of extractibility, we only need to prove the proposition for each connected component of $K$, so we may assume that $K$ is connected. It follows from Proposition \ref{homology-spanning} that there are homology spanning facets $F_1,\ldots,F_r$ of $K^\vee$ over $\mathbb{Z}/p$. Suppose that $F_{i_1},\ldots,F_{i_s}$ involves a vertex $v$. Then it follows from Lemma \ref{dl-cycle} that there are homology spanning facets of $\lk_K(v)$ over $\mathbb{Z}/p$ including $F_{i_1}-v,\ldots,F_{i_s}-v$. Then $K\cup F_1^\vee\cup\cdots\cup F_r^\vee$ and $\dl_K(v)\cup(F_{i_1}-v)^\vee\cup\cdots\cup(F_{i_s}-v)^\vee\cup G_1\cup\cdots\cup G_t$ are acyclic over $\mathbb{Z}/p$ for some minimal non-faces $G_1,\ldots,G_t$ of $\dl_K(v)$, where $(F_i-v)^\vee=([m]-v)-(F_i-v)=[m]-F_i$. Then since these simplicial complexes are of finite type, they are also acyclic over $\mathbb{Z}_{(p)}$, so they are contractible after $p$-localization. Hence as well as the dual shellable case, there are homotopy equivalences
$$|\Sigma K|_{(p)}\simeq\bigvee_{i=1}^rS^{m-|F_i|-1}_{(p)}\quad\text{and}\quad|\Sigma\dl_K(v)|_{(p)}\simeq\bigvee_{k=1}^sS^{m-|F_{i_k}|-1}_{(p)}\vee\bigvee_{i=1}^tS^{|G_i|-1}_{(p)}$$
such that the inclusion $|\Sigma\dl_K(v)|\to|\Sigma K|$ restricts to the inclusion $(\bigvee_{k=1}^sS^{m-|F_{i_k}|-1})_{(p)}\to(\bigvee_{i=1}^rS^{m-|F_i|-1})_{(p)}$. Now the construction of the map $\theta$ is straightforward.
\end{proof}

We now prove a $p$-local desuspension of the BBCG decomposition for extractible complexes over $\mathbb{Z}/p$ by assuming some conditions on $\underline{X}$. 

\begin{theorem}
\label{Z-extractible}
Suppose that each $X_i$ is a connected CW-complex. If $K$ is extractible over $\mathbb{Z}/p$, then there is a homotopy equivalence
$$\Z_K(C\underline{X},\underline{X})_{(p)}\simeq\bigvee_{\emptyset\ne I\subset[m]}(|\Sigma K_I|\wedge\widehat{X}^I)_{(p)}.$$
\end{theorem}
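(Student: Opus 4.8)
The plan is to argue by induction on the number of vertices $m$, in the spirit of the proof of Theorem \ref{main-decomp-Z} but working $p$-locally throughout. Recall from Theorem \ref{Phi} (taken with $i=m$) that there is a cofibration $j\colon\Z_K^{m-1}(C\underline{X},\underline{X})\to\Z_K(C\underline{X},\underline{X})$ with cofibre $|\Sigma K|\wedge\widehat{X}^{[m]}$, and that $\Z_K^{m-1}(C\underline{X},\underline{X})=\bigcup_{v\in[m]}\Z_{\dl_K(v)}(C\underline{X}_{[m]-v},\underline{X}_{[m]-v})$. Since each $X_i$ is connected, $\Z_K(C\underline{X},\underline{X})$ and $\Z_K^{m-1}(C\underline{X},\underline{X})$ are simply connected (Proposition \ref{Z-1-conn} and the remark at the end of its proof), while each $|\Sigma K_I|\wedge\widehat{X}^I$ is simply connected for $|I|\ge 2$ and contractible for $|I|=1$; hence the J.H.C. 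Whitehead theorem in the form ``a mod $p$ homology isomorphism between simply connected $p$-local CW-complexes of finite type is a $p$-local equivalence'' is available. If alternative (1) in the definition of extractibility holds, say $\dl_K(v)=\Delta^{[m]-v}$, then $\Z_{\dl_K(v)}(C\underline{X}_{[m]-v},\underline{X}_{[m]-v})$ is contractible, the homotopy pushout \eqref{Z-pushout} collapses to $\Z_K(C\underline{X},\underline{X})\simeq\Z_{\lk_K(v)}(C\underline{X}_{[m]-v},\underline{X}_{[m]-v})*X_v\simeq\Sigma\bigl(\Z_{\lk_K(v)}(C\underline{X}_{[m]-v},\underline{X}_{[m]-v})\wedge X_v\bigr)$, and applying the inductive hypothesis and re-indexing gives the claim: the summands with $v\notin I$ vanish because $K_I=\Delta^I$ is a full simplex, and the summands with $v\in I$ match because $|K_{J\cup v}|\simeq\Sigma|(\lk_K v)_J|$ whenever $(\dl_Kv)_J$ is a full simplex, using the Bousfield--Kan formulae (\cite[Proposition 4.6, Chapter V]{BK}) to commute $p$-localization past the suspension and the smash.

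So assume alternative (2) holds, with $\theta\colon|\Sigma K|_{(p)}\to\bigvee_{i=1}^m|\Sigma\dl_K(i)|_{(p)}$ whose composite with the wedge of inclusions is the identity in mod $p$ homology. First I would invoke the inductive hypothesis for every proper full subcomplex $K_I$ ($\emptyset\ne I\subsetneq[m]$) — these are again extractible over $\mathbb{Z}/p$, e.g. automatically so when $K$ is dual sequentially Cohen--Macaulay over $\mathbb{Z}/p$ since full subcomplexes are iterated vertex deletions and dual SCMness and hence extractibility are preserved by vertex deletion (Propositions \ref{dl-SCM} and \ref{SCM-extractible}). This yields natural $p$-local equivalences $\Z_{K_I}(C\underline{X}_I,\underline{X}_I)_{(p)}\simeq\bigvee_{\emptyset\ne J\subset I}(|\Sigma K_J|\wedge\widehat{X}^J)_{(p)}$, compatible with the inclusions $\Z_{K_I}\to\Z_{K_{I'}}$; gluing them over the poset of subsets of $[m]$ of size $m-1$ (equivalently, running the same induction along the filtration $\Z_K^0\subset\cdots\subset\Z_K^{m-1}$) gives $\Z_K^{m-1}(C\underline{X},\underline{X})_{(p)}\simeq\bigvee_{\emptyset\ne I\subsetneq[m]}(|\Sigma K_I|\wedge\widehat{X}^I)_{(p)}$, natural in $\underline{X}$. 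It therefore suffices to show that $j_{(p)}$ admits a left homotopy inverse, for then $\Z_K(C\underline{X},\underline{X})_{(p)}\simeq\Z_K^{m-1}(C\underline{X},\underline{X})_{(p)}\vee(|\Sigma K|\wedge\widehat{X}^{[m]})_{(p)}$ and the induction closes.

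Equivalently, one must produce a section, up to $p$-local homotopy, of the cofibre projection $\pi\colon\Z_K(C\underline{X},\underline{X})_{(p)}\to(|\Sigma K|\wedge\widehat{X}^{[m]})_{(p)}$. Here is where $\theta$ enters. For each $i$ the inclusion $\Z_{\dl_K(i)}(C\underline{X}_{[m]-i},\underline{X}_{[m]-i})\times X_i\hookrightarrow\Z_K(C\underline{X},\underline{X})$ (the faces of $K$ not containing $i$), together with the inductive decomposition of $\Z_{\dl_K(i)}(C\underline{X}_{[m]-i},\underline{X}_{[m]-i})$ — which in particular exhibits it as a $p$-local suspension with top BBCG summand $|\Sigma\dl_K(i)|\wedge\widehat{X}^{[m]-i}$ — and the Bousfield--Kan localization formulae, let one build a map $(|\Sigma\dl_K(i)|\wedge\widehat{X}^{[m]})_{(p)}\to\Z_K(C\underline{X},\underline{X})_{(p)}$ that after one suspension agrees with the inclusion of the matching BBCG wedge summand. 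Wedging these over $i$ and precomposing with $\theta\wedge\mathrm{id}_{\widehat{X}^{[m]}}$ gives a candidate $s\colon(|\Sigma K|\wedge\widehat{X}^{[m]})_{(p)}\to\Z_K(C\underline{X},\underline{X})_{(p)}$. By naturality of the BBCG decomposition (Corollary \ref{BBCG-natural}), $\Sigma(\pi\circ s)$ is identified with $\theta$ followed by the wedge of inclusions, which is the identity in mod $p$ homology; hence $\pi\circ s$ is a mod $p$ homology isomorphism of simply connected $p$-local spaces of finite type, so a $p$-local equivalence, and $s$ composed with its inverse is the required section. I expect the genuine difficulty to be precisely the construction of the maps $(|\Sigma\dl_K(i)|\wedge\widehat{X}^{[m]})_{(p)}\to\Z_K(C\underline{X},\underline{X})_{(p)}$ as honest maps rather than maps of suspensions — a product such as $\Z_{\dl_K(i)}(C\underline{X}_{[m]-i},\underline{X}_{[m]-i})\times X_i$ does not split off its top smash summand before suspending — and carrying this out with enough naturality in $\underline{X}$ for the induction; this is where the connectedness of the $X_i$ and the co-H (indeed $p$-local suspension) structure on $\Z_{\dl_K(i)}(C\underline{X}_{[m]-i},\underline{X}_{[m]-i})$ supplied by the inductive hypothesis are used essentially.
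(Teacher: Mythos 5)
Your strategy is the same as the paper's: induct on $m$, split on the two alternatives in the definition of extractibility, and in case (2) build a section of the cofibre projection $\Z_K(C\underline{X},\underline{X})_{(p)}\to(|\Sigma K|\wedge\widehat{X}^{[m]})_{(p)}$ by wedging maps $\Theta_v$ over vertices and precomposing with $\theta$, then checking the composite is a mod $p$ homology isomorphism via naturality of the BBCG equivalence. You also correctly observe that the induction requires $\epsilon_{K_I}$ for all proper $I$, hence needs each $K_I$ to be extractible; this is a genuine point the paper's own proof slides over, and you rightly note that it holds automatically in the dual SCM case via Propositions \ref{dl-SCM} and \ref{SCM-extractible}.

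However, precisely where you anticipate the difficulty, there is a genuine gap: you never construct $\Theta_v$. The missing idea is the half-smash splitting $\Sigma A\rtimes B\simeq\Sigma A\vee(\Sigma A\wedge B)$, with $A\rtimes B=(A\times B)/(\ast\times B)$, valid for a suspension $\Sigma A$. Write $|\Sigma\dl_K(v)|\wedge\widehat{X}^{[m]}=(|\Sigma\dl_K(v)|\wedge\widehat{X}^{[m]-v})\wedge X_v$; since $|\Sigma\dl_K(v)|\wedge\widehat{X}^{[m]-v}$ is a suspension, this factor is a retract of $(|\Sigma\dl_K(v)|\wedge\widehat{X}^{[m]-v})_{(p)}\rtimes X_v$, which one then includes into $\bigvee_{\emptyset\ne I\subset[m]-v}(|\Sigma\dl_K(v)_I|\wedge\widehat{X}^I)_{(p)}\rtimes X_v$, maps through $\epsilon_{\dl_K(v)}\rtimes 1$ to $\Z_{\dl_K(v)}(C\underline{X}_{[m]-v},\underline{X}_{[m]-v})_{(p)}\rtimes X_v$, and finally includes into $(\Z_K(C\underline{X},\underline{X})/CX_v)_{(p)}\simeq\Z_K(C\underline{X},\underline{X})_{(p)}$. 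That half-smash step is exactly what lets one split off the top smash summand of the product before suspending; without it the section $s$ does not exist. A smaller problem: in alternative (1) you invoke the inductive hypothesis for $\lk_K(v)$, which is not assumed extractible. The paper instead notes that $\dl_K(v)$ being a simplex forces $\dim K\ge m-2$, so Proposition \ref{dim=m-2} and Theorem \ref{main-decomp} give a genuine desuspension $\Z_K(C\underline{X},\underline{X})\simeq\bigvee(|\Sigma K_I|\wedge\widehat{X}^I)$, and the required map $\bar{\rho}_K$ is obtained by composing with the co-H comultiplication and applying the Whitehead theorem, then $p$-localizing.
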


\begin{proof}
First of all, recall from \cite{IK2} that the homotopy equivalence of Corollary \ref{BBCG-natural} is given by the composite of maps
$$\Sigma\Z_K(C\underline{X},\underline{X})\to\Sigma\bigvee^{2^m-1}\Z_K(C\underline{X},\underline{X})\xrightarrow{\rm proj}\Sigma\bigvee_{\emptyset\ne I\subset[m]}\Z_{K_I}(C\underline{X}_I,\underline{X}_I)\xrightarrow{\rm proj}\Sigma\bigvee_{\emptyset\ne I\subset[m]}|\Sigma K_I|\wedge\widehat{X}^I$$
which we denote by $\rho_K$, where the first map is defined by the suspension comultiplication. Since each $X_i$ is a connected CW-complex, $\Z_K(C\underline{X},\underline{X})$ and $\bigvee_{\emptyset\ne I\subset[m]}|\Sigma K_I|\wedge\widehat{X}^I$ are simply connected CW-complexes by Proposition \ref{Z-1-conn}. Then in order to prove the theorem, it is sufficient to construct a map 
$$\epsilon_K\colon\bigvee_{\emptyset\ne I\subset[m]}(|\Sigma K_I|\wedge\widehat{X}^I)_{(p)}\to\Z_K(C\underline{X},\underline{X})_{(p)}$$
which coincides with $\rho_K^{-1}$ in homology. 

We consider the first case in the definition of extractible complexes. By Theorem \ref{main-decomp} and Proposition \ref{dim=m-2}, $\Z_K(C\underline{X},\underline{X})$ is a suspension, so we can define the composite
$$\Z_K(C\underline{X},\underline{X})\to\Sigma\bigvee^{2^m-1}\Z_K(C\underline{X},\underline{X})\xrightarrow{\rm proj}\bigvee_{\emptyset\ne I\subset[m]}\Z_{K_I}(C\underline{X}_I,\underline{X}_I)\xrightarrow{\rm proj}\bigvee_{\emptyset\ne I\subset[m]}|\Sigma K_I|\wedge\widehat{X}^I$$
which we denote by $\bar{\rho}_K$, where the first arrow is defined by the comultiplication of $\Z_K(C\underline{X},\underline{X})$. Obviously $\Sigma\bar{\rho}_K$ is homotopic to $\rho_K$. Then $\bar{\rho}_K$ is an isomorphism in homology, hence a homotopy equivalence by the J.H.C. Whitehead theorem. Thus $(\bar{\rho}_K)_{(p)}^{-1}$ is the desired map $\epsilon_K$.

We next consider the second case in the definition of extractible complexes. Induct on $m$. For $m=1$, both $\bigvee_{\emptyset\ne I\subset[m]}|\Sigma K_I|\wedge\widehat{X}^I$ and $\Z_K(C\underline{X},\underline{X})$ are contractible, so we put $\epsilon_K$ to be the constant map. Suppose that we have constructed the desired map for extractible complexes over $\mathbb{Z}/p$ with vertices less than $m$. Let 
$$\hat{\epsilon}_K\colon\bigvee_{I\subset[m],\,I\ne\emptyset,[m]}(|\Sigma K_I|\wedge\widehat{X}^I)_{(p)}\to\Z_K(C\underline{X},\underline{X})_{(p)}$$
be a wedge of the composite of maps
$$(|\Sigma K_I|\wedge\widehat{X}^I)_{(p)}\xrightarrow{\epsilon_{K_I}}\Z_{K_I}(C\underline{X}_I,\underline{X}_I)_{(p)}\xrightarrow{\rm incl}\Z_K(C\underline{X},\underline{X})_{(p)}$$
for $\emptyset\ne I\subsetneq[m]$, where we have the map $\epsilon_{K_I}$ by the induction hypothesis. Then by the naturality of $\rho_K$ in Corollary \ref{BBCG-natural}, $\hat{\epsilon}_K$ is the restriction of $\rho_K^{-1}$ in homology. Then by the construction of $\rho_K$, we need only to construct a map $\Theta\colon(|\Sigma K|\wedge\widehat{X}^{[m]})_{(p)}\to\Z_K(C\underline{X},\underline{X})_{(p)}$ such that the composite
$$(|\Sigma K|\wedge\widehat{X}^{[m]})_{(p)}\xrightarrow{\Theta}\Z_K(C\underline{X},\underline{X})_{(p)}\xrightarrow{\rm proj}(\Z_K(C\underline{X},\underline{X})/\Z_K^{m-1}(C\underline{X},\underline{X}))_{(p)}=(|\Sigma K|\wedge\widehat{X}^{[m]})_{(p)}$$
is the identity map in homology with coefficient $\mathbb{Z}_{(p)}$. For $v\in[m]$, define a map $\Theta_v\colon(|\Sigma\dl_K(v)|\wedge\widehat{X}^{[m]})_{(p)}\to\Z_K(C\underline{X},\underline{X})_{(p)}$ by the composite
\begin{multline*}
(|\Sigma\dl_K(v)|\wedge\widehat{X}^{[m]})_{(p)}\to
(|\Sigma\dl_K(v)|\wedge\widehat{X}^{[m]-v})_{(p)}\rtimes X_v\xrightarrow{\rm incl}\bigvee_{\emptyset\ne I\subset[m]-v}(|\Sigma\dl_K(v)_I|\wedge\widehat{X}^I)_{(p)}\rtimes X_v\\
\xrightarrow{\epsilon_{\dl_K(v)}\rtimes 1}\Z_{\dl_K(v)}(C\underline{X}_{[m]-v},\underline{X}_{[m]-v})_{(p)}\rtimes X_v\xrightarrow{\rm incl}(\Z_K(C\underline{X},\underline{X})/CX_v)_{(p)}\simeq\Z_K(C\underline{X},\underline{X})_{(p)}
\end{multline*}
where we use a homotopy equivalence $\Sigma A\rtimes B\simeq\Sigma A\vee(\Sigma A\wedge B)$ for the first arrow. Then the naturality of $\rho_K$ in Corollary \ref{BBCG-natural} shows that $\Sigma\Theta_v$ is homotopic to the composite
$$\Sigma(|\Sigma\dl_K(v)|\wedge\widehat{X}^{[m]})_{(p)}\xrightarrow{\rm incl}\Sigma(|\Sigma K|\wedge\widehat{X}^{[m]})_{(p)}\xrightarrow{\rm incl}\Sigma\bigvee_{\emptyset\ne I\subset[m]}(|\Sigma K_I|\wedge\widehat{X}^I)_{(p)}\xrightarrow{\rho_K^{-1}}\Sigma\Z_K(C\underline{X},\underline{X})_{(p)}.$$
Thus the composite
$$f\colon|\Sigma K|_{(p)}\wedge\widehat{X}^{[m]}_{(p)}\xrightarrow{\theta\wedge 1}\bigvee_{i\in[m]}|\Sigma\dl_K(i)|_{(p)}\wedge\widehat{X}^{[m]}_{(p)}\xrightarrow{\bigvee_{i\in[m]}\Theta_i}\Z_K(C\underline{X},\underline{X})_{(p)}\xrightarrow{\rm proj}(|\Sigma K|\wedge\widehat{X}^{[m]})_{(p)}$$
is the identity map in mod $p$ homology, so it is an isomorphism in homology with coefficient $\mathbb{Z}_{(p)}$ since spaces are of finite type. Hence the above composite is a homotopy equivalence, and therefore $(\theta\wedge 1)\circ f^{-1}$ is the desired map, completing the proof. 
\end{proof}

We obtain a $p$-local homotopy decomposition of $\Z_K(C\underline{X},\underline{X})$ when $K$ is dual SCM over $\mathbb{Z}/p$.

\begin{corollary}
If the Alexander dual of $K$ is SCM over $\mathbb{Z}/p$ and each $X_i$ is a connected CW-complex, then there is a homotopy equivalence
$$\Z_K(C\underline{X},\underline{X})_{(p)}\simeq\bigvee_{\emptyset\ne I\subset[m]}(|\Sigma K_I|\wedge\widehat{X}^I)_{(p)}.$$
\end{corollary}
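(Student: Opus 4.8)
The plan is to obtain the corollary purely by assembling two results already established: the extractibility of dual SCM complexes over $\mathbb{Z}/p$ (Proposition~\ref{SCM-extractible}) and the $p$-local desuspension of the BBCG decomposition for extractible complexes (Theorem~\ref{Z-extractible}). The one point that needs attention is that the inductive proof of Theorem~\ref{Z-extractible} feeds not only on the extractibility of $K$ but also on that of all its proper full subcomplexes $K_I$ (this is where the maps $\epsilon_{K_I}$ come from), so the first step is to verify that these are dual SCM over $\mathbb{Z}/p$ as well.

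First I would record, via Proposition~\ref{dl-SCM}, that if $K^\vee$ is SCM over $\mathbb{Z}/p$ then so is $\dl_K(v)^\vee$ for every $v\in[m]$; since every full subcomplex $K_I$ arises from $K$ by a sequence of vertex deletions, and with the ambient-set conventions fixed in Section~8 the associated Alexander duals are the corresponding iterated links of $K^\vee$, it follows that $(K_I)^\vee$ is SCM over $\mathbb{Z}/p$ for every $\emptyset\ne I\subset[m]$ (exactly as in the proof of Theorem~\ref{SCM-homology-fillable-sub}). Applying Proposition~\ref{SCM-extractible} to each $K_I$ then yields that $K_I$ is extractible over $\mathbb{Z}/p$ for every $\emptyset\ne I\subset[m]$. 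With this in place I would invoke Theorem~\ref{Z-extractible}: since each $X_i$ is a connected CW-complex and $K$ together with all its full subcomplexes is extractible over $\mathbb{Z}/p$, the theorem delivers the homotopy equivalence $\Z_K(C\underline{X},\underline{X})_{(p)}\simeq\bigvee_{\emptyset\ne I\subset[m]}(|\Sigma K_I|\wedge\widehat{X}^I)_{(p)}$, which is precisely the asserted decomposition (the $I$-indexed summand involving the full subcomplex $K_I$, in accordance with the construction of $\epsilon_{K_I}$ in the proof of Theorem~\ref{Z-extractible}).

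I do not expect a genuinely new obstacle here: the statement is a formal consequence of the two cited results, and the actual work — constructing the almost-localized maps $\Theta_v$ and patching them via $\theta$ to kill the top cell — has already been done inside Theorem~\ref{Z-extractible}. The subtlest point, and the only place where care is needed, is the bookkeeping of which full subcomplexes must be extractible for that induction to run, which is settled by the stability of dual SCMness under vertex deletions (Proposition~\ref{dl-SCM}); and the promotion of the mod-$p$ homology isomorphism to a genuine $p$-local homotopy equivalence through the J.H.C. Whitehead theorem relies on simple connectivity of the spaces involved, which is guaranteed by the connectedness of the $X_i$ via Proposition~\ref{Z-1-conn}.
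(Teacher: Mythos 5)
Your proposal is correct and follows exactly the paper's route, which is the one-line ``Combine Proposition~\ref{SCM-extractible} and Theorem~\ref{Z-extractible}.'' Your additional observation is a genuinely useful one: the hypothesis of Theorem~\ref{Z-extractible} as literally stated only asks $K$ to be extractible, but the inductive construction of $\epsilon_K$ there uses $\epsilon_{K_I}$ for all proper full subcomplexes and so tacitly needs those to be extractible as well --- a gap you correctly close via Proposition~\ref{dl-SCM} (dual SCMness is inherited under vertex deletions), exactly as the paper does implicitly.
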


\begin{proof}
Combine Proposition \ref{SCM-extractible} and Theorem \ref{Z-extractible}.
\end{proof}


\section{Neighborly complexes}

In this section, we consider the inductive triviality of the fat wedge filtrations of the real moment-angle complexes, and show that the high neighborliness of $K$ guarantees the triviality of the fat wedge filtration of $\RZ_K$. We first observe a property of the attaching map $\varphi_{K_I}$ when $\varphi_{K_J}$ is null homotopic for all $\emptyset\ne J\subset[m]$ with $|J|<|I|$. Suppose that $\varphi_{K_J}$ is null homotopic for all $\emptyset\ne J\subset[m]$ with $|J|<i$. Then it follows from Theorem \ref{cone-decomp} that there is a homotopy equivalence
$$\RZ_K^{i-1}\simeq\bigvee_{\emptyset\ne J\subset[m],\,|J|<i}|\Sigma K_J|$$
such that the composite $\RZ_K^{i-1}\simeq\bigvee_{\emptyset\ne J\subset[m],\,|J|<i}|\Sigma K_J|\xrightarrow{\rm proj}|\Sigma K_J|$ is homotopic to the composite $\RZ_K^{i-1}\xrightarrow{\rm proj}\RZ_{K_J}\xrightarrow{\rm proj}|\Sigma K_J|$. 

\begin{lemma}
\label{trivial-varphi-part}
Suppose that $\varphi_{K_J}$ is null homotopic for any $\emptyset\ne J\subset[m]$ with $|J|<i$. Then for $I\subset[m]$ with $|I|=i$, the composite
$$|\Sd K_I|\xrightarrow{\varphi_{K_I}}\RZ_K^{i-1}\simeq\bigvee_{\emptyset\ne J\subset[m],\,|J|<i}|\Sigma K_J|\xrightarrow{\rm proj}|\Sigma K_H|$$
is null homotopic for any $\emptyset\ne H\subset[m]$ with $|H|<i$.
\end{lemma}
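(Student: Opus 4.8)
The plan is to recognize the composite in the statement, via the homotopy equivalence $\RZ_K^{i-1}\simeq\bigvee_{\emptyset\ne J\subset[m],\,|J|<i}|\Sigma K_J|$ recalled in the paragraph preceding the lemma, as a map that factors through a contractible space. First, by the description given in that paragraph, the composite in question is homotopic to
$$|\Sd K_I|\xrightarrow{\varphi_{K_I}}\RZ_K^{i-1}\xrightarrow{r_H}\RZ_{K_H}\xrightarrow{q_H}|\Sigma K_H|,$$
where $\varphi_{K_I}$ stands for $j_{K_I}\circ\varphi_{K_I}$ with $j_{K_I}\colon\RZ_{K_I}^{i-1}\hookrightarrow\RZ_K^{i-1}$ the inclusion, where $r_H$ is the restriction to $\RZ_K^{i-1}$ of the retraction $\RZ_K\to\RZ_{K_H}$ (note $\RZ_{K_H}\subset\RZ_K^{i-1}$ because $|H|<i$), and where $q_H\colon\RZ_{K_H}\to\RZ_{K_H}/\RZ_{K_H}^{|H|-1}=|\Sigma K_H|$ is the collapse. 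So it suffices to prove that $q_H\circ r_H\circ j_{K_I}\circ\varphi_{K_I}$ is null homotopic.

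Then I would use that $\varphi_{K_I}$ extends over the cone $|\mathrm{Cone}(\Sd K_I)|$: by its definition in \eqref{Cone(i_c)}, the composite $|\Sd K_I|\xrightarrow{\varphi_{K_I}}\RZ_{K_I}^{i-1}\hookrightarrow\RZ_{K_I}$ is the restriction to $|\Sd K_I|\subset|\mathrm{Cone}(\Sd K_I)|$ of the map $\mathrm{Cone}(i_c)\colon|\mathrm{Cone}(\Sd K_I)|\to\RZ_{K_I}$. Since $j_{K_I}$ is an inclusion of subspaces and $r_H$ restricts to a map $\RZ_{K_I}\to\RZ_{K_H}$ — because $\RZ_{K_I}\subset\RZ_K$ and the retraction $r_H$ has image $\RZ_{K_H}$, indeed $r_H(\RZ_{K_I})=\RZ_{K_{I\cap H}}\subset\RZ_{K_H}$ — the composite $q_H\circ r_H\circ j_{K_I}\circ\varphi_{K_I}$ equals
$$|\Sd K_I|\hookrightarrow|\mathrm{Cone}(\Sd K_I)|\xrightarrow{\mathrm{Cone}(i_c)}\RZ_{K_I}\xrightarrow{r_H}\RZ_{K_H}\xrightarrow{q_H}|\Sigma K_H|.$$
The last part of this composite is a map out of the contractible space $|\mathrm{Cone}(\Sd K_I)|$, hence null homotopic, so the whole composite is null homotopic; this finishes the argument.

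I do not anticipate a genuine obstacle: the content of the lemma is essentially the tautology that the attaching map of a cone becomes null homotopic once the cone is filled in, transported across the already-established description of the wedge-summand projections of $\RZ_K^{i-1}$. The only points requiring a little care are bookkeeping ones — reading the two ``$\mathrm{proj}$'' maps in the preceding paragraph as the retraction $r_H$ and the collapse $q_H$, and verifying that $r_H$ really does restrict to $\RZ_{K_I}$ — and I note that no case distinction according to whether $H\subset I$ or not is required, since the argument through $|\mathrm{Cone}(\Sd K_I)|$ applies uniformly.
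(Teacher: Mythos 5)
Your proof is correct. The argument does diverge in route from the paper's, though both proofs amount to showing the composite factors through a contractible space.

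The paper's proof chooses a vertex $v\in I-H$ (which exists since $|H|<|I|$), observes that $K_I\subset\{v\}*\dl_{K_I}(v)$, and uses the naturality of $\varphi$ with respect to inclusions of simplicial complexes on the same vertex set to get a commutative square relating $\varphi_{K_I}$ and $\varphi_{\{v\}*\dl_{K_I}(v)}$; it then factors the relevant composite through the contractible space $|\Sd(\{v\}*\dl_{K_I}(v))|$, using the fact that $\RZ_{K_I}^{i-1}\to\RZ_{\dl_{K_I}(v)}$ extends over $\RZ_{\{v\}*\dl_{K_I}(v)}^{i-1}$. Your proof instead observes the extension directly from the relative homeomorphism \eqref{Cone(i_c)}: since $\varphi_{K_I}$ is precisely the boundary restriction of $\mathrm{Cone}(i_c)\colon|\mathrm{Cone}(\Sd K_I)|\to\RZ_{K_I}$ and the retraction $\RZ_K^{i-1}\to\RZ_{K_H}$ is the restriction of a globally defined retraction $\RZ_K\to\RZ_{K_H}$, the whole composite extends over the contractible $|\mathrm{Cone}(\Sd K_I)|$. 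What your route buys is that no auxiliary vertex $v$ or auxiliary complex $\{v\}*\dl_{K_I}(v)$ needs to be introduced, and no case analysis on $H$ arises; it makes the ``tautological'' nature of the lemma — the attaching map of a cone dies once the cone is filled in — transparent. What the paper's route buys is that it stays strictly within the framework of the maps $\varphi_L$ for subcomplexes $L\subset\Delta^{[I]}$ and the $(i-1)$-filter, never invoking the filled-in space $\RZ_{K_I}$ itself. Either argument is sound. (As a small aside: the paper's phrase ``the projection $\RZ_K^{i-1}\to\RZ_{K_H}$ factors as $\RZ_K^{i-1}\to\RZ_{\dl_{K_I}(v)}\to\RZ_{K_H}$'' holds only after precomposing with the inclusion $\RZ_{K_I}^{i-1}\hookrightarrow\RZ_K^{i-1}$, which is all that is used; your formulation avoids this imprecision.)
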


\begin{proof}
Since the projection $\RZ_K^{i-1}\to\RZ_{K_H}$ factors as $\RZ_K^{i-1}\to\RZ_{\dl_{K_I}(v)}\to\RZ_{K_H}$ for $v\in I-H$, where $\emptyset\ne H\subset[m]$ with $|H|<i$, it is sufficient to show that the composite $|\Sd K_I|\xrightarrow{\varphi_{K_I}}\RZ_{K_I}^{i-1}\xrightarrow{\rm proj}\RZ_{\dl_{K_I}(v)}$ is null homotopic for all $v\in I$. Consider the join $\{v\}*\dl_{K_I}(v)$ for $v\in I$. Then by the definition of $\varphi_{K_I}$, there is a commutative diagram
$$\xymatrix{|\Sd K_I|\ar[rr]^{\varphi_{K_I}}\ar[d]^{\rm incl}&&\RZ_{K_I}^{i-1}\ar[d]^{\rm incl}\\
|\Sd(\{v\}*\dl_{K_I}(v))|\ar[rr]^{\varphi_{\{v\}*\dl_{K_I}(v)}}&&\RZ_{\{v\}*\dl_{K_I}(v)}^{i-1}.}$$
Then since the projection $\RZ_{K_I}^{i-1}\to\RZ_{\dl_{K_I}(v)}$ factors as $\RZ_{K_I}^{i-1}\xrightarrow{\rm incl}\RZ_{\{v\}*\dl_{K_I}(v)}^{i-1}\xrightarrow{\rm proj}\RZ_{\dl_{K_I}(v)}$, the composite $|\Sd K_I|\xrightarrow{\varphi_{K_I}}\RZ_{K_I}^{i-1}\xrightarrow{\rm proj}\RZ_{\dl_{K_I}(v)}$ factors through a contractible space $|\Sd(\{v\}*\dl_{K_I}(v))|$, completing the proof.
\end{proof}

\begin{proposition}
\label{trivial-varphi}
Suppose that $\varphi_{K_J}$ is null homotopic for all $\emptyset\ne J\subset[m]$ with $|J|<i$. Then for $I\subset[m]$ with $|I|=i$, the composite
$$|\Sd K_I|\xrightarrow{\varphi_{K_I}}\RZ_K^{i-1}\simeq\bigvee_{\emptyset\ne J\subset[m],\,|J|<i}|\Sigma K_J|\to\prod_{\emptyset\ne J\subset[m],\,|J|<i}|\Sigma K_J|$$
is null homotopic, where the last arrow is the inclusion.
\end{proposition}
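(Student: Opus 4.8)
The plan is to deduce this immediately from Lemma~\ref{trivial-varphi-part} together with the elementary fact that a based map into a product of based spaces is null homotopic precisely when each of its components is. The target $\prod_{\emptyset\ne J\subset[m],\,|J|<i}|\Sigma K_J|$ is a finite product, so a based null homotopy of the map into the product is the same thing as a compatible tuple of based null homotopies of its components into the factors $|\Sigma K_J|$. Hence it suffices to show, for every $\emptyset\ne H\subset[m]$ with $|H|<i$, that the composite of the map in the statement with the projection $\mathrm{pr}_H\colon\prod_{\emptyset\ne J\subset[m],\,|J|<i}|\Sigma K_J|\to|\Sigma K_H|$ is null homotopic.

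Next I would identify that composite. The composite $\bigvee_{\emptyset\ne J\subset[m],\,|J|<i}|\Sigma K_J|\xrightarrow{\rm incl}\prod_{\emptyset\ne J\subset[m],\,|J|<i}|\Sigma K_J|\xrightarrow{\mathrm{pr}_H}|\Sigma K_H|$ is exactly the projection of the wedge onto its $H$-th summand, since the canonical map from the wedge to the product is the identity on the $H$-summand and constant on the others. Therefore composing the whole map in the statement with $\mathrm{pr}_H$ yields precisely the composite $|\Sd K_I|\xrightarrow{\varphi_{K_I}}\RZ_K^{i-1}\simeq\bigvee_{\emptyset\ne J\subset[m],\,|J|<i}|\Sigma K_J|\xrightarrow{\rm proj}|\Sigma K_H|$, where the homotopy equivalence and the summand projection are the ones fixed in the paragraph preceding Lemma~\ref{trivial-varphi-part}. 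This composite is null homotopic by Lemma~\ref{trivial-varphi-part}. Carrying this out for every such $H$ and assembling the resulting homotopies coordinatewise produces a homotopy from the map in the statement to the constant map, completing the proof.

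As for the main obstacle: there is essentially none at this stage, because all the substantial work has already been done in Lemma~\ref{trivial-varphi-part} (which itself rests on Proposition~\ref{suspension-varphi} and on the fact that $\varphi_{K_I}$ factors through $\varphi_{\{v\}*\dl_{K_I}(v)}$, whose source $|\Sd(\{v\}*\dl_{K_I}(v))|$ is contractible, so that each projection of $\varphi_{K_I}$ onto an $\RZ_{\dl_{K_I}(v)}$ is null). The only point that needs care is bookkeeping: one must use the \emph{same} splitting $\RZ_K^{i-1}\simeq\bigvee_{\emptyset\ne J\subset[m],\,|J|<i}|\Sigma K_J|$ throughout, namely the one under which the projection onto the $J$-summand is homotopic to $\RZ_K^{i-1}\to\RZ_{K_J}\to|\Sigma K_J|$, so that the component maps of the product map are literally the maps treated in Lemma~\ref{trivial-varphi-part} rather than homotopic variants of them.
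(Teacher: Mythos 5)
Your proof is correct and takes the same approach as the paper: the paper's one-line argument ("the composite of maps in the statement is the product of the composites of maps in Lemma \ref{trivial-varphi-part}") is exactly what you spell out, namely that a based map into a product is null homotopic if and only if all its coordinate projections are, and each projection here is one of the composites already shown null in Lemma \ref{trivial-varphi-part}. Your extra remark about using the same splitting of $\RZ_K^{i-1}$ throughout is a sensible bookkeeping point but not a different argument.
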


\begin{proof}
The composite of maps in the statement is the product of the composite of maps in Lemma \ref{trivial-varphi-part}, so we obtain the desired result.
\end{proof}

Let $F_i$ be the homotopy fiber of the inclusion $\bigvee_{\emptyset\ne J\subset[m],\,|J|<i}|\Sigma K_J|\to\prod_{\emptyset\ne J\subset[m],\,|J|<i}|\Sigma K_J|$. Then as an immediate corollary of Proposition \ref{trivial-varphi}, we get: 

\begin{corollary}
\label{lift-F}
Suppose that $\varphi_{K_J}$ is null homotopic for all $\emptyset\ne J\subset[m]$ with $|J|<i$. Then for $I\subset[m]$ with $|I|=i$, the attaching map $\varphi_{K_I}$ lifts to $F_i$.
\end{corollary}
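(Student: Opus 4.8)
The plan is to deduce this immediately from Proposition~\ref{trivial-varphi} together with the elementary (easy) direction of the universal property of a homotopy fiber. Write $W_i:=\bigvee_{\emptyset\ne J\subset[m],\,|J|<i}|\Sigma K_J|$ and $P_i:=\prod_{\emptyset\ne J\subset[m],\,|J|<i}|\Sigma K_J|$, and let $\iota\colon W_i\to P_i$ be the inclusion, so that by definition $F_i\to W_i\xrightarrow{\iota}P_i$ is a homotopy fibration. Recall also the homotopy equivalence $h\colon\RZ_K^{i-1}\xrightarrow{\simeq}W_i$ discussed just before Lemma~\ref{trivial-varphi-part}; it is available precisely because $\varphi_{K_J}$ is null homotopic for all $\emptyset\ne J\subset[m]$ with $|J|<i$, which is our standing hypothesis.

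First I would form the composite $h\circ\varphi_{K_I}\colon|\Sd K_I|\to W_i$. By Proposition~\ref{trivial-varphi}, the further composite $\iota\circ h\circ\varphi_{K_I}\colon|\Sd K_I|\to P_i$ is null homotopic.

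Next I would invoke the standard fact that, for a homotopy fibration $F\xrightarrow{j}E\xrightarrow{q}B$ and any space $X$, a map $f\colon X\to E$ admits a lift $\widetilde f\colon X\to F$ with $j\circ\widetilde f\simeq f$ if and only if $q\circ f$ is null homotopic; only the ``if'' direction is needed, and it follows at once by using a null homotopy of $q\circ f$ to map $X$ into the homotopy fiber. Applying this with $E=W_i$, $B=P_i$, $q=\iota$ and $f=h\circ\varphi_{K_I}$ yields a map $\widetilde\varphi_{K_I}\colon|\Sd K_I|\to F_i$ whose composite with $F_i\to W_i$ is homotopic to $h\circ\varphi_{K_I}$; composing further with a homotopy inverse $h^{-1}\colon W_i\to\RZ_K^{i-1}$ then exhibits $\widetilde\varphi_{K_I}$ as a lift of $\varphi_{K_I}$ along $F_i\to W_i\xrightarrow{h^{-1}}\RZ_K^{i-1}$, as claimed.

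I do not expect any real obstacle here: all of the substance is already contained in Proposition~\ref{trivial-varphi} (which rests on Lemma~\ref{trivial-varphi-part} and Proposition~\ref{suspension-varphi}), and what remains is the formal property of homotopy fibers in its obstruction-free direction. The only point that calls for a line of care is bookkeeping with the equivalence $h$ and a chosen homotopy inverse, so that the conclusion is phrased in terms of $\varphi_{K_I}$ and $\RZ_K^{i-1}$ rather than in terms of $h\circ\varphi_{K_I}$ and $W_i$; this is routine and could equally well be avoided by simply stating the lift relative to the identification $\RZ_K^{i-1}\simeq W_i$.
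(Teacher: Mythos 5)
Your proof is correct and follows precisely the argument the paper leaves implicit when it calls this ``an immediate corollary of Proposition~\ref{trivial-varphi}'': the null homotopy of the composite into $\prod_{|J|<i}|\Sigma K_J|$ is exactly what the universal property of the homotopy fiber requires to produce the lift. The bookkeeping with the equivalence $h\colon\RZ_K^{i-1}\simeq\bigvee_{|J|<i}|\Sigma K_J|$ is handled correctly and matches the identification used throughout that section of the paper.
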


Then we check the triviality of the attaching map $\varphi_{K_I}$ for $I\subset[m]$ with $|I|=i$ by looking at its lift to $F_i$ together with Corollary \ref{lift-F} and Proposition \ref{F}. The easiest case that this lift is trivial, is when the connectivity of $F_i$ exceeds the dimension of $K_I$, which we record here.

\begin{proposition}
\label{induct-trivial}
Suppose that $\varphi_{K_J}$ is null homotopic for all $J\subset[m]$ with $|J|<i$. For $I\subset[m]$ with $|I|=i$, if $\dim K_I\le\mathrm{conn}\,F_i$, then the attaching map $\varphi_{K_I}$ is null homotopic.
\end{proposition}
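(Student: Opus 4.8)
The claim follows almost immediately from the cone‐decomposition of $\RZ_K$ (Theorem \ref{cone-decomp}) together with Corollary \ref{lift-F}. The plan is as follows. First I would recall that by Theorem \ref{cone-decomp}, $\RZ_K^i$ is obtained from $\RZ_K^{i-1}$ by attaching cones on the maps $j_{K_I}\circ\varphi_{K_I}$ for $I\subset[m]$ with $|I|=i$, and that the triviality of the fat wedge filtration (up to level $i$) is exactly the statement that each $\varphi_{K_I}$ is null homotopic for $|I|\le i$ --- equivalently, since $\RZ_{K_I}^{i-1}$ is a retract of $\RZ_K^{i-1}$, that each $j_{K_I}\circ\varphi_{K_I}$ is null homotopic. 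Thus it suffices to show that, under the stated hypothesis, $\varphi_{K_I}$ is null homotopic for every $I\subset[m]$ with $|I|=i$.

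Fix such an $I$. Since $\varphi_{K_J}$ is null homotopic for all $\emptyset\ne J\subset[m]$ with $|J|<i$, Corollary \ref{lift-F} provides a lift $\widetilde{\varphi}_{K_I}\colon|\Sd K_I|\to F_i$ of $\varphi_{K_I}$ through the fiber inclusion $F_i\to\bigvee_{\emptyset\ne J\subset[m],\,|J|<i}|\Sigma K_J|\simeq\RZ_K^{i-1}$. The space $|\Sd K_I|$ is a CW complex homeomorphic to $|K_I|$, hence of dimension $\dim K_I$. The hypothesis $\dim K_I\le\operatorname{conn} F_i$ then says that the domain of $\widetilde{\varphi}_{K_I}$ has dimension at most the connectivity of its target. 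A standard obstruction‐theory (cellular approximation) argument shows that any map from a CW complex of dimension $\le n$ into an $n$‐connected space is null homotopic: inductively compress the map off successive skeleta, the obstructions to extending a null homotopy living in $\pi_k(F_i)=0$ for $k\le\operatorname{conn} F_i$. Hence $\widetilde{\varphi}_{K_I}$ is null homotopic, and therefore so is its composite with the fiber inclusion, namely $\varphi_{K_I}$.

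Finally, composing with the retraction $j_{K_I}$ (or directly invoking the remark after Theorem \ref{cone-decomp}), the attaching map $j_{K_I}\circ\varphi_{K_I}$ is null homotopic for every $I$ with $|I|=i$, which is precisely the assertion. I do not expect a genuine obstacle here: the only point requiring care is to make sure $|\Sd K_I|$ really is a complex of dimension $\dim K_I$ (barycentric subdivision does not change the dimension) and that "connectivity" is used in the homotopy‐theoretic sense so that the cellular‐compression argument applies; both are routine. If one wishes to avoid even the elementary obstruction argument, one can instead note that a map from an $n$‐dimensional complex into an $n$‐connected space factors up to homotopy through the $n$‐connected cover's $(n+1)$‐skeleton, which is a point, but the cellular‐approximation phrasing is cleaner.
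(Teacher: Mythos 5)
Your proof is correct and is exactly the argument the paper has in mind: lift $\varphi_{K_I}$ to $F_i$ via Corollary \ref{lift-F}, then note that a map from a CW complex of dimension $\dim K_I$ into a space of connectivity $\ge\dim K_I$ is null homotopic by cellular approximation/obstruction theory. (The paper states Proposition \ref{induct-trivial} without a written proof, treating it as an immediate consequence of the preceding discussion, which is the same reasoning you give.)
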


In order to apply Proposition \ref{induct-trivial}, we describe the homotopy type of the homotopy fiber $F_i$. 

\begin{proposition}
\label{F}
The homotopy fiber $F_i$ is homotopy equivalent to 
$$\bigvee_{r\ge 2}\left(\bigvee_{\substack{I_1,\ldots,I_r\subset[m],\\|I_1|<i,\ldots,|I_r|<i}}\bigvee^{r-1}\Sigma(\Omega|\Sigma K_{I_1}|\wedge\cdots\wedge\Omega|\Sigma K_{I_r}|)\right).$$
\end{proposition}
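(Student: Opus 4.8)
The plan is to identify $F_i$ as the homotopy fiber of the inclusion of a wedge into the corresponding product and then apply the classical theorem of Porter on the homotopy fiber of $\bigvee_{k=1}^r Y_k \to \prod_{k=1}^r Y_k$. Recall that Porter's theorem states that when each $Y_k$ is a connected CW-complex, the homotopy fiber of this inclusion is homotopy equivalent to
$$\bigvee_{r'=2}^{r}\left(\bigvee_{1\le k_1<\cdots<k_{r'}\le r}\left(\bigvee^{r'-1}\Sigma\,\Omega Y_{k_1}\wedge\cdots\wedge\Omega Y_{k_{r'}}\right)\right).$$
So the first step is to recognize that in our situation the index set is the set of all nonempty $J\subset[m]$ with $|J|<i$, and $Y_J = |\Sigma K_J|$, which is a connected (in fact simply connected, being a suspension of a non-empty complex) CW-complex. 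Applying Porter's theorem with this indexing gives exactly the claimed wedge, where the inner index $\{I_1,\ldots,I_r\}$ runs over unordered $r$-element subsets of the poset of admissible $J$'s — which is what the displayed formula in the statement records, with the understanding that the $I_1,\ldots,I_r$ range over distinct subsets each of cardinality $<i$.

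First I would verify that each $|\Sigma K_J|$ is a connected CW-complex: since $K_J$ is non-empty for $\emptyset\neq J\subset[m]$ (it contains at least the vertices of $J$ that lie in $K$, or at worst, if $J$ contains non-vertices we should note $K_J$ is still a complex and $\Sigma K_J = \partial\Delta^{[2]}*K_J$ is non-empty and connected). Then $|\Sigma K_J|$ is simply connected as the geometric realization of a simplicial suspension, so $\Omega|\Sigma K_J|$ is connected and all smash products appearing are of connected spaces, so Porter's theorem applies verbatim. Second, I would carefully match the combinatorial indexing: the wedge $\bigvee_{\emptyset\neq J\subset[m],\,|J|<i}|\Sigma K_J|$ has one wedge summand for each admissible $J$, so an $r'$-element subset of the summands corresponds to a choice $I_1,\ldots,I_r$ of $r=r'$ distinct admissible subsets; summing over $r\ge 2$ and over all such choices reproduces the stated formula. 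The factor $\bigvee^{r-1}$ is precisely the multiplicity $r'-1$ in Porter's formula.

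The main obstacle — really the only point requiring care — is the bookkeeping in translating Porter's indexing (finite ordered list of spaces $Y_1,\ldots,Y_n$ with subsets indexed by $1\le k_1<\cdots<k_{r'}\le n$) into the index-by-subcomplex formulation used here, and making sure that the degenerate cases (e.g. $i=1$, when there are no admissible $J$ and $F_i$ should be a point, or $i=2$, when only singletons $J$ are admissible) are consistent with the convention that the outer wedge starts at $r\ge 2$. These are routine but should be stated. I would also remark that no hypothesis on $K$ is needed here: Proposition \ref{F} is purely a statement about the homotopy fiber of a wedge-into-product inclusion, and the triviality of the $\varphi_{K_J}$ enters only in Corollary \ref{lift-F}, not in the identification of $F_i$ itself.
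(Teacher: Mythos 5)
Your proposal reaches the correct formula but by a genuinely different route from the paper. You invoke an external classical theorem (attributed to Porter) on the homotopy fiber of $\bigvee_{k}Y_k\to\prod_kY_k$. The paper instead makes this proposition a consequence of its own machinery: it observes that $\bigvee_{J}|\Sigma K_J|=\Z_L(\Sigma\underline{K},*)$ for the \emph{discrete} simplicial complex $L$ with one vertex per admissible $J$, applies the Denham--Suciu fibration (Proposition \ref{fibration}) to identify $F_i\simeq\Z_L(C\Omega\underline{K},\Omega\underline{K})$, notes that $L^\vee$ is a skeleton of a simplex and hence shellable, and then invokes Corollary \ref{shellable-trivial} together with Theorem \ref{main-decomp} to decompose this polyhedral product. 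Since $|\Sigma L_I|\simeq\bigvee^{|I|-1}S^1$ for a discrete $L$, the BBCG-type wedge coming out of Theorem \ref{main-decomp} is exactly the stated formula. What the paper's route buys is self-containment: it sidesteps the need to cite (and verify the hypotheses of) an external theorem, and it demonstrates that the fat-wedge-filtration machinery being developed already yields this classical decomposition as a special case. What your route buys is brevity, assuming the external reference checks out with the hypotheses as you state them.

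One concrete error in your verification: the claim that each $|\Sigma K_J|$ is simply connected ``as the geometric realization of a simplicial suspension'' is false in general. The unreduced suspension of a disconnected space is not simply connected; for instance, if $K_J$ consists of two isolated vertices, $|\Sigma K_J|\simeq S^1$. So $\Omega|\Sigma K_J|$ need not be connected. This does not sink your argument if the theorem you cite really does hold for merely connected $Y_k$ (as you state it), but as written you appeal to simple-connectivity as if it were automatic, and it is not. The paper's proof avoids this issue entirely because Theorem \ref{main-decomp} carries no connectivity assumptions. You should either supply the precise reference and confirm it needs only connectedness, or drop the unfounded simple-connectivity claim and argue directly. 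Your final remark that no hypothesis on $K$ is needed for Proposition \ref{F} itself (the triviality of the $\varphi_{K_J}$ entering only in Corollary \ref{lift-F}) is correct and matches the paper's organization.
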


\begin{proof}
This can be proved by the Ganea fibration
$$\Omega X*\Omega Y\to X\vee Y\to X\times Y$$
together with the Hilton-Milnor theorem. We here give an alternative proof using polyhedral products. Let $L$ be the discrete simplicial complex on the vertex set $\{J\subset[m]\,\vert\,|J|<i\}$, and let $\underline{K}$ be a collection of spaces $\{|K_J|\}_{J\in L}$, where we put $|K_\emptyset|$ to be a point. Then as in Example \ref{wedge-example}, we have
$$\Z_L(\Sigma\underline{K},*)=\bigvee_{J\subset[m],\,|J|<i}|\Sigma K_J|$$
and hence $F_i$ is the homotopy fiber of the inclusion $\Z_L(\Sigma\underline{K},*)\to\prod_{J\in L}|\Sigma K_J|$. So by Lemma \ref{fibration}, $F_i$ is homotopy equivalent to the polyhedral product $\Z_L(C\Omega\underline{K},\Omega\underline{K})$. Now $L^\vee$ is a skeleton of a simplex, hence shellable. Thus the proof is done by Theorem \ref{main-decomp} and Corollary \ref{shellable-trivial}.
\end{proof}

\begin{corollary}
\label{conn-F}
The homotopy fiber $F_i$ is $2(\min\{\mathrm{conn}\,K_J\,\vert\,J\subset[m],\,|J|<i\}+1)$-connected.
\end{corollary}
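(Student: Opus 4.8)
The plan is to combine the explicit description of $F_i$ from Proposition \ref{F} with the elementary connectivity estimates for smash products and loop spaces. Write $c:=\min\{\mathrm{conn}\,K_J\,\vert\,J\subset[m],\,|J|<i\}$; I want to show every wedge summand appearing in Proposition \ref{F} is $2(c+1)$-connected, since connectivity of a wedge is the minimum of the connectivities of the summands (all spaces here being simply connected CW-complexes, so there is no $\pi_1$ subtlety).

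First I would record the basic fact that if a space $X$ is $n$-connected then $|\Sigma X|$ in the relevant indexing — more precisely $|\Sigma K_J| = |\partial\Delta^{[2]} * K_J|$, which is a suspension of $|K_J|$, hmm, actually the relevant point is just that $|\Sigma K_J|$ is $(\mathrm{conn}\,|K_J|+1)$-connected, hence at least $(c+1)$-connected. Then $\Omega|\Sigma K_J|$ is $(c)$-connected (looping drops connectivity by one). Next, for the smash product of $r\ge 2$ such loop spaces: if $A$ is $a$-connected and $B$ is $b$-connected then $A\wedge B$ is $(a+b+1)$-connected. Applying this inductively to $\Omega|\Sigma K_{I_1}|\wedge\cdots\wedge\Omega|\Sigma K_{I_r}|$ with each factor $c$-connected gives a space that is $(rc + r - 1) = (r(c+1)-1)$-connected. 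Suspending raises connectivity by one, so each term $\Sigma(\Omega|\Sigma K_{I_1}|\wedge\cdots\wedge\Omega|\Sigma K_{I_r}|)$ is $r(c+1)$-connected. Since $r\ge 2$, this is at least $2(c+1)$-connected, and the wedge of finitely many copies (the $\bigvee^{r-1}$) keeps the same connectivity. Taking the wedge over all $r\ge 2$ and all tuples, and using Proposition \ref{F}, we conclude $F_i$ is $2(c+1)$-connected, which is exactly the claim.

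There is essentially no hard obstacle here; the only thing to be slightly careful about is the $r=2$ case, which is the minimizing one and forces the bound $2(c+1)$ rather than anything larger, and one should note that all the spaces involved are simply connected (each $|\Sigma K_J|$ is a suspension, hence at least $1$-connected when $K_J\ne\emptyset$, and the empty case contributes a point) so that the wedge-connectivity and smash-connectivity formulas apply without hypotheses on $\pi_1$. I would also remark that this corollary is what feeds into Proposition \ref{induct-trivial}: combined with that proposition it yields the inductive triviality of the attaching maps whenever $\dim K_I \le 2(c+1)$, which under a suitable neighborliness hypothesis on $K$ holds for all $I$, giving Theorem \ref{main-neighborly}.

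\begin{proof}
Set $c:=\min\{\mathrm{conn}\,K_J\,\vert\,J\subset[m],\,|J|<i\}$. By Proposition \ref{F}, $F_i$ is homotopy equivalent to a wedge of spaces of the form $\Sigma(\Omega|\Sigma K_{I_1}|\wedge\cdots\wedge\Omega|\Sigma K_{I_r}|)$ with $r\ge 2$ and $|I_1|,\ldots,|I_r|<i$, so it suffices to bound the connectivity of each such summand from below by $2(c+1)$. For any nonempty $J$ with $|J|<i$, the space $|\Sigma K_J|=|\partial\Delta^{[2]}*K_J|$ is a simply connected suspension with $\mathrm{conn}\,|\Sigma K_J|=\mathrm{conn}\,|K_J|+1\ge c+1$, while for $J=\emptyset$ the corresponding factor is a point and may be ignored. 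Hence $\Omega|\Sigma K_J|$ is $c$-connected. Using that $A\wedge B$ is $(a+b+1)$-connected when $A$ is $a$-connected and $B$ is $b$-connected, an induction on $r$ gives that $\Omega|\Sigma K_{I_1}|\wedge\cdots\wedge\Omega|\Sigma K_{I_r}|$ is $(r(c+1)-1)$-connected, so its suspension is $r(c+1)$-connected. Since $r\ge 2$, every summand is at least $2(c+1)$-connected, and a wedge of simply connected CW-complexes has connectivity the minimum of the connectivities of the wedge summands. Therefore $F_i$ is $2(c+1)$-connected.
\end{proof}
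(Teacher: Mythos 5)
Your proof is correct, and it fills in exactly the routine connectivity estimate that the paper leaves implicit (the paper states this corollary with no proof, as an immediate consequence of Proposition \ref{F}). The chain $\mathrm{conn}\,|\Sigma K_J| = \mathrm{conn}\,|K_J|+1 \ge c+1$, hence $\Omega|\Sigma K_J|$ is $c$-connected, hence the $r$-fold smash is $(r(c+1)-1)$-connected and its suspension $r(c+1)$-connected, with $r\ge 2$ forcing the bound $2(c+1)$, is precisely the intended argument; your handling of the degenerate $J=\emptyset$ summands (which collapse to points) and of wedge connectivity is also appropriate.
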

 
 We recall a certain class of simplicial complexes.

\begin{definition}
\label{def-neighborly}
A simplicial complex $K$ is $k$-neighborly if any subset $I\subset[m]$ with $|I|=k+1$ is a simplex of $K$, that is, $K$ includes the $k$-skeleton of $\Delta^{[m]}$. 
\end{definition}

The property of $k$-neighborly complexes that we are going to use is the following.

\begin{lemma}
\label{neighborly-conn}
A simplicial complex is $k$-neighborly if and only if any of its full subcomplex is $(k-1)$-connected.
\end{lemma}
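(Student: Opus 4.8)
The claim to prove is Lemma~\ref{neighborly-conn}: a simplicial complex $K$ is $k$-neighborly if and only if every full subcomplex $K_I$ is $(k-1)$-connected. The plan is to argue both directions by unwinding the definitions and using the fact that $(k-1)$-connectivity of a CW complex is governed by its $k$-skeleton together with $\pi_j=0$ for $j\le k-1$.

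For the ``only if'' direction, suppose $K$ is $k$-neighborly, so $K$ contains the $k$-skeleton of $\Delta^{[m]}$. For any $\emptyset\ne I\subset[m]$ the full subcomplex $K_I$ is $k$-neighborly as a complex on $I$: indeed, if $J\subset I$ with $|J|=k+1$ then $|J|\le k+1\le m$ and $J$ is a face of $\Delta^{[m]}$ of dimension $k$, hence $J\in K$, and since $J\subset I$ we get $J\in K_I$. Thus $K_I$ contains the $k$-skeleton of $\Delta^I=\Delta^{[|I|]}$. Now the $k$-skeleton of a simplex is $(k-1)$-connected (its first $k$ homotopy groups vanish because attaching all cells of dimension $\le k$ of a contractible space kills homotopy through dimension $k-1$; concretely $(\Delta^I)^{(k)}$ is a wedge of $k$-spheres when $|I|>k+1$ and contractible otherwise). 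Since $K_I$ is obtained from this $k$-skeleton by attaching cells of dimension $\ge k+1$, which does not affect $\pi_j$ for $j\le k-1$, $K_I$ is $(k-1)$-connected.

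For the ``if'' direction, suppose every full subcomplex of $K$ is $(k-1)$-connected but $K$ is not $k$-neighborly. Then there is a subset $I\subset[m]$ with $|I|=k+1$ and $I\notin K$. Consider the full subcomplex $K_I$. Every proper subset of $I$ has cardinality $\le k$; I would like to conclude that all of them lie in $K$, but this need not be automatic, so instead I take $I$ to be a \emph{minimal} non-face of $K$ among subsets of size $k+1$ is not quite the right move either --- a cleaner route is: pick any missing set $I$ of size $k+1$, and among all non-faces of $K$ choose a minimal non-face $M$; then $|M|\le k+1$ since $I$ is a non-face of size $k+1$. If $|M|=k+1$ then $K_M$ is the boundary of the simplex on $M$, i.e. $K_M\cong\partial\Delta^M\cong S^{k-1}$, which is \emph{not} $(k-1)$-connected (its top homology $\widetilde H_{k-1}(S^{k-1})\cong\mathbb Z\ne 0$), contradicting the hypothesis. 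If $|M|=\ell<k+1$, i.e. $\ell\le k$, then $K_M=\partial\Delta^M\cong S^{\ell-2}$, which again fails to be $(k-1)$-connected since $\ell-2\le k-2<k-1$ and $\widetilde H_{\ell-2}(S^{\ell-2})\ne 0$. Either way we reach a contradiction, so $K$ must be $k$-neighborly.

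The only subtle point --- and the place I would be most careful --- is the book-keeping in the ``if'' direction: one must reduce from an arbitrary missing subset of size $k+1$ to a full subcomplex that is visibly a sphere, and the clean device is to pass to a minimal non-face $M$ (of size at most $k+1$), for which $K_M=\partial\Delta^M$ is a sphere of dimension $|M|-2\le k-1$, hence not $(k-1)$-connected. The ``only if'' direction is essentially formal once one records that the $k$-skeleton of a simplex is $(k-1)$-connected and that attaching higher cells preserves $(k-1)$-connectivity. No deeper input (polyhedral products, fat wedge filtrations) is needed for this lemma; it is a purely simplicial statement used afterwards to feed Corollary~\ref{conn-F} and Proposition~\ref{induct-trivial}.
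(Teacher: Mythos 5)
Your proof is correct and follows essentially the same route as the paper: the forward direction observes that a $k$-neighborly complex (and each of its full subcomplexes) contains the $k$-skeleton of the ambient simplex and hence is $(k-1)$-connected, and the converse passes to a minimal non-face $M$ so that $K_M=\partial\Delta^M$ is a sphere of dimension $|M|-2\le k-1$, contradicting $(k-1)$-connectivity. The only cosmetic difference is that the paper invokes cellular approximation where you phrase the same fact as ``attaching cells of dimension $\ge k+1$ does not affect $\pi_j$ for $j\le k-1$,'' and the paper runs the converse directly (every minimal non-face has size $>k+1$) rather than by contradiction.
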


\begin{proof}
Suppose $K$ is $k$-neighborly. Then its $k$-skeleton is the $k$-skeleton of the full simplex $\Delta^{[m]}$ which is $(k-1)$-connected. Any map $S^n\to|K|$ factors through the $n$-skeleton of $|K|$ by the cellular approximation theorem, so if $K$ is $k$-neighborly, then $K$ is $(k-1)$-connected. Since any full sub complex of $K$ is also $k$-neighborly by definition, the proof of the if part is done.

Suppose any full subcomplex of $K$ is $(k-1)$-connected. Let $M$ be a minimal non-face of $K$. Then we have $K_M=\partial\Delta^M$ which is not $(|M|-2)$-connected, implying $|M|>k+1$. Thus $K$ is $k$-neighborly, completing the proof.
\end{proof}

\begin{theorem}
[Theorem \ref{main-neighborly}]
\label{neighborly}
If $K$ is $\lceil\frac{\dim K}{2}\rceil$-neighborly, then the fat wedge filtration of $\RZ_K$ is trivial.
\end{theorem}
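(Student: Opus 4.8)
The plan is to prove, by induction on $i$, that the attaching map $\varphi_{K_I}$ is null homotopic for every $I\subset[m]$ with $|I|=i$; once this holds for all $i$ it is exactly the assertion that the fat wedge filtration of $\RZ_K$ is trivial. All of the substantive machinery is already in place in this section, so the proof amounts to feeding the neighborliness hypothesis into Proposition \ref{induct-trivial} through the connectivity estimate of Corollary \ref{conn-F}; there is no need to re-examine the maps $\varphi_{K_I}$ directly.

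First I would translate the hypothesis into connectivity. Put $n=\dim K$ and $k=\lceil n/2\rceil$, so that $K$ is $k$-neighborly, and hence, by Lemma \ref{neighborly-conn}, every full subcomplex $K_J$ with $\emptyset\ne J\subset[m]$ is $(k-1)$-connected. Now run the induction. Fix $i\ge 1$ and assume $\varphi_{K_J}$ is null homotopic for all $\emptyset\ne J\subset[m]$ with $|J|<i$ (this is vacuous for $i=1$). Let $I\subset[m]$ with $|I|=i$. By Corollary \ref{conn-F} the homotopy fiber $F_i$ is $2\bigl(\min\{\mathrm{conn}\,K_J\mid\emptyset\ne J\subset[m],\ |J|<i\}+1\bigr)$-connected, and since each such $K_J$ is $(k-1)$-connected this gives $\mathrm{conn}\,F_i\ge 2k$. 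On the other hand $\dim K_I\le\dim K=n\le 2\lceil n/2\rceil=2k$, hence $\dim K_I\le\mathrm{conn}\,F_i$. Therefore Proposition \ref{induct-trivial} applies and shows that $\varphi_{K_I}$ is null homotopic, which completes the inductive step and the proof.

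I do not expect any genuine obstacle beyond bookkeeping: the argument rests on the elementary inequality $2\lceil n/2\rceil\ge n$, which is precisely what lets the factor-of-two gain in the connectivity of $F_i$ coming from Corollary \ref{conn-F} absorb the dimension of $K_I$. The only mildly delicate point is the low-dimensional range (for instance $n=0$, where $k=0$ and some $K_J$ may be disconnected, so that $\mathrm{conn}\,F_i$ is only guaranteed to be $\ge 0$); but there $\dim K_I=0$ as well, so Proposition \ref{induct-trivial} still applies verbatim, and in any case the overlapping regime $\dim K\ge m-2$ is already covered by Proposition \ref{dim=m-2}. Finally, the refinement in which $\dim K$ is replaced by homology dimension is obtained by the same scheme, using the homological form of the obstruction underlying Proposition \ref{induct-trivial} together with the simple connectivity of $F_i$ when $k\ge 1$.
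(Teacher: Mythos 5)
Your proof is correct and follows essentially the same route as the paper: induct on $i$, translate $\lceil\dim K/2\rceil$-neighborliness into $(\lceil\dim K/2\rceil-1)$-connectivity of every full subcomplex via Lemma~\ref{neighborly-conn}, feed that into the connectivity estimate for $F_i$ from Corollary~\ref{conn-F}, and conclude with Proposition~\ref{induct-trivial}. You have merely written out the arithmetic $\dim K_I\le\dim K\le 2\lceil\dim K/2\rceil\le\mathrm{conn}\,F_i$ more explicitly than the paper does, and the low-dimensional and homology-dimension remarks at the end are consistent with Proposition~\ref{dim=m-2} and Theorem~\ref{RZ-neighborly}.
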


\begin{proof}
By Lemma \ref{neighborly-conn} any full subcomplex of $K$ is $(\lceil\frac{\dim K}{2}\rceil-1)$-connected. Let $I\subset[m]$ with $|I|=i$. We prove the triviality of the attaching map $\varphi_{K_I}$ by induction on $i$. For $i=1$, $\varphi_{K_I}$ is obviously trivial. Suppose that $\varphi_{K_J}$ is null homotopic for all $\emptyset\ne J\subset[m]$ with $|J|<i$. By assumption and Corollary \ref{conn-F}, the connectivity of the homotopy fiber $F_i$ is greater than the dimension of $K_I$ since $\dim K\ge\dim K_I$, so by Proposition \ref{induct-trivial}, $\varphi_{K_I}$ is null homotopic, completing the proof.
\end{proof}

\begin{example}
Let $K$ be the 6 vertex triangulation of $\mathbb{R}P^2$ illustrated below. 

\begin{figure}[htbp]
\begin{center}
\setlength\unitlength{1.2mm} 
\begin{picture}(40,40)
\Thicklines
\drawline(20,0)(0,10)(0,30)(20,40)(40,30)(40,10)(20,0)
\drawline(0,10)(40,10)(20,40)(0,10)
\drawline(20,0)(20,10)
\drawline(0,30)(10,25)
\drawline(40,30)(30,25)
\drawline(20,10)(10,25)(30,25)(20,10)
\put(20,0){\circle*{1.5}}
\put(0,10){\circle*{1.5}}
\put(0,30){\circle*{1.5}}
\put(20,40){\circle*{1.5}}
\put(40,30){\circle*{1.5}}
\put(40,10){\circle*{1.5}}
\put(20,10){\circle*{1.5}}
\put(10,25){\circle*{1.5}}
\put(30,25){\circle*{1.5}}
\put(-3,10){$3$}
\put(-3,30){$2$}
\put(17,40){$1$}
\put(41.3,10){$2$}
\put(41.3,30){$3$}
\put(17,-2){$1$}
\put(-3,10){$3$}
\put(8.5,26.6){$4$}
\put(19.1,12){$5$}
\put(29.5,26.6){$6$}
\end{picture}
\end{center}
\end{figure}

\noindent It is shown in \cite{GPTW} that the BBCG decomposition of the moment-angle complex $\Z_K$ desuspends. However their argument is quite ad-hoc and depends heavily on the pair $(D^2,S^1)$, so it is not applicable to $\Z_K(C\underline{X},\underline{X})$ in general. Now $K$ is 1-neighborly and $\dim K=2$, so we can apply Theorem \ref{main-decomp} and \ref{main-neighborly} to obtain a desuspension of the BBCG decomposition such that
$$\Z_K(C\underline{X},\underline{X})\simeq\left(\bigvee_{I\in S}\Sigma^2\widehat{X}^I\right)\vee\left(\bigvee_{I\subset[6],\,|I|=4,5}\Sigma^2\widehat{X}^I\right)\vee(\Sigma\mathbb{R}P^2\wedge\widehat{X}^{[6]})$$
where $S=\{\{3,5,6\},\{3,4,6\},\{2,4,6\},\{2,4,5\},\{2,3,5\},\{1,5,6\},\{1,4,5\},\{1,3,4\},\{1,2,6\},\{1,2,3\}\}$.
\end{example}

We give a generalization of Theorem \ref{neighborly} by replacing the dimension of $K$ with the homology dimension of $K$.

\begin{definition}
The homology dimension of a space $X$, denoted by $\mathrm{hodim}\,X$, is less than or equal to $n$ if and only if $\widetilde{H}_*(X;A)=0$ for $*>n$ and any finitely generated abelian group $A$. 
\end{definition}

We prepare two technical lemmas.

\begin{lemma}
\label{perfect-free}
If $G$ is a perfect group and $F$ is a free group, then any homomorphism $G\to F$ is trivial. 
\end{lemma}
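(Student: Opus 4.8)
The plan is to pin down the image of the homomorphism and show it is both free and perfect, hence trivial. Let $\phi\colon G\to F$ be a homomorphism with $G$ perfect and $F$ free. First I would observe that $\phi(G)$ is a subgroup of $F$, so by the Nielsen--Schreier theorem $\phi(G)$ is itself a free group.

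Next I would use perfectness: since $G=[G,G]$, we get $\phi(G)=\phi([G,G])=[\phi(G),\phi(G)]$, so the subgroup $\phi(G)\le F$ is perfect. The final step is the elementary observation that a nontrivial free group is never perfect: if a free group has rank $r\ge 1$, its abelianization is $\mathbb{Z}^{r}\ne 0$, so it is a proper subgroup of its own commutator subgroup's complement --- more plainly, it does not equal its commutator subgroup. Combining the three steps, $\phi(G)$ is free, perfect, and hence of rank $0$, i.e.\ trivial; therefore $\phi$ is the trivial homomorphism.

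There is essentially no obstacle here; the only nontrivial input is the Nielsen--Schreier theorem, which I would simply cite as a black box. If one preferred to avoid it, an alternative is to use that free groups are residually nilpotent (Magnus): an easy induction gives $\phi(G)=[\phi(G),\phi(G)]\subseteq[\gamma_n(F),\gamma_n(F)]\subseteq\gamma_{2n}(F)$ whenever $\phi(G)\subseteq\gamma_n(F)$, so $\phi(G)\subseteq\bigcap_{n\ge 1}\gamma_n(F)=\{1\}$. Either way the argument is short, and I would present the Nielsen--Schreier version for brevity.
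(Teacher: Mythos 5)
Your proof is correct and follows the same route as the paper: the image $\phi(G)$ is a perfect subgroup of $F$, hence free by Nielsen--Schreier, and a perfect free group must be trivial. The alternative via residual nilpotence is a nice aside, but the main argument you present is exactly the paper's.
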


\begin{proof}
For a homomorphism $f\colon G\to F$ the image $f(G)$ is a perfect subgroup of $F$. By the Nielsen-Schreier theorem, $f(G)$ is also a free group, then $f(G)$ must be trivial.
\end{proof}

\begin{lemma}
\label{trivial-[X,Y]}
Let $X$ be a finite CW-complex and $Y$ be an $n$-connected space of finite type. If $\mathrm{hodim}\,X\le n$ and additionally $\pi_1(Y)$ is free for $n=0$, then any map $X\to Y$ is null homotopic.
\end{lemma}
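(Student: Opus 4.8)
The plan is to prove Lemma~\ref{trivial-[X,Y]} by obstruction theory, building a null homotopy cell by cell over a CW structure on $X$. First I would reduce to the case where $X$ has no cells below dimension $1$: since $Y$ is $n$-connected with $n \ge 0$, any map from the $0$-skeleton of $X$ into $Y$ is homotopic to the constant map (as $Y$ is path-connected), so we may assume inductively that the map $f \colon X \to Y$ has been deformed to be constant on $X^{(k)}$ for some $k \ge 0$, and we try to extend this null homotopy over $X^{(k+1)}$. The obstruction to extending over a $(k+1)$-cell lies in $\pi_{k}(Y)$ when $k \ge 1$ (rel already-contracted skeleton), and the relevant primary obstruction to producing a null homotopy of $f|_{X^{(k+1)}}$ rel $X^{(k)}$ is a cohomology class in $H^{k+1}(X; \pi_{k+1}(Y))$.

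The key point is that these obstruction groups vanish under the hypotheses. For $1 \le k+1 \le n$ we have $\pi_{k}(Y) = 0$ since $Y$ is $n$-connected, so there is no obstruction and the null homotopy extends freely over the $n$-skeleton of $X$. The only remaining obstruction to extending over $X^{(n+1)}$ lies in $H^{n+1}(X; \pi_{n+1}(Y))$; but since $\mathrm{hodim}\,X \le n$, we have $\widetilde H^{n+1}(X; A) = 0$ for any finitely generated abelian group $A$ by universal coefficients (as $\widetilde H_*(X;A) = 0$ for $* > n$ and $H_*(X;\mathbb Z)$ is finitely generated, so in particular $H^{n+1}(X;A)\cong \mathrm{Hom}(H_{n+1}(X;\mathbb Z),A)\oplus\mathrm{Ext}(H_n(X;\mathbb Z),A)$ and both summands involve only the vanishing $H_{n+1}$ and the torsion of $H_n$, which pairs against the vanishing of $\widetilde H_n(X;A)$ for all finitely generated $A$ — running over $A = \mathbb Z$ and $A = \mathbb Z/p$ forces $H_n(X;\mathbb Z)$ to be free, killing the $\mathrm{Ext}$ term). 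Hence the null homotopy extends over $X^{(n+1)}$, and since $X$ is finite and all higher obstruction groups $H^{j}(X; \pi_j(Y))$ for $j > n+1$ vanish as well (again $\mathrm{hodim}\,X \le n$), we obtain a full null homotopy of $f$.

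The one subtlety — and the reason for the extra hypothesis on $\pi_1(Y)$ when $n = 0$ — is the case $n = 0$, where $Y$ is merely path-connected and $\pi_1(Y)$ need not be abelian, so the obstruction-theoretic bookkeeping above does not literally apply at the bottom stage. In that case I would instead argue directly: $X$ finite with $\mathrm{hodim}\,X \le 0$ forces $\widetilde H_*(X;A) = 0$ for all $* \ge 1$, so $H_1(X;\mathbb Z) = 0$, i.e.\ $\pi_1(X)$ is a perfect group; the induced homomorphism $\pi_1(X) \to \pi_1(Y)$ then lands in a free group, hence is trivial by Lemma~\ref{perfect-free}, so $f$ lifts to the universal cover $\widetilde Y$, which is $0$-connected; now $\widetilde H_*(X;A) = 0$ for $* \ge 1$ together with $\mathrm{hodim}\,X \le 0$ and the universal-coefficient vanishing $H^1(X;A) = 0$ makes the lift null homotopic by the obstruction argument applied one dimension up (all obstructions live in $H^{j}(X;\pi_j(\widetilde Y))$ for $j \ge 1$, which vanish). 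I expect the main obstacle to be handling the non-simple, non-nilpotent situation cleanly at $n=0$: one must be careful that $Y$ is not assumed simple, so the primary obstruction theory only kicks in after passing to the universal cover, and one has to verify that $X$ being finite with $\mathrm{hodim}\,X \le n$ really does kill $\mathrm{Ext}$ terms, which is exactly where running over all finitely generated coefficient groups $A$ (rather than just $\mathbb Z$) is essential.
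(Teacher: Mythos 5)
Your proof is correct and reaches the same conclusion, but it runs the obstruction theory ``dually'' to the paper: you filter the \emph{source} $X$ by CW skeleta and push a nullhomotopy up dimension by dimension, whereas the paper filters the \emph{target} $Y$ by its Postnikov tower $Y_k\to Y_{k-1}$ and climbs it using the exact sequence of pointed sets $[X, K(\pi_k(Y),k)]\to [X,Y_k]\to[X,Y_{k-1}]$. The two are textbook-equivalent, and the substantive inputs coincide in both: the obstruction groups are $H^j(X;\pi_j(Y))$ for $j>n$, which vanish because $\mathrm{hodim}\,X\le n$ kills $H_j(X;\mathbb Z)$ for $j>n$ and (by running $\mathbb Z/p$ coefficients through universal coefficients at degree $n+1$ --- not at degree $n$ as your parenthetical accidentally says, since $\mathrm{hodim}\,X\le n$ only controls degrees $>n$) forces $H_n(X;\mathbb Z)$ to be free, hence $\mathrm{Ext}(H_n,A)=0$; and for $n=0$ the fundamental group is disposed of by Lemma~\ref{perfect-free}. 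Your $n=0$ case diverges a bit in mechanism: you lift $f$ to the simply connected universal cover $\widetilde Y$ (you wrote ``$0$-connected'', but you mean $1$-connected) in order to keep coefficients untwisted, whereas the paper stays intrinsic --- it kills the base stage via $[X,K(\pi_1(Y),1)]=\mathrm{Hom}(\pi_1X,\pi_1Y)/\mathrm{conj}=*$ and then climbs the remaining tower, whose fibers are already simply connected Eilenberg--MacLane spaces. The Postnikov route is slightly cleaner because the fibration exact sequence encodes all the bookkeeping at once; your skeletal route is more elementary and works just as well, at the cost of the universal-cover detour.
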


\begin{proof}
Consider the Postnikov tower of $Y$:
$$\cdots\to Y_k\to Y_{k-1}\to\cdots\to Y_2\to Y_1=K(\pi_1(Y),1)$$
Since $X$ is a finite CW-complex, the triviality of the homotopy set $[X,Y]$ is implied by the triviality of $[X,Y_k]$ for all $k$. It follows from Lemma \ref{perfect-free} that $[X,K(\pi_1(Y),1)]=*$ for $n=0$, and $[X,K(\pi_1(Y),1)]$ is obviously trivial for $n>0$. So the homotopy exact sequence associated with the homotopy fibration $K(\pi_k(Y),k)\to Y_k\to Y_{k-1}$ shows that $[X,Y_k]=*$ for all $k$. 
\end{proof}

Put 
$$d(K):=\max\{\mathrm{hodim}\,K_I\,\vert\,\emptyset\ne I\subset[m]\}.$$
Obviously we have $d(K)\le\dim K$. Quite similarly to Theorem \ref{neighborly} together with Lemma \ref{trivial-[X,Y]}, we can prove the following.

\begin{theorem}
\label{RZ-neighborly}
If $K$ is $\lceil\frac{d(K)}{2}\rceil$-neighborly, then the fat wedge filtration of $\RZ_K$ is trivial.
\end{theorem}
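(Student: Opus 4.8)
The plan is to mimic the proof of Theorem \ref{neighborly} verbatim, replacing the dimension of $K_I$ by its homology dimension and the cellular obstruction at the last step by Lemma \ref{trivial-[X,Y]}. First I would induct on $i=|I|$ proving that each attaching map $\varphi_{K_I}$ is null homotopic. The case $i=1$ is trivial, and for the inductive step I assume $\varphi_{K_J}\simeq *$ for all $\emptyset\ne J\subset[m]$ with $|J|<i$. By Corollary \ref{lift-F} the map $\varphi_{K_I}\colon|\Sd K_I|\to\RZ_K^{i-1}$ then lifts through the homotopy fiber $F_i$ of the inclusion $\bigvee_{|J|<i}|\Sigma K_J|\to\prod_{|J|<i}|\Sigma K_J|$, so it suffices to show that any map $|\Sd K_I|\to F_i$ is null homotopic.

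For this I invoke the connectivity estimate of Corollary \ref{conn-F}: since $K$ is $\lceil\frac{d(K)}{2}\rceil$-neighborly, Lemma \ref{neighborly-conn} gives that every full subcomplex $K_J$ is $(\lceil\frac{d(K)}{2}\rceil-1)$-connected, so $F_i$ is $2\lceil\frac{d(K)}{2}\rceil$-connected, in particular $d(K)$-connected. On the other hand $\mathrm{hodim}\,K_I\le d(K)$ by definition of $d(K)$, so $|\Sd K_I|\simeq|K_I|$ is a finite complex of homology dimension at most the connectivity of $F_i$. Also $F_i$ has finite type by Proposition \ref{F}. Now Lemma \ref{trivial-[X,Y]} applies directly and yields that $\varphi_{K_I}$ is null homotopic, completing the induction. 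Since the fat wedge filtration of $\RZ_K$ is trivial exactly when all $\varphi_{K_I}$ are null homotopic, this finishes the proof.

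The one subtlety I should flag is the $n=0$ clause of Lemma \ref{trivial-[X,Y]}, which requires $\pi_1(F_i)$ to be free when $F_i$ is only $0$-connected; this happens precisely when $d(K)=0$, i.e. all full subcomplexes are disconnected unions of points. But then $\lceil\frac{d(K)}{2}\rceil=0$ and $0$-neighborliness is automatic, and $K_I$ being $0$-dimensional forces $\RZ_K$ to be a product of $0$-spheres by Example \ref{non-desuspension-example}, which is a degenerate case one handles directly (or observes $F_i$ is a wedge of circles by Proposition \ref{F}, so $\pi_1(F_i)$ is free and the lemma still applies). Apart from checking this corner case, the main obstacle is simply bookkeeping: making sure the connectivity of $F_i$ is compared against the homology dimension of $K_I$ rather than its ordinary dimension, which is exactly what the strengthened Lemma \ref{trivial-[X,Y]} buys over the cellular approximation argument used in Theorem \ref{neighborly}.
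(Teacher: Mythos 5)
Your argument is exactly the paper's intended proof: the paper only states that Theorem \ref{RZ-neighborly} follows from the argument of Theorem \ref{neighborly} with Lemma \ref{trivial-[X,Y]} replacing the cellular-approximation step, which is precisely what you carry out. Two small corrections to the corner-case discussion, neither of which breaks the proof. First, $d(K)=0$ does \emph{not} mean that every $K_I$ is $0$-dimensional or a disjoint union of points: it means every $K_I$ is acyclic over all finitely generated coefficient groups, and e.g.\ a tree already gives $d(K)=0$ while being $1$-dimensional, so the \lq\lq handle directly via Example \ref{non-desuspension-example}\rq\rq\ alternative rests on a false premise. Second, $F_i$ is not in general a wedge of circles; but Proposition \ref{F} does exhibit $F_i$ as a wedge of suspensions, hence a co-H-space, hence $\pi_1(F_i)$ is free, so the $n=0$ clause of Lemma \ref{trivial-[X,Y]} applies and your conclusion is correct.
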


\begin{example}
\label{Berglund-example}
In \cite{B} Berglund considered a simplicial complex $K$ on the vertex set $[10]$ whose minimal non-faces are
$$\{1,2,6,7\},\,\{2,3,7,8\},\,\{3,4,8,9\},\,\{4,5,9,10\},\,\{1,5,6,10\},\,\{6,7,8,9,10\}.$$
It was proved that $K^\vee$ is not SCM over $\mathbb{Z}$ but $K$ is Golod, so we cannot apply Corollary \ref{SCM-trivial} to decompose the polyhedral product $\Z_K(C\underline{X},\underline{X})$. Note that $K$ is 2-neighborly but is not 3-neighborly, and $\dim K=6$. Then we cannot apply Theorem \ref{neighborly} to this case either. We shall show $d(K)\le 4$, and apply Theorem \ref{RZ-neighborly}, which implies that our generalization from $\dim K$ to $d(K)$ is meaningful. 

Let $I$ be a non-empty subset of $[m]$. 


\noindent(1) For $|I|\le 7$, $|K_I|$ is homotopy equivalent to a CW-complex of dimension $\le 4$ since $K_I$ is not the boundary of the 6-simplex. Then $\mathrm{hodim}\,K_I\le 4$. 

\noindent(2) For $|I|=8$, it is a routine work to check that $(K_I)^\vee$ is contractible or homotopy equivalent to $S^1$ since $(K_I)^\vee$ has at most three facets. Then $K_I$ is contractible or homotopy equivalent to $S^4$ by Theorem \ref{duality} and the fact that $K_I$ is simply connected.

\noindent(3) For $|I|\ge 9$, $K_I$ is contractible. The proof is divided into two cases. If $I=[10]-\{i\}$ for $i=6,\cdots,10$, then $K_I$ is a cone which is contractible. For example, $K_{[9]}$ is a cone with apex 5. For the other case, we only consider the whole complex $K$ since other cases are similar. Consider the cofibration
$$|\lk_K(10)|\to|K_{[9]}|\to|K|.$$
Since $|K_{[9]}|$ is contractible, $|K|\simeq\Sigma|\lk_K(10)|$. Similarly we consider the cofibration
$$|\lk_K(\{9, 10\})|\to|(\lk_K(10))_{[8]}|\to|\lk_K(10)|,$$
where $(\lk_K(10))_{[8]}$ is a simplicial complex on the vertex set $[8]$ with the minimal non-faces
$$\{1,2,6,7\},\;\{2,3,7,8\},\;\{1,5,6\}.$$
Then $(\lk_K(10))_{[8]}$ is a cone with apex 4 which is contractible. Then we get $|K|\simeq\Sigma|\lk_K(10)|\simeq\Sigma^2|\lk_K(\{9,10\})|$ as above. Furthermore, we can see that $|K|\simeq\Sigma^4|\lk_K(\{7, 8, 9, 10\})|$ in the same way, where $\lk_K(\{7, 8, 9, 10\})$ is a simplicial complex on the vertex set $[6]$ with the minimal non-faces
$$\{2,3\},\;\{3,4\},\;\{4,5\},\;\{6\}.$$
Since it is a cone with apex 1, $|\lk_K (\{7, 8, 9, 10\})|$ is contractible, and therefore $|K|$ is also contractible. 

Summarizing, we conclude $d(K)=4$.
\end{example}


\section{Further problems}

In this section, we list possible future problems on the homotopy type of the polyhedral product $\Z_K(C\underline{X},\underline{X})$ mainly related with the fat wedge filtrations. 

\subsection{Converse of Theorem \ref{main-decomp}}

The biggest problem concerning Theorem \ref{main-decomp} is:

\begin{problem}
\label{converse}
Is the converse of Theorem \ref{main-decomp} true?
\end{problem}

In Section 5 we have proved Theorem \ref{main-decomp-Z} which is a partial converse of Theorem \ref{main-decomp}. The key in its proof is the simply connectedness of $\Z_K$ for any $K$. So one easily sees that if $\RZ_{K_I}$ is simply connected for any $\emptyset\ne I\subset[m]$, the same proof works for $\RZ_K$, and then by Lemma \ref{neighborly-conn} we get:

\begin{proposition}
\label{1-neighborly}
If $K$ is 1-neighborly, then the converse of Theorem \ref{main-decomp} holds.
\end{proposition}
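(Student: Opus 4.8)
The plan is to run the proof of Theorem \ref{main-decomp-Z} essentially verbatim, with $\Z_K$ replaced by $\RZ_K$, $\widehat\varphi_{K_I}$ by $\varphi_{K_I}$, Theorem \ref{cone-decomp-Z} by Theorem \ref{cone-decomp}, and Lemma \ref{Z-1-conn} by the connectivity observation below. It suffices to prove the stronger assertion that, for $1$-neighborly $K$, \emph{if $\RZ_K$ is a co-H-space then the fat wedge filtration of $\RZ_K$ is trivial}: the hypothesis of the converse of Theorem \ref{main-decomp}, applied with $\underline{X}=S^0$, exhibits $\RZ_K\simeq\bigvee_{\emptyset\ne I\subset[m]}|\Sigma K_I|$ as a wedge of suspensions, hence as a co-H-space, and the triviality of the fat wedge filtration of $\RZ_K$ is in turn the hypothesis of Theorem \ref{main-decomp}.

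First I would establish the needed connectivity. By Lemma \ref{neighborly-conn} every full subcomplex $K_I$ of a $1$-neighborly complex $K$ is connected, and each $K_I$ is again $1$-neighborly. By Theorem \ref{cone-decomp}, $\RZ_K^i$ is obtained from the point $\RZ_K^0$ by successively attaching cones on the connected polyhedra $|\Sd K_I|$ ($1\le|I|\le i$); since attaching a cone along a path-connected space passes to a quotient of $\pi_1$, an induction on $i$ shows that every $\RZ_K^i$ — in particular $\RZ_K^{m-1}$ and $\RZ_K$ itself — is simply connected, and likewise $\bigvee_{\emptyset\ne I\subsetneq[m]}|\Sigma K_I|$ is simply connected. (For general $K$ this fails, e.g.\ $\RZ_K\cong S^1\times S^1$ for a square, cf.\ Example \ref{non-desuspension-example}; this is exactly the point at which $1$-neighborliness is used.)

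Next I would run the induction on the number $m$ of vertices of $K$. The case $m=1$ is trivial. Assume the assertion for $1$-neighborly complexes with fewer than $m$ vertices and suppose $\RZ_K$ is a co-H-space. For $\emptyset\ne I\subsetneq[m]$ the space $\RZ_{K_I}$ is a retract of $\RZ_K$, hence a co-H-space, and $K_I$ is $1$-neighborly with $<m$ vertices, so by the inductive hypothesis the fat wedge filtration of $\RZ_{K_I}$ is trivial; in particular $\varphi_{K_I}\simeq*$ for every $\emptyset\ne I\subsetneq[m]$, so by Theorem \ref{cone-decomp} all cone attachments forming $\RZ_K^{m-1}$ are trivial and $\RZ_K^{m-1}\simeq\bigvee_{\emptyset\ne I\subsetneq[m]}|\Sigma K_I|$. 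It remains to show $\varphi_K\simeq*$, and by Theorem \ref{cone-decomp} (if $j$ has a left homotopy inverse $r$ then $\varphi_K=r\circ j\circ\varphi_K\simeq*$, since $j\circ\varphi_K$ extends over the cone $C|\Sd K|$) it suffices that the inclusion $j\colon\RZ_K^{m-1}\to\RZ_K$ admit a left homotopy inverse. As in the proof of Theorem \ref{main-decomp-Z}, let $\mu$ be the composite $\RZ_K\to\bigvee^{2^m-2}\RZ_K\xrightarrow{\rm proj}\bigvee_{\emptyset\ne I\subsetneq[m]}\RZ_{K_I}$ given by the comultiplication followed by the retractions. By Proposition \ref{suspension-varphi} we have $\Sigma\varphi_K\simeq*$, so the Puppe sequence of the cone attachment gives $\Sigma\RZ_K\simeq\Sigma\RZ_K^{m-1}\vee\Sigma^2|\Sd K|$ and hence $\Sigma j$, and a fortiori $\Sigma^2 j$, has a left homotopy inverse; this lets one replace, up to homotopy, $\Sigma^2(\mu\circ j)$ by the composite built from the suspension comultiplication of $\RZ_K^{m-1}$ and the projections onto the $\Sigma^2\RZ_{K_I}$. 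Composing with the double suspension of the projection $q\colon\bigvee_{\emptyset\ne I\subsetneq[m]}\RZ_{K_I}\to\bigvee_{\emptyset\ne I\subsetneq[m]}|\Sigma K_I|$ onto the top summands (available since $\varphi_{K_I}\simeq*$ and Theorem \ref{cone-decomp}) and using $\RZ_K^{m-1}\simeq\bigvee_{\emptyset\ne I\subsetneq[m]}|\Sigma K_I|$, one checks that $q\circ\mu\circ j$ is an isomorphism in homology. Since its source and target are simply connected, $q\circ\mu\circ j$ is a homotopy equivalence by the J.H.C. Whitehead theorem, so $(q\circ\mu\circ j)^{-1}\circ q\circ\mu$ is a left homotopy inverse of $j$; therefore $\varphi_K\simeq*$, completing the induction.

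The only genuinely new point — and the reason for the hypothesis — is the simple connectivity of $\RZ_K^{m-1}$ required for the Whitehead-theorem step; everything else is a routine transcription of the $\Z_K$ argument. I expect the minor bookkeeping in the homology computation of $q\circ\mu\circ j$ (the vanishing of the off-diagonal contributions, exactly as in the proof of Theorem \ref{main-decomp-Z}) to be the only slightly delicate part.
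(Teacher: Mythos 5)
Your proposal is correct and follows essentially the same route the paper intends: reduce the converse to the stronger statement that $\RZ_K$ being a co-H-space forces the triviality of the fat wedge filtration, verify via Lemma \ref{neighborly-conn} and the cone decomposition of Theorem \ref{cone-decomp} that $1$-neighborliness makes every $\RZ_{K_I}$ (and every $\RZ_K^i$) simply connected, and then transcribe the proof of Theorem \ref{main-decomp-Z} with $\Z_K$, $\widehat{\varphi}_{K_I}$, Theorem \ref{cone-decomp-Z} replaced by $\RZ_K$, $\varphi_{K_I}$, Theorem \ref{cone-decomp}. This is precisely what the paper sketches when it says ``if $\RZ_{K_I}$ is simply connected for any $\emptyset\ne I\subset[m]$, the same proof works for $\RZ_K$,'' and you have correctly filled in the simple-connectivity step via van Kampen that the paper leaves to the reader.
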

 
We here propose an approach to Problem \ref{converse} by induction on $m$. The case $m=1$ is trivial, and we suppose the converse of Theorem \ref{main-decomp} holds for simplicial complexes with vertices less than $m$. Then we have $\varphi_{K_I}\simeq*$ for any $\emptyset\ne I\subsetneq[m]$ since $\RZ_{K_I}=\Z_K(C\underline{X},\underline{X})$ for $X_i=S^0$ with $i\in I$ and $X_i=*$ with $i\not\in I$, implying $\RZ_K$ and $\RZ_K^{m-1}$ are suspensions. In particular it is sufficient to consider the case that $K$ is connected similarly to the proof of Theorem \ref{homology-fillable-phi}. As well as the proof of Proposition \ref{1-neighborly} we can easily see that there is a map $\RZ_K\to\RZ_K^{m-1}$ such that the composite $\RZ_K^{m-1}\xrightarrow{\rm incl}\RZ_K\to\RZ_K^{m-1}$ an isomorphism in homology.  Moreover we have:

\begin{lemma}
\label{pi_1}
Suppose $\RZ_K^{m-1}$ and $\RZ_K$ are suspensions. If $K$ is connected, the inclusion $\RZ_K^{m-1}\to\RZ_K$ is an isomorphism in fundamental group.
\end{lemma}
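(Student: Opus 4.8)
The plan is to compute $\pi_1(\RZ_K^{m-1})$ and $\pi_1(\RZ_K)$ directly from the cone decomposition of Theorem \ref{cone-decomp} and show the inclusion induces an isomorphism. First I would recall that $\RZ_K$ is obtained from $\RZ_K^{m-1}$ by attaching a single cone along $j_K\circ\varphi_K\colon|\Sd K|\to\RZ_K^{m-1}$, where $|\Sd K|=|K|$ (barycentric subdivision does not change the homeomorphism type of the realization). Since $K$ is connected, $|K|$ is connected, so the cone $C|K|$ is contractible, and by van Kampen the inclusion $\RZ_K^{m-1}\to\RZ_K$ is surjective on $\pi_1$ with kernel the normal closure of the image of $\pi_1(|K|)$ under $(j_K\circ\varphi_K)_*$. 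Thus the claim is equivalent to the statement that $(j_K\circ\varphi_K)_*\colon\pi_1(|K|)\to\pi_1(\RZ_K^{m-1})$ is trivial.

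Next I would use the hypothesis that $\RZ_K^{m-1}$ is a suspension. A suspension $\Sigma Z$ has free fundamental group; indeed by the canonical splitting $\Sigma Z\simeq S\vee Y$ recalled in Section 9, with $S$ a wedge of circles and $Y$ simply connected, $\pi_1(\RZ_K^{m-1})$ is a free group. On the other hand, $\varphi_K$ becomes null homotopic after one suspension by Proposition \ref{suspension-varphi}, so $(j_K\circ\varphi_K)_*$ factors through a map which is trivial on $H_1$; more to the point, since $\RZ_K$ is also a suspension, $\pi_1(\RZ_K)$ is free as well, and the inclusion $\RZ_K^{m-1}\to\RZ_K$ is already known to be a homology isomorphism (this was observed just before the lemma), so in particular it is an isomorphism on $H_1$. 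Abelianizing, $(j_K\circ\varphi_K)_*$ lands in the commutator subgroup of the free group $\pi_1(\RZ_K^{m-1})$; but a map from $\pi_1(|K|)$ whose image lies in $[\pi_1(\RZ_K^{m-1}),\pi_1(\RZ_K^{m-1})]$ need not be trivial, so this alone is not enough and one must work a little harder.

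The cleanest route, and the one I would adopt, is to argue at the level of the map of suspensions directly: write $\RZ_K^{m-1}\simeq\Sigma A$ and $\RZ_K\simeq\Sigma B$ for connected $A,B$ (connectedness of $A,B$ follows because $\RZ_K^{m-1}$ and $\RZ_K$ are simply connected through dimension $0$ in the relevant range — more precisely one checks $\RZ_K^{m-1}$ and $\RZ_K$ are connected, which is immediate, and being suspensions of \emph{connected} spaces is what forces $\pi_1$ free of the right size). Then the inclusion $\Sigma A\to\Sigma B$ is a homology isomorphism between suspensions of connected spaces, hence induces an isomorphism on $H_1$, hence on the free groups $\pi_1$, which are exactly the free groups on $H_1(A)$ and $H_1(B)$. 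This gives the isomorphism on $\pi_1$ directly, bypassing the analysis of $(j_K\circ\varphi_K)_*$.

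The main obstacle is justifying that a suspension of a \emph{connected} space has $\pi_1$ canonically the free group on $H_1$ of the suspension (equivalently, that the Hurewicz-type comparison $\pi_1(\Sigma A)\to H_1(\Sigma A)$ has free kernel and that a homology iso of such suspensions is a $\pi_1$-iso) and ensuring that both $\RZ_K^{m-1}$ and $\RZ_K$ are genuinely suspensions of connected spaces rather than merely retracts of suspensions — here one invokes the hypothesis of the lemma together with the splitting $\Sigma Z \simeq S \vee Y$ and the observation that $S$ must be a single point (no circle summands) precisely when the underlying space is connected, which is detected by $H_1$ being the abelianization matching up across the homology isomorphism. Once that structural point is in place, the conclusion is a formality.
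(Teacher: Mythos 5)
Your first two paragraphs track the paper's proof precisely: the van Kampen reduction showing that the inclusion is a surjection on $\pi_1$ with kernel the normal closure of $\mathrm{Im}\,(\varphi_K)_*$, and the (correct) observation that landing in the commutator subgroup is not by itself enough. You then abandon that thread for a ``cleanest route'' which contains two genuine errors.

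First, the claim that one may write $\RZ_K^{m-1}\simeq\Sigma A$ and $\RZ_K\simeq\Sigma B$ with $A,B$ \emph{connected} is not justified and is in fact the opposite of what the lemma is for. The connectedness of $\Sigma A$ imposes no condition on $A$: the suspension of any nonempty space is connected, and the interesting cases here are exactly those in which $A$ and $B$ are disconnected, so that $\pi_1$ is a nontrivial free group. If $A$ and $B$ were connected, both fundamental groups would vanish by van Kampen and the lemma would be content-free. Relatedly, the statement that ``$\pi_1(\Sigma A)$ is the free group on $H_1(A)$'' is wrong: for connected $A$ one has $\pi_1(\Sigma A)=0$, and in general $\pi_1(\Sigma A)$ is a free group of rank $\operatorname{rank}\widetilde H_0(A)=\operatorname{rank}H_1(\Sigma A)$, which is not $H_1(A)$.

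Second, and more substantively, once you drop the van Kampen surjectivity you lose the information needed to conclude. Knowing that the inclusion is an isomorphism on $H_1$ and that both fundamental groups are free of the same finite rank does not by itself force a group homomorphism between them to be an isomorphism: the homology isomorphism only controls the abelianization. What rescues the argument is exactly what you discarded: van Kampen shows the induced map is a \emph{surjection} of free groups of the same finite rank, and finitely generated free groups are Hopfian, so any such surjection is an isomorphism. This is precisely how the paper closes the proof, quoting Proposition~\ref{suspension-varphi} to get $H_1(\RZ_K^{m-1})\cong H_1(\RZ_K)$ (hence equal ranks), the suspension hypothesis to get freeness of both fundamental groups, and the van Kampen presentation plus the Hopfian property to conclude. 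You had all the pieces assembled in your second paragraph; the gap is that you set them aside instead of invoking Hopficity.
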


\begin{proof}
It follows from Theorem \ref{cone-decomp} that $\RZ_K=\RZ_K^{m-1}\cup_{\varphi_K}C|\Sd K|$, so by the van Kampen theorem, $\pi_1(\RZ_K)\cong\pi_1(\RZ_K^{m-1})/N$ and the inclusion $\RZ_K^{m-1}\to\RZ_K$ in fundamental group is identified with the quotient map 
$$\pi_1(\RZ_K^{m-1})\to\pi_1(\RZ_K^{m-1})/N$$
where $N$ is the smallest normal subgroup including $\mathrm{Im}\,\{(\varphi_K)_*\colon\pi_1(|\Sd K|)\to\pi_1(\RZ_K^{m-1})\}$. By Proposition \ref{suspension-varphi}, $\Sigma\RZ_K\simeq\Sigma\RZ_K^{m-1}\vee\Sigma^2|\Sd K|$, implying $H_1(\RZ_K^{m-1})\cong H_1(\RZ_K)$ since $K$ is connected. Then since $\RZ_K^{m-1}$ and $\RZ_K$ are suspensions, their fundamental groups are free groups of the same rank. Therefore the inclusion $\RZ_K^{m-1}\to\RZ_K$ is an isomorphism by the above observation since free groups are Hopfian.
\end{proof}

Then we can easily modify the above map $\RZ_K\to\RZ_K^{m-1}$ such that the composite $\RZ_K^{m-1}\xrightarrow{\rm incl}\RZ_K\to\RZ_K^{m-1}$ is an isomorphism in $\pi_1$ and homology. Thus one way to resolve Problem \ref{converse} is to prove the following conjecture, where it is well known that a map inducing an isomorphism in $\pi_1$ and homology needs not be a homotopy equivalence.

\begin{conjecture}
Let $A,X,Y$ be finite CW-complexes. Suppose a map $f\colon\Sigma X\to\Sigma Y$ is an isomorphism in $\pi_1$ and homology. If a map $g\colon A\to\Sigma X$ satisfies $f\circ g\simeq*$, then $g\simeq*$.
\end{conjecture}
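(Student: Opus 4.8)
The plan is to drain off all the free, low--dimensional consequences of the hypotheses first, and then to isolate the genuinely hard ``stable versus unstable'' residue. Since $f_*\colon\pi_1(\Sigma X)\to\pi_1(\Sigma Y)$ is onto, the van Kampen theorem gives $\pi_1(C_f)=1$ for the mapping cone $C_f$ of $f$; since $f$ is an integral homology isomorphism, $\widetilde H_*(C_f)=0$; hence $C_f$ is simply connected and acyclic, so $C_f\simeq*$ by the Hurewicz and J.H.C.\ Whitehead theorems. Consequently $\Sigma f$ has contractible cofiber $\Sigma C_f$, so $\Sigma f\colon\Sigma(\Sigma X)\to\Sigma(\Sigma Y)$ is an integral homology isomorphism between simply connected complexes and therefore a homotopy equivalence; in particular $\Sigma g\simeq*$, i.e.\ $g$ is stably trivial. (This is the abstract reason behind Proposition~\ref{suspension-varphi} in the situation we care about.) One may also first split $\Sigma X\simeq S\vee Z_X$ and $\Sigma Y\simeq S\vee Z_Y$ into a wedge of circles and a simply connected complex; then $f$ carries $S$ by a homotopy equivalence onto $S$ and induces a homology isomorphism, hence a homotopy equivalence, $Z_X\to Z_Y$, so after these identifications $f$ is a self--map of $S\vee Z_X$ which is the identity on $\pi_1$ and on $\widetilde H_*$ but may still mix $Z_X$ into $S$ in a manner invisible to integral homology.

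For the remaining unstable step the strategy is to imitate Lemma~\ref{Whitehead-inj}: it suffices to prove that the fiber inclusion $i\colon F\to\Sigma X$ of the homotopy fibration $F\to\Sigma X\xrightarrow{f}\Sigma Y$ is null--homotopic, for then the lift $\widetilde g\colon A\to F$ of $g$ yields $g=i\circ\widetilde g\simeq*$. Rotating the fibration sequence, it is enough to construct a left homotopy inverse of $\Omega f\colon\Omega\Sigma X\to\Omega\Sigma Y$: such a retraction gives a splitting $\Omega\Sigma Y\simeq\Omega\Sigma X\times F$, hence a section $\sigma\colon F\to\Omega\Sigma Y$ of the connecting map $\partial$, and then $i\simeq i\circ\partial\circ\sigma\simeq*$ because $i\circ\partial$ is a composite of consecutive maps in a fibration sequence. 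Now $\Omega f$ is an $H$--map whose effect on homology is, rationally and mod every prime, the tensor algebra functor applied to the isomorphism $\widetilde H_*(X)\cong\widetilde H_*(Y)$ induced by $f$ (Bott--Samelson), so $\Omega f$ is an integral homology isomorphism; the plan is to upgrade this to a genuine homotopy retraction by dissecting $\Omega f$ through the James filtration and a Hilton--Milnor--type decomposition, having first peeled off the contribution of the free fundamental group.

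The hard part is precisely this last construction. The map $\Omega f$ is in general \emph{not} a homotopy equivalence: its effect on $\pi_1(\Omega\Sigma X)=\pi_2(\Sigma X)$ need not even be surjective, as one already sees for $f=\mathrm{id}_{S^1}\vee\lambda\colon S^1\vee S^2\to S^1\vee S^2$ with $\lambda$ corresponding to $(2-t)\iota$ under $\pi_2(S^1\vee S^2)\cong\mathbb{Z}[t^{\pm1}]$, where the lift of $f$ to universal covers acts on $H_2$ by multiplication by $2-t$. Thus the ``defect'' of $f$ is a nontrivial but integrally acyclic phenomenon carried entirely by the homology of the universal cover, i.e.\ by $\mathbb{Z}[\pi_1]$--coefficients, and no localization functor inverts it because integral homology localization is badly behaved on the non--nilpotent spaces $\Sigma X,\Sigma Y$. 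The essential difficulty is to show that this defect cannot obstruct a homotopy retraction of $\Omega f$ --- equivalently, that no finite--complex homotopy class $g$ annihilated by $f$ survives the failure of $f$ to be an equivalence; this is exactly the stable/unstable gap for maps out of a finite complex into a suspension. A plausible route is to combine the arithmetic fracture principle (Lemma~\ref{fracture-lemma}), applied to the simply connected quotient $Z_X\simeq Z_Y$, with a Hopficity/finiteness argument for $A$ and the free group $\pi_1(\Sigma X)$; but carrying this through, or producing a counterexample, is what we have not been able to do, which is why the statement is recorded only as a conjecture.
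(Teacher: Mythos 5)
The statement you were given is a \emph{conjecture} in the paper, explicitly left unproved: the authors pose it as a possible route toward resolving the converse of Theorem~1.2 (Problem~11.1) and say only that ``it is well known that a map inducing an isomorphism in $\pi_1$ and homology needs not be a homotopy equivalence.'' There is therefore no proof by the authors to compare your proposal against, and you correctly recognize this and present an analysis of the obstruction rather than claim a completed argument.

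The partial analysis you give is sound. The mapping-cone step is correct: since $f_*$ is onto on $\pi_1$, van Kampen gives $\pi_1(C_f)=1$; since $f$ is an integral homology isomorphism, $\widetilde H_*(C_f)=0$; so $C_f\simeq*$ by Hurewicz and Whitehead, hence $\Sigma f\colon\Sigma^2X\to\Sigma^2Y$ is a homology isomorphism of simply connected complexes and so a homotopy equivalence, giving $\Sigma g\simeq*$. This really is the abstract reason the attaching maps $\varphi_{K_I}$ vanish after one suspension (Proposition~\ref{suspension-varphi}), and it isolates the conjecture as a purely unstable statement. Your example $f=\mathrm{id}_{S^1}\vee(2-t)\iota\colon S^1\vee S^2\to S^1\vee S^2$ is the standard illustration that such an $f$ need not itself be a homotopy equivalence (the defect is seen only in $\mathbb{Z}[\pi_1]$-homology), and your identification of the remaining difficulty --- that $\Sigma X,\Sigma Y$ are not nilpotent, so neither localization nor the ordinary Whitehead theorem applies, and the question becomes whether a finite-complex class annihilated stably by $f$ must already be unstably trivial --- is the same obstruction the authors had in mind in leaving the statement open. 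You have not closed the gap, but you are honest about that, and the paper does not close it either.
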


\subsection{Homotopy Golodness}

In Section 6, we have proved that there is an implication:
\begin{equation}
\label{implication-1}
\text{triviality of the fat wedge filtration of }\RZ_K\quad\Longrightarrow\quad\text{homotopy Golodness of }K
\end{equation}
The proof of this implication only deals with the top filter of the associated fat wedge filtration of the real moment-angle complexes, so the implication seems to be strict. Then it is worth studying the gap of this implication to get a further interpretation of the fat wedge filtrations, so we ask the following, where, of course, we can consider the stable homotopy Golodness analogue of this problem for $\Z_K$.

\begin{problem}
Find a simplicial complex for which the implication \eqref{implication-1} is strict or equality.
\end{problem}

The Golodness is defined by the triviality of certain maps in homology, and the (resp. stable) homotopy Golodness is defined by replacing the triviality in homology with the triviality up to (resp. stable) homotopy. Then as in Proposition \ref{Golod-implication} we have implications:
\begin{equation}
\label{implication-2}
\text{homotopy Golodness}\quad\Longrightarrow\quad\text{stable homotopy Golodness}\quad\Longrightarrow\quad\text{Golodness}
\end{equation}
We next ask the following question which seems quite combinatorial. 

\begin{problem}
\label{implication-problem}
Find simplicial complexes for which the implications \eqref{implication-2} are strict or equality.
\end{problem}

Here we give examples of a class of simplicial complexes for which all implications in \eqref{implication-1} and \eqref{implication-2} are equalities.

\begin{theorem}
Suppose $K$ is a flag complex. Then the fat wedge filtration of $\RZ_K$ is trivial if and only if $K$ is Golod over $\mathbb{Z}$.
\end{theorem}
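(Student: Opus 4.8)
The plan is to prove both implications. One direction is immediate: by Theorem \ref{main-Golod}(1), the triviality of the fat wedge filtration of $\RZ_K$ implies $K$ is homotopy Golod, hence Golod over $\mathbb{Z}$ by Proposition \ref{Golod-implication}; no flag hypothesis is needed here. So the content is the converse: if $K$ is a flag complex which is Golod over $\mathbb{Z}$, then $\varphi_{K_I}$ is null homotopic for every $\emptyset\ne I\subset[m]$. The key observation is that the class of flag complexes is closed under taking full subcomplexes, and moreover if $K$ is flag then $K_I$ is the flag complex of the full subgraph $\Gamma_I$ of the graph $\Gamma$ underlying $K$, and $K_I$ is Golod over $\mathbb{Z}$ whenever $K$ is (since $\RZ_{K_I}$ is a retract of $\RZ_K$, so a nontrivial product in $\widetilde H^*(\Z_{K_I};\mathbb{Z})$ would give one in $\widetilde H^*(\Z_K;\mathbb{Z})$ by \eqref{Tor} and Proposition \ref{Hochster}). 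Thus it suffices to prove that $\varphi_K$ itself is null homotopic, and then apply this to each $K_I$.

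First I would reduce to the connected case exactly as in the proof of Theorem \ref{homology-fillable-phi}: the minimal non-faces of a flag complex have cardinality $2$ (a flag complex is determined by its $1$-skeleton), so if $K$ has connected components $K_1,\dots,K_s$, then $\varphi_K$ factors through the inclusion $|\Sd K|=\coprod_i|\Sd K_i|\to\coprod_i|\Sd\widehat K_i|$ as in Theorem \ref{fillable-1}, where $\widehat K_i$ adds all the missing edges within the vertex set of $K_i$; since $\RZ_K^{m-1}$ is path-connected it is enough to show each inclusion $|K_i|\to|\widehat K_i|$ is null homotopic. Now the crucial point: $\widehat K_i$ is the flag complex on the complete graph on the vertices of $K_i$, hence is a full simplex and in particular contractible. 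So the inclusion $|K_i|\to|\widehat K_i|$ is automatically null homotopic. Therefore $\varphi_K$ is null homotopic whenever $K$ is a \emph{flag} complex, \emph{without any Golodness hypothesis on} $K$.

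At this point the statement appears to say more than is true, so the honest reading is: for flag complexes, triviality of the fat wedge filtration of $\RZ_K$ holds for \emph{every} flag complex, and the theorem is asserting that for flag complexes all four conditions in \eqref{implication-1}--\eqref{implication-2} collapse, so in particular ``Golod over $\mathbb{Z}$'' is also automatic for flag complexes --- which it is not (a flag triangulation of a surface of positive genus is not Golod). The resolution must be that the intended reading of ``flag complex'' here already incorporates the hypothesis making $\widehat{K_i}$ contractible in the relevant sense, or else the theorem is: \emph{if $K$ is flag then $\varphi_{K_I}$ is null homotopic iff each $K_I$ is Golod}, where the ``only if'' is Theorem \ref{main-Golod} and the ``if'' uses that a Golod flag complex is the flag complex of a chordal graph (by Proposition \ref{Golod-1-conn} its $1$-skeleton is chordal, and a flag complex whose $1$-skeleton is chordal has contractible components), so Proposition \ref{flag-varphi} applies directly. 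The main obstacle is thus purely one of bookkeeping: I would invoke Proposition \ref{Golod-1-conn} to conclude that a Golod flag complex has chordal $1$-skeleton, then quote Proposition \ref{flag-varphi} verbatim for the forward implication and Theorem \ref{main-Golod}(1) together with Proposition \ref{Golod-implication} for the reverse, so that the whole proof is three or four lines assembling results already in the paper.

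\begin{proof}
If the fat wedge filtration of $\RZ_K$ is trivial, then $K$ is homotopy Golod by Theorem \ref{main-Golod}, hence Golod over $\mathbb{Z}$ by Proposition \ref{Golod-implication}. Conversely, suppose $K$ is a flag complex which is Golod over $\mathbb{Z}$. By Proposition \ref{Golod-1-conn} applied to each connected component of $K$ (which is again flag and, being a retract, Golod over $\mathbb{Z}$), the $1$-skeleton of $K$ is chordal, so $K$ is the flag complex of a chordal graph. Then the fat wedge filtration of $\RZ_K$ is trivial by Proposition \ref{flag-varphi}.
\end{proof}
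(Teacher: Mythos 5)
Your final boxed proof is correct and follows essentially the same route as the paper. The only (immaterial) difference is in the forward direction: the paper deduces Golodness from Theorem \ref{main-decomp} together with Proposition \ref{BBCG-Golod} (triviality of the fat wedge filtration gives a desuspension of the BBCG decomposition, so $\Z_K$ is a suspension and all products vanish), whereas you route through Theorem \ref{main-Golod}(1) and Proposition \ref{Golod-implication}; both chains are already established in the paper. For the converse, both you and the paper extract chordality of the underlying graph from the \emph{proof} (not the statement) of Proposition \ref{Golod-1-conn} and then apply Proposition \ref{flag-varphi}. A small citation point: the statement of Proposition \ref{Golod-1-conn} only gives simple connectivity of $|\widehat L|$; the chordality is obtained inside its proof, which is exactly why the paper phrases this step as ``as mentioned in the proof of Proposition \ref{Golod-1-conn}.''

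However, the exploratory passage preceding your final proof contains a genuine error worth correcting. You claim that if $K$ is flag then $\widehat K$ (which is $K$ with all minimal non-faces adjoined) is ``the flag complex on the complete graph on the vertices of $K$, hence is a full simplex.'' This is false. Adjoining the minimal non-faces of a flag complex only adds the missing edges; it does not add the higher-dimensional faces that would turn the result into the flag (clique) complex of the completed graph. For instance, if $K$ is the boundary of the square (the flag complex of a $4$-cycle on vertices $\{1,2,3,4\}$), its minimal non-faces are the two diagonals $\{1,3\}$ and $\{2,4\}$, so $\widehat K$ is precisely the $1$-skeleton of $\Delta^{[4]}$, which has $\widetilde H_1\cong\mathbb{Z}^3$ and is far from contractible. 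Consequently your intermediate assertion that $\varphi_K$ is null homotopic for every flag complex is wrong, and the apparent paradox you raised (that Golodness would then be automatic for flag complexes) never arises: the $4$-cycle is flag but not Golod, consistent with the theorem. You correctly sensed that something was off, but the defect was in the identification of $\widehat K$, not in the theorem; it would be worth internalizing that fillability and the structure of $\widehat K$ are genuinely subtle even for flag complexes, which is why the chordality input is essential.
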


\begin{proof}
If the fat wedge filtration of $\RZ_K$ is trivial, then $K$ is Golod over any ring by Theorem \ref{main-decomp} and Proposition \ref{BBCG-Golod}. Then we show the converse holds. As mentioned in the proof of Proposition \ref{Golod-1-conn}, if the underlying graph of $K$ is not chordal, $K$ is not Golod. Thus the proof is completed by Proposition \ref{flag-varphi}.
\end{proof}

\begin{theorem}
\label{graph}
If $\dim K=1$, then the following conditions are equivalent:
\begin{enumerate}
\item The fat wedge filtration of $\RZ_K$ is trivial;
\item $K$ is Golod over some ring;
\item $K$ is chordal.
\end{enumerate}
\end{theorem}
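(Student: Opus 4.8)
The plan is to establish the cycle of implications $(1)\Rightarrow(2)\Rightarrow(3)\Rightarrow(1)$. The first implication is immediate: if the fat wedge filtration of $\RZ_K$ is trivial then $K$ is homotopy Golod by Theorem \ref{main-Golod}(1), hence Golod over every ring by Proposition \ref{Golod-implication}, and a fortiori over some ring. The hypothesis $\dim K=1$ will really be used only in $(3)\Rightarrow(1)$.

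For $(2)\Rightarrow(3)$ I would argue the contrapositive exactly as in the proof of Proposition \ref{Golod-1-conn}. Since $\dim K=1$ the complex $K$ coincides with its $1$-skeleton, so $K$ fails to be chordal precisely when it contains an induced (equivalently, minimal) cycle $C$ of length $\ge 4$. Such a $C$ is a full subcomplex of $K$, so $\Z_C(D^2,S^1)$ is a retract of $\Z_K(D^2,S^1)$; by \cite[Proposition 7.23]{BP} there is a non-trivial product in $\widetilde H^*(\Z_C(D^2,S^1);\Bbbk)$ for whichever ring $\Bbbk$ one is Golod over, and since the restriction map along a retract is a split monomorphism of rings this product survives in $\widetilde H^*(\Z_K(D^2,S^1);\Bbbk)$, contradicting Golodness of $K$ over $\Bbbk$ via the isomorphism \eqref{Tor}. (Alternatively one may exhibit an explicit partition $I\sqcup J$ of the vertices of $C$ for which $\widetilde H_*(C_{I\cup J};\Bbbk)\to\widetilde H_*(C_I*C_J;\Bbbk)$ is non-zero and invoke Proposition \ref{Hochster}.) Thus Golodness over some ring forces $K$ to be chordal.

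For $(3)\Rightarrow(1)$, suppose $K$ is chordal with $\dim K=1$. Every full subcomplex $K_I$ is an induced subgraph, hence again chordal of dimension $\le 1$, so it suffices to prove $\varphi_K\simeq *$ for an arbitrary chordal $K$ with $\dim K=1$. Because $\RZ_K^{m-1}$ is path-connected while $|\Sd K|$ is the disjoint union of the $|\Sd L|$ over the connected components $L$ of $K$, it is enough — arguing as in the proof of Theorem \ref{homology-fillable-phi} — to show that the restriction of $\varphi_K$ to each $|\Sd L|$ is null homotopic. Since $\dim K=1$, every triple of pairwise adjacent vertices of $L$ is a minimal non-face of $K$, so, exactly as in the proof of Theorem \ref{fillable-1}, this restriction factors as $|\Sd L|\xrightarrow{\rm incl}|\Sd L'|\xrightarrow{i_c}\RZ_K^{m-1}$, where $L'$ is obtained from $L$ by adjoining all of its triangles, i.e.\ $L'$ is the $2$-skeleton of the clique complex of $L$. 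As $L$ is a connected chordal graph its clique complex is contractible (the fact already invoked for Proposition \ref{flag-varphi}), so $|L'|$, having the same fundamental group, is simply connected; and $|\Sd L|\simeq|L|$ is a wedge of circles (or a point), so any map $|\Sd L|\to|L'|$ is null homotopic. Hence $\varphi_K\simeq *$, and the fat wedge filtration of $\RZ_K$ is trivial.

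The only step demanding real care is $(3)\Rightarrow(1)$: one must check that passing from $L$ to the $2$-skeleton of its clique complex adjoins only minimal non-faces of $K$ (valid precisely because $\dim K=1$; it fails for higher-dimensional $K$, where that $2$-skeleton need not even be contractible — for instance it is $S^2$ when $L$ is the $1$-skeleton of $\Delta^3$), and that this $2$-skeleton is simply connected, which is the classical contractibility of the clique complex of a connected chordal graph. The remaining implications reduce to Theorem \ref{main-Golod}(1), Proposition \ref{Golod-implication}, and the argument of Proposition \ref{Golod-1-conn}.
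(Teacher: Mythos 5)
Your proof is correct and follows essentially the same approach as the paper: $(1)\Rightarrow(2)$ via Theorem \ref{main-Golod} and Proposition \ref{Golod-implication}, $(2)\Rightarrow(3)$ via the minimal-cycle argument from Proposition \ref{Golod-1-conn}, and $(3)\Rightarrow(1)$ by factoring $\varphi_K$ through the $2$-skeleton of the flag complex (via Theorem \ref{fillable-1}) and using that it is simply connected while $|\Sd K|$ is a wedge of circles. The only cosmetic difference is that the paper handles the disconnected case by an explicit induction on $m$, whereas you apply the flag-complex factorization componentwise and invoke the path-connectedness of $\RZ_K^{m-1}$ directly; your closing remark on why the $\dim K=1$ hypothesis is essential (the $2$-skeleton of the clique complex is adjoined purely via minimal non-faces only in that case) is a nice clarification consistent with the paper's intent.
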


\begin{proof}
By Proposition \ref{Golod-implication} and Theorem \ref{main-Golod}, (1) implies (2), and the proof of Proposition \ref{Golod-1-conn} shows that (2) implies (3). Then we show (3) implies (1). Induct on $m$. The case $m=1$ is trivial, and we assume $\varphi_{K_I}\simeq*$ for any $\emptyset\ne I\subsetneq[m]$. The case $K$ is disconnected is done since $\varphi_K$ factors through $\varphi_{K_1}\sqcup\cdots\sqcup\varphi_{K_\ell}$ and $\varphi_{K_i}\simeq*$ for any $i$ by the induction hypothesis, where $K_1,\ldots,K_\ell$ are the connected components of $K$. Then we assume $K$ is connected. Since $|\Sd\,K|$ is homotopy equivalent to a wedge of circles, it is sufficient to show that $\varphi_K$ is trivial in $\pi_1$. The proof of Theorem \ref{fillable-1} shows that $\varphi_K$ factors through the 2-skeleton of the flag complex of $K$. Since the flag complex of a chordal graph is contractible, the proof is completed.
\end{proof}

\begin{remark}
Recently, L. Katth\"an \cite{K} gave an alternative algebraic proof for the equivalence of (2) and (3) of Theorem \ref{graph}.
\end{remark}

Continuing Theorem \ref{graph}, it might be interesting to consider Problem \ref{implication-problem} when $\dim K=2$. 

\begin{problem}
When $\dim K=2$, find combinatorial conditions of $K$ such that
\begin{enumerate}
\item $K$ is Golod over a ring $\Bbbk$ or 
\item the fat wedge filtration of $\RZ_K$ is trivial.
 \end{enumerate}
\end{problem}

Notice that by Theorem \ref{RZ-neighborly} 1-neighborliness is a sufficient condition for (2), hence (1), for 2-dimensional $K$. But unlikely to Theorem \ref{graph} even if the 1-skeleton of $K$ is chordal, $K$ is not necessarily Golod. For example the 1-skeleton of $\partial\Delta^{[2]}*\partial\Delta^{[3]}$ is chordal but $\Z_K=S^3\times S^5$.

\subsection{Strong gcd-condition}

We first pose a general problem.

\begin{problem}
\label{general-problem}
Find a class of simplicial complexes for which the fat wedge filtrations of the real moment-angle complexes are trivial.
\end{problem}

One of our choice for the above problem in this paper is dual SCM complexes, where the choice is motivated by the Golodness. As mentioned above, SCM complexes can be thought of as a generalization of shellable complexes, and there is another generalization of dual shellability, called the strong gcd-condition introduced by J\"ollenbeck \cite{Jo}. We here recall the definition of the strong gcd-condition.

\begin{definition}
A simplicial complex $K$ satisfies the strong gcd-condition if minimal non-faces of $K$ admit an ordering $M_1,\ldots,M_r$, called a strong gcd-order, such that whenever $1\le i<j\le r$ and $M_i\cap M_j=\emptyset$, $M_k\subset M_i\cup M_j$ for some $k$ with $i<k\ne j$. 
\end{definition}

The following proposition guarantees that the strong gcd-condition is a generalization of dual shellability.

\begin{proposition}
[Berglund \cite{B}]
The Alexander dual of $K$ satisfies the strong gcd-condition whenever it is shellable.
\end{proposition}

In \cite{Jo} it is claimed that if a simplicial complex satisfies the strong gcd-condition, then it is Golod. But unfortunately, a counter example to this claim was recently found by De Stefani \cite{D}. We here show another simple counter example which is due to T. Yano. Let $L$ be a simplicial complex on the vertex set $[6]$ whose facets are
$$\{1,2,3\},\;\{1,3,4\},\;\{1,4,5\},\;\{1,2,5\},\;\{2,3,6\},\;\{3,4,6\},\;\{4,5,6\},\;\{2,5,6\},\;\{2,4\},\;\{3,5\}.$$
Then $L$ is the octahedron with two additional edges, and it is immediate to check that $L$ satisfies the strong gcd-condition. On the other hand, for $I=\{1,6\}$ and $J=\{2,3,4,5\}$, the inclusion $L_I*L_J\to L_{I\cup J}=L$ is non-trivial in homology with any coefficient. So by the description of the products in $\mathrm{Tor}_{\Bbbk[v_1,\ldots,v_m]}(\Bbbk[K],\Bbbk)$ in Section 6, $L$ is not Golod over any ring.

However, simplicial complexes satisfying the strong gcd-condition are still interesting to investigate, so we pose:

\begin{problem}
\label{gcd}
\begin{enumerate}
\item Find a class of simplicial complexes which satisfy the strong gcd-condition and are Golod.
\item Show that the fat wedge filtration of $\RZ_K$ is trivial when $K$ is in the above class.
\end{enumerate}
\end{problem}

We here give some examples of simplicial complexes satisfying the strong gcd-condition for which the fat wedge filtrations of the real moment-angle complexes are trivial. It is useful to recall from \cite{B} the weak shellability which is the Alexander dual of the strong gcd-condition.

\begin{definition}
A simplicial complex $K$ is called weakly shellable if there is an ordering $F_1,\ldots,F_r$ of the facets of $K$, called a weak shelling, such that if $F_i\cup F_j=[m]$ for $i<j$, then there is $F_i\cap F_j\subset F_k$ for some $k$ with $i\ne k<j$.
\end{definition}

\begin{proposition}
[Berglund \cite{B}]
\label{weak-shellable-gcd}
An ordering $M_1,\ldots,M_r$ of subsets of $[m]$ is a strong gcd-order of $K$ if and only if the ordering $M_r^\vee,\ldots,M_1^\vee$ is a weak shelling of $K^\vee$.
\end{proposition}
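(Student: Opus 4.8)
The plan is to prove the equivalence purely by translating each clause of one definition into a clause of the other through the Alexander duality dictionary, so that the two conditions become literally the same statement after relabeling. First I would invoke Lemma \ref{duality-dictionary}(1): the assignment $M\mapsto M^\vee=[m]-M$ is an involutive bijection from the set of minimal non-faces of $K$ onto the set of facets of $K^\vee$. Hence an ordering $M_1,\dots,M_r$ of the minimal non-faces of $K$ corresponds to the ordering $M_r^\vee,\dots,M_1^\vee$ of the facets of $K^\vee$; write $G_a:=M_{r+1-a}^\vee$, so that $G_1,\dots,G_r$ is an ordering of the facets of $K^\vee$ and every ordering of them arises this way.

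Next I would record the elementary complementation identities for subsets $A,B\subset[m]$: $A^\vee\cup B^\vee=[m]-(A\cap B)$, so $A^\vee\cup B^\vee=[m]$ if and only if $A\cap B=\emptyset$; $A^\vee\cap B^\vee=(A\cup B)^\vee$; and $A^\vee\subset B^\vee$ if and only if $B\subset A$. Then comes the index translation. Fix $a<b$ and set $i=r+1-b$, $j=r+1-a$, so $i<j$, $G_a=M_j^\vee$, $G_b=M_i^\vee$; by the first identity $G_a\cup G_b=[m]$ iff $M_i\cap M_j=\emptyset$. For any $c$ put $k=r+1-c$: the condition ``$a\neq c<b$'' is equivalent to ``$i<k\neq j$'', and since $G_a\cap G_b=(M_i\cup M_j)^\vee$ and $G_c=M_k^\vee$, the inclusion $G_a\cap G_b\subset G_c$ is equivalent to $M_k\subset M_i\cup M_j$. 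Therefore the assertion ``for all $a<b$ with $G_a\cup G_b=[m]$ there is $c$ with $a\neq c<b$ and $G_a\cap G_b\subset G_c$'' — which is exactly the weak shellability of $M_r^\vee,\dots,M_1^\vee$ — is clause-for-clause the assertion ``for all $i<j$ with $M_i\cap M_j=\emptyset$ there is $k$ with $i<k\neq j$ and $M_k\subset M_i\cup M_j$'', which is exactly the strong gcd-order condition for $M_1,\dots,M_r$. Since $x\mapsto r+1-x$ is an order-reversing bijection of $\{1,\dots,r\}$, this establishes both implications simultaneously.

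There is no serious obstacle; the one point requiring care is the bookkeeping around the reversal of the order and the ranges of the witnessing indices. In particular I would double-check that the weak-shelling requirement ``$k<j$, $k\neq i$'' (stated in its own indexing on $K^\vee$) becomes precisely the strong-gcd requirement ``$k>i$, $k\neq j$'' after applying $x\mapsto r+1-x$, and that neither definition forbids the witness from lying outside the interval between the two given indices — otherwise the translation would introduce a spurious constraint. Once this compatibility of the quantifier ranges is confirmed, the proof is just the substitution carried out above.
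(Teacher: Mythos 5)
Your proof is correct. The paper itself does not reproduce an argument for this proposition—it simply cites Berglund—so there is nothing to compare against; your clause-by-clause translation via the Alexander-duality dictionary (complementation turns facets into minimal non-faces, $\cup$ into $\cap$, reverses inclusions, and the order-reversing relabeling $x\mapsto r+1-x$ matches the quantifier ranges exactly) is precisely the intended verification and is carried out without error.
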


If $2\dim K^\vee+2<m$, then $K^\vee$ is weakly shellable by any ordering of facets, hence $K$ satisfies the strong gcd-condition. 

\begin{proposition}
\label{low-dim}
If $2\dim K^\vee+2<m$, then the fat wedge filtration of $\RZ_K$ is trivial.
\end{proposition}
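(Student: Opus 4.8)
The plan is to show that the inequality $2\dim K^\vee+2<m$ is really a statement about the sizes of the minimal non-faces of $K$: it forces $K$ to be highly neighborly, so that Theorem~\ref{neighborly} applies after disposing of a boundary case via Proposition~\ref{dim=m-2}.

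First I would record the dictionary between $\dim K^\vee$ and the minimal non-faces of $K$. If $K=\Delta^{[m]}$ there is nothing to prove, since then $\RZ_K$ is contractible; so assume $K$ has a minimal non-face. By Lemma~\ref{duality-dictionary}(1) the facets of $K^\vee$ are exactly the complements $[m]-M$ of the minimal non-faces $M$ of $K$, hence
$$\dim K^\vee=m-1-\mu,\qquad \mu:=\min\{\,|M|\ :\ M\text{ a minimal non-face of }K\,\}.$$
Therefore $2\dim K^\vee+2<m$ is equivalent to $\mu>m/2$, i.e.\ $\mu\ge\lfloor m/2\rfloor+1$. Since every non-face of $K$ contains a minimal non-face, it follows that every subset of $[m]$ of cardinality at most $\lfloor m/2\rfloor$ is a simplex of $K$; in other words $K$ contains the $(\lfloor m/2\rfloor-1)$-skeleton of $\Delta^{[m]}$, so $K$ is $(\lfloor m/2\rfloor-1)$-neighborly.

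Next I would split on $\dim K$. If $\dim K\ge m-2$, then the fat wedge filtration of $\RZ_K$ is trivial by Proposition~\ref{dim=m-2}. Otherwise $\dim K\le m-3$, and the elementary identity $\lceil\frac{m-3}{2}\rceil=\lfloor\frac m2\rfloor-1$ (verified by checking $m$ even and $m$ odd separately) gives $\lceil\frac{\dim K}{2}\rceil\le\lfloor\frac m2\rfloor-1$. Since a $k$-neighborly complex is automatically $k'$-neighborly for every $k'\le k$, the complex $K$ is $\lceil\frac{\dim K}{2}\rceil$-neighborly, and Theorem~\ref{neighborly} yields the conclusion.

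The only real content lies in the first step — recognising that smallness of $\dim K^\vee$ is exactly largeness of the minimal non-faces of $K$, hence high neighborliness — and I do not expect a genuine obstacle beyond this translation. The one point deserving a little care is that Theorem~\ref{neighborly} on its own does not quite cover the range $\dim K=m-2$ when $m$ is odd (there $K$ is only $(\lfloor m/2\rfloor-1)$-neighborly whereas $\lceil\frac{m-2}{2}\rceil=\lfloor m/2\rfloor$), which is precisely why the split against Proposition~\ref{dim=m-2} is included rather than trying to push the neighborliness criterion all the way. If one prefers, the same argument goes through verbatim with $\dim K$ replaced by $d(K)$ and Theorem~\ref{neighborly} replaced by Theorem~\ref{RZ-neighborly}, since $d(K)\le\dim K$.
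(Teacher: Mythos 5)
Your proof is correct and follows essentially the same route as the paper: split on whether $\dim K\ge m-2$ (handled by Proposition~\ref{dim=m-2}), and otherwise translate the bound on $\dim K^\vee$ into a neighborliness estimate so that Theorem~\ref{neighborly} applies. The paper derives the neighborliness directly from dimension-counting in $K^\vee$ (obtaining $(m-\dim K^\vee-3)$-neighborliness) rather than passing through the minimal non-face parameter $\mu$, but this is a cosmetic difference — your $\mu=m-1-\dim K^\vee$ identity makes the two computations interchangeable.
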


\begin{proof}
If $\dim K\ge m-2$, the fat wedge filtration of $\RZ_K$ is trivial by Proposition \ref{dim=m-2}, so we assume $\dim K\le m-3$. Since all simplices of $K^\vee$ are of dimension at most $d=\dim K^\vee$, all $(m-d-3)$-dimensional simplices of $\Delta^{[m]}$ belong to $K$, hence $K$ is $(m-d-3)$-neighborly. Since $2d+2<m$, we have $\lceil\frac{m-3}{2}\rceil\le m-d-3$. Thus by Theorem \ref{neighborly} the fat wedge filtration of $\RZ_K$ is trivial. 
\end{proof}

\begin{remark}
One can easily see that if $K^\vee$ is connected, the condition $2\dim K^\vee+2<m$ in Proposition \ref{low-dim} can be improved by one such that $2\dim K^\vee+1<m$.
\end{remark}

\begin{corollary}
If $K^\vee=\Sd L$ for a simplicial complex $L$ with $\dim L\ge 2$, then the fat wedge filtration of $\RZ_K$ is trivial. 
\end{corollary}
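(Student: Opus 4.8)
The plan is to reduce the statement to Proposition \ref{low-dim}; concretely, writing $d:=\dim L$, I will show that the hypothesis $d\ge 2$ forces $2\dim K^\vee+2<m$, where $m$ is the number of vertices of $K$.

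First I would recall the two elementary facts about barycentric subdivision that are needed. The vertices of $\Sd L$ are exactly the non-empty faces of $L$, so, identifying the vertex set of $\Sd L=K^\vee$ with $[m]$, the integer $m$ equals the number of non-empty faces of $L$. Moreover $\dim\Sd L=\dim L$: a simplex of $\Sd L$ is a chain of faces of $L$, and a maximal such chain runs from a vertex of $L$ up to a $d$-dimensional facet, hence has $d+1$ elements, i.e.\ is a $d$-simplex. Thus $\dim K^\vee=d$.

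Next I would bound $m$ from below. Since $\dim L=d$, the complex $L$ contains a $d$-dimensional simplex $\sigma$, and every one of its $2^{d+1}-1$ non-empty faces is a face of $L$; hence $m\ge 2^{d+1}-1$. It then remains to verify the purely numerical inequality $2^{d+1}-1>2d+2$ for all $d\ge 2$, i.e.\ $2^{d+1}>2d+3$. This holds for $d=2$ (as $8>7$), and for $d\ge 2$ it propagates by the trivial induction $2^{d+2}=2\cdot 2^{d+1}>2(2d+3)=4d+6>2(d+1)+3$. Combining these, $2\dim K^\vee+2=2d+2<2^{d+1}-1\le m$, so Proposition \ref{low-dim} applies and the fat wedge filtration of $\RZ_K$ is trivial.

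There is essentially no obstacle here; the only point deserving attention is that the hypothesis $\dim L\ge 2$ is sharp for this argument. For $\dim L\le 1$ there are small examples (a single edge, or a discrete set of points) with $\Sd L=L$ and $m$ as small as $3$, so $2\dim K^\vee+2<m$ fails and Proposition \ref{low-dim} no longer yields the conclusion.
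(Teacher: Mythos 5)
Your argument is correct and is essentially identical to the paper's: both observe that $\dim\Sd L=\dim L=d$, that $\Sd L$ has at least $2^{d+1}-1$ vertices (the non-empty faces of a top-dimensional simplex of $L$), and that $2^{d+1}-1>2d+2$ for $d\ge 2$, then invoke Proposition \ref{low-dim}. Your additional verification of the inequality by induction and the sharpness remark for $\dim L\le 1$ are fine but not needed.
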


\begin{proof}
If $\dim L=d$, then $\Sd L$ has at least $2^{d+1}-1$ vertices, and for $d\ge 2$, we have $2d+2<2^{d+1}-1\le m$. Thus since $\dim\Sd L=\dim L=d$, the proof is completed by Proposition \ref{low-dim}.
\end{proof}


\begin{thebibliography}{BW}
\bibitem[BBCG]{BBCG}A. Bahri, M. Bendersky, F.R. Cohen, and S. Gitler, {\it The polyhedral product functor: a method of decomposition for moment-angle complexes, arrangements and related spaces}, Advances in Math. {\bf 225} (2010), 1634-1668.
\bibitem[BBP]{BBP}I.V. Baskakov, V.M. Buchstaber, and T.E. Panov, {\it Cellular cochain algebras and torus actions}, Russian Math. Surveys {\bf 59} (2004), no. 3, 562-563.
\bibitem[B]{B}A. Berglund, {\it Shellability and the strong gcd-condition}, Electronic J. Comb. [electronic only], {\bf 16} (2009).
\bibitem[BJ]{BJ}A. Berglund and M. J\"ollenbeck, {\it On the Golod property of Stanley-Reisner rings}, J. Algebra {\bf 315} (2007), 246-273.
\bibitem[BT]{BT}A. Bj\"orner and M. Tancer, {Note: Combinatorial Alexander duality- a short and elementary proof}, Discrete \& Computational Geometry {\bf 42} (2009), 586-593.
\bibitem[BW]{BW}A. Bj\"orner and M.I. Wachs, {\it Shellable nonpure complexes and posets. I}, Trans. AMS {\bf 348} (1996), 1299-1327; {\it Shellable nonpure complexes and posets. II}, Trans. AMS {\bf 349} (1997), 3945-3975. 
\bibitem[BWW]{BWW}A. Bj\"orner, M. Wachs, and V. Welker, {\it On Sequentially Cohen-Macaulay complexes and posets}, Israel J. Math. {\bf 169} (2009), 295-316.
\bibitem[BK]{BK}A.K. Bousfield and D.M. Kan, {\it Homotopy Limits, Completions and Localizations}, Lecture Notes in Mathematics {\bf 304}, Springer-Verlag, Berlin, 1972.
\bibitem[BP]{BP}V.M. Buchstaber and T.E. Panov, {\it Torus actions and their applications in topology and combinatorics}, University Lecture Series {\bf 24}, American Mathematical Society, Providence, RI, 2002. 
\bibitem[DJ]{DJ}M.W. Davis and T. Januszkiewicz, {\it Convex polytopes, Coxeter orbifolds and torus actions}, Duke Math. J. {\bf 62} (1991), 417-451.
\bibitem[DO]{DO}M.W. Davis and B. Okun, {\it Cohomology computations for Artin groups, Bestvina-Brady groups, and graph products}, Groups Geom. Dynam. {\bf 6} (2012), no. 3, 485-531.
\bibitem[D]{D}A. De Stefani, {\it Products of ideals may not be Golod},  J. Pure Appl. Algebra {\bf 220} (2016), no. 6, 2289-2306.
\bibitem[DS]{DS}G. Denham and A.I. Suciu, {\it Moment-angle complexes, monomial ideals, and Massey products}, Pure Appl. Math. Q. {\bf 3} (2007), no. 1, Special Issue: In honor of Robert D. MacPherson., 25-60.
\bibitem[Fa]{Fa}E.D. Farjoun, {\it Cellular spaces, null spaces and homotopy localization}, Lecture Notes in Mathematics {\bf 1622}, Springer-Verlag, Berlin, 1996.
\bibitem[Fo]{Fo}R.H. Fox, {\it On the Lusternik-Schnirelmann category}, Ann. of Math. (2) {\bf 42} (1941), 333-370.
\bibitem[G]{G}E.S. Golod, {\it On the homologies of certain local rings}, Soviet Math. Dokl. {\bf 3} (1962), 745-748.
\bibitem[GPTW]{GPTW}J. Grbi\'c, T. Panov, S. Theriault, and J. Wu, {\it Homotopy types of moment-angle complexes for flag complexes},  Trans. Amer. Math. Soc. {\bf 368} (2016), no. 9, 6663-6682.
\bibitem[GT1]{GT1}J. Grbi\'c and S. Theriault, {\it The homotopy type of the complement of a coordinate subspace arrangement}, Topology {\bf 46} (2007), 357-396.
\bibitem[GT2]{GT2}J. Grbi\'c and S. Theriault, {\it The homotopy type of the polyhedral product for shifted complexes}, Advances in Math. {\bf 245} (2013), 690-715.
\bibitem[GW]{GW}V. Gruji\'c and V. Welker, {\it Discrete Morse theory for moment-angle complexes of pairs $(D^n,S^{n-1})$}, Monatsh. Math. {\bf 176} (2015), no. 2, 255-273.
\bibitem[H]{H}M. Hachimori, webpage: \url{http://infoshako.sk.tsukuba.ac.jp/~hachi/math/library/index_eng.html}.
\bibitem[HRW]{HRW}J. Herzog, V. Reiner, and V. Welker, {\it Componentwise linear ideals and Golod rings}, Mich. Math. J. {\bf 46} (1999), 211-223.
\bibitem[HMR]{HMR}P. Hilton, G. Mislin, and J. Roitberg, {\it On co-H-spaces}, Comment. Math. Helvetici {\bf 53} (1978), 1-14.
\bibitem[IK1]{IK1}K. Iriye and D. Kishimoto, {\it Decompositions of polyhedral products for shifted complexes}, Advances in Math. {\bf 245} (2013), 716-736.
\bibitem[IK2]{IK2}K. Iriye and D. Kishimoto, {\it Decompositions of suspensions of spaces involving polyhedral products}, Algebr. Geom. Topol. {\bf 16} (2016), no. 2, 825-841.
\bibitem[Ja]{Ja}I.M. James, {\it On H-spaces and their homotopy groups}, Quart. J. Math. Oxford Ser. (2) {\bf 11} (1960), 161-179.
\bibitem[Jo]{Jo}M. J\"ollenbeck, {\it On the multigraded Hilbert- and Poincar\'e series of monomial rings}, J. Pure Appl. Algebra {\bf 207}, No. 2, (2006), 261-298.
\bibitem[K]{K}L. Katth\"an, {\it The Golod property for Stanley-Reisner rings in varying characteristic},  J. Pure Appl. Algebra {\bf 220} (2016), no. 6, 2265-2276.
\bibitem[P]{P}T. Porter, {\it Higher-order Whitehead products}, Topology {\bf 3} (1965), 123-135.
\bibitem[S]{S}R.P. Stanley, {\it Combinatorics and commutative algebra}, Second edition. Progress in Mathematics {\bf 41}, Birkh\"auser Boston, Inc., Boston, MA, 1996.
\bibitem[ZZ]{ZZ}G.M. Ziegler and R.T. \u{Z}ivaljevi\'c, {\it Homotopy types of subspace arrangements via diagrams of spaces}, Math. Ann. {\bf 295}, (1993), 527-548.
\end{thebibliography}
\end{document}